\documentclass[leqno,12pt]{amsart} 
\title[{Graded  contractions of $\mathfrak g_2$}]{Graded  contractions of \\
the   $\bb Z_2^3$-grading on $\mathfrak g_2$}
\author{Cristina Draper, Thomas Leenen Meyer, Juana S\'anchez-Ortega}
\setlength{\textheight}{23cm}
\setlength{\textwidth}{16cm}
\setlength{\oddsidemargin}{0cm}
\setlength{\evensidemargin}{0cm}
\setlength{\topmargin}{0cm}
\setlength{\unitlength}{0.06in} 
\usepackage{amssymb,color, bm}

\subjclass[2010]{Primary  
17B70;  
Secondary 17B30,   	
17B25.   
}

\keywords{Graded contractions,  exceptional Lie algebra, solvable algebras, nilpotent algebras, gradings, Fano plane}
\thanks{${}^*$ The first author is supported by Junta de Andaluc\'{\i}a  through projects  FQM-336 and UMA18-FEDERJA-119,   and  by the Spanish Ministerio de Ciencia e Innovaci\'on   through projects  PID2019-104236GB-I00 and PID2020-118452GB-I00, all of them with FEDER funds.
The  second author   has received a University of Cape Town Science Faculty PhD Fellowship and the Harry Crossley Research Fellowship.
The third author is supported by URC fund 459269.} 

\usepackage{amssymb, mathrsfs, amsmath, tabu, amsthm}
\usepackage{hyperref}
\usepackage{tikz-cd, graphicx} 
\usepackage{pdfpages}
\usepackage{enumitem}
\usepackage{ragged2e}
\usetikzlibrary{decorations.markings}

\hypersetup{
	colorlinks = true,
	urlcolor = blue,
	linkcolor = blue,
	citecolor = red
}
\newcommand{\gla}{\mathfrak{gl}} 
\newcommand{\la}{\mathfrak{L}} 
\newcommand{\setbar}[2]{\{#1 \, \lvert \, #2\}} 
\newcommand{\bfemph}[1]{\emph{\textbf{#1}}}

\newcommand{\f}[1]{\mathfrak{#1}}

\newcommand{\bb}[1]{\mathbb{#1}}

\newcommand{\ep}{\varepsilon}

\newcommand{\lae}{\la^{\ep}}
\newcommand{\weyl}{\mathcal{W}}

\newcommand{\comment}[1]{}

\DeclareMathOperator{\Supp}{Supp}
\DeclareMathOperator{\Der}{Der}
\DeclareMathOperator{\Aut}{Aut}
\DeclareMathOperator{\Stab}{Stab}
\DeclareMathOperator{\Diag}{Diag}
\DeclareMathOperator{\ad}{ad}
\DeclareMathOperator{\id}{id}
\DeclareMathOperator{\Spec}{Spec}

\theoremstyle{plain} 
\newtheorem{theorem}{Theorem}[section]
\newtheorem{lemma}[theorem]{Lemma}
\newtheorem{cor}[theorem]{Corollary}
\newtheorem{prop}[theorem]{Proposition}

\theoremstyle{definition} 
\newtheorem{define}[theorem]{Definition}
\newtheorem{defis}[theorem]{Definitions}
\newtheorem{remark}[theorem]{Remark}
\newtheorem{example}[theorem]{Example}
\newtheorem{notation}[theorem]{Notation}

\newtheorem{conclusion}[theorem]{Conclusion}

\begin{document}


\maketitle

\vspace{-20pt}\begin{abstract}
Graded contractions of the   $\bb{Z}_2^3$-grading on the complex exceptional Lie algebra $\f{g}_2$ are   
classified up to equivalence and up to strong equivalence. 
 The non-toral fine $\bb{Z}_2^3$-grading is highly symmetric, with all the homogeneous components Cartan subalgebras.
 This  makes possible a combinatorial treatment based on certain \emph{nice} subsets of the set of 21 edges of the Fano plane. 
 There are 24 such nice sets up to collineation. Each of these is the support of an \emph{admissible} graded contraction, one of which is present in every equivalence class of graded contractions.
Each nice set gives rise to a single Lie algebra, except for three of the cases in which families depending on one or two parameters are found. 
In particular, a large family of 14-dimensional Lie algebras arise, most of which are solvable. The properties of each of these Lie algebras are studied.
\end{abstract}
\vspace{-10pt}\tableofcontents
	
 \section{Introduction}  
 
 The notion of a Lie group   contraction (i.e. Lie algebra contraction) comes from Physics, being   introduced by Segal in 1951 \cite{Segal} and by  In\"on\"u and Wigner in 1953 \cite{IW53}.
 Segal considers a sequence of groups whose structure constants   converge to  the structure constants of a non-isomorphic group. 
 As explained in  \cite{IW53}, the fact that classical mechanics is a limiting case of relativistic mechanics means that the Galilei group must   in some sense be a limiting case of the relativistic mechanics group. Similarly the Lorentz group must be in some sense a limiting case of the de Sitter group. 
 So the general purpose is to try to formalize what can be the \emph{limit} or contraction of groups. 
 In Fialowski  and de Montigny's words \cite{Fialo} it was the need to relate the symmetries underlying Einstein's mechanics and
Newtonian mechanics which motivated   the concept of a contraction. This consists of multiplying the generators of the symmetry by certain  ``contraction parameters'', such that when
these parameters reach some singularity point, one obtains a non-isomorphic Lie algebra of the same dimension. More precisely, take $L$ either a real or complex Lie algebra and $U \colon(0,1] \to \mathrm{GL}(L)$ a continuous map. For $\varepsilon \in (0, 1]$, define $[x, y]_\varepsilon := {U_\varepsilon}^{-1}([U_\varepsilon(x), U_\varepsilon(y)])$, for all $x, y \in L$. Then $L_\varepsilon = \big(L, [\cdot, \cdot]_\varepsilon \big)$ is a Lie algebra isomorphic to $L$. Write $[x, y]_0$ to denote the limit of $[x, y]_\varepsilon$ when $\varepsilon \to 0^+$; if $[x, y]_0$ exists for all $x, y\in L$, then $L_0 = \big(L,[\cdot, \cdot]_0\big)$ becomes a Lie algebra, which is called a one-parametric continuous contraction of $L$, or simply, a \emph{contraction} of $L$.

Degenerations, contractions and deformations of many algebraic structures have turned out to be very useful in both Mathematics and Physics. Looking at the literature, it seems that physicists have been more interested in degenerations and contractions, while most of the results about deformations can be found in Mathematics journals.
These three notions have been introduced in many different ways, depending on the approach taken;
here we present a very brief introduction within the framework of Lie algebras. In \cite{Fialo} deformations and contractions procedures are used to construct and classify new Lie algebras; see also \cite{Burde07} for a nice exposition for Lie algebras and algebraic groups, and \cite[Chapter~7]{enci} for a more complete review for Lie algebras and Lie groups.
 
A contraction can be viewed as a special case of \emph{degeneration}. Let $V$ be an $n$-dimensional vector space over a field $F$ and $\mathcal{L}_n(F)$ the variety of Lie algebra laws, that is, the alternating bilinear maps $\mu\colon V\times V\to V$ such that the pair $(V, \mu)$ is a Lie algebra. The general linear group $\mathrm{Gl}(V)$ acts on $\mathcal{L}_n(F)$ by $(g\cdot \mu)(x, y) = g\big(\mu(g^{-1}x, g^{-1}y)\big)$, for $g \in \mathrm{Gl}(V)$ and $x, y \in V$. If $O(\mu)$ denotes the orbit of $\mu\in \mathcal{L}_n(F)$ under the previous action and $\overline{O(\mu)}$ the closure of the orbit with respect to the Zariski topology, then $\lambda\in \mathcal{L}_n(F)$  degenerates to $\mu \in \mathcal{L}_n(F)$ if $\mu \in \overline{O(\lambda)}$.

Contractions and \emph{deformations} are opposite procedures. 
Roughly speaking, both contractions and deformations of Lie algebras are modifications of their structure constants; but contractions transform a Lie
algebra into a  ``more Abelian'' Lie algebra, while deformations produce a Lie algebra with a more complicated Lie bracket. 
A one-parameter deformation of a
Lie algebra $L=(V,\mu)$ as before is a continuous curve over $\mathcal{L}_n(F)$.
 A formal one-parameter deformation is defined by the Lie brackets
$[a, b]_t = F_0(a, b) + tF_1(a, b) + \ldots + t^m F_m (a, b) + \ldots$
where $F_0$ denotes the original Lie bracket. 
 
The type of contractions we will be working with, known as \emph{graded contractions}, appeared first in the early 90s in some Physics journals, as a generalization of the Wigner-In\"on\"u contractions. Since their introduction, graded contractions have been investigated in other algebraic structures; see, for instance, \cite{ref19} for affine algebras, \cite{ref15} for Jordan algebras and \cite{ref17} for Virasoro algebras. The idea behind graded contractions consists essentially in preserving Lie gradings through the contraction process. Their name might confuse the reader at first since a graded contraction is not a contraction that is graded. As we will see, graded contractions are defined algebraically and not by a limiting process.  

The first work  on the subject of graded contractions of Lie algebras is \cite{otro91}. In it, de Montigny and Patera  studied   the grading-preserving contractions of complex Lie algebras and superalgebras of any type; appearing a new type of discrete contractions 
which are not Wigner-In\"on\"u-like  continuous contractions.
The first examples including other gradings other than $\bb Z_2$ appear in \cite{CPSW}, 
which applies
the new defined formalism to the toral $\bb Z^2$-grading  of the classical simple Lie algebra $\mathfrak{sl}(3,\bb C)$ of dimension 8, that is,   the root decomposition.
 This Lie algebra of $A_2$-type admits exactly 3 fine non-toral gradings, with universal groups $\bb Z_3^2$, $\bb Z_2^3$ and $\bb Z\times\bb Z_2$.
In 2004, a group of Physicists studied in \cite{checos} the Pauli grading of $\mathfrak{sl}(3,\bb C)$ over $\bb Z_3^2$. 
The Pauli grading has the advantage that all the non-zero homogeneous components are 1-dimensional, so 
the Jacobi identity associated to a graded contraction produces a system of quadratic equations for
the contraction parameters, which can be reduced using the group of symmetries of the grading to find a non-trivial solution. Two years later, in 2006, the Pauli grading of $\mathfrak{sl}(3,\bb C)$ over $\bb Z_3^2$ is investigated further in \cite{checos06}, and this time the system of 48 contraction equations involving 24 contraction parameters is completely solved.  A list of all the equivalence classes of solutions is provided (there are 188 inequivalent solutions, 13 of them depending or one or two continuous parameters); the algorithms developed in \cite{Rand} were used to identify a Lie algebra starting from its structure constants.
Graded contractions of the Gell-Mann grading of $\mathfrak{sl}(3,\bb C)$ over the group $\bb Z_2^3$ were studied by Hrivn\'ak and Novotn\'y \cite{gr-cont} in 2013. The difficulty now is that this grading has one homogeneous component of dimension 2; the authors restricted their study to the graded contractions preserving the 2-dimensional homogeneous component. Using the group of symmetries of the Gell-Mann grading, the system of contraction equations is reduced and solved, and 53 types of Lie algebras were found.   
Despite the fact that the results in \cite{checos06, gr-cont} are a contribution to the classification problem of solvable Lie algebras, 
the mathematical community seems not to have continued this line of work.
This might be due to the   long  calculations that require the use of computer systems, which suggests, perhaps,  that a different approach to investigate graded contractions  may revive interest in the topic.
The works   containing the most general results on the subject of graded contractions are \cite{ref33, ref34}; in \cite{ref33} the contraction matrix is studied provided that the abelian grading group has finite order, introducing invariants as
pseudobasis, higher-order identities, and sign invariants;
while \cite{ref34} focusses on how the distribution of zero entries  in the contraction matrix affects the structure of the associated contracted algebra.

Our manuscript is inspired in all the above works on concrete simple Lie algebras, but 
tries to go a few steps further
in some aspects. We 
focus our attention on the    beautiful exceptional Lie algebra of type  $\f{g}_2$,  a step further not only for the dimension, 14, but also for the least size of a matrix algebra containing $\f{g}_2$, 7.
 It is worth mentioning here that this is the first time graded contractions on an exceptional Lie algebra are investigated. 
Gradings on the octonion algebra $\mathcal{O}$ were studied by Elduque in \cite{gradsO}; his paper inspired other authors \cite{GradsG2, gradsg22} to investigate gradings on the Lie algebra of derivations of $\mathcal{O}$, that is,  the Lie algebra $\f{g}_2$.
It turns out that any grading on $\f{g}_2$ is induced by a grading on $\mathcal{O}$, although there is not a one-to-one correspondence between non-equivalent gradings on $\mathcal{O}$ and on $\f{g}_2$. Amongst the 25 non-equivalent gradings on $\f{g}_2$, there is only one non-toral, which is precisely the 
$\bb Z_2^3$-grading we are interested in (defined in Equation~\eqref{graduaciong2}). 
The challenge of trying to classify its graded contractions 
 is that its nonzero homogeneous components have all of them dimension 2,   
and that the grading is non-toral  (i.e., the homogeneous spaces are not sums of root spaces).   
An additional motivation for choosing this grading is its high symmetry, since the greater the symmetry, the easier it is to classify the graded contractions; and some particularities such as, for example, the fact that all the pieces are Cartan subalgebras, which makes the choice of a basis of $\f{g}_2$ formed by semisimple elements trivial.  

Our calculations and techniques are innovative,   in the sense that we do not require the use of a computer system.
Our main goal is   to introduce new tools and procedures to classify all graded-contractions of the $\bb Z_2^3$-grading on $\f{g}_2$ up to isomorphism of the related contracted graded algebras. These tools permit, in particular,   to investigate Weimar-Woods's Conjecture \cite[Conjecture~2.15]{ref33}, which turns out to be true for our grading, but false in general (see Example~\ref{ex_encontreunoenC}).  

Our second goal consists of  exploring  a new family of 14-dimensional solvable and nilpotent Lie algebras. Although complex semisimple Lie algebras were classified by Killing at the end of the XIX century, the situation with solvable algebras is completely different and we are still far from obtaining a full classification. This classification problem has attracted the attention of several researchers, and some solvable Lie algebras of small dimensions have been classified; for instance, see \cite{Graaf} for the classification of solvable Lie algebras up to dimension 4 over an arbitrary field, and \cite{book} for a classification up to dimension 6. Some other classifications have been obtained by adding extra hypotheses onto the solvable Lie algebras; see, for example, \cite{citadoreciente} for  a classification of solvable Lie algebras with an $n$-dimensional (nilpotent of nilindex $n-3$) nilradical, and \cite{Lu} for a classification of finite-dimensional complex solvable Lie
algebras with nondegenerate symmetric invariant bilinear form. See \cite{review} and the references therein for a nice review of the classifications of Lie algebras. \smallskip

The paper is structured as follows: in \S\ref{sec2} we gather together the required background on graded contractions of Lie algebras and their
 equivalence relations, we introduce the $\bb Z_2^3$-grading $\Gamma_{\f{g}_2}$ on $\f{g}_2$ that we will be working with, and provide some results on this grading (Lemmas~\ref{lem_base} and \ref{lem_jacobi_li}), which will allow in \S\ref{sec3} to restrict our attention to graded contractions adapted in some way to $\Gamma_{\f{g}_2}$, the \emph{admissible} graded contractions. Thus, 
 \S\ref{sec3} deals with graded contractions of $\Gamma_{\f{g}_2}$; 
considering  admissible graded contraction  allows us to view our classification problem from a combinatorial point of 
view in the projective plane $P^2(\bb F_2)=PG(2,2) $.
Besides, collineations of this plane read the action of the Weyl group of the grading
 (Proposition~\ref{colli}).
The main tool is to consider certain sets, called {\it nice sets}, that are defined via a certain absorbing property (see Definition~\ref{def_nice}),
which 
are the supports of the admissible graded contractions. Important  examples of nice sets are presented in Definition~\ref{niceSubsets} and Proposition~\ref{candidatos}, and a full classification (up to collineations) is achieved in Theorem~\ref{classiN}.

\S\ref{sec4} is devoted to the classification of admissible graded contractions up to normalization. Using our   combinatorial approach, we are able to avoid the use of the computer,  as mentioned earlier. We found (see Theorem~\ref{prop_equiv})
21 non-isomorphic (up to normalization) graded algebras obtained by graded contraction of $\f{g}_2$,
jointly with three families parametrized by $\bb{C}^\times$, $\bb{C}^\times/\bb{Z}_2$ and $(\bb{C}^\times)^2/\bb{Z}^2_2$. 
As we will see in \S\ref{sec5}, 
strongly equivalent graded contractions and equivalent graded contractions up to normalization turn out to be the same thing for our grading (Theorem~\ref{teo_normalizacionbastaba}). 
Then \S\ref{sec_revision} addresses the relationship between equivalence and strong equivalence, studying in Proposition~\ref{nueva}
when two admissible equivalent graded contractions $\eta$ and $\eta'$ admit a collineation $\sigma$ such that $\eta^\sigma\approx\eta'$. 
This happens frequently, but not always.
The result allows us to obtain the equivalence classes in Theorem~\ref{th_lasclasesdeverdad}, namely: 20 classes jointly with 3 parametrized families.
Thus, our main classification results are  Theorem~\ref{prop_equiv}, Theorem~\ref{teo_normalizacionbastaba} and Theorem~\ref{th_lasclasesdeverdad}.

We finish by investigating in \S\ref{sec6} the properties that these new Lie algebras satisfy. Besides the trivial cases (a simple and an abelian Lie algebra), 
an algebra which is the sum of a semisimple Lie algebra and its center, 
and another one not reductive,  we obtain: 12 nilpotent Lie algebras (11 of them with nilindex 2, and the other one with nilindex 3), 4 solvable (not nilpotent) Lie algebras (3 of them with solvability index 2, and the other one with solvability index 3) and 3 infinite parametrized families of 2-step solvable but not nilpotent Lie algebras (see Theorem~\ref{teo_lasalgebras}  for more details).


\section{Preliminaries}  \label{sec2}

In order to make this paper self-contained and suitable for a broader audience, we recall here the necessary background and introduce some notation.

\subsection{The Lie algebra $\mathfrak{g}_2$ and its fine $\mathbb Z_2^3$-grading}   \label{sec21}


The \bfemph{complex octonion algebra} $\mathcal{O}$, also known as the bioctonions, is the only eight-dimensional unital composition algebra over $\bb{C}$. The set $\big \{1, \, {\bf i}, \, {\bf j}, \, {\bf k}, \, {\bf l}, \, {\bf il}, \, {\bf jl}, \, {\bf kl} \big \}$ constitutes a basis for $\mathcal{O}$ with product given by the figure below, where $1$ denotes the identity element,  and all the squares of basic elements equal $-1$.

\begin{center}
\begin{tikzpicture}[scale=0.9, every node/.style={transform shape}]  \label{fano}
\draw [postaction={decoration={markings, mark= at position 0.75 with {\arrowreversed{stealth}}}, decorate}] 
(30:1) -- (210:2);
\draw [postaction={decoration={markings, mark= at position 0.75 with {\arrowreversed{stealth}}}, decorate}]
(150:1) -- (330:2);
\draw [postaction={decoration={markings, mark= at position 0.75 with {\arrowreversed{stealth}}}, decorate}]
(270:1) -- (90:2);
\draw [postaction={decoration={markings, mark= at position .24 with {\arrowreversed{stealth}}}, decoration={mark= at position .57 with {\arrowreversed{stealth}}},decoration={mark= at position .9 with {\arrowreversed{stealth}}}, decorate}]
(90:2)  -- (210:2) -- (330:2) -- cycle;
\draw [postaction={decoration={markings, mark= at position .3 with {\arrowreversed{stealth}}}, decoration={mark= at position .63 with {\arrowreversed{stealth}}},decoration={mark= at position .96 with {\arrowreversed{stealth}}}, decorate}] (0:0)  circle (1);
\draw 
(30:1) node[circle, draw, fill=white]{{\bf i}}
(210:2) node[circle, draw, fill=white]{{\bf -il}}
(150:1) node[circle, draw, fill=white]{{\bf k}}
(330:2) node[circle, draw, fill=white]{{\bf -kl}}
(270:1) node[circle, draw, fill=white]{{\bf j}}
(90:2) node[circle, draw, fill=white]{{\bf -jl}}
(0:0) node[circle, draw, fill=white]{{\bf l}};
\end{tikzpicture}
\end{center}
 The treatment on the octonions based on the Fano plane can be consulted, for instance, in the book \cite{Okubo}, which deals with both real and complex octonion algebra and its use in the field of mathematical physics.
The map $\bar x := -x$, for any $x \ne 1$ in the basis and $\bar 1:= 1$, is an involution on $\mathcal{O}$ satisfying that $n(x) := x \bar x \in \bb C$ and $t(x) := x + \bar x \in \bb C$. (We identify here $\bb C$ and $\bb C1$.) The map $t: \mathcal{O} \to \bb C$ is linear, while the map  $n\colon \mathcal{O}\to \bb C$ is a norm {admitting composition}, that is, $n(xy) = n(x)n(y)$, for all $x, y \in \mathcal{O}$. Moreover, $x^2 - t(x)x + n(x)1 = 0$, for all $x \in \mathcal{O}$, which makes $\mathcal{O}$ a quadratic algebra.
		
The Lie algebra of derivations of $\mathcal{O}$ is the complex exceptional simple Lie algebra of dimension 14 of type $G_2$; 
we denote as 
\[
\f{g}_2 = \Der(\mathcal{O}) = \left \{d\in\mathfrak{gl}(\mathcal{O})\mid  d(xy)=d(x)y+xd(y), \ \forall \, x,y\in \mathcal{O}\right \}.
\] 
The derivations of the alternative algebra $\mathcal{O}$ are well known (see, for instance, \cite[Chapter~III, \S8]{Schafer}).
Namely, the {left} $L_x$ and {right} $R_x$  {multiplication operators} on $\mathcal{O}$, $L_x(y)=xy$ and $R_xy=yx$, give rise to a concrete derivation $D_{x, y}$: 
\[
D_{x, y} := [L_x, \, L_y] + [L_x, \, R_y] + [R_x,R_y]\in \Der(\mathcal{O}) .
\]
Notice that $D_{x, y}(z) = [[x, y], z] - 3(x, y, z)$  for all $z\in \mathcal{O}$, where $[\cdot, \cdot]$ and $(\cdot, \cdot, \cdot)$ refer to the commutator and associator of $\mathcal{O}$, respectively. The skew-symmetric bilinear map $D\colon   \mathcal{O} \times \mathcal{O} \to \Der(\mathcal{O})$ given by $(x, y) \mapsto D_{x, y}$ is   $\Der( \mathcal{O})$-invariant: $[d, D_{x, y}] = D_{d(x),y} + D_{x, d(y)}$, for all $d \in \Der(\mathcal{O})$, and, although it is not surjective, the derivation algebra is spanned by its image,
\[
\f{g}_2 = \Der(\mathcal{O}) = \left \{\sum_i D_{x_i, y_i}\mid  \,  x_i, \, y_i \in \mathcal{O}, \, i \in \bb{N} \right \}.
\] 
Moreover,  for all $x, y, z \in \mathcal{O}$,
\begin{equation}\label{identidadciclica}
D_{x, yz} + D_{y, zx} + D_{z, xy} = 0.
\end{equation}

In this paper we are interested in studying the  fine $\bb{Z}_2^3$-grading of the Lie algebra $\f{g}_2$.
Many details on this grading can be found in the AMS-monograph \cite{EK13}, devoted to gradings on simple Lie algebras over algebraically closed  fields  without restrictions on the characteristic; which also encloses the description of gradings on simple associative algebras and on the octonion algebra $\mathcal{O}$. 

First, we review some general facts about gradings. Let $G$ be   an abelian group. A \bfemph{$G$-grading} $\Gamma$ on an arbitrary algebra $A$ is a vector space decomposition $\Gamma:A = \oplus_{g\in G} A_g$ satisfying that $A_gA_h \subseteq A_{g + h}$, for all $g, \, h$ in $G$. The \bfemph{support} of $\Gamma$ is the set given by $\Supp(\Gamma) := \{g \in G: \, A_g \ne 0 \}$; it is usually assumed that $\Supp (\Gamma)$ generates the whole grading group $G$.

Gradings on $A$ and on its algebra of derivations $\Der(A)$ (which is always a Lie subalgebra of the general linear algebra $\mathfrak{gl}(A)$) are closely related. In fact, any $G$-grading $\Gamma\colon A = \bigoplus_{g\in G} A_g$ on $A$ induces a $G$-grading on $\gla(A)$ and on $\Der(A)$, respectively, as: 
\begin{align}
\gla(A) &= \bigoplus\limits_{g\in G}\gla(A)_g, \mbox{ where } \gla(A)_g: = \{f \in \gla(A): f(A_h)\subseteq A_{g+h},\ \forall \, h\in G\}, \notag
\\
\Der(A) &= \bigoplus\limits_{g\in G} \Der(A)_g, \mbox{ where} \Der(A)_g:=\Der(A)\cap \gla(A)_g. \label{ex_grading_o}
\end{align} 
Let $\Gamma$ be a $G$-grading on $A$; 
the \bfemph{automorphism group} $\Aut(\Gamma)$ of $\Gamma$ consists of all self-equivalences: $\, \, \Aut(\Gamma) = \{f \in \Aut(A) \colon \mbox{for any } g \in G \mbox{ there exists } g' \mbox{ with  } f(A_g)\subseteq A_{g'}\}$; 
the \bfemph{stabilizer} $\Stab(\Gamma)$ of $\Gamma$ consists of all graded automorphisms of $A$, that is,  $\Stab(\Gamma)=\{f\in \Aut(A)\colon f(A_g)\subseteq A_g,\ \forall \, g\in A\}$; 
the \bfemph{diagonal group} $\Diag(\Gamma)$ of $\Gamma$ consists of all automorphisms of $A$ such that each $A_g$ is contained in some eigenspace, that is,
$\Diag(\Gamma) = \{f\in \Aut(A): \mbox{for all } g \in G \textrm{ there exists } \alpha_g \in\bb C \mbox{ such that }  f\vert_{A_g} = \alpha_g\mathrm{id}_{A_g}\}$. 
The \bfemph{Weyl group} of $\Gamma$ is the factor group $\weyl(\Gamma)=\Aut(\Gamma)/\Stab(\Gamma)$, 
which is a subgroup of $\mathrm{Sym}\big(\mathrm{Supp}(\Gamma)\big)$. \smallskip

Second, we focus on describing the mentioned grading $\Gamma_{\f{g}_2}$, which will be the main character of this paper.
 Let $G = \bb{Z}_2^3$. The decomposition 
$\Gamma_{\mathcal{O}}:\mathcal{O}=\oplus_{g\in \bb{Z}_2^3}\mathcal{O}_g$ given by 
\begin{align*}
\mathcal{O}_{(1,0,0)} &= \bb{C}{\bf i}, && \mathcal{O}_{(0,1,0)} = \bb{C}{\bf j}, \notag 
\\
\mathcal{O}_{(0,0,1)} &= \bb{C}{\bf l}, && \mathcal{O}_{(1,1,0)} = \bb{C}{\bf k},  
\\
\mathcal{O}_{(1,0,1)} &= \bb{C}{\bf il}, && \mathcal{O}_{(0,1,1)} = \bb{C}{\bf jl}, \notag
\\
\mathcal{O}_{(1,1,1)} &= \bb{C}{\bf kl}, && \mathcal{O}_{(0,0,0)} = \bb{C}1, \notag
\end{align*}
is a $G$-grading on $\mathcal{O}$. It is said to be a division grading, since every homogeneous element is invertible.
We write $\Gamma_{\f{g}_2}$ to denote the $G$-grading on $\f{g}_2$ obtained from $\Gamma_{\mathcal{O}}$ via  \eqref{ex_grading_o}. 
For any $x \in \mathcal{O}_k$ and $y \in \mathcal{O}_h$, the multiplication operators $L_x$ and  $R_y$   are homogeneous maps, which imply that $D_{x, y}\in (\f{g}_2)_{k + h}$, and so the homogeneous components of the grading become
\begin{equation}\label{graduaciong2}
\begin{array}{ll}
\Gamma_{\f{g}_2}: &\f{g}_2= \oplus_{g\in \bb{Z}_2^3} (\f{g}_2)_g,\vspace{2pt}\\
&(\f{g}_2)_{g} = \sum_{k+h=g} D_{\mathcal{O}_{k }, \mathcal{O}_{h }}.
\end{array}
\end{equation}
In particular, ${(\f{g}_2)}_e = \sum_{k\in G}D_{\mathcal{O}_{k}, \mathcal{O}_{k}} = 0$, since $\mathcal{O}_{k}$ is 1-dimensional and 
$D_{x, x} = 0$ for all $x \in \mathcal{O}$.

The Weyl groups of these gradings $\Gamma_{\f{g}_2}$ and $\Gamma_{\mathcal{O}}$ are known to be isomorphic to the whole set of automorphisms of the group $G=\bb Z_2^3$,  which is relatively big, with $7\cdot6\cdot 4=168$ elements.
Indeed,  note that any $f\in\Aut(\mathcal{O})$ induces   $\tilde f\in \Aut(\f{g}_2)$ defined by $\tilde f(d)=  fdf^{-1}$, for all $d\in \f{g}_2$. This automorphism satisfies $\tilde f(D_{x, y}) = D_{f(x), f(y)}$, for all $x, y \in \mathcal{O}$. In particular, if $f\in\Aut(\Gamma_{\mathcal{O}})$,  there is a group homomorphism $\alpha\colon G\to G$ such that $  f(\mathcal{O}_g)=\mathcal{O}_{\alpha(g)}$ and the related map $\tilde f\in \Aut(\Gamma_{\f{g}_2})$ also satisfies $\tilde f((\f{g}_2)_g)=(\f{g}_2)_{\alpha(g)}$. Now, given 
 any triplet of elements $g, h, k$ generating $G$, there exists an algebra automorphism $f\colon  \mathcal{O} \to \mathcal{O}$ such that
\begin{equation} \label{eq_Cayleytriplets}
f(\mathcal{O}_g) = \bb{C}{\bf i}, \qquad
f(\mathcal{O}_h) = \bb{C}{\bf j}, \qquad
f(\mathcal{O}_k) = \bb{C}{\bf l};
\end{equation}
since any pair of Cayley triples are connected by an automorphism of $\mathcal{O}$ (see, for instance,  \cite[Remark~5.13]{NotesG2}).
The related map $\tilde f$ satisfies $\tilde f((\f{g}_2)_{g}) = (\f{g}_2)_{(1, 0, 0)}$, $\tilde f((\f{g}_2)_{h}) = (\f{g}_2)_{(0, 1, 0)}$ and $\tilde f((\f{g}_2)_{k})= (\f{g}_2)_{(0, 0, 1)}$, which implies
\begin{equation} \label{eq_weyl}
\weyl(\Gamma_{\f{g}_2})\cong \Aut(\bb Z_2^3).  
\end{equation}
\smallskip

Many facts of the grading $\Gamma_{\f{g}_2}$ were investigated in \cite{GradsG2, EK13}. In the next two results, we collect   the main properties needed for our purposes. 

\begin{lemma} \label{lem_base}
The following assertions hold for any distinct $g_1$ and $g_2$ in $\bb Z_2^3 \setminus \{e\}$.
\begin{enumerate}
\item[\rm (i)] The subalgebra generated by $\mathcal{O}_{g_1}$ and $ \mathcal{O}_{g_2}$ is a quaternion subalgebra.
\item[\rm (ii)] The homogeneous component ${(\f{g}_2)}_{g_1}$ is a 2-dimensional Cartan subalgebra. In particular any homogeneous basis is formed entirely by semisimple elements.
\item[\rm (iii)] The subalgebra $(\f{g}_2)_{g_1} \oplus (\f{g}_2)_{g_2} \oplus (\f{g}_2)_{g_3}$, for $g_3 := g_1 + g_2$, is a semisimple Lie algebra isomorphic to $\f{sl}(2, \mathbb C) \oplus \f{sl}(2, \mathbb C)$.
Moreover, for $k \in \{1, 2, 3\}$ there exists a basis $B_{g_k} = \{x_k, y_k\}$ of $ (\f{g}_2)_{g_k}$ such that 
\begin{equation}\label{eq_para5}
[x_k, x_{k+1}] = x_{k +2}, \quad
[y_k, y_{k+1}] = y_{k + 2}, \quad
[x_k, y_{k'}] = 0,
\end{equation}
for any $k' \in \{1, 2, 3\}$  and the sum of subscripts  taken modulo 3. In particular:
\[
[(\f{g}_2)_{g_1}, (\f{g}_2)_{g_2}] = (\f{g}_2)_{g_1+g_2}.
\]
\end{enumerate}
\end{lemma}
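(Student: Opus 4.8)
The plan is to handle the three parts in order, using repeatedly that $g+g=e$ in $\bb Z_2^3$ and that $\Gamma_{\mathcal{O}}$ decomposes $\mathcal{O}$ into mutually orthogonal anisotropic lines for the norm form $n$. For (i), fix nonzero $x\in\mathcal{O}_{g_1}$ and $y\in\mathcal{O}_{g_2}$ and let $Q$ be the subalgebra they generate. By Artin's theorem $Q$ is associative. As $x,y$ are anisotropic pure imaginaries, $x^2,y^2\in\bb C1$, whence $1\in Q$, and the four elements $1,x,y,xy$ lie in the pairwise distinct components $\mathcal{O}_e,\mathcal{O}_{g_1},\mathcal{O}_{g_2},\mathcal{O}_{g_1+g_2}$ (here $g_1\neq g_2$ is used), so they are linearly independent. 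Associativity together with $yx=-xy$ shows $Q=\operatorname{span}\{1,x,y,xy\}$ is closed of dimension $4$; since $n|_Q$ is non-degenerate, $Q$ is a quaternion subalgebra.

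For (ii), abelianness is free: $[(\f{g}_2)_{g_1},(\f{g}_2)_{g_1}]\subseteq(\f{g}_2)_{g_1+g_1}=(\f{g}_2)_e=0$. I would read the dimension $2$ off the spanning set $\{D_{\mathcal{O}_k,\mathcal{O}_{k+g_1}}\}_k$ (the pair $(e,g_1)$ contributes $0$, and \eqref{identidadciclica} cuts the remaining three generators down to two), or simply cite it. The crux is that every nonzero $d\in(\f{g}_2)_{g_1}$ is semisimple, which I plan to obtain by showing $d$ is diagonalizable on $\mathcal{O}$ and then invoking preservation of the Jordan decomposition under the faithful representation $\mathcal{O}$ of the semisimple algebra $\f{g}_2$. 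Now $d$ sends $\mathcal{O}_k$ into $\mathcal{O}_{k+g_1}$; since derivations preserve $\operatorname{Im}\mathcal{O}$ and $g_1+g_1=e$, it annihilates $\mathcal{O}_e\oplus\mathcal{O}_{g_1}$, while each of the three remaining blocks $V_k:=\mathcal{O}_k\oplus\mathcal{O}_{k+g_1}$ (with $k\notin\{e,g_1\}$) is $d$-invariant with both summands anisotropic and mutually orthogonal. Writing $d(u)=\alpha w$, $d(w)=\beta u$ on such a block with $u\in\mathcal{O}_k$, $w\in\mathcal{O}_{k+g_1}$, skew-symmetry of $d$ for $n$ forces $\alpha,\beta$ to be simultaneously zero or simultaneously nonzero, so $d|_{V_k}$ is either $0$ or has the two distinct eigenvalues $\pm\sqrt{\alpha\beta}$; in particular no nonzero nilpotent block occurs. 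Hence $d$ is diagonalizable, $(\f{g}_2)_{g_1}$ is a $2$-dimensional toral subalgebra, and as $\operatorname{rank}\f{g}_2=2$ it is a Cartan subalgebra.

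For (iii), put $H=\{e,g_1,g_2,g_3\}$, a subgroup isomorphic to $\bb Z_2^2$. Closure of $S:=(\f{g}_2)_{g_1}\oplus(\f{g}_2)_{g_2}\oplus(\f{g}_2)_{g_3}$ is clear since $g_i+g_j\in H\setminus\{e\}$ for $i\neq j$ and equals $e$ for $i=j$, so $S$ is a $6$-dimensional $H$-graded subalgebra. Each $(\f{g}_2)_{g_i}$ maps $\mathcal{O}_h$ into $\mathcal{O}_{h+g_i}$, hence preserves the quaternion subalgebra $Q=\bigoplus_{h\in H}\mathcal{O}_h$ of part (i); therefore $S$ lies in $\{d\in\f{g}_2:d(Q)\subseteq Q\}$, the Lie algebra of the stabilizer of $Q$ in $G_2$, which is $\mathfrak{so}(4)\cong\f{sl}(2,\bb C)\oplus\f{sl}(2,\bb C)$ of dimension $6$. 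Equality of dimensions gives $S\cong\f{sl}(2,\bb C)\oplus\f{sl}(2,\bb C)$. To manufacture the basis, let $\mathfrak a,\mathfrak b$ be the two simple ideals of $S$. By (ii) each $(\f{g}_2)_{g_i}$ is a Cartan subalgebra of $S$ as well, so it splits as the sum of its intersections with $\mathfrak a$ and $\mathfrak b$, each a line; thus both ideals are $H$-graded with one-dimensional homogeneous components. Choosing $x_k$ spanning $\mathfrak a_{g_k}$ and $y_k$ spanning $\mathfrak b_{g_k}$ gives $[x_k,y_{k'}]\in[\mathfrak a,\mathfrak b]=0$ and $[x_k,x_{k+1}]=c_k x_{k+2}$, $[y_k,y_{k+1}]=c_k' y_{k+2}$ with all $c_k,c_k'\neq 0$. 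A diagonal rescaling $x_k\mapsto\lambda_k x_k$, solvable over $\bb C$ because it only requires extracting square roots, normalizes the $c_k$ to $1$, and likewise for the $y_k$; this yields \eqref{eq_para5}, and $[(\f{g}_2)_{g_1},(\f{g}_2)_{g_2}]=(\f{g}_2)_{g_1+g_2}$ follows from $[x_1,x_2]=x_3$, $[y_1,y_2]=y_3$ together with the grading.

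The main obstacle is the semisimplicity claim in (ii): a priori a nonzero homogeneous derivation could be nilpotent, and it is exactly the skew-symmetry of derivations with respect to the non-degenerate norm form — combined with the orthogonality and anisotropy of the graded lines — that rules this out. Once (ii) is in place, the ideal-splitting in (iii), namely that the canonical decomposition $S=\mathfrak a\oplus\mathfrak b$ respects the grading, is the only further delicate point, and it follows formally from each graded component being a Cartan subalgebra.
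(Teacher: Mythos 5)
Your proposal is correct, but it follows a genuinely different route from the paper's proof, essentially reversing the logical order of (ii) and (iii). The paper first invokes the Weyl group identification \eqref{eq_weyl} to reduce to $g_1=(1,0,0)$, $g_2=(0,1,0)$, and then proves (iii) by explicitly exhibiting the basis --- $x_k$ is the derivation killing $\mathcal{H}$ and sending $q\mathbf{l}\mapsto\tfrac12(e_kq)\mathbf{l}$, while $y_1=\tfrac14D_{\mathbf{j},\mathbf{k}}$, etc. --- verifying \eqref{eq_para5} by direct computation and deducing the dimension claims from $\dim\f{g}_2=14$; part (ii) is then read off from (iii). You instead prove (i) for arbitrary $g_1\neq g_2$ (Artin's theorem plus nondegeneracy of $n$, with no Weyl-group reduction), establish (ii) \emph{first} and independently --- using skew-symmetry of derivations with respect to $n$ to rule out nilpotent blocks, so homogeneous derivations act diagonalizably on $\mathcal{O}$, and then transferring semisimplicity through the faithful representation via Jordan decomposition --- and obtain (iii) by identifying $S$ with the stabilizer subalgebra of the quaternion subalgebra $Q$, quoting that this stabilizer is $\f{sl}(2,\bb C)\oplus\f{sl}(2,\bb C)$, and finishing with the splitting of Cartan subalgebras across the two simple ideals plus a rescaling. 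The trade-off is clear: the paper's computation produces an explicit basis that is reused later in the manuscript (it is the prototype of the bases $B^{\ell,k}_{i}$ of Remark~\ref{re_base} and of Lemma~\ref{le_cambiodebase}), whereas your argument avoids all Fano-plane sign computations but leans on one genuinely external structural input (the stabilizer of a quaternion subalgebra inside $\f{g}_2$, available e.g.\ in \cite{NotesG2}), which is precisely the fact the paper's explicit construction establishes by hand. Two loose ends you should tighten, both routine: the relation extracted from \eqref{identidadciclica} only gives $\dim(\f{g}_2)_g\le 2$, so equality needs the count $14=\sum_{g\ne e}\dim(\f{g}_2)_g\le 7\cdot 2$; and the nonvanishing of the structure constants $c_k$, $c'_k$ should be justified by noting that two linearly independent elements of $\f{sl}(2,\bb C)$ never commute, since centralizers of nonzero elements there are one-dimensional.
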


\begin{proof}
It is enough to prove the result for $g_1 := (1, 0, 0)$, $g_2 := (0, 1, 0)$ by Equation~\eqref{eq_weyl}.

\noindent (i) is clear, since the subalgebra generated by $\mathcal{O}_{g_1} = \bb{C}{\bf i}$ and $ \mathcal{O}_{g_2} = \bb{C}{\bf j}$ is just 
 the algebra of the complex quaternions $\mathcal{H}=\langle 1, {\bf i},{\bf j},{\bf k}\rangle $.
\smallskip 

\noindent (ii) follows from (iii) since $(\f{g}_2)_{g_1} =\bb{C} x_1\oplus \bb{C} y_1  $ is abelian. 

\smallskip

\noindent (iii) For $k \in \{1, 2, 3\}$, consider the derivation $x_k \in \f{g}_2$ of $\mathcal{O}$ mapping $\mathcal{H}$ onto $0$ and 
\[
x_1(q {\bf l}) = \frac12({\bf i}q){\bf l}, \qquad
x_2(q {\bf l}) = \frac12({\bf j}q){\bf l}, \qquad
x_3(q {\bf l}) = \frac12({\bf k}q){\bf l},
\]
for any $q \in \mathcal{H}$; and the following elements of $\f{g}_2$:  
\[
y_1 = \frac14 D_{\bf j,\bf k}, \qquad
y_2 = \frac14 D_{\bf k,\bf i}, \qquad
y_3 = \frac14 D_{\bf i,\bf j}.
\]
Checking that Equation~\eqref{eq_para5} holds is a straightforward calculation and we leave it to the reader. On the other hand, it is not difficult
to see  that $x_k, \, y_k \in(\f{g}_2)_{g_k}$ are linearly independent derivations, so $\dim(\f{g}_2)_{g_k}\geq 2$ and   (iii) follows, since $\dim \f{g}_2 = 14$. 
\end{proof}

We provide a short proof of the next result due to the lack of a suitable reference.    

\begin{lemma} \label{lem_jacobi_li} 
If the subgroup generated by $g, h, k\in \bb Z_2^3$ is the entire group,  
then there exist $x \in (\f{g}_2)_g, \, y\in (\f{g}_2)_h, \, z\in (\f{g}_2)_k$ such that $[x,[y,\, z]], \, [y,[z,\, x]]$ are linearly independent.
\end{lemma}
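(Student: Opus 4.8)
The plan is to exploit the symmetry \eqref{eq_weyl} to normalise the triple, and then to exhibit an explicit witness built from the concrete derivations $D_{x,y}$.

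First I would observe that a triple $g,h,k$ generates $\bb{Z}_2^3$ precisely when it is an $\bb{F}_2$-basis of $\bb{Z}_2^3$: all three must be nonzero and distinct, and three spanning vectors in a $3$-dimensional space are automatically independent. Since $\weyl(\Gamma_{\f{g}_2})\cong\Aut(\bb{Z}_2^3)$ acts transitively on such bases, and each of its elements is realised by some $\tilde f\in\Aut(\Gamma_{\f{g}_2})$ — a Lie algebra automorphism, hence one preserving brackets, homogeneous components and linear independence — it suffices to treat the normalised case $g=(1,0,0)$, $h=(0,1,0)$, $k=(0,0,1)$. In this case $[y,z]\in(\f{g}_2)_{h+k}$ and $[z,x]\in(\f{g}_2)_{k+g}$, so both $[x,[y,z]]$ and $[y,[z,x]]$ land in the $2$-dimensional space $(\f{g}_2)_{g+h+k}=(\f{g}_2)_{(1,1,1)}$. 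Thus ``linearly independent'' is the same as ``spanning'', and I only have to produce one good witness.

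For the witness I would take the homogeneous derivations $x:=D_{{\bf j},{\bf k}}\in(\f{g}_2)_{(1,0,0)}$, $y:=D_{{\bf k},{\bf i}}\in(\f{g}_2)_{(0,1,0)}$ and $z:=D_{{\bf i},{\bf il}}\in(\f{g}_2)_{(0,0,1)}$, the subscripts adding up to the required degrees. The two inner brackets $[y,z]$ and $[z,x]$ are then computed with the invariance rule $[d,D_{u,v}]=D_{d(u),v}+D_{u,d(v)}$, which turns each bracket of $D$'s into a sum of $D$'s whose entries are the images $D_{u,v}(w)=[[u,v],w]-3(u,v,w)$; all that remains are octonion products, read off from the Cayley--Dickson multiplication (equivalently, the Fano plane). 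Iterating once more presents $[x,[y,z]]$ and $[y,[z,x]]$ as explicit combinations of the degree-$(1,1,1)$ derivations $D_{{\bf j},{\bf il}}$, $D_{{\bf k},{\bf l}}$ and $D_{{\bf i},{\bf jl}}$, and the cyclic identity \eqref{identidadciclica} applied to ${\bf i},{\bf j},{\bf l}$ removes one of these, leaving both double brackets written over the single pair $\{D_{{\bf j},{\bf il}},\,D_{{\bf k},{\bf l}}\}$.

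Finally, to certify independence without relying on the abstract dimension count, I would evaluate the two derivations on the test elements ${\bf i}$ and ${\bf j}$: a derivation of degree $(1,1,1)$ sends ${\bf i}\in\mathcal{O}_{(1,0,0)}$ into $\mathcal{O}_{(0,1,1)}=\bb{C}{\bf jl}$ and ${\bf j}\in\mathcal{O}_{(0,1,0)}$ into $\mathcal{O}_{(1,0,1)}=\bb{C}{\bf il}$, and one checks that the resulting pairing $d\mapsto(d({\bf i}),d({\bf j}))$ is injective on the $2$-dimensional $(\f{g}_2)_{(1,1,1)}$ (it already separates $D_{{\bf j},{\bf il}}$ from $D_{{\bf k},{\bf l}}$). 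The two double brackets then yield a $2\times 2$ matrix of scalars, and I would verify that its determinant is nonzero, which forces $[x,[y,z]]$ and $[y,[z,x]]$ to be linearly independent. The only genuine obstacle is the bookkeeping of the non-associative octonion products and their signs; once the multiplication table is fixed, the Jacobi-type reductions via \eqref{identidadciclica} and Lemma~\ref{lem_base} and the final determinant are routine.
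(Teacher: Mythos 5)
Your proposal is correct and takes essentially the same route as the paper: reduce to a single generating triple via the Weyl group \eqref{eq_weyl}, choose explicit derivations $D_{u,v}$ as witnesses, and expand the double brackets using the invariance rule $[d,D_{u,v}]=D_{d(u),v}+D_{u,d(v)}$ together with the cyclic identity \eqref{identidadciclica} (the paper normalizes instead to $(1,1,0),(0,1,0),(0,1,1)$ with witness $x=D_{{\bf i},{\bf j}}$, $y=D_{{\bf j},{\bf l}}$, $z=D_{{\bf i},{\bf k}}$, an immaterial difference). The arithmetic you deferred does check out: with the paper's multiplication conventions your witness yields $[x,[y,z]]=20D_{{\bf j},{\bf il}}+4D_{{\bf l},{\bf k}}$ and $[y,[z,x]]=4D_{{\bf j},{\bf il}}-12D_{{\bf l},{\bf k}}$, whose coefficient matrix relative to the linearly independent pair $\{D_{{\bf j},{\bf il}},D_{{\bf l},{\bf k}}\}$ has determinant $-256\neq 0$.
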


\begin{proof}
We can assume without loss of generality that $g = (1, 1, 0)$, $k = (0, 1, 0)$ and $h = (0, 1, 1)$, since by Equation~\eqref{eq_weyl} there is an automorphism of the   Lie algebra $\f{g}_2$ sending $(\f{g}_2)_g$, $(\f{g}_2)_h$ and $(\f{g}_2)_k$ onto $(\f{g}_2)_{(1, 1, 0)}$, $(\f{g}_2)_{(0, 1, 0)}$ and $(\f{g}_2)_{(0, 1, 1)}$, respectively.
Recall that  ${\bf i} \in \mathcal{O}_{g+k},$ ${\bf j} \in \mathcal{O}_k,$ ${\bf k} \in \mathcal{O}_g,$ and  ${\bf l} \in \mathcal{O}_{k+h},$ and let $x:= D_{{\bf i},{\bf j} }\in (\f{g}_2)_g$, $y := D_{{\bf j}, {\bf l}}\in (\f{g}_2)_h$, and $z := D_{{\bf i},{\bf k}}\in (\f{g}_2)_k$. 				
Taking into account that any three elements in a quaternion subalgebra associate with each other, 
and that $\mathcal{O}$ is an alternative algebra so that three elements associate if two of them are repeated, we get:   
\begin{align*}
D_{{\bf i}, {\bf k}}({\bf i}) &= [[{\bf i}, {\bf k}], {\bf i}] - 3({\bf i}, {\bf k}, {\bf i})=
[-2{\bf j}, {\bf i}] - 0 = 4{\bf k},
\\
D_{{\bf i}, {\bf k}}({\bf j}) &= [[{\bf i}, {\bf k}], {\bf j}] - 3({\bf i}, {\bf k}, {\bf j})=
-2[{\bf j}, {\bf j}] = 0, 
\\
D_{{\bf j}, {\bf l}}({\bf j}) &= [[{\bf j}, {\bf l}], {\bf j}] - 3({\bf j},{\bf l},{\bf j}) =
[2{\bf jl}, {\bf j}] - 0 = 4{\bf l},
\\
D_{{\bf j},{\bf l}}({\bf k})&= 4({\bf jl}){\bf k}-3(({\bf jl}){\bf k}-{\bf j}({\bf lk})) =-4{\bf il}-3(-{\bf il}-{\bf il})=2{\bf il}.
\end{align*}
From the invariance of $D$, we derive that 
\begin{align}
[z, x] & = [D_{{\bf i}, {\bf k}}, D_{{\bf i}, {\bf j}}] = D_{D_{{\bf i}, {\bf k}}({\bf i}), {\bf j}} + 
D_{{\bf i}, D_{{\bf i}, {\bf k}}({\bf j})} = 4D_{{\bf k}, {\bf j}} = -4D_{{\bf j}, {\bf k}}, \notag
\\
[y,[z, x]] & = -4[D_{{\bf j},{\bf l}},D_{{\bf j},{\bf k}}]=-4(D_{D_{{\bf j},{\bf l}}({\bf j}),{\bf k}}+D_{{\bf j},D_{{\bf j},{\bf l}}({\bf k})})
=-16D_{{\bf l},{\bf k}}-8D_{{\bf j},{\bf il}}. \label{[y,[z,x]]}
\end{align}
Similarly, \eqref{identidadciclica} implies $D_{{\bf i},{\bf jl}}=D_{{\bf j}, {\bf il}}-D_{{\bf l}, {\bf k}} $ and we get 		
\begin{equation} \label{[x,[y,z]]1}
[x, [y, z]] = -4D_{{\bf l},{\bf k}} + 8D_{{\bf j}, {\bf il}} + 4D_{{\bf i},{\bf jl}} = -8D_{{\bf l}, {\bf k}} + 
12D_{{\bf j}, {\bf il}}.
\end{equation} 
The required independence follows from Equations~\eqref{[y,[z,x]]} and \eqref{[x,[y,z]]1}, since   it is easy to check that
$D_{{\bf l},{\bf k}}$ and $D_{{\bf j}, {\bf il}}$ are   linearly independent.
\end{proof}  


\subsection{Graded contractions of Lie algebras} \label{chap_graded_contractions}

Throughout this section all vector spaces are assumed to be finite-dimensional over the field $\mathbb{C}$ of complex numbers, $G$ denotes an abelian group, $\la$ a Lie algebra and $\mathbb{C}^\times$ the nonzero complex numbers. The results collected here are an adaptation from \cite{checos06, otro91} to our needs.
	
\begin{define} \label{defepbracket}
Let $\Gamma: \la = \bigoplus_{g\in G} \la_g$ be a $G$-grading on $\la$. 
A \bfemph{graded contraction} of $\Gamma$ is a map $\varepsilon\colon G\times G\to \bb{C}$ such that the vector space $\la$ endowed with product $[x, y]^\varepsilon := \varepsilon(g, h)[x, y]$, for $x\in \la_g,  y\in \la_h$, is a Lie algebra. We write $\la^\ep$ to refer to $(\la, [\cdot, \cdot]^\epsilon)$.
 Note that $\la^\ep$ is indeed a $G$-graded Lie algebra with  $(\la^\ep)_g:=\la_g$.
\end{define}	
Trivial examples of graded contractions are the constant maps $\varepsilon = 1$ and $\varepsilon = 0$, which produce the original algebra $\la^\ep = \la$ and an abelian Lie algebra, respectively.

Here, we are interested in how many graded Lie algebras arise
from a fixed graded algebra by considering its graded contractions; we begin by introducing some notation.

\begin{define} \label{defequivnorm1}
Two graded contractions $\ep$ and $\ep'$ of a $G$-grading $\Gamma$ on $\la$ are called
 \bfemph{equivalent}, written $\ep \sim \ep'$, if the graded Lie algebras $\la^\ep$ and $\la^{\ep'}$ are isomorphic; that is, there exists an algebra isomorphism   $\varphi\colon\la^\ep \to\la^{\ep'}$ satisfying that for any $g\in G$ there is $h \in G$ such that $\varphi((\la^{\ep})_g) = (\la^{\ep'})_h$. 
\end{define}

Notice that, although $\la^\ep$ and $\la^{\ep'}$ are   isomorphic as (ungraded) Lie algebras, it could happen that $\ep \not\sim \ep'$.
An example of this situation follows.

\begin{example} \label{ex_noeslomismo}
Consider the Lie algebra $\la =  \mathfrak{so}(3,\bb C)\oplus\langle z\rangle$, where $z$ denotes a central element and $\{x_1, x_2, x_3\}$ a basis of the skew-symmetric matrices $ \mathfrak{so}(3,\bb C)$ with bracket given by $[x_i, x_{i+1}] = x_{i+2}$, where the subscripts are taken modulo 3. (The field $\bb C$ is not very relevant here.)
Let $H = \bb Z_2^2$ and consider the $H$-grading on $\la$ given by
$$
\la_{h_0}=\{0\},\quad
\la_{h_1}=\langle x_1,z \rangle,\quad
\la_{h_2}=\langle x_2 \rangle,\quad
\la_{h_3}=\langle x_3 \rangle,
$$
where
$h_0 = (0, 0)$, $h_1 = (1, 0)$, $h_2 = (1, 1)$ and $h_3 = (0, 1)$  are all the elements of $H$.
Given a map $\ep\colon H \times H \to \bb C$, we 
write $\ep_{ij} = \ep(h_i, h_j)$. It is straightforward to check that the following maps 
$\ep$ and $\ep'$ are both graded contractions: 
$$
 \ep_{12}=\ep_{21}=1,\quad
 \ep'_{23}=\ep'_{32}=1,\quad
$$
where $\ep_{ij}=0=\ep'_{ij}$ for the pairs of indices which are yet to be assigned a value.
The Lie algebra $\la^\ep = \langle x_1, x_2, x_3, z\rangle$
is  2-step nilpotent: it satisfies  $[x_1, x_2]^\ep = -[x_2,x_1]^\ep = x_3$ and all the remaining brackets among basic elements
are  equal to 0. 
Similarly, in $\la^{\ep'} $ we have 
$[x_2, x_3]^{\ep'} = -[x_3, x_2]^{\ep'} = x_1$, and $x_1$ and $z$ are central.
Then, $\la^\ep$ is isomorphic to $\la^{\ep'}$ via
\begin{equation}\label{eq_ej1}
x_1\mapsto x_2,\quad
x_2\mapsto x_3,\quad
x_3\mapsto x_1,\quad
z\mapsto z.
\end{equation} 
We claim that $\la^{\ep}$ and $\la^{\ep'}$ are not isomorphic as graded algebras, that is, $\ep \not \sim\ep'$; in fact, suppose that $\varphi\colon \la^\ep \to \la^{\ep'}$ is an isomorphism of Lie algebras satisfying that for any $g\in H$ there exists $h \in H$ with $\varphi((\la^{\ep})_g) = (\la^{\ep'})_h$. But then $\varphi((\la^{\ep})_{h_1}) = (\la^{\ep'})_{h_1}$, since these are the only 2-dimensional homogeneous components; which is impossible since $(\la^{\ep'})_{h_1} \subseteq \f{z}(\la^{\ep'})$ and $x_1\in (\la^{\ep})_{h_1}$ is not a central element. 
\end{example}

  Definition \ref{defequivnorm1} can be confusing at first, since in the literature the relation $\sim$ sometimes refers to isomorphic Lie algebras. 
However  it makes sense to impose that the pieces of the grading are preserved without breaking.
 In fact, it    is very much in line with the philosophy of the definition of graded contraction.
 
Sometimes is important that the isomorphism is a graded algebra isomorphism too. 
The next equivalence relation will be very valuable  in our study.

\begin{define} \label{defequivnorm2}
Two graded contractions $\ep$ and $\ep'$ of a $G$-grading $\Gamma$ on $\la$ are called
  \bfemph{strongly equivalent}, written $\ep \approx \ep'$, if the graded Lie algebras $\la^\ep$ and $\la^{\ep'}$ are isomorphic as graded algebras; that is, there exists an isomorphism of Lie algebras $\varphi\colon \la^\ep \to \la^{\ep'}$  such that $\varphi((\la^{\ep})_g) = (\la^{\ep'})_g$ for any $g \in G$, 
  also called a \emph{graded isomorphism}.
\end{define}

Clearly, $\ep \approx \ep'$ implies $\ep \sim \ep'$. Notice that the converse does not need to be true as shown in the next example.

 \begin{example} \label{ex_unamasfuerte}
Consider the Lie algebra $\la = \mathfrak{so}(3,\bb C)$ of skew-symmetric matrices with basis $\{x_1, x_2, x_3\}$ as in Example \ref{ex_noeslomismo}. Let $H = \bb Z_2^2$ and write its elements as in Example \ref{ex_noeslomismo}; consider the $H$-grading given by
\[
\la_{h_0}=\{0\},\quad
\la_{h_1}=\langle x_1 \rangle,\quad
\la_{h_2}=\langle x_2 \rangle,\quad
\la_{h_3}=\langle x_3 \rangle.
\]
Let $\ep, \ep': H \times H \to \bb C$ be the graded contractions defined  as in Example \ref{ex_noeslomismo}.

One can easily see that the Lie algebra $\la^\ep = \langle x_1,x_2,x_3\rangle$ is again 2-step nilpotent with $[x_1,x_2]=-[x_2, x_1] = x_3\in\f{z}(\la^{\ep})$.
Now $\ep\sim\ep'$ since 
\begin{equation}\label{eq_ej2}
x_1 \stackrel{\varphi}{\mapsto} x_2,\quad
x_2 \stackrel{\varphi}{\mapsto} x_3,\quad
x_3 \stackrel{\varphi}{\mapsto} x_1,\quad
\end{equation}
is a Lie algebra isomorphism satisfying that $\varphi((\la^{\ep})_{h_1}) = (\la^{\ep'})_{h_2}$, $\varphi((\la^{\ep})_{h_2}) = 
(\la^{\ep'})_{h_3}$, 
and $\varphi((\la^{\ep})_{h_3}) = (\la^{\ep'})_{h_1}$. But $\ep \not\approx \ep'$, since $(\la^{\ep})_{h_3} = \f{z}(\la^{\ep})$, while 
$(\la^{\ep'})_{h_3} \ne\f{z}(\la^{\ep'})$.
\end{example}

 \begin{remark}\label{re_nodeltotal}
 At this point it is worth noticing that the maps given in \eqref{eq_ej1} and \eqref{eq_ej2} are automorphisms of the original $\la$, although this is just a coincidence.
For instance, consider the $H$-graded Lie algebra $\la = \mathfrak{so}(3,\bb C)$ as in the previous example. Then 
the graded contractions defined by $\ep_{12} = \ep_{21} = 1$,
$\ep'_{12} = \ep'_{21} = 4$, and zero elsewhere, are strongly equivalent, 
but the graded isomorphism $\varphi\colon \la^\ep \to \la^{\ep'}$ given by $\varphi(x_1) = \frac12x_1$, $\varphi(x_2) = \frac12x_2$ and $\varphi(x_3) = x_3$ is not, of course, an automorphism of $\mathfrak{so}(3,\bb C) $.
 \end{remark}

Given an arbitrary map $\ep\colon G\times G\to \bb{C},$ it is natural asking under what conditions $\ep$ is a graded contraction of $\Gamma$. Such a question is not as easy as it seems since it relies on the properties of $\Gamma$.	

\begin{remark} \label{rem_gc}
Let $\ep\colon G\times G\to \bb{C}$ be an arbitrary map. 

\noindent (i) We define a ternary map $\ep\colon G \times G \times G \to \bb{C}$ by 
$$\ep(g, h, k) := \ep(g, h + k)\ep(h, k).$$
It is straightforward to check that  $\ep$ is a graded contraction of $\Gamma$ if and only if (a1) and (a2) below 
hold for all $g, h, k \in G$ and $x\in \la_g, \, y\in \la_h, \, z\in\la_k$:
\begin{enumerate}
\item[(a1)] $\big(\ep(g, h) - \ep(h, g)\big)[x, y] = 0,$
\item[(a2)] $\big(\ep(g, h, k) - \ep(k, g, h)\big)[x, [y, z]]
+ \big(\ep(h, k, g) - \ep(k, g, h)\big)[y, [z, x]] = 0$.
\end{enumerate}

\smallskip

\noindent (ii) Condition (a1)  is equivalent to $\ep$ being {\it nearly symmetric}: $\ep(g, h) = \ep(h, g)$, for all $g, h \in G$ with $[\la_g,\la_h] \neq 0$. A sufficient condition (not necessary, in general) for $\ep$ to satisfy (a2) is $\ep(g, h, k) = \ep(h, k, g)$, for all $g, h, k \in G$.
\end{remark}

The following example will be very important for our purposes. 

\begin{example} \label{newgc}
Let $\ep$ be a graded contraction of $\Gamma$  and $\alpha\colon  G \to \mathbb{C}^\times$ an arbitrary map. 
The map $\ep^\alpha\colon G\times G \to \mathbb{C}^\times$ defined as
\[
\ep^\alpha(g, h) := \ep(g, h) \dfrac{\alpha(g)\alpha(h)}{\alpha(g + h)}, \quad \forall \, g,h\in G,
\] 
is always a graded contraction (since it satisfies (a1) and (a2)). If $\alpha$ is a group homomorphism, then $\ep^\alpha = \ep$, but in general $\ep^\alpha$ is different from $\ep$. 
\end{example}

This inspires another equivalence relation.

\begin{define} \label{defequivnorm3}
Two graded contractions $\ep$ and $\ep'$ of a $G$-grading $\Gamma$ on $\la$ are called
 \bfemph{equivalent via normalization}, written $\ep \sim_n \ep'$, if there exists a map $\alpha\colon  G \to \mathbb{C}^\times$ such that $\ep' = \ep^\alpha$ (as per in Example \ref{newgc}). 
\end{define}

Notice that $\ep \sim_n \ep'$ implies that $\ep \approx \ep'$, since  
the map $\varphi\colon \la^{\ep^\alpha} \to \la^{\ep}$ given by 
\begin{equation*} \label{eq_isomorfas}
\varphi(x) = \alpha(g)x, \ \, \forall \, x \in \la_g,
\end{equation*} 
is a graded algebra isomorphism in the sense of Definition~\ref{defequivnorm2}. 
Some authors have conjectured (see \cite[Conjecture 2.15]{ref33}) that the converse is also true, that is, $\ep \approx \ep'\Rightarrow\ep \sim_n \ep'$. 
Although, this is in general false as shown in the next example,  
we will prove in \S\ref{sec5} that this result is true for $\Gamma_{\f{g}_2}$.

 \begin{example} \label{ex_encontreunoenC}
 Let $\la = \f{sl}(2,\bb C)\oplus \f{sl}(2,\bb C)$ be the Lie algebra consisting on the direct sum of two simple ideals, both copies of traceless matrices. For $i = 1, 2$, consider    
\[
\left \{x_i=\tiny\begin{pmatrix}1&0\\0&-1\end{pmatrix},e_i=\tiny\begin{pmatrix}0&1\\0&0\end{pmatrix},f_i=\tiny\begin{pmatrix}0&0\\1&0\end{pmatrix}\right\}
\]
the standard basis of $\f{sl}(2,\bb C)$. Let $H = \bb Z_2^2$ and use the same notation as in Example~\ref{ex_noeslomismo}.
 Then $\la$ becomes a $H$-graded algebra with 
 $$
 \la_{h_0}=\langle x_1,x_2\rangle,\quad
 \la_{h_1}=\langle e_1,f_1 \rangle,\quad
  \la_{h_2}=\langle e_2,f_2 \rangle,\quad
   \la_{h_3}=\{0\}.
 $$
Let $\ep,\ep'\colon H \times H \to \bb C$ be the graded contractions given by $\ep\equiv 1$ (constant map) and
$$
 \ep'_{01}= \ep'_{10}=  -1,\quad
 \ep'_{ij}=1, \, \, \textrm{ elsewhere.}
$$
Then $\la^\ep=\la$, while the products in $\la^{\ep'}$ are given by $[a_1,a_2]^{\ep'}=0$ if $a_i\in\{x_i,e_i,f_i\}$,
$$
\begin{array}{lll}
[x_1,e_1]^{\ep'}=-2e_1,\ & [x_1,f_1]^{\ep'}=2f_1, \ &[e_1,f_1]^{\ep'}=x_1,\\
{[x_2,e_2]^{\ep'}}=2e_2,\  &[x_2,f_2]^{\ep'}=-2f_2, \ &[e_2,f_2]^{\ep'}=x_2.
\end{array}
$$
It is straightforward to check that 
the map $\varphi\colon \la^\ep \to \la^{\ep'}$ given by 
\[
\varphi(x_1)=-x_1,\ 
\varphi(x_2)=x_2,\quad 
\varphi(e_1)= -e_1,\ 
\varphi(f_1)= f_1,\quad 
\varphi(e_2)=e_2,\ 
\varphi(f_2)=f_2,\ 
\]  
 is a graded isomorphism, which implies that $\la^{\ep'}$ is a Lie algebra, $\ep'$ is a graded contraction, and $\ep \approx \ep'$.
 On the other hand, we claim $\ep \not \sim_n \ep'$. 
 In fact, assume there exist $\alpha,\beta,\gamma\in\bb C^\times$ such that the map $\phi\colon \la^\ep \to \la^{\ep'}$ given below is a Lie algebra isomorphism,
\[
\phi(x_1)=\alpha x_1,\ 
\phi(x_2)=\alpha x_2,\quad 
\phi(e_1)=\beta e_1,\ 
\phi(f_1)=\beta f_1,\quad 
\phi(e_2)=\gamma e_2,\ 
\phi(f_2)=\gamma f_2.\ 
\]
In such a case, we obtain that  
$$
\begin{array}{lcl}
2\beta e_1=\phi([x_1,e_1]^\ep)=\alpha\beta[x_1,e_1]^{\ep'}= -2\alpha\beta e_1&\Rightarrow&\alpha=-1, \\
2\gamma e_2=\phi([x_2,e_2]^\ep)=\alpha\gamma[x_2,e_2]^{\ep'}= 2\alpha\gamma e_2&\Rightarrow&\alpha=1,
\end{array}
$$
a contradiction.
 \end{example}

\begin{remark} 
At this point the reader might have noticed the similarity between the  equivalence relations $\sim$, $ \approx  $ and $\sim_n$ and the automorphism group $\Aut(\Gamma)$, the stabilizer $\Stab(\Gamma)$ and  the diagonal group $\Diag(\Gamma)$ of $\Gamma$. To be more precise, one can easily show that $\ep \sim_n \ep'$ if and only if there exist a graded isomorphism $\varphi\colon  \la^{\ep}\to\la^{\ep'}$ and a collection of nonzero scalars $\{\alpha_g: g\in G\} \subseteq \bb C^\times$ with $\varphi\vert_{\la_g}=\alpha_g\mathrm{id}_{\la_g}$.
\end{remark}

The classification problem consists in determining the equivalence classes of graded contractions of $\Gamma$ via $\sim$. This problem 
turns out to be   closely related to determining the equivalence classes via $\approx$, provided that the Weyl group of $\Gamma$ is known 
(see Proposition~\ref{colli}).
Now, finding the equivalence classes via $\sim_n$ is much easier than finding the equivalence classes via $\approx$, since the first ones can be computed in an algorithmic way; while one may describe the task of finding graded isomorphisms (i.e. equivalence class via $\approx$) as a wild jungle, hard to be approached algorithmically, at least if the homogeneous components are not   1-dimensional.



\section{Graded contractions of $\Gamma_{\f{g}_2}$} \label{sec3}


This section is devoted to the study of the graded contractions of $\Gamma_{\f{g}_2}$, the fine $\bb{Z}_2^3$-grading on $\f{g}_2$
introduced in  \eqref{graduaciong2}. 
We develop here the machinery needed to classify such graded contractions up to equivalence via normalization  in \S\ref{sec4}, 
  up to  strong equivalence in \S\ref{sec5}, and up to equivalence in \S\ref{sec_revision}.

\subsection{Admissible maps}\label{sec31}
 	
Throughout this section, to ease the notation, we will write $G$ to refer to  
$\bb{Z}_2^3$,   $e$ to denote its identity element and $\la$ to denote the Lie algebra $\f{g}_2$.

\begin{define}
We call $\ep\colon G\times G \to \bb{C}$ an \bfemph{admissible} map if $\ep(g, g) = \ep(e, g) = \ep(g, e) = 0$, for all $g \in G$.
\end{define}

Given a graded contraction of $\Gamma_{\f{g}_2}$, we first prove that there is always an equivalent admissible graded contraction.
	
\begin{lemma}\label{ref_existeadmisible}
Let $\ep$ be a graded contraction of $\Gamma_{\f{g}_2}$. Then there exists an admissible graded contraction $\ep'$ of $\Gamma_{\f{g}_2}$ equivalent to $\ep$.
\end{lemma}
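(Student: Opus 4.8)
Given an arbitrary graded contraction $\ep$ of $\Gamma_{\f g_2}$, I want to produce an *equivalent* (in the sense $\sim$) one that is *admissible*, i.e. vanishes on all pairs $(g,g)$ and on all pairs involving the identity $e$.

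**Key structural facts.** Recall $(\f g_2)_e = 0$, so the identity component is trivial. This already handles the conditions involving $e$: for any $g$, $\ep(e,g)$ and $\ep(g,e)$ multiply brackets $[x,y]$ with $x \in (\f g_2)_e = 0$ or $y \in (\f g_2)_e = 0$, so these bracket products are identically zero regardless of the value of $\ep(e,g)$ or $\ep(g,e)$. Similarly $\ep(g,g)$ multiplies $[x,y]$ with both $x,y \in (\f g_2)_g$, and since each $(\f g_2)_g$ is abelian (it's a 2-dimensional Cartan subalgebra by Lemma~\ref{lem_base}(ii)), $[(\f g_2)_g, (\f g_2)_g] = 0$, so $\ep(g,g)$ is likewise irrelevant to the bracket.

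**Plan of proof.** The point, then, is that the values of $\ep$ on the "diagonal" pairs and on $e$-pairs do not affect the product $[\cdot,\cdot]^\ep$ at all. So the natural approach is:

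First I would let $\la$ denote $\f g_2$ and recall $\la_e = 0$ together with the fact that each $\la_g$ is abelian. Then I define a new map $\ep'\colon G\times G\to\bb C$ by setting $\ep'(g,h) = \ep(g,h)$ whenever $g\ne h$ and $g,h\ne e$, and $\ep'(g,g)=\ep'(e,g)=\ep'(g,e)=0$ for all $g$. By construction $\ep'$ is admissible. The core verification is that $[\cdot,\cdot]^{\ep'}$ coincides with $[\cdot,\cdot]^\ep$: indeed $\ep$ and $\ep'$ differ only on pairs $(g,h)$ where $[\la_g,\la_h]=0$ (namely $g=h$, or $g=e$, or $h=e$), so for homogeneous $x\in\la_g$, $y\in\la_h$ one has $\ep'(g,h)[x,y]=\ep(g,h)[x,y]$ in every case. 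Hence $[\cdot,\cdot]^{\ep'}=[\cdot,\cdot]^\ep$ as bilinear maps, which simultaneously shows $\la^{\ep'}$ is a Lie algebra (so $\ep'$ is a genuine graded contraction) and that $\la^{\ep'}=\la^\ep$ as graded algebras — giving $\ep'\approx\ep$ via the identity map, and a fortiori $\ep'\sim\ep$.

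**Main obstacle.** There is essentially no hard step here; the only thing to be careful about is being exhaustive in listing which pairs $(g,h)$ carry a zero bracket, so that I can legitimately alter $\ep$ there without changing the contracted product. Concretely I must confirm: (i) $e$-pairs vanish because $\la_e=0$, and (ii) diagonal pairs vanish because each $\la_g$ is abelian — both supplied by the setup and Lemma~\ref{lem_base}. Once those two observations are in place, the conclusion $\ep'\approx\ep$ (hence $\ep'\sim\ep$) is immediate, and the equivalence obtained is in fact the strongest one, realized by the identity isomorphism.
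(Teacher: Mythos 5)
Your proposal is correct and follows essentially the same route as the paper: define $\ep'$ to agree with $\ep$ off the diagonal and off the $e$-pairs, set it to zero there, and observe that the two brackets coincide because all altered pairs carry zero bracket, so the identity map realizes the equivalence (indeed a strong one). The only cosmetic difference is that you justify $[\la_g,\la_g]=0$ via Lemma~\ref{lem_base}(ii) (each component is an abelian Cartan subalgebra), while the paper uses the grading itself ($g+g=e$ and $\la_e=0$); both are valid.
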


\begin{proof}
For $g, h \in G $ we let $\ep'(g, h) := \ep(g, h)$ if $g \neq h$, $g,h\ne e$ and 0 otherwise. 
Notice that $\ep'$ is clearly admissible and satisfies  
$[\cdot, \cdot]^{\ep} = [\cdot, \cdot]^{\ep'}$. In fact, for $x \in \la_g$ and $y \in \la_h$ we distinguish two cases:
\begin{itemize}
\item[-] If $g\ne h$ and $g, h\ne e$, then $[x, y]^{\ep'} = \ep'(g, h)[x, y] = \ep(g, h)[x, y] = [x, y]^{\ep}$. 
\item[-] If either $g = h$ (so $g + h = e$) or $g = e$ or $h = e$, then $[\la_g, \la_h] = 0$, since $\la_e=0$. In particular, we  have $[x,y] = 0$, which implies $[x, y]^{\ep'} =   [x, y]^{\ep}=0$.
\end{itemize}
Thus $[\cdot, \cdot]^{\ep} = [\cdot, \cdot]^{\ep'}$, and so the identity is a Lie algebra isomorphism between  
$\la^{\ep'}$ and $\la^{\ep}$. 
\end{proof}

Consider $\mathcal{G}$ the set of all admissible graded contractions of $\Gamma_{\f{g}_2}$, and $\tilde{\mathcal{G}}$ the set of all graded contractions of $\Gamma_{\f{g}_2}$; clearly, $\mathcal{G} \subseteq \tilde{\mathcal{G}}$. 
Lemma~\ref{ref_existeadmisible} allows us to  shift   from classifying graded contractions of $\Gamma_{\f{g}_2}$ up to equivalence (i.e., Lie algebras obtained by graded contractions of ${\f{g}_2}$ up to   isomorphism) to classifying the orbits in the quotient set $\mathcal{G}/\sim$. In other words, there is a bijection from $\mathcal{G}/\sim$ onto $\tilde{\mathcal{G}}/\sim$.

Admissible graded contractions of $\Gamma_{\f{g}_2}$ have many symmetry-type properties: 
	
\begin{lemma} \label{rem_admiss}
Let $\ep$ be an admissible graded contraction of $\,\Gamma_{\f{g}_2}$ and $g, h, k\in G$. Then:
\begin{enumerate}
\item[\rm (i)] $\ep(g, h) = \ep(h, g)$;
\item[\rm (ii)] $\ep(e, \cdot, \cdot) = \ep(\cdot, e, \cdot) = \ep(\cdot, \cdot, e) = 0$;
\item[\rm (iii)] $\ep(g, k, k) = 0$;
\item[\rm (iv)] $\ep(g, h, k) = \ep(g, k, h)$;
\item[\rm (v)]  $\ep(h + k, h, k) = 0$.
\end{enumerate}
\end{lemma}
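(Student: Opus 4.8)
The plan is to treat part (i) as the only substantive step --- upgrading the \emph{near}-symmetry coming from condition (a1) of Remark~\ref{rem_gc} to full symmetry of $\ep$ --- and then to obtain parts (ii)--(v) as immediate scalar identities from the definition $\ep(g,h,k)=\ep(g,h+k)\ep(h,k)$ together with the admissibility relations $\ep(a,a)=\ep(a,e)=\ep(e,a)=0$. Only (i) uses the Lie structure of $\f{g}_2$; everything else is formal manipulation in $G=\bb Z_2^3$.

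For (i) I would start from Remark~\ref{rem_gc}(ii), which says that (a1) is equivalent to $\ep(g,h)=\ep(h,g)$ for every pair with $[\la_g,\la_h]\neq 0$. It therefore remains only to handle the pairs with $[\la_g,\la_h]=0$, and Lemma~\ref{lem_base} pins these down exactly: $\la_e=0$, each $\la_g$ is an abelian Cartan subalgebra by Lemma~\ref{lem_base}(ii), and $[\la_{g_1},\la_{g_2}]=\la_{g_1+g_2}\neq 0$ for distinct $g_1,g_2\in G\setminus\{e\}$ by Lemma~\ref{lem_base}(iii). Hence $[\la_g,\la_h]=0$ can only occur when $g=e$, or $h=e$, or $g=h$; in the first two cases both $\ep(g,h)$ and $\ep(h,g)$ vanish by admissibility, and in the last both sides are literally $\ep(g,g)$. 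This yields symmetry for all $g,h$.

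With (i) in hand, the remaining parts fall out by expanding the ternary map. For (ii), $\ep(e,h,k)=\ep(e,h+k)\ep(h,k)=0$ since $\ep(e,\cdot)=0$, and symmetrically $\ep(g,e,k)=\ep(g,k)\ep(e,k)=0$ and $\ep(g,h,e)=\ep(g,h)\ep(h,e)=0$. For (iii) I would use that $k+k=e$ in $\bb Z_2^3$, so $\ep(g,k,k)=\ep(g,e)\ep(k,k)=0$; and for (v) that $\ep(h+k,h,k)=\ep(h+k,h+k)\,\ep(h,k)=0$ because $\ep(a,a)=0$ with $a=h+k$. Finally (iv) combines commutativity of $G$ with part (i): $\ep(g,h,k)=\ep(g,h+k)\ep(h,k)=\ep(g,k+h)\ep(k,h)=\ep(g,k,h)$.

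I do not expect a genuine obstacle here. The one place demanding care is the case analysis in (i), namely confirming that $g=e$, $h=e$ and $g=h$ are the \emph{only} ways for a homogeneous bracket to vanish; but this is exactly the content of Lemma~\ref{lem_base}(iii), so the verification is immediate once that lemma is invoked.
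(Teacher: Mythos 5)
Your proof is correct and follows essentially the same route as the paper: part (i) is handled by the identical case split (degenerate cases killed by admissibility, the rest by Lemma~\ref{lem_base}(iii) together with the near-symmetry of Remark~\ref{rem_gc}(ii)), noting that $g+h=e$ in $\bb Z_2^3$ is exactly the case $g=h$. Parts (ii)--(v), which the paper omits as straightforward, are exactly the scalar expansions of $\ep(g,h,k)=\ep(g,h+k)\ep(h,k)$ that you carry out, and they are all correct.
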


\begin{proof}
(i) If $e \in \{g, h, g + h \}$, then $\ep(g, h) = \ep(h, g) = 0$, since we are assuming that $\ep$ is admissible. 
Suppose now that $e \notin \{g, h, g + h\}$. Then $[\la_g, \la_h] = \la_{g + h} \neq 0$ by Lemma~\ref{lem_base} (iii). The result now follows from Remark~\ref{rem_gc} (ii). 

The proofs of (ii)--(v) are straightforward, and therefore omitted.
\end{proof}

Not every admissible map is an admissible graded contraction of $\Gamma_{\f{g}_2}$, but
a necessary and sufficient condition is given in the next result.  
The advantage of admissible graded contractions is just that this condition  does not explicitly refer    to the elements in $\f{g}_2 $.
	 
\begin{prop}\label{pr_condicionesgrcont}
An admissible map $\ep\colon  G \times G \to \bb{C}$ is a graded contraction of $\Gamma_{\f{g}_2}$ if and only if the following conditions hold for all $g, h, k \in G$:
\begin{enumerate}
\item[\rm (b1)] $\ep(g, h) = \ep(h, g)$,  
\item[\rm (b2)] $\ep(g, h, k) = \ep(k, g, h)$, provided that $G = \langle g, h, k \rangle$. 
\end{enumerate}
\end{prop}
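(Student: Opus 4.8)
The plan is to reduce the general graded-contraction conditions (a1) and (a2) from Remark~\ref{rem_gc} to the cleaner conditions (b1) and (b2), using the special structure of $\Gamma_{\f{g}_2}$ — namely that $\la_e=0$, that all nonzero components are $2$-dimensional, and especially the two structural lemmas. The forward implication is almost immediate: if $\ep$ is a graded contraction, then (a1) gives $\big(\ep(g,h)-\ep(h,g)\big)[x,y]=0$ for all $x\in\la_g,y\in\la_h$, and (b1) follows exactly as in Lemma~\ref{rem_admiss}(i), since either one of $g,h,g+h$ equals $e$ (whence both sides vanish by admissibility) or else $[\la_g,\la_h]=\la_{g+h}\neq 0$ by Lemma~\ref{lem_base}(iii), forcing $\ep(g,h)=\ep(h,g)$. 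Condition (b2) similarly should follow from (a2): when $G=\ang{g,h,k}$ I would invoke Lemma~\ref{lem_jacobi_li} to produce $x\in\la_g,y\in\la_h,z\in\la_k$ with $[x,[y,z]]$ and $[y,[z,x]]$ linearly independent, so that the two coefficients in (a2) must each vanish, yielding $\ep(g,h,k)=\ep(k,g,h)$, which is (b2).

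For the reverse implication, I would verify (a1) and (a2) assuming (b1) and (b2). Condition (a1) is immediate from (b1). For (a2), the work splits according to whether $\ang{g,h,k}$ is all of $G$ or a proper subgroup. If $G=\ang{g,h,k}$, then (b2) together with the already-established symmetry relations of Lemma~\ref{rem_admiss}(iv) should make both coefficients $\ep(g,h,k)-\ep(k,g,h)$ and $\ep(h,k,g)-\ep(k,g,h)$ vanish; here one uses that (b2), applied cyclically (since $G=\ang{g,h,k}=\ang{h,k,g}=\ang{k,g,h}$), forces the three quantities $\ep(g,h,k)$, $\ep(h,k,g)$, $\ep(k,g,h)$ to coincide. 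The remaining case is when $g,h,k$ do \emph{not} generate $G$.

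The main obstacle I anticipate is exactly this last case: showing (a2) holds when $\ang{g,h,k}\neq G$. Here (b2) is not assumed, so I cannot appeal to it, and I must instead argue that the relevant iterated brackets vanish. The key observation should be that if $g,h,k$ lie in a proper subgroup of $G=\bb Z_2^3$, that subgroup has order at most $4$, so it is either trivial, of order $2$, or a Klein four-group $\{e,a,b,a+b\}$. In all such cases the triple bracket $[x,[y,z]]$ lands in $\la_{g+h+k}$ with $g+h+k$ inside the same small subgroup; using admissibility ($\ep$ vanishes whenever two arguments coincide or one is $e$, cf. Lemma~\ref{rem_admiss}(iii),(v)) together with Lemma~\ref{lem_base}(iii) — which pins down $[(\f g_2)_{g_1},(\f g_2)_{g_2}]=(\f g_2)_{g_1+g_2}$ and the $\sla(2)\oplus\sla(2)$ structure — I expect to show that either the bracket $[y,z]$ already vanishes, or the outer bracket does, or the relevant $\ep$-coefficients are forced to be equal by (b1) and the symmetry relations. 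Concretely, within a Klein four subgroup at most three nonzero components are involved and the Jacobi-type expression reduces to a single $\sla(2)$-computation whose coefficients are governed by $\ep$ restricted to that subgroup; I would verify directly that (b1) alone suffices there. This case analysis over the (few) proper subgroups of $\bb Z_2^3$ is routine but is where the real content lies, and it is what makes the clean criterion (b2) need only be imposed on generating triples.
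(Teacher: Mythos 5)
Your proposal is correct and follows essentially the paper's own proof: the forward direction extracts (b1) from Lemma~\ref{lem_base}~(iii) and (b2) from the linear independence supplied by Lemma~\ref{lem_jacobi_li}, and the reverse direction splits into the generating case (where (b2) applied cyclically kills both coefficients in (a2)) and the degenerate case, exactly as the paper does after reducing, up to permutation, to $k\in\langle g,h\rangle=\{e,g,h,g+h\}$. The only real difference is organizational, and your degenerate case is in fact simpler than you anticipate: when $g,h,k$ are distinct, nonzero elements of a Klein four-group one has $g+h+k=e$, so both iterated brackets land in $\la_e=0$ and no $\f{sl}(2)$-computation (nor Lemma~\ref{lem_base}~(iii)) is needed anywhere in this direction --- the only subcase that uses (b1) is when exactly two of the three indices coincide, where the bracket involving the repeated index vanishes because $\la_e=0$ and (b1) annihilates the remaining coefficient.
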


\begin{proof}
Let $\ep\colon  G \times G \to \bb{C}$ be admissible. Suppose first that $\ep$ satisfies (b1) and (b2). We prove that (a1) and (a2) from Remark~\ref{rem_gc} are satisfied. Notice that (a1) clearly follows from (b1). Take $g, h, k \in G$ and $x \in \la_g$, $y \in \la_h$ and $z \in \la_k$ homogeneous elements. If $G = \langle g, h, k \rangle$, then (a2) follows from (b2). Otherwise, we can assume without loss of generality that $k \in \langle g, h \rangle = \{e, g, h, g + h\}$. We distinguish a few cases: 
\begin{itemize}
\item[-] $k = e$: then $\la_k = 0$ and (a2) follows. 
\item[-]  $k = g$: this implies $k + g = e$ and so $\la_{k + g} = 0$. From here, we obtain that $[z, x] = 0$, and (a2) now follows from
$\ep(g,h,g)=\ep(g,g,h)$ by (b1).
\item[-]  $k = h$: proceed like in the previous case. 
\item[-]  $k = g + h$: then $[x, [y, z]] \in \la_{g + (h + (g + h))} = \la_e = 0$ and similarly $[y, [z, x]] \in \la_e = 0$. Thus, (a2) holds.  
\end{itemize}

Conversely, suppose that $\ep$ is an admissible graded contraction. Then (b1) holds by Lemma~\ref{rem_admiss} (i). 
It remains to show the validity of (b2). 
We know that $\ep$ satisfies (a1) and (a2) from Remark~\ref{rem_gc}.
Suppose that $G = \langle g, h, k \rangle$, then we can find $x \in \la_g$, $y \in \la_h$ and $z \in \la_k$ such that $[x, [y, z]]$ and $[y, [z, x]]$ are linearly independent by Lemma~\ref{lem_jacobi_li}. From here, an application of (a2) yields (b2).
\end{proof}


\subsection{A combinatorial approach:  supports and nice sets} \label{sec32}


The results in \S\ref{sec31} allow us to focus our attention on admissible maps satisfying conditions (b1) and (b2) from Proposition~\ref{pr_condicionesgrcont}; this simplifies
 our classification problem considerably since we can forget about the grading itself and deal with the grading group.

In this section, we reformulate the problem of classifying (up to equivalence) admissible maps satisfying (b1) and (b2) onto a combinatorial problem. We begin by introducing some notation. As before, we write $G$ to denote $\bb{Z}_2^3$, and we write its elements as follows:  
\[
\begin{array}{cccc}
g_0 := (0, 0, 0), & \quad g_1 :=(1, 0, 0), & \quad g_2 := (0, 1, 0), & \quad g_3 :=(0, 0, 1),
\\
g_4 := (1, 1, 1), & \quad g_5:= (1, 1, 0), & \quad g_6:= (1, 0, 1), & \quad g_7 := (0, 1, 1).
\end{array}
\]
We let $I: = \{1, 2, \ldots, 7\}$, $I_0 := I \cup \{0\}$, 
and introduce a commutative  binary operation $\ast$ on $I_0$ as follows: for $i, j \in I_0$ we let $i \ast j$ to be the element in $I_0$ such that $g_i + g_j = g_{i \ast j}$. Restricted to $I$, this  
operation can be pictured as: 
\begin{center}
\begin{tikzpicture}[scale=0.8, every node/.style={transform shape}] 
\draw 
(30:1) node[circle, draw, fill=white]{2}  -- (210:2) node[circle, draw, fill=white]{7}
(150:1) node[circle, draw, fill=white]{4} -- (330:2) node[circle, draw, fill=white]{5}
(270:1) node[circle, draw, fill=white]{6} -- (90:2) node[circle, draw, fill=white]{1}
(90:2)  -- (210:2) -- (330:2) -- cycle
(0:0) node[circle, draw, fill=white]{3}   circle (1);
\end{tikzpicture}
\end{center}
More precisely, for $i \neq j$ in $I$, the element $i \ast j \in I$ is the third element in the unique line containing $i$ and $j$. This means that $\{i, j, i \ast j\}$ is one of the lines of the so called Fano plane $P^2(\bb F_2)$ (pictured above).  
Recall that the Fano plane is a finite projective plane with the smallest possible number of points and lines: 7 points and 7 lines, with 3 points on every line and 3 lines through every point. 

\begin{defis}
Pairwise distinct elements $i, j, k \in I$ are called \bfemph{generative}
if $k\neq i\ast j$; this is equivalent to saying that $G = \langle g_i, g_j, g_k \rangle$. 
We also say that $\{i, j, k\}$ is a \bfemph{generating triplet}, since the definition does not depend on the order of the elements $i, j$ and $k$.  
\end{defis}

Consider the set of 21 edges of the Fano plane:
$$X := \{ \{i, j\} \mid i, j \in I, \, i \neq j \}.$$

\begin{notation}
\noindent For a map $\eta\colon  X \to \bb{C}$, we write $\eta_{ij} := \eta(\{i, j\})$ and $\eta_{ijk} := \eta_{i \, j*k}\, \eta_{j k}$, provided $i, j, k$ are generative. (It is well defined, since $\{i, j*k\}\in X $.) We consider the set 
\begin{equation}\label{def_A}
\mathcal{A}:= \{\eta\colon X\to \bb{C} \mbox{ such that } \eta_{ijk} = \eta_{jki}, \text{ for all } i, j, k \in I \text{ generative}\}.
\end{equation} 
\end{notation}

Working with $\mathcal{A}$ is more convenient than dealing with $\mathcal{G}$; likely, we will be able to do so, since there is a natural bijective correspondence between these two sets.

\begin{prop} \label{bijection}
The map $\Phi\colon \mathcal G \to \mathcal A$ given by  
$\Phi(\ep)(\{i, j\}) := \ep(g_i, g_j)$ is bijective, with inverse defined as 
$\Phi^{-1}(\eta)(g_i, g_j) = \eta(\{i, j\})$ if $\{i, j\} \in X$, and 0 otherwise. 
\end{prop}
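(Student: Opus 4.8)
The plan is to verify that $\Phi$ is well-defined, that $\Phi^{-1}$ as described lands in $\mathcal{G}$, and that the two maps are mutually inverse. I will treat these three tasks in turn, and I expect the well-definedness of each map to be the only part requiring genuine thought; the mutual-inverse check will be essentially formal once the bookkeeping between the two index conventions is set up.

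First I would show that $\Phi(\ep) \in \mathcal{A}$ for every $\ep \in \mathcal{G}$. Given an admissible graded contraction $\ep$, set $\eta := \Phi(\ep)$, so $\eta_{ij} = \ep(g_i, g_j)$. I must check the defining condition $\eta_{ijk} = \eta_{jki}$ for all generative $i, j, k \in I$. Unravelling the notation, $\eta_{ijk} = \eta_{i\,(j\ast k)}\,\eta_{jk} = \ep(g_i, g_{j\ast k})\,\ep(g_j, g_k) = \ep(g_i, g_j + g_k)\,\ep(g_j, g_k) = \ep(g_i, g_j, g_k)$, the last equality by the definition of the ternary $\ep$ in Remark~\ref{rem_gc}(i). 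Since $i, j, k$ generative means exactly $G = \langle g_i, g_j, g_k\rangle$, condition (b2) of Proposition~\ref{pr_condicionesgrcont} gives $\ep(g_i, g_j, g_k) = \ep(g_k, g_i, g_j)$, i.e. $\eta_{ijk} = \eta_{kij}$; combined with the symmetry $\ep(g_i, g_j) = \ep(g_j, g_i)$ from (b1) this yields the required cyclic invariance $\eta_{ijk} = \eta_{jki}$. Thus $\eta \in \mathcal{A}$.

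Next I would check that $\Phi^{-1}(\eta) \in \mathcal{G}$ for $\eta \in \mathcal{A}$, i.e. that the map $\ep$ defined by $\ep(g_i, g_j) = \eta_{ij}$ when $\{i,j\} \in X$ and $0$ otherwise is an admissible graded contraction. Admissibility is immediate: the ``$0$ otherwise'' clause covers exactly the cases $i = j$ (including $g_i + g_j = e$) and $i = 0$ or $j = 0$, so $\ep(g,g) = \ep(e,g) = \ep(g,e) = 0$. For the graded-contraction property I would invoke Proposition~\ref{pr_condicionesgrcont} and verify (b1) and (b2). Symmetry (b1) follows since $\eta$ is defined on unordered pairs $\{i,j\}$. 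For (b2), given $g, h, k$ with $G = \langle g, h, k\rangle$, writing $g = g_i$, $h = g_j$, $k = g_\ell$ forces $i, j, \ell \in I$ to be generative, and the same unravelling as above converts $\ep(g_i, g_j, g_\ell) = \ep(g_\ell, g_i, g_j)$ into $\eta_{ij\ell} = \eta_{\ell ij}$, which is precisely the cyclic condition defining membership in $\mathcal{A}$. Hence $\ep$ satisfies (b1) and (b2) and so lies in $\mathcal{G}$.

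Finally, the mutual-inverse identities $\Phi^{-1}(\Phi(\ep)) = \ep$ and $\Phi(\Phi^{-1}(\eta)) = \eta$ follow by direct evaluation. For the first, $\Phi^{-1}(\Phi(\ep))(g_i,g_j)$ equals $\Phi(\ep)(\{i,j\}) = \ep(g_i,g_j)$ when $\{i,j\} \in X$, and equals $0$ otherwise; but when $\{i,j\} \notin X$ we have $i = j$ or $i = 0$ or $j = 0$, and admissibility of $\ep$ gives $\ep(g_i,g_j) = 0$ in each such case, so the two agree everywhere. For the second, $\Phi(\Phi^{-1}(\eta))(\{i,j\}) = \Phi^{-1}(\eta)(g_i,g_j) = \eta_{ij}$ for every $\{i,j\} \in X$, so $\Phi \circ \Phi^{-1} = \id$ on $\mathcal{A}$. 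The main (minor) obstacle throughout is simply keeping the translation between the additive/ternary notation for $\ep$ and the multiplicative subscript notation $\eta_{ijk} = \eta_{i\,(j\ast k)}\eta_{jk}$ straight; once one observes the clean identity $\eta_{ijk} = \ep(g_i, g_j, g_k)$, both the well-definedness and the bijectivity become routine.
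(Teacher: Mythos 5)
Your proposal is correct and follows essentially the same route as the paper's proof: identify $\Phi(\ep)_{ijk}=\ep(g_i,g_j,g_k)$, transfer conditions (b1)--(b2) of Proposition~\ref{pr_condicionesgrcont} back and forth between $\mathcal G$ and $\mathcal A$, and verify the two maps are mutually inverse (using admissibility of $\ep$ on the pairs outside $X$). One cosmetic correction: the step from $\eta_{ijk}=\eta_{kij}$ to the cyclic condition $\eta_{ijk}=\eta_{jki}$ is not supplied by the symmetry (b1), but simply by applying (b2) once more to the generative triple $(k,i,j)$ (equivalently, re-indexing the universally quantified identity), which is immediate and does not affect correctness.
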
	

\begin{proof}
To ease the notation, for $\ep \in \mathcal G$ and $\eta \in \mathcal A$, we write $\eta^\ep \equiv \Phi(\ep)$ and $\ep_\eta \equiv \Phi^{-1}(\eta)$. 
We first check that $\eta^{\ep} \in \mathcal{A}$ provided $\ep \in \mathcal{G}$.
Notice that $\eta^\ep$ is well defined since  $\ep(g_i, g_j) = \ep(g_j, g_i)$. For $i, j, k \in I$ generative, we have that
\begin{align*}
\eta^{\ep}_{ijk} & =
\eta^{\ep}_{i \, j*k}\, \eta^{\ep}_{jk} =
\ep(g_i, g_{j*k})\ep(g_j,g_k)=
\ep(g_i, g_j + g_k)\ep(g_j, g_k) = \ep(g_i, g_j, g_k),  
\\
\eta^{\ep}_{jki} & =
\eta^{\ep}_{j \, k*i}\, \eta^{\ep}_{ki} =
\ep(g_j, g_{k*i})\ep(g_k, g_i)=
\ep(g_j, g_k + g_i)\ep(g_k, g_i) = \ep(g_j, g_k, g_i), 
\end{align*}
and $\eta^{\ep}_{ijk} = \eta^{\ep}_{jki}$ by Proposition~\ref{pr_condicionesgrcont} (b2). 
Second, we check that $\ep_\eta \in \mathcal{G}$ provided $\eta \in \mathcal{A}$. Notice that $\ep_\eta$ is admissible by definition, and satisfies Proposition~\ref{pr_condicionesgrcont} (b1). If $i, j, k$ are generative, then $\ep_\eta(g_i, g_j, g_k) = \eta_{ijk} = \eta_{jki} = \eta_{kij} = \ep_\eta(g_k, g_i, g_j)$, and (b2) follows.  

\noindent Lastly, notice that the maps
$\mathcal{G} \to \mathcal{A}$, $\ep \mapsto \eta^{\ep}$ and $\mathcal A \to \mathcal{G}$, $\eta \mapsto \ep_\eta$, are inverses.
\end{proof}

In what follows, we will refer to admissible graded contractions and maps in $\mathcal{A}$ interchangeably. In particular, two maps $\eta, \eta'\in \mathcal{A}$ are equivalent (respectively, strongly equivalent) if $\ep_\eta$ and $\ep_{\eta'}$ are so; in such a case, we will use the same notation $\eta \sim \eta'$ (respectively, $\eta \approx \eta'$).

An invariant in the classification of strong equivalence classes in $\mathcal A$ is their support, defined in the usual way: the \bfemph{support} $S^\eta$ of $\eta \in \mathcal A$ is given by 
$$S^\eta:=\{t\in X\mid  \eta(t) \ne 0 \}.$$ 

\begin{lemma} \label{unaporsoporte}
Let  $\eta, \eta' \in \mathcal{A}$. 
 If $\eta \approx \eta'$, then $S^\eta = S^{\eta'}$.
\end{lemma}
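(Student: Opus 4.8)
The plan is to show that a strong equivalence $\eta \approx \eta'$ forces the two supports to coincide by tracking what a graded isomorphism does to the homogeneous components. Recall that $\eta \approx \eta'$ means, by Definition~\ref{defequivnorm2} together with the identification of Proposition~\ref{bijection}, that there is a graded Lie algebra isomorphism $\varphi\colon \la^{\ep_\eta} \to \la^{\ep_{\eta'}}$ with $\varphi((\la^{\ep_\eta})_g) = (\la^{\ep_{\eta'}})_g$ for every $g \in G$; that is, $\varphi$ maps each homogeneous component $\la_{g_i}$ into itself (as a graded-preserving map, not merely permuting components). So the first step is simply to record this: $\varphi$ restricts to a linear isomorphism $\varphi_i\colon \la_{g_i} \to \la_{g_i}$ on each of the seven $2$-dimensional Cartan subalgebras (Lemma~\ref{lem_base}(ii)).

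Next I would translate the support condition into a bracket condition. For an edge $t = \{i,j\} \in X$, membership $t \in S^\eta$ is equivalent to $\eta_{ij} = \ep_\eta(g_i, g_j) \neq 0$, which by definition of the contracted bracket means $[x,y]^{\ep_\eta} = \ep_\eta(g_i,g_j)[x,y] \neq 0$ for suitable $x \in \la_{g_i}$, $y \in \la_{g_j}$; since $[\la_{g_i}, \la_{g_j}] = \la_{g_i+g_j} \neq 0$ by Lemma~\ref{lem_base}(iii), this is exactly the statement that $[\,(\la^{\ep_\eta})_{g_i}, (\la^{\ep_\eta})_{g_j}\,]^{\ep_\eta} \neq 0$. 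Thus I would reformulate the goal as: $[(\la^{\ep_\eta})_{g_i}, (\la^{\ep_\eta})_{g_j}]^{\ep_\eta} \neq 0 \iff [(\la^{\ep_{\eta'}})_{g_i}, (\la^{\ep_{\eta'}})_{g_j}]^{\ep_{\eta'}} \neq 0$.

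The core step is then a direct transport of brackets through $\varphi$. Because $\varphi$ is a homomorphism and preserves each grading piece, for $x \in \la_{g_i}$ and $y \in \la_{g_j}$ we have
\[
\varphi\big([x,y]^{\ep_\eta}\big) = [\varphi(x), \varphi(y)]^{\ep_{\eta'}},
\]
where $\varphi(x) \in (\la^{\ep_{\eta'}})_{g_i}$ and $\varphi(y) \in (\la^{\ep_{\eta'}})_{g_j}$. Since $\varphi$ is injective, the left-hand side is nonzero iff $[x,y]^{\ep_\eta}$ is nonzero; and as $\varphi_i, \varphi_j$ are bijections on the respective components, the right-hand side ranges over all of $[(\la^{\ep_{\eta'}})_{g_i}, (\la^{\ep_{\eta'}})_{g_j}]^{\ep_{\eta'}}$ as $x,y$ vary. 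Hence one of these bracket spaces is zero iff the other is, which by the reformulation of the previous paragraph gives $\eta_{ij} = 0 \iff \eta'_{ij} = 0$. Running this over all edges $\{i,j\} \in X$ yields $S^\eta = S^{\eta'}$.

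I expect no genuine obstacle here: the only point demanding care is the bookkeeping that $\varphi$ preserves the component $\la_{g_i+g_j}$ in which the bracket lands (so that ``$\varphi$ of a nonzero bracket is nonzero'' really follows from injectivity of $\varphi$ restricted to that component), together with the observation from Lemma~\ref{lem_base}(iii) that the underlying ungraded bracket $[\la_{g_i},\la_{g_j}]$ is always nonzero, so that vanishing of the contracted bracket is controlled \emph{solely} by the scalar $\eta_{ij}$ and not by any degeneracy of the original product. This last remark is what makes the support a genuine strong-equivalence invariant, and it is worth stating explicitly in the write-up.
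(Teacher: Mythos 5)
Your proposal is correct and follows essentially the same route as the paper's own proof: both transport the contracted bracket through the graded isomorphism $\varphi$ via $\varphi\big([x,y]^{\ep_\eta}\big)=[\varphi(x),\varphi(y)]^{\ep_{\eta'}}=\eta'_{ij}[\varphi(x),\varphi(y)]$ and invoke Lemma~\ref{lem_base}(iii) to guarantee that $[\la_{g_i},\la_{g_j}]\neq 0$, so that vanishing is governed purely by the scalars $\eta_{ij}$, $\eta'_{ij}$. Your explicit remark that the nondegeneracy of the original product is exactly what makes the support an invariant is the same observation the paper compresses into the phrase ``taking Lemma~\ref{lem_base}(iii) into consideration.''
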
	

\begin{proof} 
We denote   $\la=\f{g}_2$ and $\la_i=(\f{g}_2)_{g_i}$.
Let $\varphi\colon \la^{\ep_{\eta}}\to \la^{\ep_{\eta'}}$ be an isomorphism such that $\varphi(\la_i)=\la_i$ for all $i\in I$.
Note, for $\{i,j\}\in X$, any $x\in\la_i$  and $y\in\la_j$, that  
$$
 \eta_{ij}\varphi([x,y])=\varphi\big([x,y]^{\ep_\eta}\big) = [ \varphi(x), \varphi(y)]^{\ep_{\eta'}}=\eta'_{ij}[ \varphi(x), \varphi(y)].
$$ 
This gives $ \eta_{ij}\ne0$ if and only if  $ \eta'_{ij}\ne0$, taking Lemma~\ref{lem_base} (iii) into consideration. In other words, $S^\eta = S^{\eta'}$.
\end{proof}

In order to determine the possible supports of the admissible graded contractions, we
 will prove that these satisfy the absorbing-type property defined below.

\begin{defis} \label{def_nice}
 For $i, j, k \in I$ generative, we let $P_{\{i,j,k\}}$ to be the subset of $X$: 
\[
P_{\{i,j,k\}} := \{\{i, j\}, \{j,k\},  \{k,i\}, \{i, j\ast k\}, \{j, k\ast i\}, \{k, i\ast j\}\}.
\] 
A subset $T\subseteq X$ is called \bfemph{nice} if $\{i, j\},   \{i\ast j, k\} \in T$ implies $P_{\{i,j,k\}} \subseteq T$, for all $i, j, k \in I$ generative.

It is clear that the trivial subsets $X$ and $\emptyset$ are both nice sets. Non-trivial nice sets and some pictures are shown in Definition~\ref{niceSubsets}.
\end{defis}

 As mentioned,    nice sets and supports of admissible graded contractions are closely related.

\begin{prop} \label{suppisnice}
If $\eta\in\mathcal A$, then the support $S^\eta$ is a nice set.
\end{prop}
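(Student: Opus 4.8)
The plan is to unwind Definition~\ref{def_nice} and exploit the multiplicative structure of the symbols $\eta_{ijk}$ together with the cyclic-invariance condition defining $\mathcal{A}$. Fix generative $i,j,k\in I$ and suppose that $\{i,j\}\in S^\eta$ and $\{i\ast j,k\}\in S^\eta$; I must show that the six edges constituting $P_{\{i,j,k\}}$ all lie in $S^\eta$, i.e.\ that $\eta$ is nonzero on each of them. The translation is immediate: $\{i,j\}\in S^\eta$ means $\eta_{ij}\neq 0$, and $\{i\ast j,k\}\in S^\eta$ means $\eta_{k\,(i\ast j)}=\eta(\{i\ast j,k\})\neq 0$.

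First I would record the product attached to the triple in the order $(k,i,j)$. By definition $\eta_{kij}=\eta_{k\,(i\ast j)}\,\eta_{ij}$, a product of two nonzero complex numbers by hypothesis, hence $\eta_{kij}\neq 0$. The key step is then to propagate this nonvanishing by cyclic invariance: applying $\eta_{ijk}=\eta_{jki}$ to the triples $(i,j,k)$ and $(j,k,i)$ (both generative, since generativity is a property of the \emph{set} $\{i,j,k\}$ and so is permutation-invariant) yields the chain $\eta_{ijk}=\eta_{jki}=\eta_{kij}$, so all three common values are nonzero.

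Now expanding each as a product,
\[
\eta_{ijk}=\eta_{i\,(j\ast k)}\,\eta_{jk},\qquad
\eta_{jki}=\eta_{j\,(k\ast i)}\,\eta_{ki},\qquad
\eta_{kij}=\eta_{k\,(i\ast j)}\,\eta_{ij},
\]
and using that a product of complex numbers can be nonzero only if every factor is, I conclude that each of the six factors $\eta_{jk},\eta_{ki},\eta_{ij},\eta_{i\,(j\ast k)},\eta_{j\,(k\ast i)},\eta_{k\,(i\ast j)}$ is nonzero. These are precisely the $\eta$-values of the six edges $\{j,k\},\{k,i\},\{i,j\},\{i,j\ast k\},\{j,k\ast i\},\{k,i\ast j\}$ listed in $P_{\{i,j,k\}}$, so $P_{\{i,j,k\}}\subseteq S^\eta$, which is exactly what nicety demands.

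I do not anticipate a genuine obstacle: the entire argument reduces to the single observation that a nonzero product forces each factor to be nonzero, combined with the cyclic identity $\eta_{ijk}=\eta_{jki}=\eta_{kij}$. The only points needing a word of care are the permutation-invariance of generativity just noted, and the well-definedness of the symbols $\eta_{ijk}$ appearing above, which holds because $\{i,j\ast k\},\{j,k\ast i\},\{k,i\ast j\}\in X$ whenever $i,j,k$ are generative.
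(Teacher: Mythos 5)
Your proof is correct and follows essentially the same route as the paper's: both pass from the hypotheses to $\eta_{kij}=\eta_{k\,(i\ast j)}\eta_{ij}\neq 0$, use the cyclic identity defining $\mathcal{A}$ to get $\eta_{ijk}=\eta_{jki}=\eta_{kij}\neq 0$, and then read off the nonvanishing of all six factors, hence $P_{\{i,j,k\}}\subseteq S^\eta$. The points of care you flag (permutation-invariance of generativity and well-definedness of $\eta_{ijk}$) are exactly the ones the paper's definitions already guarantee.
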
  

\begin{proof}
  Let $\eta \in \mathcal{A}$ and $i, j, k \in I$ generative such that $\{i, j\}$ and $\{k, i \ast j\}$ are in $S^\eta$. We need to prove that 
$P_{\{i, j, k\}} \subseteq S^\eta$. From $\{i, j\}, \{k, i \ast j\} \in S^\eta$ we have that $\eta_{ij} \neq 0$ and $\eta_{k \, i \ast j} \neq 0$. Then $\eta_{kij} \neq 0$, which implies that $\eta_{ijk} = \eta_{jki} \neq 0$, since $\eta \in \mathcal{A}$. These yield that $\eta_{i \, j \ast k}$, $\eta_{jk}$, $\eta_{j \, k \ast i}$ and $\eta_{ki}$ are all nonzero. That is to say that $\{i, j \ast k\}, \{j, k\}, \{j, k \ast i\}$ and $\{k, i\}$ are all in $S^\eta$, proving that $S^\eta$ is nice.
\end{proof}

\begin{prop} \label{defetaT}
Every non-trivial nice subset $T$ of $X$ gives rise to an element  of $\mathcal{A}$ with support $T$  and image $\{0, 1\}$.
\end{prop}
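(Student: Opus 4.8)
The plan is to define the indicator-type map $\eta\colon X\to\bb C$ by setting $\eta(t)=1$ if $t\in T$ and $\eta(t)=0$ otherwise, and then to verify that this $\eta$ lies in $\mathcal A$; that its support is exactly $T$ and its image is $\{0,1\}$ is then immediate by construction (the image is $\{0,1\}$ rather than $\{1\}$ precisely because $T$ is non-trivial, hence a proper nonempty subset of $X$). So the entire content of the statement reduces to checking the defining condition of $\mathcal A$ from Equation~\eqref{def_A}, namely that $\eta_{ijk}=\eta_{jki}$ for all generative triples $i,j,k\in I$.

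First I would unwind the notation: for generative $i,j,k$ we have $\eta_{ijk}=\eta_{i\,j\ast k}\,\eta_{jk}$, which for a $\{0,1\}$-valued $\eta$ equals $1$ exactly when both edges $\{i,j\ast k\}$ and $\{j,k\}$ lie in $T$, and equals $0$ otherwise. Thus the required identity $\eta_{ijk}=\eta_{jki}$ is the statement
\[
\big(\{i,j\ast k\}\in T \text{ and } \{j,k\}\in T\big)\iff\big(\{j,k\ast i\}\in T \text{ and } \{k,i\}\in T\big).
\]
By symmetry it suffices to prove one implication, say $\Rightarrow$. So I would assume $\{j,k\}\in T$ and $\{i,j\ast k\}\in T$ and aim to deduce that all six edges of $P_{\{i,j,k\}}$ lie in $T$, which in particular yields the two edges $\{j,k\ast i\}$ and $\{k,i\}$ needed on the right-hand side.

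The key step is to recognize that the pair of hypothesis edges is precisely of the form handled by the niceness definition, after relabelling. Writing $j'=j$, $k'=k$ and $i'=j\ast k$, the two edges $\{j,k\}=\{j',k'\}$ and $\{i,j\ast k\}=\{i,i'\}$; since $\{j',k',i'\}$ is a line we have $i'=j'\ast k'$, and one checks $i,j',k'$ form a generative triple. Then the hypothesis edges read $\{j',k'\}\in T$ and $\{j'\ast k',\,i\}\in T$, which is exactly the antecedent ``$\{a,b\},\{a\ast b,c\}\in T$'' of Definition~\ref{def_nice} with $a=j'$, $b=k'$, $c=i$. Niceness of $T$ therefore gives $P_{\{j',k',i\}}\subseteq T$. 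Finally I would observe that $P_{\{i,j,k\}}$ is symmetric in its three indices (it is built from the generating triplet $\{i,j,k\}$, which is unordered), so $P_{\{j',k',i\}}=P_{\{i,j,k\}}\subseteq T$, and in particular $\{j,k\ast i\},\{k,i\}\in T$, establishing $\eta_{jki}=1$.

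The main obstacle — really the only place demanding care — is the bookkeeping of the Fano-plane arithmetic: verifying that the antecedent edges genuinely match the pattern in Definition~\ref{def_nice} under the relabelling, and confirming the full symmetry of $P_{\{i,j,k\}}$ under permuting $i,j,k$ (so that one does not accidentally invoke niceness for a triple that fails to be generative, or land in the wrong $P$-set). I expect this to be a short combinatorial check using the incidence structure of $P^2(\bb F_2)$ and the identity $\ast$, but it is where a careless index slip would break the argument; everything else is routine.
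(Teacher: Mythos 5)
Your proof is correct and follows essentially the same route as the paper's: both reduce the claim to verifying $\eta_{ijk}=\eta_{jki}$ for the indicator map, and both rest on the single key observation that $\eta_{ijk}\neq 0$ means $\{j,k\},\{j\ast k,i\}\in T$, whence niceness forces $P_{\{i,j,k\}}\subseteq T$ and so $\eta_{jki}=1$. The paper packages this as a two-case argument ($P_{\{i,j,k\}}\subseteq T$ versus not, with a contradiction in the second case), while you state it as a direct implication closed up by cyclic symmetry; the substance, including the relabelling into Definition~\ref{def_nice} and the symmetry of $P_{\{i,j,k\}}$ in its indices, is identical.
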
   

\begin{proof}
Let $T$ be a nice subset of $X$ and $\eta^T\colon  X \to \bb{C}$ given by 
\begin{equation}\label{defietaT}
\eta^T(t) :=
\begin{cases}
1, &$if $ t\in T, \\
0, &$if $ t\notin T.
\end{cases}
\end{equation}
Clearly, $S^{\eta^T} = T$. Suppose that $i, j, k \in I$ are generative, and let us show that $\eta^T_{ijk} = \eta^T_{jki}$. We distinguish two cases:
\begin{itemize}
\item[-] $P_{\{i, j, k\}} \subseteq T$. In this case, one can easily check that $\eta^T_{ijk} = 1 = \eta^T_{jki}$.
\item[-] $P_{\{i, j, k\}} \nsubseteq T$. We claim that $\eta^T_{ijk} = 0 = \eta^T_{jki}$. Suppose on the contrary that either $\eta^T_{ijk} \neq 0$ or $\eta^T_{jki} \neq 0$. If $0 \neq \eta^T_{ijk} = \eta^T_{i \, j \ast k} \eta^T_{jk}$, then $\{i,  j \ast k\}, \{j, k\} \in T$, which implies $P_{\{  j, k, i\}} \subseteq T$ since $T$ is nice. If $\eta^T_{jki} \neq 0$, one can reach a contradiction in a similar way. Thus, necessarily $\eta^T_{ijk} = \eta^T_{jki} = 0$.
\end{itemize}
In any case, we have proved that $\eta^T_{ijk} = \eta^T_{jki}$, which tells us that $\eta^T \in \mathcal{A}$.
\end{proof}

In what follows, we make  the group $\Aut(\bb Z_2^3)  $  act on  the set of admissible maps $\mathcal{A}$ in \eqref{def_A}.

\begin{define}
A bijective map $\sigma\colon I \to I$ is called a \bfemph{collineation}   of $I$ if $\sigma(i*j) = \sigma(i)*\sigma(j)$, for all $i, j \in I$, $i\ne j$. 
Any collineation sends lines to lines, where, for $i, j \in I$ distinct, we denote the set $L_{ij} := \{i, j, i \ast j\}$ as the \bfemph{line} in $I$ containing $i$ and $j$. Notice that $L_{ij} = L_{i \, i \ast j} = L_{j \, i \ast j}$ and that three pairwise distinct elements $i, j, k \in I$ form a line if and only if they are not generative. \

We write $S_\ast (I)$ for the set consisting on all the collineations of $I$, which is a subgroup of the symmetric group on $I$. 
By the Fundamental theorem of projective geometry, the full collineation group of the Fano plane (also called automorphism group, or symmetry group) is the projective linear group $S_\ast (I)\cong \Aut(\bb Z_2^3) \cong \mathrm{PGL}(3, 2)$, which is a simple group of order 168 \cite{Hirschfeld79}.  
\end{define}

\begin{remark}\label{permutaciones}
Collineations preserve generating triplets: if $i, j, k \in I$ are generative, then $\sigma(i), \sigma(j), \sigma(k)$ are generative for all $\sigma \in S_\ast(I)$. Moreover, and this is important, for any two generating triplets $\{i, j, k\}$ and $\{i', j', k'\}$, there is a unique $\sigma\in S_\ast(I)$ such that $\sigma(i) = i'$, $\sigma(j) = j'$, $\sigma(k) = k'$. 
\end{remark}

\begin{define}\label{de_etasigma}
For $\sigma \in S_\ast(I)$ and $\eta \in \mathcal{A}$, we define 
$$
\eta^\sigma\colon  X \to \bb{C}\quad\textrm{by}\quad\eta^\sigma_{ij}: = \eta_{\sigma(i)\, \sigma(j)},
$$
 for all 
$\{i, j\} \in X$. Note that 
 $\eta^\sigma\in\mathcal{A}$, consequence of  $\eta^\sigma_{ijk} =\eta_{\sigma(i) \sigma(j) \sigma(k)} $ and Remark~\ref{permutaciones}.
 This gives an action $S_\ast(I)\times \mathcal{A}\to  \mathcal{A}$, $(\sigma,\eta)\mapsto \sigma\cdot\eta:=\eta^\sigma$.
\end{define}

This action preserves equivalence classes, since it translates the action of the Weyl group 
 of $ \Gamma_{\f{g}_2}$.
%

\begin{prop} \label{colli}    
For $\sigma \in S_\ast(I)$ and $\eta \in \mathcal{A}$,   ${\eta^\sigma}\sim \eta$.
\end{prop}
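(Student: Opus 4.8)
The plan is to show that the collineation action on $\mathcal{A}$ is a book-keeping reformulation of the Weyl group action on graded contractions, which in turn is realized by genuine Lie algebra automorphisms. Concretely, I would fix $\sigma \in S_\ast(I) \cong \Aut(\bb Z_2^3)$ and let $\alpha \colon G \to G$ be the corresponding group automorphism, characterized by $\alpha(g_i) = g_{\sigma(i)}$ for all $i \in I$ (and $\alpha(e)=e$); the fact that $\sigma$ is a collineation is exactly what makes $\alpha$ a group homomorphism, since $g_i + g_j = g_{i\ast j}$ translates $i\ast j = \sigma^{-1}(\sigma(i)\ast\sigma(j))$ into additivity of $\alpha$. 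By Equation~\eqref{eq_weyl} and the construction preceding it, every such $\alpha$ is induced by an automorphism $\tilde f \in \Aut(\Gamma_{\f{g}_2})$ satisfying $\tilde f((\f{g}_2)_g) = (\f{g}_2)_{\alpha(g)}$ for all $g \in G$.

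First I would pin down the relationship between $\eta^\sigma$ and $\ep_\eta$ under relabeling. Writing $\ep := \ep_\eta = \Phi^{-1}(\eta)$ and $\ep' := \ep_{\eta^\sigma} = \Phi^{-1}(\eta^\sigma)$, the definition $\eta^\sigma_{ij} = \eta_{\sigma(i)\sigma(j)}$ (Definition~\ref{de_etasigma}) unwinds via Proposition~\ref{bijection} to $\ep'(g_i, g_j) = \ep(g_{\sigma(i)}, g_{\sigma(j)}) = \ep(\alpha(g_i), \alpha(g_j))$; that is, $\ep'(a,b) = \ep(\alpha(a), \alpha(b))$ for all $a, b \in G$, where the admissibility-and-zero convention handles the cases with $e$ or repeated arguments automatically. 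So the content of the proposition is that precomposing the contraction map by $\alpha \times \alpha$ produces an isomorphic graded Lie algebra, with the isomorphism given by $\tilde f$ (or its inverse).

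The core step is then a direct bracket computation. I claim $\tilde f \colon \la^{\ep'} \to \la^{\ep}$ is a Lie algebra isomorphism respecting the grading in the relabeled sense, i.e.\ a witness of $\eta^\sigma \sim \eta$ in the sense of Definition~\ref{defequivnorm1}. For $x \in (\f{g}_2)_{g_i}$ and $y \in (\f{g}_2)_{g_j}$ one computes
\[
\tilde f\big([x,y]^{\ep'}\big) = \ep'(g_i,g_j)\,\tilde f([x,y]) = \ep(\alpha(g_i),\alpha(g_j))\,[\tilde f(x),\tilde f(y)] = [\tilde f(x), \tilde f(y)]^{\ep},
\]
where the middle equality uses that $\tilde f$ is an automorphism of the \emph{original} $\f{g}_2$ (so $\tilde f([x,y]) = [\tilde f(x),\tilde f(y)]$) and the last equality uses $\tilde f(x) \in (\f{g}_2)_{\alpha(g_i)}$, $\tilde f(y) \in (\f{g}_2)_{\alpha(g_j)}$ together with the definition of the $\ep$-bracket on those components. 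Since $\tilde f$ is bijective and maps $(\la^{\ep'})_{g_i} = (\f{g}_2)_{g_i}$ onto $(\la^{\ep})_{\alpha(g_i)} = (\f{g}_2)_{\alpha(g_i)}$, it satisfies the homogeneity condition of Definition~\ref{defequivnorm1}, giving $\eta^\sigma \sim \eta$.

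The main obstacle I expect is not any single hard estimate but rather keeping the three translation layers consistent: the combinatorial layer ($\sigma$ on $I$), the group layer ($\alpha$ on $G$), and the algebra layer ($\tilde f$ on $\f{g}_2$), including the convention that $\Phi^{-1}$ sets $\ep$ to zero off $X$ so that the identity $\ep'(a,b)=\ep(\alpha(a),\alpha(b))$ holds for \emph{all} $a,b$ and not merely for generic pairs. One should verify explicitly that the degenerate cases (arguments equal to $e$, or $a=b$) are compatible on both sides, which they are since $\alpha$ fixes $e$ and is injective, so $\alpha(a)=\alpha(b) \iff a=b$; once that is checked the computation above is clean and the bijectivity and grading-compatibility of $\tilde f$ finish the argument.
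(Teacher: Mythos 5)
Your proposal is correct and follows essentially the same route as the paper's own proof: lift $\sigma$ to the group automorphism $g_i\mapsto g_{\sigma(i)}$ of $\bb Z_2^3$, realize it by an automorphism $\tilde f_\sigma$ of $\f{g}_2$ coming from an octonion automorphism (the Cayley-triple construction behind Equation~\eqref{eq_weyl}), and verify by the same one-line bracket computation that $\tilde f_\sigma\colon\la^{\ep_{\eta^\sigma}}\to\la^{\ep_\eta}$ is an isomorphism permuting the homogeneous components. Your explicit handling of the degenerate cases (arguments equal to $e$ or repeated) is a minor bookkeeping addition that the paper leaves implicit.
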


\begin{proof} Write $\la$ for $\f{g}_2$ and $\la_i$ for the homogeneous component  $(\f{g}_2)_{g_i}$.
The map $\hat\sigma\colon G\to G$ defined by $\hat\sigma(g_i) = g_{\sigma(i)}$ for $i \in I$ and 
$\hat\sigma(g_0) = g_0$ is a group automorphism since $\sigma \in S_\ast(I)$. 
Proceeding like in \eqref{eq_Cayleytriplets}, we can find an automorphism $f_{\sigma}\colon\mathcal{O}\to\mathcal{O}$ such that $f_\sigma(\mathcal{O}_g) = \mathcal{O}_{\hat\sigma(g)}$ for all $g \in G$. Consider the induced map $\tilde f_{\sigma}\colon\f{g}_2\to \f{g}_2$ given by 
$\tilde f_{\sigma}(d) = f_{\sigma}df_{\sigma}^{-1}$, which is an automorphism of Lie algebras satisfying that $\tilde f_{\sigma}(\la_i)= \la_{\sigma(i)}$ for all $i\in I$.
We claim that $\tilde f_\sigma\colon \la^{\ep_{\eta^\sigma}} \to \la^{\ep_{\eta }}$ is an isomorphism. 
In fact, for $i,j\in I$, $i\ne j$, $x \in \la_{ i}$ and $y \in \la_{ j}$, we have that 
\begin{equation*}\label{eqs_deultimahora}
\tilde f_{\sigma}([x,y]^{\ep_{\eta^\sigma}})=\tilde f_{\sigma}( \eta^\sigma_{ij}[x,y])=
\eta^\sigma_{ij}\tilde f_{\sigma}( [x,y])=\eta_{\sigma(i)\sigma(j)}[\tilde f_{\sigma}(x),\tilde f_{\sigma}(y)]=[\tilde f_{\sigma}(x),\tilde f_{\sigma}(y)]^{\ep_\eta},
\end{equation*}
since $\tilde f_{\sigma}(x)\in\la_{\sigma(i)}$ and $\tilde f_{\sigma}(y)\in\la_{\sigma(j)}$. 
This shows that $\ep_{\eta^\sigma} \sim \ep_\eta$, and so $\eta^\sigma \sim \eta$.
%
 \end{proof}
 
 When a collineation $\sigma$ \emph{moves} an admissible map, it moves its support accordingly. 
 More precisely:


\begin{define}
Extend the action of the   group of collineations $S_\ast(I)$ to the set $\mathcal P(X)$ by defining $\tilde \sigma(\{i, j\}): = \{\sigma(i), \sigma(j)\}$, for all $\sigma \in S_\ast(I)$ and $\{i, j\} \in X$.
We say that two subsets $T$ and $T'$ of $X$ are \bfemph{collinear}, and we write $T \sim_c T'$,  
if there exists $\sigma \in S_\ast(I)$ such that $\tilde \sigma(T) = T'$. 
\end{define}

Notice that $\sim_c$ is also an equivalence relation on the family of nice sets, since $\tilde{\sigma}(P_{\{i, j, k\}}) = P_{\{\sigma(i),\sigma(j),\sigma(k)\}}$, which implies that $\tilde{\sigma}(T)$ is a nice subset of $X$ provided $T$ is so.
As mentioned, the natural action of $S_\ast(I)$ on $\mathcal A$ is compatible with the action on the supports.  
\begin{lemma}\label{le_relacionsoportes}
For $\sigma \in S_\ast(I)$ and $\eta \in \mathcal{A}$,  
$S^{\eta^\sigma}\sim_c S^{\eta}$.
\end{lemma}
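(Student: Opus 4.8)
The goal is to show $S^{\eta^\sigma}\sim_c S^{\eta}$, that is, that applying a collineation to an admissible map sends its support to the image of the support under the (extended) action on $X$. The plan is to prove the concrete identity $S^{\eta^\sigma}=\tilde\sigma^{-1}(S^\eta)$, from which the collinearity $S^{\eta^\sigma}\sim_c S^\eta$ follows immediately by taking $\tilde\sigma$ (or $\tilde\sigma^{-1}$) as the witnessing collineation.

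First I would unwind the definitions. By Definition~\ref{de_etasigma} we have $\eta^\sigma_{ij}=\eta_{\sigma(i)\,\sigma(j)}$ for every $\{i,j\}\in X$. By the definition of the support, $\{i,j\}\in S^{\eta^\sigma}$ precisely when $\eta^\sigma_{ij}\neq 0$, which by the displayed identity is exactly the condition $\eta_{\sigma(i)\,\sigma(j)}\neq 0$, i.e.\ $\{\sigma(i),\sigma(j)\}\in S^\eta$. Recalling that $\tilde\sigma(\{i,j\})=\{\sigma(i),\sigma(j)\}$, this reads
\begin{equation*}
\{i,j\}\in S^{\eta^\sigma}\iff \tilde\sigma(\{i,j\})\in S^\eta \iff \{i,j\}\in\tilde\sigma^{-1}(S^\eta).
\end{equation*}
Hence $S^{\eta^\sigma}=\tilde\sigma^{-1}(S^\eta)$. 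To conclude, I would note that $S_\ast(I)$ is a group (so $\sigma^{-1}\in S_\ast(I)$ and $\tilde\sigma^{-1}=\widetilde{\sigma^{-1}}$), which gives a collineation carrying $S^\eta$ to $S^{\eta^\sigma}$, namely $\tilde{\sigma}^{-1}(S^\eta)=S^{\eta^\sigma}$; by the symmetry of the relation $\sim_c$ this is precisely $S^{\eta^\sigma}\sim_c S^\eta$.

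This argument is essentially a one-line computation once the definitions are written out, so I do not anticipate a genuine obstacle. The only point requiring a moment's care is the inversion bookkeeping: the action on maps in Definition~\ref{de_etasigma} precomposes with $\sigma$, whereas $\tilde\sigma$ acts directly on edges, so the collineation exhibiting the collinearity is $\tilde\sigma^{-1}$ rather than $\tilde\sigma$. Since $\sim_c$ is an equivalence relation (symmetric in particular), the direction of the inverse is immaterial for the stated conclusion, and the proof is complete.
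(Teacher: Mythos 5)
Your proof is correct and is essentially the same as the paper's: both unwind Definition~\ref{de_etasigma} to observe that $\{i,j\}\in S^{\eta^\sigma}$ if and only if $\tilde\sigma(\{i,j\})\in S^\eta$, i.e.\ $\tilde\sigma(S^{\eta^\sigma})=S^\eta$. Your extra remark on the inversion bookkeeping (that the witnessing collineation is $\tilde\sigma^{-1}$, legitimate since $S_\ast(I)$ is a group and $\sim_c$ is symmetric) is a slightly more careful statement of what the paper leaves implicit.
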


\begin{proof} 
This is quite clear. From the own Definition~\ref{de_etasigma},
$\{i, j\}\in S^{\eta^\sigma}$ if and only if 
$\tilde \sigma(\{i, j\}): = \{\sigma(i), \sigma(j)\}\in S^{\eta}$. That is, $\tilde \sigma(S^{\eta^\sigma})=S^\eta$.
\end{proof}
 
From all the above, given $\eta \in \mathcal{A}$ with support $T$, for any $T'$ collinear to $T$, there is $\eta' \in \mathcal{A}$   with support $T'$
 in the equivalence class of $\eta$. This makes the problem of classifying all the nice subsets of $X$ up to collineations crucial for 
 our goal of classifying the  graded contractions of $\Gamma_{\f{g}_2}$.


\subsection{Classification of nice sets up to collineations} \label{sec33}

The results from \S\ref{sec32}   indicate that our classification problem involves determining all the nice subsets of $X$ up to collineations. We begin by introducing certain ``special'' types of subsets of $X$, which, as we will see, happen to be nice.

\begin{define} \label{niceSubsets}
Let $i, j, k \in I$ be generative. 
\begin{enumerate}
\item $X_{L_{ij}}:= \big\{\{i, j\}, \{i, i\ast j\}, \{j, i\ast j\}\big\}$
\item $X_{{L^C_{ij}}}:= \big \{\{i', j'\} \mid i', j' \in I, \, i' \neq j', \, i', j' \notin L_{ij} \big \}$
\item $X_{(i)}:=\setbar{\{i, \ell\}}{\ell \in I,\, \ell \neq i}$
\item $X^{(i)}:=\setbar{\{i', j'\}\in X}{i'\ast j' = i}$ 
\item $T_{\{i, j, k\}}:= P_{\{i, j, k\}} \cup P_{\{i, j, \,i \ast k\}} \cup P_{\{i, k,\,i \ast j\}} \cup P_{\{i,\, i \ast j, \,i \ast k\}}$
\end{enumerate}		
Notice that $T_{\{i,j,k\}}$ is well defined, since all the triplets involved are generating triplets. 
\end{define}
The reader might find the pictures below very helpful to retain these definitions.\smallskip

\begin{center}
\begin{tikzpicture}[>=stealth]  
	\draw (60: 2) node {$X_{L_{24}}$};
	\draw (210:2) -- (150:1);
	\draw (150:1) -- (90:2);
	\draw (90:2) -- (30:1);
	\draw (30:1) -- (330:2);
	\draw (330:2) -- (270:1);
	\draw (270:1) -- (210:2);
	
	\draw (210:2) -- (30:1);
	\draw (90:2) -- (270:1);
	\draw (330:2) -- (150:1);
	
	\draw [<->] [blue, very thick] (270:1) arc (270 : 150 :1);
	\draw [<->] [blue, very thick] (150:1) arc (150: 30 : 1);
	\draw [<->] [blue, very thick] (30:1) arc (30 : -90 : 1); 
	
\begin{scope}[xshift=+125pt]  
	\draw (60: 2) node {$X_{(3)}$};
	\draw (210:2) -- (150:1);
	\draw (150:1) -- (90:2);
	\draw (90:2) -- (30:1);
	\draw (30:1) -- (330:2);
	\draw (330:2) -- (270:1);
	\draw (270:1) -- (210:2);
	
	\draw (270:1) arc (270 : 150 :1);
	\draw (150:1) arc (150: 30 : 1);
	\draw (30:1) arc (30 : -90 : 1);
	
	\draw [<->] [blue, very thick] (210:2) -- (0:0);
	\draw [<->] [blue, very thick] (90:2) -- (0:0);
	\draw [<->] [blue, very thick] (330:2) -- (0:0);
	\draw [<->] [blue, very thick] (0:0) -- (30:1);
	\draw [<->] [blue, very thick] (0:0) -- (270:1);
	\draw [<->] [blue, very thick] (0:0) -- (150:1);
\end{scope}
	
\begin{scope}[xshift=+250pt]  
	\draw (60: 2) node {$X^{(1)}$};
	\draw (150:1) -- (90:2);
	\draw (90:2) -- (30:1);
	\draw (330:2) -- (270:1);
	\draw (270:1) -- (210:2);
	
	\draw (210:2) -- (30:1);
	\draw (90:2) -- (0:0);
	\draw  (330:2) -- (150:1);

	\draw (270:1) arc (270 : 150 :1);
	\draw (150:1) arc (150: 30 : 1);
	\draw (30:1) arc (30 : -90 : 1);

	\draw [<->] [blue, very thick] (210:2) -- (150:1);
	\draw [<->] [blue, very thick] (30:1) -- (330:2);
	\draw [<->] [blue, very thick] (0:0) -- (270:1);	
\end{scope}\end{tikzpicture}
\end{center} \ 

\begin{center}\begin{tikzpicture}[>= stealth]
\begin{scope}[  yshift=-1pt]
	\draw (60: 2) node {$X_{L_{24}^C}$};	
	\draw (150:1) -- (90:2);
	\draw (90:2) -- (30:1);
	\draw (30:1) -- (330:2);
	\draw (330:2) -- (270:1);
	\draw (0:0) -- (30:1);
	\draw (150:1) -- (210:2);
	\draw (210:2) -- (270:1);
	\draw (150:1) -- (0:0);
	\draw (270:1) -- (0:0);
	
	\draw (150:1) arc (150: 30 : 1);
	\draw (30:1) arc (30 : -90 : 1);
	\draw(150:1) arc (150: 270: 1);
	
	\draw [<->] [blue, very thick] (90:2) -- (0:0);
	\draw [<->] [blue, very thick] (0:0) -- (330:2);
	\draw [<->] [blue, very thick] (0:0) -- (210:2);
	\draw [<->] [blue, very thick, dotted] (90:2) .. controls (30:1.5) .. (330:2);
	\draw [<->] [blue, very thick, dotted] (90:2) .. controls (150:1.5) .. (210:2) ;
	\draw [<->] [blue, very thick, dotted] (210:2) .. controls (270:1.5) .. (330:2);
\end{scope}

\begin{scope}[xshift=+125pt, yshift=-1pt]  
	\draw (38: 2.6) node  {\hspace{-12pt}$T_{\{2,3,4\}}$};
	\draw (330:2) -- (210:2);

	\draw (90:2) -- (30:1);
	\draw (90:2) -- (150:1);

	\draw (270:1) arc (270 : 150 :1);
	\draw (30:1) arc (30 : -90 : 1);
	\draw [<->] [blue, very thick] (210:2) -- (150:1);
	\draw [<->] [blue, very thick] (30:1) -- (330:2);
	\draw [<->] [blue, very thick, dotted] (330:2) .. controls (270: 1.5) .. (210:2);
	
	\draw [<->] [blue, very thick] (210:2) -- (0:0);
	\draw [<->] [blue, very thick] (0:0) -- (30:1);
	\draw [<->] [blue, very thick] (0:0) -- (270:1);
	\draw [<->] [blue, very thick] (90:2) -- (0:0);
	\draw [<->] [blue, very thick] (0:0) -- (150:1);
	\draw [<->] [blue, very thick] (330:2) -- (0:0);
	\draw [<->] [blue, very thick] (150:1) arc (150: 30 : 1);
\end{scope}

\begin{scope}[xshift=+250pt, yshift=-1pt]  
	\draw (55: 2) node {\ $X\setminus X_{L_{24}^C}$};
	\draw (90:2) -- (0:0);
	\draw (210:2) -- (0:0);
	\draw (330:2) -- (0:0);

	\draw [<->] [blue, very thick] (210:2) -- (150:1);
	\draw [<->] [blue, very thick] (150:1) -- (90:2);
	\draw [<->] [blue, very thick] (90:2) -- (30:1);
	\draw [<->] [blue, very thick] (30:1) -- (330:2);
	\draw [<->] [blue, very thick] (330:2) -- (270:1);
	\draw [<->] [blue, very thick] (270:1) -- (210:2);
	
	\draw [<->] [blue, very thick, dotted] (210:2) .. controls (270:0.5) .. (30:1);
	\draw [<->] [blue, very thick] (0:0) -- (30:1);
	\draw [<->] [blue, very thick] (0:0) -- (270:1);
	\draw [<->] [blue, very thick, dotted] (90:2) .. controls (150:0.5) .. (270:1);
	\draw [<->] [blue, very thick] (0:0) -- (150:1);
	\draw [<->] [blue, very thick, dotted] (330:2) .. controls (30: 0.5) .. (150:1);

	\draw [<->] [blue, very thick] (270:1) arc (270 : 150 :1);
	\draw [<->] [blue, very thick] (150:1) arc (150: 30 : 1);
	\draw [<->] [blue, very thick] (30:1) arc (30 : -90 : 1);
	\end{scope}
\end{tikzpicture}
\end{center}

The following result will be very useful when proving the classification theorem (see Theorem~\ref{classiN}).

\begin{lemma} \label{123Sets}
The following assertions hold for $i, j, k \in I$ generative:
\begin{enumerate}
\item[\rm (i)] $X_{(i)} \sim_c X_{(1)}$;
\item[\rm (ii)] $X^{(i)} \sim_c X^{(1)}$;
\item[\rm (iii)] $X_{L_{ij}} \sim_c X_{L_{12}}$;
\item[\rm (iv)] $X_{L^C_{ij}} \sim_c X_{L^C_{12}}$;
\item[\rm (v)] $X \setminus X_{L^C_{ij}} \sim_c X \setminus X_{L^C_{12}}$;
\item[\rm (vi)] $P_{\{i, j, k\}} \sim_c P_{\{1, 2, 3\}}$;
\item[\rm (vii)] $T_{\{i, j, k\}} \sim_c T_{\{1, 2, 3\}}$.
\end{enumerate}
\end{lemma}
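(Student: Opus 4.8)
The plan is to exploit a single structural principle: every one of the seven constructions in Definition~\ref{niceSubsets} is \emph{equivariant} with respect to the action $\tilde\sigma$ of $S_\ast(I)$ on $\mathcal P(X)$, simply because a collineation respects the operation $\ast$. Concretely, I would first record the identities
\[
\tilde\sigma(X_{(i)}) = X_{(\sigma(i))}, \quad
\tilde\sigma(X^{(i)}) = X^{(\sigma(i))}, \quad
\tilde\sigma(X_{L_{ij}}) = X_{L_{\sigma(i)\sigma(j)}}, \quad
\tilde\sigma(X_{L^C_{ij}}) = X_{L^C_{\sigma(i)\sigma(j)}},
\]
together with $\tilde\sigma(T_{\{i,j,k\}}) = T_{\{\sigma(i),\sigma(j),\sigma(k)\}}$, the identity $\tilde\sigma(P_{\{i,j,k\}}) = P_{\{\sigma(i),\sigma(j),\sigma(k)\}}$ being already available from the discussion preceding Lemma~\ref{le_relacionsoportes}. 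Each of these is verified directly from $\tilde\sigma(\{i',j'\}) = \{\sigma(i'),\sigma(j')\}$ and $\sigma(i'\ast j') = \sigma(i')\ast\sigma(j')$; for instance $i'\ast j' = i$ forces $\sigma(i')\ast\sigma(j') = \sigma(i)$, which gives the identity for $X^{(i)}$, and the four defining triplets of $T_{\{i,j,k\}}$ are carried by $\sigma$ precisely onto the four defining triplets of $T_{\{\sigma(i),\sigma(j),\sigma(k)\}}$.

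Once equivariance is in place, each claim reduces to a transitivity statement for $S_\ast(I)$ on the relevant piece of data: a point (for (i), (ii)), an ordered pair of distinct points, equivalently a line (for (iii)--(v)), or a generating triplet (for (vi), (vii)). All three follow from Remark~\ref{permutaciones}, which gives sharp transitivity on ordered generating triplets. Indeed, any point $i$ completes to a generating triplet $\{i,j,k\}$, and any two distinct points $i,j$ complete to a generating triplet by choosing $k$ off the line $L_{ij}$ (four such choices exist, since $L_{ij}$ contains only $3$ of the $7$ points); mapping one completed triplet to another via the unique collineation of Remark~\ref{permutaciones} then moves the prescribed point, resp.\ ordered pair, as required.

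Finally I would dispatch the seven cases. Since $\{1,2,3\}$ is a generating triplet (as $g_1,g_2,g_3$ generate $\bb Z_2^3$) and $\{1,2\}$ spans the line $L_{12}$, parts (i)--(iv), (vi) and (vii) follow by choosing $\sigma$ sending the base datum (the point $1$; the ordered pair $(1,2)$; or the triplet $(1,2,3)$) to the general one and invoking the matching equivariance identity above. Part (v) is then immediate from (iv): the same $\sigma$ satisfies $\tilde\sigma(X\setminus X_{L^C_{12}}) = X\setminus\tilde\sigma(X_{L^C_{12}}) = X\setminus X_{L^C_{ij}}$, because $\tilde\sigma$ is a bijection of $X$. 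The only points requiring genuine care --- and hence the main (though modest) obstacle --- are checking the equivariance identities for the less transparent sets $X^{(i)}$ and $T_{\{i,j,k\}}$, and correctly deducing point- and pair-transitivity from the triplet-transitivity of Remark~\ref{permutaciones}; everything else is bookkeeping.
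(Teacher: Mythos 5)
Your proposal is correct and takes essentially the same approach as the paper: the paper's proof simply writes down the unique collineation $\sigma$ sending the ordered triplet $(1,2,3)$ to $(i,j,k)$ (the one you obtain from Remark~\ref{permutaciones}) and asserts that $\tilde\sigma$ carries each set built from $1,2,3$ to the corresponding set built from $i,j,k$, which is precisely the equivariance you verify in detail. Your write-up just makes explicit the equivariance identities and the reduction of point/pair transitivity to triplet transitivity that the paper leaves implicit.
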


\begin{proof}
Define a collineation $\sigma \in S_\ast(I)$ as 
\[
\sigma(1) = i,\ \sigma(2) = j,\ \sigma(3) = k,\ \sigma(4) = i \ast j \ast k,\ 
\sigma(5) = i \ast j,\ \sigma(6) = i \ast k,\ \sigma(7) = j \ast k.
\]
In each case, $\tilde\sigma$ maps the   set associated to $1, 2, 3$ to the corresponding   set associated to $i, j, k$, which finishes the proof. 
\end{proof}

We next prove that the subsets from Definition~\ref{niceSubsets} are all nice. 

\begin{prop} \label{candidatos}
Let $i, j, k \in I$ be generative. The following subsets of $X$ are nice: 
\begin{enumerate}
\item[\rm (i)] $X$;
\item[\rm (ii)] $X_{L_{ij}}$,
  $X_{L^C_{ij}}$,
  $X^{(i)}$,
  $X_{(i)}$, and any of their subsets;
\item[\rm (iii)] $X \setminus X_{L^C_{ij}}$;
\item[\rm (iv)] $P_{\{i, j, k\}}$;
\item[\rm (v)] $T_{\{i, j, k\}}$.
\end{enumerate}
\end{prop}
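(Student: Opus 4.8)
The plan is to reduce everything to the standard triplet $\{1,2,3\}$ and then argue combinatorially on the Fano plane. Since collineations carry nice sets to nice sets (as observed in \S\ref{sec32}), Lemma~\ref{123Sets} lets me replace each set by the representative built on $1,2,3$; and $X$ is nice directly from Definition~\ref{def_nice}. It is convenient to call an ordered pair of edges $\big(\{i,j\},\,\{k,\,i\ast j\}\big)$ with $i,j,k$ generative a \emph{trigger}: thus $T\subseteq X$ is nice precisely when, for every trigger both of whose edges lie in $T$, the whole set $P_{\{i,j,k\}}$ lies in $T$. The first and most useful observation is the meta-principle that a subset of $X$ containing \emph{no} trigger at all is automatically nice, and so then is every one of its subsets.

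I would dispatch the four sets in (ii) by showing each is trigger-free. In $X_{L_{ij}}$ every edge lies inside the line $L_{ij}$, so the completion $i'\ast j'$ of each of its edges is again in $L_{ij}$, whereas a trigger needs its second point off $L_{i'j'}=L_{ij}$. In $X_{L^C_{ij}}$ the four points off $L_{ij}$ are the complement of a line, so the third point of any of their secants lands \emph{on} $L_{ij}$; hence the completion of every edge of $X_{L^C_{ij}}$ lies on $L_{ij}$ and can never be an endpoint of another edge of the set. In $X_{(i)}$ all edges pass through $i$, and for two such edges the completion of one would have to be the far endpoint of the other, forcing the third vertex to coincide with $i$ and so destroying generativity. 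In $X^{(i)}$ the point $i$ is an endpoint of no edge while every completion equals $i$, so the second edge of a putative trigger would have to contain $i$, which is impossible. Thus all four sets, together with their subsets, are trigger-free and hence nice.

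The remaining items do contain triggers, and here the key tool is the elementary fact that for a generating triplet the three diagonal points $i\ast j,\ j\ast k,\ k\ast i$ are collinear, since $g_{i\ast j}+g_{j\ast k}+g_{k\ast i}=2(g_i+g_j+g_k)=0$ in $\bb Z_2^3$. For (iii) I put $L:=L_{ij}$ and, given a trigger of $X\setminus X_{L^C}$ on a triangle $\{a,b,c\}$ with diagonal line $M$, I split into two cases. If $M=L$ then $a\ast b\in L$ while the vertices $a,b$ lie off their own diagonal line $M=L$, so $\{a,b\}$ would sit in $X_{L^C}$, contradicting that it belongs to the set; hence no trigger can have $M=L$. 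If $M\neq L$, then $L\cap M$ is a single point, and using that both trigger edges meet $L$ one forces two of the vertices $a,b,c$ onto $L$; since $L$ is a line, the third point of the side they span also lies on $L$, and a short check then shows that each of the six edges of $P_{\{a,b,c\}}$ has an endpoint on $L$, i.e. lies in $X\setminus X_{L^C}$. This dichotomy is the conceptual crux of the proposition.

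Finally, for (iv) and (v), after reducing to $\{1,2,3\}$ I would list the triggers. For $P_{\{1,2,3\}}$ the three cevian edges all complete to the single excluded point $4$, so none of them can open a trigger, while each of the three side edges pairs with exactly one cevian and reproduces the triplet $\{1,2,3\}$; hence every trigger returns $P_{\{1,2,3\}}$ itself. For $T_{\{1,2,3\}}=P_{\{1,2,3\}}\cup P_{\{1,2,6\}}\cup P_{\{1,3,5\}}\cup P_{\{1,5,6\}}$ one verifies that each of its finitely many triggers corresponds to one of these four triplets and is therefore absorbed. The main obstacle is bookkeeping rather than ideas: part (v) requires organizing this finite verification (ten edges and four sub-triangles sharing the vertex $1$), and the clean way to keep it under control is exactly the collinearity fact above, which pins down every completion and sharply cuts the number of candidate triggers.
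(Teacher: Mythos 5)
Your proposal is correct and takes essentially the same approach as the paper: part (ii) is proved by showing those sets admit no ``trigger'' at all (the paper's phrase is that no generating triplet $\{i',j',k'\}$ has both $\{i',j'\}$ and $\{k',i'\ast j'\}$ in the set), and (iii)--(v) by chasing all possible triggers, where your endgame coincides with the paper's: in (iii) two vertices of the triplet are forced onto the line $L$ so every edge of $P_{\{i',j',k'\}}$ meets $L$, and in (iv) the key point is that the completion $i\ast j\ast k$ of the three ``cevian'' edges occurs in no edge of $P_{\{i,j,k\}}$. The remaining differences — the trigger terminology, the reduction to $\{1,2,3\}$ via Lemma~\ref{123Sets}, and organizing (iii) around the diagonal line of the triplet — are cosmetic reorganizations of the same case analysis.
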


\begin{proof}
(i) trivially holds. 

\smallskip

\noindent (ii) For $Y \in\{ X_{L_{ij}},X_{L^C_{ij}},X^{(i)} \}$, there is no generating triplet $\{i', j', k' \}$ satisfying that $\{i', j'\}, \{k', i' \ast j'\} \in Y$, and so $Y$ is nice. The same happens for $X_{(i)}$; in fact, assume that $i', j', k' \in I$ is a generating triplet such that $\{i', j'\}, \{k', i' \ast j'\} \in X_{(i)}$. From $\{i', j'\} \in X_{(i)}$ we get that either $i' = i$ or $j' = i$; suppose, for example, that $i' = i$ (a similar argument works for $j' = i$). Now, from $\{k', i' \ast j'\} \in X_{(i)}$ we get that either $k' = i$ or $i'\ast j' = i$. But both of these cases contradict the fact that $i', j', k'$ are generative, so $X_{(i)}$ is nice. The last assertion in (ii) trivially holds. 

\smallskip

\noindent (iii) Notice that $X \setminus X_{L^C_{ij}} = \big\{\{m, n\}\in X \mid   \mbox{either } m \in L_{ij} \mbox{ or } n \in L_{ij}\big \}$. 
Suppose that $\{i', j', k' \}$ is a generating triplet such that $\{i', j'\}, \{k', i' \ast j'\} \in X \setminus X_{L^C_{ij}}$.
From $\{i', j'\} \in X \setminus X_{L^C_{ij}}$ we obtain that either $i' \in L_{ij}$ or $j'\in L_{ij}$. Assume, for instance, that $i' \in L_{ij}$. This implies that $\{i', k'\}, \{i', j' \ast k'\} \in X \setminus X_{L^C_{ij}}$. It remains to show that $\{j', k'\}$ and $\{j', i'\ast k'\}$ belong to $X \setminus X_{L^C_{ij}}$.
From $\{k', i' \ast j'\} \in X \setminus X_{L^C_{ij}}$, we have that either $k' \in L_{ij}$ or $i' \ast j' \in L_{ij}$. If $k' \in L_{ij}$, then $L_{ij} = L_{i'k'}$, since $i',k'\in L_{ij}$, and so $\{j',i'\ast k'\}\in X \setminus X_{L^C_{ij}}$. Lastly, if $i' \ast j' \in L_{ij}$, since we are assuming that $i' \in L_{ij}$, we obtain that $L_{ij} = L_{i'j'}$. Thus $j' \in L_{ij}$, and so $\{j', k'\}, \{j', i'\ast k'\} \in X \setminus X_{L^C_{ij}}$. This shows that $P_{\{i', j', k'\}} \subseteq X \setminus X_{L^C_{ij}}$, and so $X \setminus X_{L^C_{ij}}$ is nice.

\smallskip

\noindent (iv) We claim that the only generating triplet $\{i', j', k' \}$ with $\{i', j'\}, \{k', i' \ast j'\} \in P_{\{i, j, k\}}$ is precisely $\{i, j, k\}$. In fact, suppose that $\{i', j', k' \}$ is one of such generating triplets; then one of the three cases below occur.
\begin{itemize}
\item[-] $\{i', j'\} = \{i, j\}$, so $i'\ast j' = i\ast j$ and $k'\in\{k,k\ast i, k\ast j,k\ast i\ast j\}$, since $i', j', k' $ are generative. But $\{k', i \ast j\} \in P_{\{i, j, k\}}$ implies $k'= k$.

\item[-] $\{i', j'\} = \{i, k\}$, so $i' \ast j' = i\ast k$ and $k'\in\{j,j\ast i, j\ast k,j\ast i\ast k\}$. Again $\{k', i \ast k\} \in P_{\{i, j, k\}}$ forces $k' = j$.
\item[-] $\{i', j'\} = \{j, k\}$ similarly leads to $\{k', i' \ast j'\} = \{i, j \ast k\}$.
\end{itemize}
To finish, notice that $\{i', j'\} \notin\{ \{k, i \ast j\} , \{j, i \ast k\}, \{i,j \ast k\} \}$, since $i' \ast j'=i\ast j \ast k$ is not one of the components in a pair in $P_{\{i, j, k\}}$.

\smallskip 

\noindent (v) Suppose that $i', j', k'$ are generative  such that $\{i', j'\}, \{k', i' \ast j'\} \in T_{\{i, j, k\}}$. Then 
 $\{i', j', k'\} \in \big \{\{i,j,k\}, \, \{i, j, \,i \ast k\}, \, \{i, k,\,i \ast j\}, \, \{i,\, i \ast j, \,i \ast k\} \big\}$ and (v) follows.
\end{proof}

The above collection of examples contains many of the nice sets, but not all of them. 
Next we study the ``big'' nice sets; big in the sense that they contain $P_{\{1, 2, 3\}}$.

\begin{lemma} \label{claim6}
Let $T$ be a nice subset of $X$ such that $P_{\{1, 2, 3\}} \subsetneq T$. Then:
\begin{enumerate}
\item[\rm (i)] $T$ contains some $P_{\{i, j, k\}}$, for $\{i, j, k\}$ a generating triplet different from  $\{1,2,3\}$. 
\item[\rm (ii)] There exists $\sigma \in S_*(I)$ such that $P_{\{1, 2, 3 \}}\cup P_{\{1, 2, i\}}\subseteq \tilde \sigma(T)$ for either $i = 4$ or $i = 6$.
\end{enumerate}
\end{lemma}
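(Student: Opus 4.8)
The plan is to reduce, via the collineations that fix $P_{\{1,2,3\}}$, to a handful of configurations, and then to apply the absorbing property of nice sets exactly once in each. First I would record the incidence data: from the operation $\ast$ one has $1\ast2=5$, $1\ast3=6$, $1\ast4=7$, $2\ast3=7$, $2\ast4=6$, $3\ast4=5$ and $5\ast6=7$, whence $P_{\{1,2,3\}}=\{\{1,2\},\{1,3\},\{2,3\},\{1,7\},\{2,6\},\{3,5\}\}$. Geometrically $\{1,2,3,4\}$ is a quadrangle (no three collinear, as $4=1\ast2\ast3$) whose diagonal points $5,6,7$ form the complementary line $\{5,6,7\}$; this picture is what organizes the case analysis.

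Next I would isolate the symmetry. Since $\tilde\sigma(P_{\{i,j,k\}})=P_{\{\sigma(i),\sigma(j),\sigma(k)\}}$, every collineation permuting $\{1,2,3\}$ fixes $P_{\{1,2,3\}}$ setwise, and these form a subgroup isomorphic to $S_3$ acting on the $15$ edges of $X\setminus P_{\{1,2,3\}}$. Using the quadrangle/diagonal-line description one checks there are exactly four orbits, of sizes $3,6,3,3$, represented by $\{1,4\}$ (vertex--fourth point), $\{1,5\}$ (vertex--non-opposite diagonal), $\{4,5\}$ (fourth point--diagonal) and $\{5,6\}$ (diagonal--diagonal). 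As both conclusions of the lemma are invariant under collineations fixing $P_{\{1,2,3\}}$, I may assume the extra edge $e\in T\setminus P_{\{1,2,3\}}$ is one of these four representatives.

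The core of the proof is then a single use of niceness in each case, pairing $e$ with one edge already in $P_{\{1,2,3\}}\subseteq T$: for $e=\{1,4\}$ the edges $\{2,6\}$ and $\{2\ast6,1\}=\{4,1\}$ give $P_{\{1,2,6\}}\subseteq T$; for $e=\{1,5\}$ the edges $\{1,5\}$ and $\{1\ast5,3\}=\{2,3\}$ give $P_{\{1,3,5\}}\subseteq T$; for $e=\{4,5\}$ the edges $\{1,2\}$ and $\{1\ast2,4\}=\{5,4\}$ give $P_{\{1,2,4\}}\subseteq T$; and for $e=\{5,6\}$ the edges $\{1,2\}$ and $\{1\ast2,6\}=\{5,6\}$ give $P_{\{1,2,6\}}\subseteq T$. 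In each case the triplet produced is generative, different from $\{1,2,3\}$, and shares an edge with $\{1,2,3\}$; this already proves (i).

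Finally, for (ii) I would normalize the resulting edge-sharing pair. When the second triplet is $\{1,2,4\}$ or $\{1,2,6\}$ there is nothing left to do, and $i=4$ or $i=6$ respectively. When it is $\{1,3,5\}$, the collineation $\sigma_0$ determined by $(1,3,2)\mapsto(1,2,3)$ fixes $\{1,2,3\}$, carries the shared edge $\{1,3\}$ to $\{1,2\}$ and sends $5=1\ast2$ to $\sigma_0(1)\ast\sigma_0(2)=1\ast3=6$, so it takes the pair to $(\{1,2,3\},\{1,2,6\})$; composing with the reduction collineation yields the required $\sigma$ with $i\in\{4,6\}$. I expect the main difficulty to be purely organizational---keeping the Fano incidences straight so that every niceness trigger uses a genuinely generative triplet and two edges that really lie in $T$. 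The one conceptual point is that the four orbit types split precisely according to whether the union of the two triplets is a quadrangle, forcing $i=4$ (only the representative $\{4,5\}$), or contains a line, forcing $i=6$.
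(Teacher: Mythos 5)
Your proof is correct. I checked the load-bearing claims: the $S_3$ of collineations extending the permutations of $\{1,2,3\}$ is $\{\mathrm{id},(1\;2)(6\;7),(1\;3)(5\;7),(2\;3)(5\;6),(1\;2\;3)(5\;7\;6),(1\;3\;2)(5\;6\;7)\}$, its orbits on the fifteen edges outside $P_{\{1,2,3\}}$ are exactly the four you list (sizes $3,6,3,3$, representatives $\{1,4\},\{1,5\},\{4,5\},\{5,6\}$), each of your four niceness applications uses two edges genuinely in $T$ and a generative triplet ($1\ne 2\ast 6$, $3\ne 1\ast 5$, $4\ne 1\ast 2$, $6\ne 1\ast 2$), and the normalization of $\{1,3,5\}$ to $\{1,2,6\}$ by the collineation extending $(2\;3)$ is right, since $\sigma_0(5)=\sigma_0(1)\ast\sigma_0(2)=1\ast 3=6$.

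The paper's proof uses the same two ingredients --- collineations stabilizing $\{1,2,3\}$ and single applications of niceness pairing the extra edge with an edge of $P_{\{1,2,3\}}$ --- but organizes them differently, and less economically. For (i) it runs through the possible extra edges in an ad hoc list ($\{i,5\}$ with $i\in\{4,6,7\}$, then $\{1,5\}$ and $\{2,5\}$, with $\{i,6\}$ and $\{i,7\}$ reduced to $\{i,5\}$ by collineations, and finally $\{i,4\}$), and its trigger choices produce triplets such as $\{1,5,6\}$ and $\{2,5,7\}$ that meet $\{1,2,3\}$ in only one point. Because the paper's (ii) is then written to rely only on the \emph{statement} of (i), it needs a genuinely separate second stage: setting $U=\{1,2,3\}\cap\{i,j,k\}$, it analyzes the cases $|U|=0,1,2$, each requiring further niceness applications or collineations. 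Your organization removes that second stage: computing the orbits first cuts the case count to four, and choosing each trigger so that the output triplet shares two points with $\{1,2,3\}$ makes (ii) follow from at most one additional collineation. What the paper's route buys is that it never needs the explicit orbit computation and its part (ii) is self-contained given only the statement of (i); what yours buys is a uniform reduction, fewer cases, and an (ii) that is nearly automatic.
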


\begin{proof}
(i)  
The proof relies on examining all the possible $\{i_1, i_2\}$ that are in $T$ but not in $P_{\{1, 2, 3\}}$
and proving that in every case, there exists a generating triplet satisfying the required condition. For instance, if $\{i, 5\}\in T$ for $i \in \{4, 6, 7\}$, 
since $\{1, 2\}$ and $\{i, 1 \ast 2\} = \{i, 5\}$ are both in $T$, which is nice, we get that $P_{\{1, 2, i\}} \subseteq T$. 
Similarly, if $\{1, 5\} \in T$, as $\{1, 5\}, \{6, 1 \ast 5\} = \{6, 2\} \in T$, we get that $P_{\{1, 5, 6\}} \subseteq T$; 
and if $\{2, 5\} \in T$, we get that $P_{\{2, 5, 7\}} \subseteq T$. 
The case    $\{i, 6\} $ (respectively, $\{i, 7\} $) belonging to $T\setminus \{1, 2, 3\}$ can be reduced 
to the above considered $\{i, 5\}\in T$, since there exists a collineation $\sigma$ such that 
$\tilde\sigma(\{1, 2, 3\})=\{1, 2, 3\}$ with $\sigma(5)=6$ (respectively, $\sigma(5)=7)$. 
Finally, if $\{i, 4\} \in T\setminus \{1, 2, 3\}$ with $i\ne 5,6,7$, then, if $i=1$, we get $P_{\{1, 2, 6\}} \subseteq T$
and if $i\in\{2, 3\}$, then $P_{\{i, 1,7\}} \subseteq T$.

\smallskip

\noindent (ii) Let $\{i, j, k\}$ be the generating triplet that exists by (i), and $U = \{1, 2, 3\}\cap\{i, j, k\}$. If $U = \emptyset$, then either $P_{\{4, 5, 6\}} \subseteq T$ or $P_{\{4, 6, 7\}} \subseteq T$. In any case, we have that $\{1, 2\},  \{4, 1 \ast 2\} = \{4, 5\} \in T$, and so 
$P_{\{1, 2, 4\}} \subseteq T$. If $|U| = 1$, we can assume without loss of generality that $U = \{1\}$, since there is $\sigma \in S_*(I)$ such that 
$\tilde\sigma(\{1, 2, 3\})=\{1, 2, 3\}$ and $\tilde\sigma(U) = \{1\}$. 
From here we obtain that $\{i, j, k \} \in \big\{\{1, 4, 5\}, \{1, 4, 6\}, \{1, 5, 6\}, \{1, 5, 7\},  
\{1, 6, 7\}\big\}$. If $\{i, j, k\} = \{1, 4, 5\}$, then $\{1, 2\}$ and $\{4, 1 \ast 2\} = \{4, 5\}$ are both in $T$, and so $P_{\{1, 2, 4\}} \subseteq T$.
The other 4 possibilities for $\{i, j, k\} $ similarly lead to either  $P_{\{1, 2, 4\}} \subseteq T$ or $P_{\{1, 2, 6\}} \subseteq T$. Lastly, if $|U| = 2$, then $U \in \big \{ \{1, 2\}, \{1, 3\}, \{2, 3\}\big \}$. We can  assume that $U = \{1, 2\}$ by using a convenient $\sigma \in S_*(I)$ as above. 
Then since $1 \ast 2 = 5$ and $\{i, j, k\}$ is a generating triplet, we have that $k \in \{4, 6, 7\}$. For $k \in \{4, 6\}$ we are done, so let $k = 7$; then $P_{\{1, 2, 3\}} \cup P_{\{1, 2, 6\}} \subseteq \tilde \sigma(T)$ for $\sigma = (1 \quad 2)(6 \quad 7)  \in S_\ast(I)$, since $\sigma$ fixes $P_{\{1, 2, 3\}}$ and  
sends $P_{\{1, 2, 7\}} $ to $ P_{\{1, 2, 6\}}$. 
\end{proof}

\begin{lemma} \label{claim1}
If $T$ is a nice subset of $X$ such that $X \setminus X_{L^C_{12}}\subsetneq T$, then $T = X$.
\end{lemma}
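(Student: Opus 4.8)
The plan is to make the two sets explicit and then bootstrap from a single extra edge. Here $L_{12} = \{1,2,5\}$, so $X_{L^C_{12}}$ consists of the six edges $\{a,b\}$ with $a,b \in \{3,4,6,7\}$, while $X \setminus X_{L^C_{12}}$ consists of the fifteen edges meeting $\{1,2,5\}$. The strict inclusion $X \setminus X_{L^C_{12}} \subsetneq T$ thus says that $T$ contains all fifteen edges that meet $L_{12}$, together with at least one edge $\{a,b\}$ with $a,b \in \{3,4,6,7\}$. The goal reduces to showing that this single extra edge forces all six edges inside $\{3,4,6,7\}$ into $T$, for then $X_{L^C_{12}} \subseteq T$ and hence $T = X$.

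The key observation I would isolate first is a pure incidence fact: for any two distinct $a,b \in \{3,4,6,7\}$ the third point $a\ast b$ of the line $L_{ab}$ lies on $L_{12}$. Indeed $a,b \notin L_{12}$, and in the Fano plane the line $L_{ab}$ meets $L_{12}$ in exactly one point, which must be $a\ast b$; one can also read this off directly from the seven lines. Consequently, for any third point $c \in \{3,4,6,7\}$, the triple $\{a,b,c\}$ is generative (since $c \ne a\ast b \in L_{12}$), and the edge $\{a\ast b, c\}$ meets $L_{12}$, hence already lies in $X\setminus X_{L^C_{12}} \subseteq T$.

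With this in hand the bootstrap step is immediate: whenever $\{a,b\} \in T$ with $a,b \in \{3,4,6,7\}$ and $c \in \{3,4,6,7\}$ is a third point, niceness applied to the generative triple $\{a,b,c\}$ — whose trigger edges $\{a,b\}$ and $\{a\ast b, c\}$ both lie in $T$ — yields $P_{\{a,b,c\}} \subseteq T$, and in particular $\{a,c\}, \{b,c\} \in T$. Starting from the given edge $\{a,b\}$ and writing $\{a,b,c,d\} = \{3,4,6,7\}$, I would apply this with third point $c$ and then with $d$ to obtain all edges through $a$ (namely $\{a,c\}$ and $\{a,d\}$, along with $\{b,c\}$ and $\{b,d\}$); one further application, to the edge $\{a,c\}\in T$ and the point $d$, supplies the last edge $\{c,d\}$. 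Thus all six edges inside $\{3,4,6,7\}$ belong to $T$.

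The argument has no real obstacle once the incidence fact is in place: the only point to check carefully is that each niceness trigger edge $\{a\ast b, c\}$ is genuinely already available, which is exactly guaranteed by $a\ast b \in L_{12}$. In particular I would not need any case analysis over which extra edge appears, nor any collineation reduction, since the bootstrap is uniform in the choice of $\{a,b\}$; this keeps the proof much shorter than the stabilizer-of-a-line symmetry reduction one might first attempt.
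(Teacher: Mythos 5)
Your proof is correct and takes essentially the same route as the paper's: both bootstrap from the single extra edge inside $\{3,4,6,7\}$ by iterating the niceness property, using as free trigger edges those meeting $L_{12}$, which already lie in $X\setminus X_{L^C_{12}}\subseteq T$. The only refinement is that by isolating the incidence fact $a\ast b\in L_{12}$ for all distinct $a,b\in\{3,4,6,7\}$ you make the three-step propagation uniform in the choice of extra edge, whereas the paper runs the same kind of argument concretely on the representative edge $\{3,4\}$ (``suppose, for example'').
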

 
\begin{proof}
Let $T$ be nice such that $X \setminus X_{L^C_{12}}\subsetneq T$. We will prove that $X_{L^C_{12}} \subseteq T$. Recall that $X_{L^C_{12}} = \{\{3, 4\}, \{3, 6\}, \{3, 7\}, \{4, 6\}, \{4, 7\}, \{6, 7\}\}$ and 
suppose, for example, that $\{3, 4\} \in T$. Then from $\{1, 5\} \in X \setminus X_{L^C_{12}}$, $3 \ast 4 = 5$, we have that $P_{\{1, 3, 4\}} \subseteq T$. In particular, $\{3, 1 \ast 4\} = \{3, 7\}$ and $\{4, 1 \ast 3\} = \{4, 6\}$ are in $T$. Now, using that $\{4, 6\}$ and $\{7, 4 \ast 6\} = \{7, 2\}$ are both in $T$, we get that $P_{\{4, 6, 7\}} \subseteq T$. Thus, $\{4, 7\}, \{6, 7\} \in T$. It remains to show that $\{3, 6\} \in T$, which follows from the fact that $P_{\{3, 6, 7\}} \subseteq T$ since $\{6, 7\}, \{3, 5\} \in T$. This shows that $T = X$, as desired.
\end{proof} 

\begin{prop} \label{prop_p2}
Let $T$ be a nice subset of $X$ containing $P_{\{1, 2, 3\}}$. Then there exists a collineation $\sigma \in S_\ast(I)$ such that $\, \tilde \sigma(T) \in \big\{X, \, X \setminus X_{L_{12}^C}, \, T_{\{1, 2, 3\}}, \, P_{\{1, 2,3\}}\big\}$.
\end{prop}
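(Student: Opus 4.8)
The four target sets are nested, $P_{\{1,2,3\}}\subseteq T_{\{1,2,3\}}\subseteq X\setminus X_{L_{12}^C}\subseteq X$, and each of them is nice by Proposition~\ref{candidatos}; so the plan is to pin $T$ down to one of these four ``levels'' by repeatedly using the absorbing property of Definition~\ref{def_nice} to force new edges into $T$, and then invoking the two structural lemmas already at hand. If $T=P_{\{1,2,3\}}$ we are done, so assume $P_{\{1,2,3\}}\subsetneq T$. By Lemma~\ref{claim6}(ii) we may, after replacing $T$ by $\tilde\sigma(T)$ for a suitable $\sigma\in S_\ast(I)$ (which keeps $T$ nice and still containing $P_{\{1,2,3\}}$, and which is harmless since the conclusion is stated up to collineation), assume $P_{\{1,2,3\}}\cup P_{\{1,2,i\}}\subseteq T$ with $i=4$ or $i=6$.

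I would first dispose of $i=4$. The set $P_{\{1,2,3\}}\cup P_{\{1,2,4\}}$ already contains every edge of $X\setminus X_{L_{12}^C}$ except the four edges through the point $5$, namely $\{1,5\},\{2,5\},\{5,6\},\{5,7\}$. Since $\{3,5\}\in P_{\{1,2,3\}}$ with $3\ast5=4$, while $\{1,4\},\{2,4\}\in P_{\{1,2,4\}}$, niceness applied to the pair $\{3,5\},\{4,1\}$ forces $P_{\{1,3,5\}}\subseteq T$ (which supplies $\{1,5\},\{5,6\}$) and applied to $\{3,5\},\{4,2\}$ forces $P_{\{2,3,5\}}\subseteq T$ (which supplies $\{2,5\},\{5,7\}$). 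Hence $X\setminus X_{L_{12}^C}\subseteq T$, and Lemma~\ref{claim1} gives $T=X\setminus X_{L_{12}^C}$ or $T=X$.

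Now the case $i=6$. Using $\{3,5\}\in P_{\{1,2,3\}}$ and $\{1,4\}\in P_{\{1,2,6\}}$, niceness forces $P_{\{1,3,5\}}\subseteq T$, and one checks directly that $P_{\{1,2,3\}}\cup P_{\{1,2,6\}}\cup P_{\{1,3,5\}}$ already exhausts $T_{\{1,2,3\}}=X_{(1)}\cup\{\{2,3\},\{2,6\},\{3,5\},\{5,6\}\}$; thus $T_{\{1,2,3\}}\subseteq T$. If $T=T_{\{1,2,3\}}$ we are done, so suppose $T$ contains some edge outside $T_{\{1,2,3\}}$; there are eleven such edges. I would cut the work down using the stabilizer of $T_{\{1,2,3\}}$ in $S_\ast(I)$: it fixes the point $1$, preserves the $4$-cycle $2\!-\!3\!-\!5\!-\!6$ together with the remaining pair $\{4,7\}$, and contains (for instance) the collineations $(2\,5)(4\,7)$ and $(2\,3)(5\,6)$; under it the eleven edges fall into three orbits, represented by the diagonal $\{2,5\}$, the cross-edge $\{2,4\}$, and the edge $\{4,7\}$. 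For each representative I would trace the forced closure: adding $\{2,5\}$ (with $2\ast5=1$ and all of $X_{(1)}$ present) forces, through the various $P_{\{2,5,k\}}$, every edge through $2$ and through $5$, so $X_{(1)}\cup X_{(2)}\cup X_{(5)}=X\setminus X_{L_{12}^C}\subseteq T$; adding $\{2,4\}$ forces $\{2,5\}$ via $P_{\{2,4,5\}}$ (using $\{5,6\}\in T$), reducing to the previous case; and adding $\{4,7\}$ forces, through the $P_{\{4,7,k\}}$ with $k\in\{2,3,5,6\}$, all cross-edges, after which $\{2,5\}$ and $\{3,6\}$ are themselves forced (by $P_{\{2,4,5\}}$ and $P_{\{3,4,6\}}$), giving $T=X$. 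In the first two situations Lemma~\ref{claim1} again yields $T\in\{X\setminus X_{L_{12}^C},X\}$; and if a closure only exhibits $X\setminus X_{L_{13}^C}\subseteq T$, one first applies $(2\,3)(5\,6)$, which carries $L_{13}$ to $L_{12}$, before invoking Lemma~\ref{claim1}.

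The main obstacle is this final case $T\supsetneq T_{\{1,2,3\}}$: in contrast to the clean reductions for $i=4,6$, it requires understanding the stabilizer of $T_{\{1,2,3\}}$ well enough to collapse the eleven candidate edges into a short list of orbit representatives, followed by an elementary but careful bookkeeping of which $P_{\{i,j,k\}}$ are absorbed at each step. The one genuinely delicate point is to confirm that the three chosen edges really do exhaust all orbits, so that no intermediate nice set strictly between $T_{\{1,2,3\}}$ and $X\setminus X_{L_{12}^C}$ (or $X$) is overlooked.
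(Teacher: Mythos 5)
Your proposal is correct, and its skeleton coincides with the paper's: the same initial reduction via Lemma~\ref{claim6}(ii) to the two cases $P_{\{1,2,3\}}\cup P_{\{1,2,4\}}\subseteq T$ and $P_{\{1,2,3\}}\cup P_{\{1,2,6\}}\subseteq T$, the same conclusion $X\setminus X_{L_{12}^C}\subseteq T$ in the first case followed by Lemma~\ref{claim1}, and the same intermediate step $T_{\{1,2,3\}}\subseteq T$ in the second. Where you genuinely depart from the paper is in the residual case $T_{\{1,2,3\}}\subsetneq T$: the paper treats only the edges $\{2,4\}$ and $\{3,4\}$ explicitly (the latter via a chain ending in $X\setminus X_{L_{13}^C}\subseteq T$ plus a collineation) and then asserts, leaving the verification to the reader, that any of the other nine edges of $X\setminus T_{\{1,2,3\}}$ forces $\{2,4\}$ or $\{3,4\}$ into $T$; you instead use the stabilizer of $T_{\{1,2,3\}}$ --- which the paper itself records as having order $8$ in Remark~\ref{numeronicesets} --- to collapse the eleven candidate edges into three orbits, and close each representative by hand. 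Your orbit claim is correct: the stabilizer fixes $1$, acts as the full dihedral group of the $4$-cycle $2$-$3$-$5$-$6$ (the rotation is realized as $(2\;3\;5\;6)(4\;7)$, and your reflections $(2\;5)(4\;7)$, $(2\;3)(5\;6)$ lie in it), so the two diagonals form one orbit, the eight edges joining $\{2,3,5,6\}$ to $\{4,7\}$ form another, and $\{4,7\}$ is fixed. Your three closures also check out: $\{2,5\}$ forces $X_{(2)}\cup X_{(5)}$ through the sets $P_{\{2,5,k\}}$, $k\in\{3,4,6,7\}$; $\{2,4\}$ forces $\{2,5\}$ through $P_{\{2,4,5\}}$ (using $\{5,6\}\in T_{\{1,2,3\}}$); and $\{4,7\}$ forces all cross-edges through the $P_{\{4,7,k\}}$ and then both diagonals, giving $T=X$. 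The net effect is that your treatment of the final case is more systematic and actually supplies the detail the paper omits, at the cost of justifying the orbit decomposition --- which, as you correctly flag, is the one delicate point, but amounts to a finite check against a group of order $8$.
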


\begin{proof}
If either $T = X$ or $T = P_{\{1, 2, 3\}}$, then we are done. Otherwise let $P_{\{1, 2, 3\}}  \subsetneq  T \subsetneq X$. Replace $T$ with  $\tilde \sigma(T)$ for $\sigma \in S_\ast(I)$ chosen as in Lemma~\ref{claim6} (ii). We distinguish two cases:

\smallskip

\noindent Case 1. $P_{\{1, 2, 3\}}\cup  P_{\{1, 2, 4\}} \subseteq  T$. 

\smallskip

We begin by proving that $X \setminus X_{L^C_{12}} =P_{\{1, 2, 3\}}\cup  P_{\{1, 2, 4\}} \cup X_{(5)}\subseteq  T$. To do so, we need to check that $\{i, 5\} \in  T$ for all $i = 1, 2, 6, 7$. Using that $\{1, 4\} \in P_{\{1, 2, 4\}}$ and $ \{3, 5\} \in P_{\{1, 2, 3\}}$ are both in $ T$, which is nice, we get that $P_{\{3, 5, 1\}} \subseteq  T$. From here, we obtain that $\{1, 5\}$ and $\{5, 6\}$ are in $ T$. Using now that $\{1, 5\}$ and $\{7, 2\}\in P_{\{1, 2, 4\}}$ are in $ T$, we get that $P_{\{1, 5, 7\}} \subseteq   T$, and so $\{5, 7\} \in  T$. Lastly, $\{2, 6\} \in P_{\{1, 2, 3\}}\subseteq  T$ and so $P_{\{5, 7, 2\}}\subseteq  T$, which yields $\{2, 5\} \in  T$. This shows that $X \setminus X_{L^C_{12}} \subseteq  T$. If $ T = X \setminus X_{L^C_{12}}$, then we are done; otherwise, $X \setminus X_{L^C_{12}} \subsetneq  T$ and Lemma~\ref{claim1} yields $ T = X$.

\smallskip

\noindent Case 2. $P_{\{1, 2, 3\}}\cup P_{\{1, 2, 6\}} \subseteq  T$. 

\smallskip

We first prove that $T_{\{1, 2, 3\}} \subseteq  T$.  
Using that $\{3, 1 \ast 2\} = \{3, 5\} \in P_{\{1, 2, 3\}}$ and $\{1, 2 \ast 6 \} = \{1, 4\} \in P_{\{1, 2, 6\}}$ are in $ T$, which is nice, the fact that $\{3, 5\}, \{1, 3\ast 5\} \in  T$ gives $P_{\{1, 3, 5\}} \subseteq   T$. In particular, we get that $\{1, 5\} \in   T$, proving that 
$T_{\{1, 2, 3\}}=P_{\{1, 2, 3\}}\cup P_{\{1, 2, 6\}}\cup \{\{1, 5\} \} \subseteq  T$.
 If $ T = T_{\{1, 2, 3\}}$, then there is nothing to prove; otherwise, $T_{\{1, 2, 3\}} \subsetneq  T$, which yields that 
$ T$ contains an element from the set $X \setminus T_{\{1, 2, 3\}} = \Big\{\{2,4 \},\{2,5 \},\{2,7 \},\{3,4 \},\{3,6 \},\{3,7 \},\{4,5 \},\{ 4,6\},\{4,7 \},\{ 5,7\},\{6,7 \}\Big \}$.

\noindent If $\{2, 4\} \in   T$, then from $\{2, 4\}, \{1, 2 \ast 4\} = \{1, 6\}  \in  T$ we get that $P_{\{1, 2, 4\}} \subseteq  T$, and $X \setminus X_{L^C_{12}} \subseteq  T$ by Case 1. Similarly, if $\{3, 4\} \in  T$, then using that $T$ is nice, 
\begin{align*}
P_{\{1, 3, 4\}}\subseteq  & T\Rightarrow \{3, 7\} \in  T\stackrel{\{1, 6\} \in   T }{\Longrightarrow} P_{\{1, 6,7\}} \subseteq  T\Rightarrow \{4,6\} \in   T  
\stackrel{\{3, 5\} \in   T}{\Longrightarrow} P_{\{3, 5, 6\}} \subseteq   T.
\end{align*}
From here, we obtain that $X \setminus X_{L^C_{13}} = T_{\{1, 2, 3\}}\cup P_{\{1, 3, 4\}} \cup P_{\{1, 6,7\}} \cup P_{\{3,5,6\}} \subseteq   T$. Taking now $\alpha \in S_\ast(I)$ such that $\tilde \alpha(X \setminus X_{L^C_{13}}) = X \setminus X_{L_{12}^C}$ as in Lemma~\ref{123Sets},
we get that $X \setminus X_{L_{12}^C} \subseteq \tilde \alpha (T)$; and we can proceed like in the proof of Case 1. Lastly, if any other element of $X \setminus T_{\{1, 2, 3\}}$ 
is in $ T$, using that $ T$ is nice, one can show that either $\{2, 4\} \in   T$ or $\{3, 4\} \in   T$.
\end{proof} 

Now, we investigate the ``smaller'' nice sets. We begin with a trivial but useful result. 

\begin{lemma} \label{rem_vacuous_nice}
Let $T$ be a nice subset of $X$ not containing any $P_{\{i', j', k'\}}$, for $i', j', k' \in I$ generative. If $i, j, k \in I$ satisfy  $\{i, j\}, \{i \ast j, k\}$ are both in $T$, then either $k = i$ or $k = j$.     
\end{lemma}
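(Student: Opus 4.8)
The plan is to argue by contradiction: assume that $i,j,k\in I$ satisfy $\{i,j\},\{i\ast j,k\}\in T$, yet $k\neq i$ and $k\neq j$. First I would observe that the hypotheses force $i,j,k$ to be a generating triplet. Indeed, $\{i,j\}\in X$ already gives $i\neq j$, and $\{i\ast j,k\}\in X$ gives $k\neq i\ast j$. Combined with $k\neq i$ and $k\neq j$, this says precisely that $i,j,k$ are pairwise distinct with $k\neq i\ast j$, which is exactly the definition of being generative.

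Once I know $i,j,k$ are generative, the niceness of $T$ applies directly. By Definition~\ref{def_nice}, the fact that $\{i,j\}\in T$ and $\{i\ast j,k\}\in T$ for the generative triplet $i,j,k$ immediately yields $P_{\{i,j,k\}}\subseteq T$. This contradicts the hypothesis that $T$ contains no $P_{\{i',j',k'\}}$ for any generative $i',j',k'\in I$. Hence our assumption $k\neq i$ and $k\neq j$ is untenable, so either $k=i$ or $k=j$, as claimed.

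I do not anticipate any real obstacle here; the statement is essentially a direct unwinding of the definitions of \emph{generative} and of \emph{nice set}. The only point that needs a moment of care is checking that the four distinctness conditions ($i\neq j$, $k\neq i\ast j$, $k\neq i$, $k\neq j$) together constitute exactly the generative condition, so that the absorbing property of niceness is genuinely triggered. Everything else is the immediate contrapositive of the ``$T$ contains no $P_{\{i',j',k'\}}$'' assumption.
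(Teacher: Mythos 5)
Your proof is correct and follows essentially the same route as the paper: both hinge on observing that $i\neq j$ and $k\neq i\ast j$ come for free from membership in $X$, so that $k\notin\{i,j\}$ would make $i,j,k$ generative and trigger the niceness property, contradicting the assumption that $T$ contains no $P_{\{i',j',k'\}}$. The paper phrases this as a contrapositive (non-generative plus $k\neq i\ast j$ forces $k\in\{i,j\}$) while you argue by contradiction, but the content is identical.
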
 

\begin{proof}
Let $i, j, k \in I$ such that  $\{i, j\}, \{i \ast j, k\} \in T$. In particular, $i \neq j$, $k \neq i \ast j$. If $i, j, k$ are generative, 
then $P_{\{i, j, k\}} \subseteq T$ since $T$ is nice. But this contradicts our hypothesis, so $i, j, k$ are not generative and $k \in \{i, j, i \ast j\}$.  We finish since $k \neq i \ast j$.
\end{proof}

In the following proof, we use this observation: 
 if $i,j\in I$, $i\ne j$, and $L$ is a line, if either $\{i,j\}\subseteq L$ or $\{i,j\}\subseteq I\setminus L$, then $i\ast j\in L$ (since any two lines intersect); otherwise, $i\ast j\notin L$.

\begin{prop} \label{prop_no_p2}
Let $T$ be a non-empty nice  set not containing $P_{\{i, j, k\}}$ for $i, j, k \in I$ generative.
\begin{enumerate}
\item[\rm (i)] If $X_L \subseteq T$ for some line $L$, then $T = X_L$.
\item[\rm (ii)] If $T \not\subseteq X_L$ for any line $L$ and $T\not\subseteq X_{( i)}$ for any $i  \in I$, then 
\begin{enumerate}
\item[\rm (1)] $\{i, j\} \in T$ implies $\{i\ast j, k\} \notin T$ for all  $k \in I$;
\item[\rm (2)] Either $T\subseteq X_{L^C}$ for some line $L$ or $T= X^{(i)}$ for some $i \in I$. 
\end{enumerate}
\end{enumerate}
In particular, $|T|$ is at most $6$.
\end{prop}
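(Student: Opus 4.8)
The plan is to prove part (i) directly, then recast (ii)(1) as the statement that $T$ meets each Fano line in at most one edge, and finally deduce (ii)(2) from a covering argument built on (ii)(1). Throughout I assume, as the hypotheses grant, that $T$ is nice and contains no $P_{\{i,j,k\}}$.

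For part (i), fix a line $L=\{a,b,c\}$ with $X_L\subseteq T$ and suppose for contradiction that some $e=\{p,q\}\in T$ lies outside $X_L$; since $X_L$ already contains the three edges inside $L$, not both of $p,q$ lie in $L$. Using the observation preceding the proposition (the third point $p\ast q$ lies in $L$ exactly when $p,q$ are both in or both out of $L$), I would split into two cases. If $p,q\notin L$, then $m:=p\ast q\in L$, and for any $r\in L\setminus\{m\}$ the edge $\{m,r\}\in X_L\subseteq T$; now $\{p,q\}$ and $\{p\ast q,r\}=\{m,r\}$ both lie in $T$ with $p,q,r$ generative, so niceness forces $P_{\{p,q,r\}}\subseteq T$, contradicting the no-$P$ hypothesis. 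If instead exactly one endpoint, say $p$, lies in $L$, write $L=\{p,s,t\}$; then $\{s,t\}\in X_L\subseteq T$ and $\{s\ast t,q\}=\{p,q\}=e\in T$ with $s,t,q$ generative, again producing a forbidden $P_{\{s,t,q\}}\subseteq T$. Hence no such $e$ exists and $T=X_L$.

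For (ii)(1), the cases $k\in\{i,j\}$ assert exactly that, if $\{i,j\}\in T$, neither of the other two edges $\{i,i\ast j\},\{j,i\ast j\}$ of the line $L_{ij}$ is in $T$, i.e.\ that $T$ carries at most one edge per line, while the case of generative $i,j,k$ is immediate from niceness and the no-$P$ hypothesis. To prove the line part I would assume some line carries two edges of $T$; as any two of the three edges of a line share a vertex, after relabelling $\{a,b\},\{a,c\}\in T$ with $L=\{a,b,c\}$ a line. Since $T\not\subseteq X_{(a)}$, there is $e=\{p,q\}\in T$ with $a\notin e$; examining the position of $\{p,q\}$ relative to $L$ (both in $L$, which forces the third edge, so $X_L\subseteq T$ and $T=X_L$ by part (i), against $T\not\subseteq X_L$; exactly one in $L$; or both outside $L$) one exhibits in the last two cases a generative triple formed by $\{p,q\}$ and one of $\{a,b\},\{a,c\}$ whose $P$ must then sit in $T$, contradicting no-$P$. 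This establishes (ii)(1).

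The heart is (ii)(2), which I would derive from (ii)(1) by a covering analysis. Set $V:=\bigcup_{e\in T}e$ and $W:=I\setminus V$; then (ii)(1) says precisely that for every $e=\{i,j\}\in T$ the third point $m(e):=i\ast j$ is uncovered, i.e.\ $m(e)\in W$, since any edge through $m(e)$ has the form $\{i\ast j,k\}$. I would then run a case analysis on $|W|$, using standard Fano incidences (two lines meet in a point; the three points $w_i\ast w_j$ of a non-collinear triple are collinear; the complement of a line contains no line and the complement of a triangle exactly one). The surviving cases are $|W|=1$, say $W=\{w\}$, where every $m(e)=w$ forces $T\subseteq X^{(w)}$ and, as $V=I\setminus\{w\}$ must be covered, $T=X^{(w)}$; $|W|=3$ with $W$ a line $L$, where all edges lie in $V=I\setminus L$, giving $T\subseteq X_{L^C}$; and $|W|=4$ with $V$ a triangle, where the edges of $T$ are sides of $V$, all lying in $I\setminus L$ for the unique line $L\subseteq W$, so $T\subseteq X_{L^C}$. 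Every other value is excluded: $|W|=0$ gives $T=\emptyset$; $|W|\ge5$ forces $|T|\le1$, hence $T\subseteq X_{(i)}$; $|W|=4$ with $V$ a line leaves no admissible edge; $|W|=3$ with $W$ a triangle forces each diagonal point of $W$ to be joined only to the remaining point $p$, whence $T\subseteq X_{(p)}$; and $|W|=2$ is impossible because $u:=w_1\ast w_2$ lies in $V$ yet the only edges through $u$ with third point in $\{w_1,w_2\}$ are $\{u,w_1\}$ and $\{u,w_2\}$, each covering a point of $W$. This gives the dichotomy, and since $|X_{L^C}|=6$ and $|X^{(i)}|=3$ we obtain $|T|\le6$. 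I expect the genuine obstacle to be the eliminations of $|W|=2$ and of $|W|=3$ with $W$ a triangle: in each one must show that a forced vertex of $V$ cannot be covered without either placing an edge onto $W$ or collapsing $T$ into a star, and making this precise relies on the Fano arithmetic, in particular the collinearity of the three diagonal points of a triangle. Once (ii)(1) is in hand, the remaining cases are essentially bookkeeping.
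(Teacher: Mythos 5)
Your proposal is correct. For (i) and (ii)(1) you use the same mechanism as the paper: suppose an offending edge exists and let niceness force a forbidden $P_{\{i,j,k\}}$ into $T$; the differences are only organizational (you case on the position of the extra edge relative to the line $L$ and invoke $T\not\subseteq X_{(a)}$ at the shared vertex $a$, where the paper first reduces to $k\in\{i,j\}$ via its vacuous-niceness lemma and then runs a longer split). I checked your elided verifications and they all go through: with $\{a,b\},\{a,c\}\in T$ and $L=\{a,b,c\}$, an edge $\{b,q\}\in T$ with $q\notin L$ yields the generative triple $\{a,c,q\}$ because $\{a\ast c,q\}=\{b,q\}\in T$; an edge $\{p,q\}\in T$ disjoint from $L$ yields $\{p,q,b\}$ when $p\ast q=a$ and $\{p,q,a\}$ when $p\ast q\in\{b,c\}$. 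Where you genuinely depart from the paper is (ii)(2): the paper normalizes by a collineation so that $\{1,2\}\in T$, splits on whether $T$ contains an edge disjoint from $\{1,2\}$ or $T\subseteq X_{(1)}\cup X_{(2)}$, and finishes with explicit bookkeeping in the indices $1,\dots,7$; you instead introduce the uncovered set $W=I\setminus\bigcup_{e\in T}e$, observe that (ii)(1) says exactly that the third point of every edge of $T$ lies in $W$, and classify by the size and shape of $W$ using Fano incidences (the three diagonal points of a triangle are collinear, the complement of a line contains no line, the complement of a triangle contains exactly one line). All your cases check out: $|W|=2$ is impossible for precisely the reason you give; $|W|=3$ a triangle forces $T\subseteq X_{(p)}$ for the seventh point $p$, excluded by hypothesis; $|W|=1$ gives $T=X^{(w)}$ because the three edges of $X^{(w)}$ are disjoint and must all be present to cover $I\setminus\{w\}$; the cases $|W|=3$ a line and $|W|=4$ with triangular complement give $T\subseteq X_{L^C}$. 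Your route is coordinate-free, needs no collineations, and exposes the structural reason for the dichotomy (the shape of the uncovered set determines the type of $T$), whereas the paper's route is heavier on notation but uses nothing beyond the product rule. One cosmetic remark: your closing bound $|T|\le 6$ is argued only within case (ii); for completeness note that the excluded configurations $T\subseteq X_L$ and $T\subseteq X_{(i)}$ give $|T|\le 3$ and $|T|\le 6$ trivially.
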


\begin{proof}
(i) Let $i, j \in I$ be such that $i \neq j$ and $X_{L_{ij}} \subseteq T$. Suppose on the contrary that $X_{L_{ij}} \subsetneq T$. Then there exists $\{k, \ell\} \in T$ such that $k \notin L_{ij}=\{i, j, i \ast j\}$. In particular $\{i, j, k\}$ is a generating triplet. We distinguish a few cases:
\begin{itemize}
\item[-] $\ell \in \{i, j\}$. If $\ell = i$, then $\{j, i \ast j \}, \{  k, i\} \in T$ and so $P_{\{k, j, i \ast j\}} \subseteq T$, which is impossible. A similar argument works for $\ell = j$. 
    
\item[-] $\ell = i \ast j$. In this case, from $\{k,  i \ast j\}$ and $\{i, j\} \in T$ we get $P_{\{k, i, j\}} \subseteq T$, a contradiction. 

\item[-] $\ell \notin L_{ij}$. Then $k \ast \ell \in L_{ij}$.  
If $k \ast \ell = i$, then $\{k, \ell\},   \{ j, k \ast \ell\} \in T$ implies $P_{\{j, k, \ell\}} \subseteq T$, a contradiction; $k \ast \ell = j$ leads us to a contradiction, as well. Lastly, if $k \ast \ell = i \ast j$, then $\{k, \ell\}, \{i, k\ast \ell  \} = \{i, i\ast j\} \in T$ implies that $P_{\{i, k, \ell\}} \subseteq T$, a contradiction.
\end{itemize}
In any case, we have reached a contradiction, which implies that $T = X_{L_{ij}}$.

\smallskip

\noindent (ii) Suppose that $T \not\subseteq X_L$ for any line $L$ and $T\not\subseteq X_{(i)}$ for any $i \in I$.

\noindent (1) Let $i, j \in I$ be such that  $\{i, j\} \in T$. Suppose on the contrary that $\{i \ast j, k \} \in T$ for some $k \in I$. Then $k \in \{i, j\}$ by Lemma~\ref{rem_vacuous_nice}. If $\{i \ast j, i\}, \{i \ast j, j\} \in T$, then $X_{L_{ij}} \subseteq T$ and $T = X_{L_{ij}}$ by (1), which contradicts our hypothesis on $T$. Thus, we can assume that $\{i\ast j, i\} \in T$ and $\{i \ast j, j\} \notin T$. From $T \not\subseteq X_{L_{ij}}$, we can find $\{k, \ell\} \in T$ such that  $\ell \notin L_{ij}$. We consider two cases:
\begin{itemize}
\item[-] $k \in L_{ij}$. If $k = i$, then $\{i, \ell\}, \{i, j\}, \{i, i\ast j\} \in T$ and since $T\not\subseteq X_{(i)}$ there are $a, b $ in $I \setminus \{i\}$ with $\{a, b\} \in T$. If $a = j$, then $\{  b, j\} , \{i, i \ast j\}\in T$ implies $P_{\{ b, i, i\ast j\}} \subseteq T$, a contradiction; 
if $a = i \ast j$, then $ \{i, j\}, \{  i \ast j,b\} \in T$ yields $P_{\{  i, j,b\}} \subseteq T$, a contradiction. Hence, $a, b \notin L_{ij}$, which implies $a \ast b \in L_{ij}$. From here we obtain that $P_{\{a, b, m\}} \subseteq T$, for some $m$ in $L_{ij}$ (that depends on $a \ast b$), a contradiction. Lastly, if $k =j$
(respectively, if $k=i \ast j$), then we can easily derive that $P_{\{i,i\ast  j, \ell\}} \subseteq T$ (respectively, $P_{\{i, j, \ell\}} \subseteq T$), a contradiction.  
\item[-] $k \notin L_{ij}$. Then $k \ast \ell \in L_{ij}$, as observed before the proposition. If $k \ast \ell \in \{j,i\ast j\}$, then $P_{\{i, k, \ell\}} \subseteq T$;
and if $k \ast \ell=i$, then $P_{\{j, k, \ell\}} \subseteq T$.
\end{itemize}
In any case, we have reached a contradiction, and so (1) follows. 

\smallskip

\noindent (2) We can always assume that $\{1, 2\} \in T$. From (1) we have that $\{5, k\} \notin T$ for all $k$. 
First, assume we can find  $\ell_1, \ell_2 \ne1,2$ such that   $\{\ell_1, \ell_2\} \in T$. Keeping in mind that $\{1,2,\ell_1\}$ is a generating triplet,  
we can find $\sigma \in S_\ast(I)$ fixing $1$ and $2$ and sending $\ell_1$ onto 3; this allows us to take $\ell_1 = 3$, and so $\ell_2 \in \{4, 6, 7\}$. From (1) we obtain that $3 \ast \ell_2 \neq 1, 2$, which yields $\ell_2 \notin \{6, 7\}$ and so $\ell_2 = 4$. In addition, using that $T$ does not contain any subset of the form $P_{\{i, j, k\}}$, from $\{1, 2\} \in T$ we get $\{4, 7\}, \{3, 6\}, \{4, 6\}, \{3, 7\} \not \in T$; similarly, from $\{3, 4\} \in T $ we obtain $\{1, 6\}, \{2, 7\}, \{2, 6\}, \{1, 7\} \not\in T $. Altogether we are left with:
\[	
\{1, 2\}, \{3, 4\}  \in T\subseteq\{ \{1, 2\}, \{3, 4\}, 
\{1, 3\}, \{1, 4\}, \{2, 3\}, \{2, 4\}, \{6, 7\}\}.
\]
If $\{6, 7\} \notin   T $, then  
 $T \subseteq X_{L^C_{67}}$;
otherwise    $X^{(5)}=\{\{1, 2\}, \{3, 4\}, \{6, 7\}\}\subseteq T$.  From (1) there is no $\{i, j\}\in T$ with  
$i\ast j\in \{1,2,3,4,6,7\}$, and then we get $X^{(5)}=T$.
Second, consider the possibility $T \subseteq X_{(1)}\cup X_{(2)}$. From  $T\not\subseteq X_{(1)}$ and $T\not\subseteq X_{(2)}$, we can find
$\ell_1, \ell_2 \ne1, 2$ with $\{1, 2\}, \{1, \ell_1\}, \{  2,\ell_2\}\in T$. As above, a convenient collineation allows us to take   $\ell_1=3$. 
If $T \not\subseteq X_{L^C_{67}}$, this means that there is $\ell_3\in\{1, 2\}$ such that $\{  7,\ell_3\}\in T$. The only possibility is $\{  1,7\}\in T$, since $2\ast 7=3$. Now $1\ast 7 = 4$ forces $\ell_2$ to be $3$. But $2\ast 3 = 7$, which  contradicts (1). Hence, the possibility $T \subseteq X_{(1)}\cup X_{(2)}$ does not lead to any solution.
\end{proof}

\begin{cor}\label{todosnice}
Any subset $T$ of $X$ different from $P_{\{i, j, k\}}$ is nice with cardinal 
$\le6$ if and only if either $T\subseteq X_{L}$ or $T\subseteq X_{L^c}$ for some line $L$, or  $T\subseteq X_{(\ell)}$ or $T\subseteq X^{(\ell)}$ for some $\ell \in I$.
\end{cor}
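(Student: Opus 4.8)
The plan is to obtain this corollary as an immediate repackaging of Proposition~\ref{candidatos} and Proposition~\ref{prop_no_p2}, so that the only genuinely new ingredient needed is an elementary cardinality count; this count is what excludes the sets of the form $P_{\{i,j,k\}}$ and lets me avoid reopening the analysis of the large nice sets.

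For the ``if'' direction I would argue directly. Assume $T$ is contained in one of $X_{L}$, $X_{L^C}$, $X_{(\ell)}$ or $X^{(\ell)}$. By Proposition~\ref{candidatos}~(ii) each of these four sets and every one of its subsets is nice, so $T$ is nice. For the bound $|T|\le 6$ I would just count the ambient sets: a line carries $\binom{3}{2}=3$ edges, the four points off a line span $\binom{4}{2}=6$ edges, there are $6$ edges through a fixed point, and exactly $3$ edges whose third collinear point is a fixed point; hence $|X_{L}|=|X^{(\ell)}|=3$ and $|X_{L^C}|=|X_{(\ell)}|=6$, giving $|T|\le 6$ in every case.

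For the ``only if'' direction, assume $T\neq P_{\{i,j,k\}}$ is nice with $|T|\le 6$. Since $\emptyset\subseteq X_{L}$ trivially, I may assume $T\neq\emptyset$. The crucial remark is that $T$ contains no $P_{\{i,j,k\}}$: because $i,j,k$ generative forces the six edges listed in Definition~\ref{def_nice} to be pairwise distinct, one has $|P_{\{i,j,k\}}|=6$, so $P_{\{i,j,k\}}\subseteq T$ combined with $|T|\le 6$ would force $T=P_{\{i,j,k\}}$, against the hypothesis. Thus $T$ is a non-empty nice set containing no $P_{\{i,j,k\}}$, which is exactly the hypothesis of Proposition~\ref{prop_no_p2}. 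I would then split into three cases: if $T\subseteq X_{L}$ for some line $L$, or $T\subseteq X_{(\ell)}$ for some $\ell$, there is nothing left to prove; otherwise $T\not\subseteq X_{L}$ for every line and $T\not\subseteq X_{(\ell)}$ for every $\ell$, which is precisely the hypothesis of Proposition~\ref{prop_no_p2}~(ii), and its conclusion~(2) gives $T\subseteq X_{L^C}$ for some line $L$ or $T=X^{(\ell)}$ for some $\ell$. In all cases $T$ lies in one of the four families.

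I do not expect a serious obstacle, since the heavy combinatorics is already carried out in Proposition~\ref{prop_no_p2}; the only step deserving care is the count $|P_{\{i,j,k\}}|=6$, which is what turns ``$T$ contains some $P_{\{i,j,k\}}$'' into ``$T$ equals that $P_{\{i,j,k\}}$'' and thereby bypasses the large nice sets $X$, $X\setminus X_{L^C}$ and $T_{\{i,j,k\}}$ of Proposition~\ref{prop_p2}, whose sizes $21$, $15$ and $10$ all exceed $6$. Finally, to see why the clause ``different from $P_{\{i,j,k\}}$'' is essential, I would note that $P_{\{i,j,k\}}$ meets six of the seven points and so is contained in none of the four families, so without this clause the biconditional would break.
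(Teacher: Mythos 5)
Your proof is correct and follows essentially the same route as the paper, whose entire proof is ``Apply Propositions~\ref{candidatos} and \ref{prop_no_p2}'': you use Proposition~\ref{candidatos}~(ii) for the ``if'' direction and Proposition~\ref{prop_no_p2} for the ``only if'' direction, merely making explicit the cardinality counts (in particular $|P_{\{i,j,k\}}|=6$, which converts ``$T$ contains some $P_{\{i,j,k\}}$'' into ``$T$ equals it'') and the treatment of $T=\emptyset$ that the paper leaves implicit. No gaps; this is a faithful, slightly more detailed version of the paper's argument.
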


\begin{proof}
Apply Propositions~\ref{candidatos} and \ref{prop_no_p2}.
\end{proof}

\noindent We are now in a position to classify the nice subsets of $X$.

\begin{theorem} \label{classiN}
Every  nontrivial nice set $T$ is collinear to one and only one of the following subsets:
\begin{enumerate}
\item[-] if $|T| = 0$, then $T=T_1:=\emptyset$;
\item[-] if $|T| = 1$, then $T \sim_c T_{2}:=\{\{1, 2\}\}$;

\item[-] for $|T| = 2$, there are three possibilities:
\[
 T_{3}:=\{\{1, 2\}, \{1, 3\}\},\quad T_{4}:=\{\{1, 2\}, \{1, 5\}\}, 
\quad T_{5}:=\{\{1, 2\}, \{6, 7\}\};
\]
\item[-] if $|T| = 3$, then $T$ is collinear to one of the following sets: 
\begin{align*}
& T_{6}:=X_{L_{12}}, \quad T_{7}:=X^{(1)}, \quad T_{8}:=\{\{1, 2\}, \{1, 3\}, \{1, 4\}\}, \quad T_{9}:=\{\{1, 2\}, \{1, 3\}, \{1, 5\}\}, 
\\
& 
T_{10}:=\{\{1, 2\}, \{1, 3\}, \{1, 7\}\},
\quad 
T_{11}:=\{\{1, 2\}, \{1, 6\}, \{2, 6\}\}, 
\quad 
T_{12}:=\{\{1, 2\}, \{1, 6\}, \{6, 7\}\};
\end{align*}
\item[-] for $|T| = 4$, we have four possibilities:
\begin{align*}
& T_{13}:=\{\{1, 2\}, \{1, 3\}, \{1, 4\}, \{1, 5\}\}, \  T_{14}:=\{\{1, 2\}, \{1, 3\}, \{1, 5\}, \{1, 6\}\},\   
\\
&T_{15}:=\{\{1, 2\}, \{1, 6\}, \{1, 7\}, \{2, 6\} \},\ 
 T_{16}:=\{\{1, 2\}, \{1, 6\}, \{2, 7\}, \{6, 7\}\};
\end{align*}
\item[-] for $|T|= 5$, we have two options:
\[
T_{17}:=\{\{1, 2\}, \{1, 3\}, \{1, 4\}, \{1, 5\}, \{1, 6\}\}, 
\quad 
T_{18}:=\{\{1, 2\}, \{1, 6\}, \{1, 7\}, \{2, 6\}, \{2, 7\}\};
\]
\item[-] if $|T| = 6$, then $  T $ is collinear to one of the following:
\[
 T_{19}:=X_{L^C_{12}}, \quad T_{20}:=X_{(1)}, \quad T_{21}:=P_{\{1, 2, 3\}};
\]
\item[-] if $|T| = 10$, then $T \sim_c T_{22}:=T_{\{1,2,3\}}$;
\item[-] if $|T| = 15$, then $T \sim_c T_{23}:=X\setminus X_{L^C_{12}}$;
\item[-] if $|T| = 21$, then $T \sim_c T_{24}:=X$.
\end{enumerate}
\end{theorem}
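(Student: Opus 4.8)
The plan is to split the nontrivial nice sets $T$ according to whether or not $T$ contains a subset of the form $P_{\{i,j,k\}}$ for some generating triplet, since the two regimes are governed precisely by the two structure results already established, Proposition~\ref{prop_p2} and Corollary~\ref{todosnice}.

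First I would dispose of the \emph{large} sets, those $T$ containing some $P_{\{i,j,k\}}$. By Lemma~\ref{123Sets}(vi) a single collineation moves $P_{\{i,j,k\}}$ onto $P_{\{1,2,3\}}$, so we may assume $P_{\{1,2,3\}}\subseteq T$; Proposition~\ref{prop_p2} then shows $T$ is collinear to exactly one of $X$, $X\setminus X_{L_{12}^C}$, $T_{\{1,2,3\}}$, $P_{\{1,2,3\}}$, that is, to $T_{24},T_{23},T_{22},T_{21}$. Their cardinalities $21,15,10,6$ are distinct, and each genuinely contains a copy of some $P_{\{i,j,k\}}$ --- an invariant that will later separate them from every set of the second regime.

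Second, for the \emph{small} sets (those containing no $P_{\{i,j,k\}}$, hence in particular not equal to any $P_{\{i,j,k\}}$), Corollary~\ref{todosnice} --- together with the bound $|T|\le 6$ of Proposition~\ref{prop_no_p2} --- forces $T\subseteq X_L$, $T\subseteq X_{L^C}$, $T\subseteq X_{(\ell)}$, or $T\subseteq X^{(\ell)}$, and Lemma~\ref{123Sets}(i)--(iv) normalizes the line to $L_{12}$ and the point to $1$. This reduces the problem to enumerating, up to the stabilizer in $S_\ast(I)$ of each configuration, the subsets of the four concrete sets: the collinear triangle $X_{L_{12}}$ on $\{1,2,5\}$, the matching $X^{(1)}=\{\{2,5\},\{3,6\},\{4,7\}\}$, the star $X_{(1)}$ of the six edges through $1$, and the copy of $K_4$ given by $X_{L_{12}^C}$ on the oval $\{3,4,6,7\}$. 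Every such subset is nice by Proposition~\ref{candidatos}(ii), so nothing outside this list can occur. Running through each family by cardinality, and merging the orbits that are shared between two families (a pair of disjoint edges appears both in $X^{(1)}$ and in the $K_4$ and collapses to $T_5$; a $3$-edge star appears both in $X_{(1)}$ and in the $K_4$ and collapses to $T_8$; and so on), yields exactly the representatives $T_1,\dots,T_{20}$.

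Finally I would verify the $24$ representatives are pairwise non-collinear. Cardinality settles all comparisons except within a fixed size, where I would use collineation invariants: the number of vertices covered by the edges, the degree sequence of the configuration (distinguishing star, path, cycle, triangle and ``diamond''), whether the covered vertices form a line, how many pairs of edges share a line through a common vertex, and whether a copy of some $P_{\{i,j,k\}}$ is present. I expect the bookkeeping of the small regime to be the main obstacle: one must confirm that each of the four families contributes exactly the claimed orbits, identify precisely which orbits coincide across families, and check that the chosen invariants really separate the most delicate same-cardinality classes --- above all the three $3$-edge stars $T_8,T_9,T_{10}$, which differ only according to whether their outer points form a line or two of them share a line through the centre, and the two $4$-edge stars $T_{13},T_{14}$, told apart by the number of such collinear outer pairs.
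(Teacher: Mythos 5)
Your proposal is correct and follows essentially the same route as the paper: the paper likewise handles sets containing some $P_{\{i,j,k\}}$ via Lemma~\ref{123Sets}(vi) and Proposition~\ref{prop_p2}, and all remaining sets via Proposition~\ref{prop_no_p2} (equivalently Corollary~\ref{todosnice}), followed by an orbit-by-orbit enumeration inside $X_{L_{12}}$, $X^{(1)}$, $X_{(1)}$ and $X_{L_{12}^C}$ that merges coincident orbits across families. The only difference is organizational (you split by the presence of a $P_{\{i,j,k\}}$ first, the paper proceeds by cardinality), and the detailed bookkeeping you defer --- including separating $T_8$, $T_9$, $T_{10}$ and $T_{13}$, $T_{14}$ by exactly the line-incidence invariants you name --- is precisely where the paper's proof spends its effort.
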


\begin{proof} 
The result trivially holds for $|T| = 1$. 

Suppose that $|T| = 2$ and notice that $T$ does not contain any $P_{\{i, j, k\}}$, for $i, j, k \in I$ generative. If $T \subseteq X_L$ for some line $L$,
 then Lemma~\ref{123Sets} applies to get that $\tilde \sigma(X_L) = X_{L_{12}}$ for some $\sigma \in S_\ast(I)$. Thus $ \tilde \sigma( T)  \subseteq X_{L_{12}}$, and so $T \sim_c \{\{1, 2\}, \{1, 5\}\}$, since $\{\{1, 2\}, \{2, 5\}\}$ becomes $\{\{1, 2\}, \{1, 5\}\}$ via   $(1 \quad 2)(6 \quad 7)$ and $\{\{1, 2\}, \{2, 5\}\}$ becomes $\{\{1, 5\}, \{2, 5\}\}$ via any collineation sending $\{1,2,3\}$ into $\{1,5,i\}$, for any $i\notin{L_{12}}$  (see Remark~\ref{permutaciones}). If $T \subseteq X_{(i)}$ for some $i$, then Lemma~\ref{123Sets} allows us to take $i = 1$. From here we obtain one new possibility for $T$ (up to collineations), namely, $\{\{1, 2\}, \{1, 3\}\}$; since $\{\{1, 2\}, \{1, 3\}\} \sim_c \{\{1, 2\}, \{1, i\}\}$ for $i = 3, 4, 6, 7$, by Remark~\ref{permutaciones} ($\{1,2,i\}$ is a generating triplet). 
Suppose now that $T \not\subseteq X_L$ and $T \not\subseteq X_{(\ell)}$ for any line $L$ and any $\ell \in I$. Then 
$T \subseteq X_{L_{12}^C}$ (up to collineations) by Proposition~\ref{prop_no_p2} and Lemma~\ref{123Sets}. There are two nice sets (up to collineations) contained in $X_{L_{12}^C}$: $\{\{3, 4\}, \{3, 6\}\}$, which is contained in $X_{(3)}$ so nothing new here; and $\{\{3, 4\}, \{6, 7\}\} \sim_c \{\{1, 2\}, \{6, 7\}\}$ via $(1 \quad 3)(2 \quad 4)$. 

\smallskip

Assume that $|T| = 3$; from Proposition~\ref{prop_no_p2} and Lemma~\ref{123Sets} we get that $T$ is either collinear to $X_{L_{12}}$ or $X^{(1)}$, or one of the following holds: 

- $T \subseteq X_{(1)}$. There exists $i$, $j$, $k$ in $I$ such that $T = \{\{1, i\}, \{1, j\}, \{1, k\}\}$. 
If   two of the elements in $T$ belong to   $X_L$ for some line $L$, then we can assume that $i = 2$, $j = 5$. Then $\tilde\sigma(T) = \{\{1, 2\}, \{1, 5\}, \{1, 3\}\}$, for $\sigma$ the collineation sending the generating triplet $\{1, 2 , k\}$ to the generating triplet $\{1, 2, 3\}$. 
Otherwise, $\{1, i, j\}$ is a generating triplet and we may assume it to be $\{1, 2, 3\}$. From here we obtain that $T$ is either $\{\{1, 2\}, \{1, 3\}, \{1, 4\}\}$ or $\{\{1, 2\}, \{1, 3\}, \{1, 7\}\}$ since $k = 5, 6$ leads us to the previous case. 

\smallskip

- $T \subseteq X_{L^C_{12}}$ and $T \not \subseteq X_{(\ell)}$  for all $\ell\in I$.  
Then $T$ is of the form $T = \{\{i_1, j_1\}, \{i_2, j_2\}, \{i_3, j_3\}\}$, for $i_s, j_s \in L^C_{12}= \{3, 4, 6, 7\}$ such that none of the elements of $L^C_{12}$ appears three times in $\{i_1, j_1, i_2, j_2, i_3, j_3\}$. If three of the elements of $L^C_{12}$ (we can take $3$, $4$ and $6$) appear twice each, then $T=\{\{3, 4\}, \{3, 6\}, \{4, 6\}\}  \sim_c \{\{1, 2\}, \{1, 6\}, \{2, 6\}\}$. Otherwise, two elements must appear twice and the remaining other two must appear once, for instance let $3$, $4$ appearing twice; then
$T = \{\{3, 4\}, \{3, 6\}, \{4, 7\}\} \sim_c \{\{1, 2\}, \{1, 6\}, \{6, 7\}\}$ via $(3 \quad 1 \quad 5)(6 \quad 2 \quad 4)$.

\smallskip

Suppose that $|T| = 4$ and let $T = \{\{1, i\}, \{1, j\}, \{1, k\}, \{1, \ell\}\} \subseteq X_{(1)}$. 
 Keeping in mind that $i, j, k, \ell$ are pairwise distinct and 
$i, j, k, \ell \in I\setminus \{1\}= (L_{12} \setminus \{1\}) \cup (L_{13} \setminus \{1\}) \cup (L_{14} \setminus \{1\})$,
we have two possibilities: 

\noindent - $i, j \in L_{1m} \setminus \{1\}$ and $k, \ell \in L_{1n} \setminus \{1\}$, for $m \neq n$.

\smallskip

In this case, $T \sim_c \{\{1, 2\}, \{1, 5\}, \{1, 3\}, \{1, 6\}\}$.

\medskip

\noindent - $i, j \in L_{1m} \setminus \{1\}$, $k \in L_{1n} \setminus \{1\}$ and $\ell \in L_{1p} \setminus \{1\}$, for $m, n, p$ pairwise distinct.

\smallskip

Without loss of generality we can assume $i = 2$, $j = 5$, $k = 3$ and $\ell = 4$ or 7. Notice that $\{\{1, 2\}, \{1, 5\}, \{1, 3\}, \{1, 4\}\} \sim_c \{\{1, 2\}, \{1, 5\}, \{1, 3\}, \{1, 7\}\}$ via the collineation sending 
the generating triplet $\{1, 2, 3\}$ onto the generating triplet $\{1, 5, 3\}$. Lastly, notice that this set is nice by Corollary~\ref{todosnice}.

Assume now that $T \not\subseteq X_{(\ell)}$ for all $\ell \in  I$. Then, up to collineation,  $T \subseteq X_{L_{34}^C}$ by Proposition~\ref{prop_no_p2}. Let $T = \big\{\{i_1, j_1\}, \{i_2, j_2\}, \{i_3, j_3\}, \{i_4, j_4\}\big\}$, where $i_s ,j_s \in L_{34}^C = \{1, 2, 6, 7\}$, for $s = 1, \ldots, 4$. Notice that each element of $L_{34}^C$ appears at most three times in the pairs belonging to $T$. Suppose that, for instance, 1 appears exactly three times, then $\{1, 2\}, \{1, 6\}, \{1, 7\} \in T$ and all the possible options for the fourth element give rise to collinear sets to $\{\{1, 2\}, \{1, 6\}, \{1, 7\}, \{2, 6\}\}$. The remaining case is all the elements of $L_{34}^C$ appearing twice; in such a case, $T \sim_c \{\{1, 2\}, \{1, 6\}, \{2, 7\}, \{6, 7\}\}$.

\smallskip

Assume that $|T| = 5$; if $T \subseteq X_{(\ell)}$ for some $\ell$, then it is straightforward to check that $T  \sim_c \{\{1, 2\}, \{1, 3\}, \{1, 4\}, \{1, 5\}, \{1, 6\}\}$; otherwise  Proposition~\ref{prop_no_p2} applies to get that 
$T = \{\{i_1 ,j_1\}, \{i_2 ,j_2\}, \{i_3, j_3\}, \{i_4, j_4\}, \{i_5, j_5\}\} \subseteq X_{L_{34}^C}$. Then the Pigeonhole principle yields that one of the elements of $L_{34}^C$ appears more than twice in $\{i_1, j_1, \ldots, i_5, j_5\}$; the number of such occurrences is exactly three, since $|L_{34}^C| = 4$. Thus, we can assume $\{1, 2\}, \{1, 6\}, \{1, 7\} \in T$ and $i_4, j_4, i_5, j_5 \in \{2, 6, 7\}$. In any case, we get that 
$T \sim_c \{\{1, 2\}, \{1, 6\}, \{1, 7\}, \{2, 6\}, \{2, 7\}\}$.
\smallskip

Suppose that $|T| = 6$; if $T \subseteq X_{(\ell)}$ for some $\ell$, then $T \sim_c X_{(1)}$. If $T$ contains some $P_{\{i, j, k\}}$ (for $i, j, k$ generative), then $T \sim_c P_{\{1, 2, 3\}}$. Otherwise, $T  = X_{L ^C}$ (for some line $L$) by Proposition~\ref{prop_no_p2}.  

\smallskip

To finish, if $|T| > 6$, then $T$ must contain strictly some $P_{\{i, j, k\}}$ (for $i, j, k$ generative) by Proposition~\ref{prop_no_p2}. From here, Proposition~\ref{prop_p2} allows us to conclude that $T$ is collinear to either $X\setminus X_{L_{12}^C}$, $T_{\{1, 2, 3\}}$ or $T = X$, finishing the proof.
\end{proof}

\begin{conclusion}\label{coro_almenos24}
At the moment, we have found   $24$ nice sets and hence $24$   Lie algebras obtained by graded contractions of $\Gamma_{\f{g}_2}$, these ones obtained as $\la^{\ep_{\eta^{T_i}}}$ for $i=1,\dots,24$.
In the next section we will prove that 23 of them are non-isomorphic, the exception will be the algebras related to $T_8$ and $T_{10}$ which are isomorphic, as checked in Example~\ref{horror}.
We will also prove that the only cases in which there are more than one equivalence class with the same support will be those   with support  collinear to $T_{14}$, $T_{17}$ and $T_{20}$. Fixed any such nice set, there will be
an infinite number of  non-isomorphic algebras
 that have it as their support.
\end{conclusion}

\begin{remark}\label{numeronicesets}
To be more precise, there are   $779$ nice sets.  To compute this number, we have to compute how many nice sets are there
in the orbit $ S_\ast(I)\cdot T_i=\{\tilde\sigma(T_i) \mid \sigma \in S_\ast(I)\}$, for different values of $1\le i\le24$. 
Recall that, if $S_\ast(I)_{T_i}:=\{\sigma \in S_\ast(I)\mid \tilde\sigma(T_i)=T_i\}$ denotes the subgroup of collineations which
leave $T_i$ invariant, its cardinal is related with the cardinal of the orbit by $|S_\ast(I)\cdot T_i|=\frac{168}{|S_\ast(I)_{T_i}|}$.
Now we compute these cardinals case by case:\smallskip

\begin{center}
\begin{tabular}{r||c| c| c |c| c| c||} 
 \hline
 $i$ &1,\,24 &2,\,4,\,5,\,14,\,16,\,22&3,\,9,\,12,\,13,\,15&6,\,7,\,19,\,20,\,23&8,\,10,\,11,\,21&17,\,18 \\ [0.5ex] 
 \hline 
  $|S_\ast(I)_{T_i}|$ &168 &8&2&24&6&4 \\ [0.5ex] 
 \hline
 $|S_\ast(I)\cdot T_i|$ &1 &21&84&7&28&42 \\ [0.5ex] 
 \hline
\end{tabular}
\end{center}\smallskip

\noindent For instance, the orbit of $T_1$ is $S_\ast(I)\cdot T_1=\{\emptyset\}$, which  contains only one nice set. 
The orbit of $T_2=\{\{1,2\}\}$ has 21 elements, namely, $S_\ast(I)\cdot T_2=\{\{t\}\mid t\in X\}$.
For  $T_3=\{\{1,2\},\{1,3\}\}$, if a collineation $\sigma$ satisfies $\tilde\sigma(T_3)=T_3$, then $\sigma(1)=1$ and $\sigma(\{2,3\})=\{2,3\}$. Besides the identity map, there is only one such collineation, so that the subgroup of collineations fixing $T_3$ has 2 elements and the orbit of $T_3$ has 84 elements. Look at $T_4=\{\{1,2\},\{1,5\}\}$. A collineation $\sigma$ leaving $T_4$ invariant is determined by $\sigma(1)=1$, $\sigma(2)\in\{2,5\}$,
$\sigma(3)\in\{3,4,6,7\}$, so that there are 8 elements in the stabilizer. There are also  8 collineations leaving $T_5=\{\{1,2\},\{6,7\}\}$ invariant:
$\sigma(1)\in\{1,2,6,7\}$, this forces $\sigma(2)=2,1,7,6$ respectively, and the possibilities for $\sigma(6)$ are two ($6/7$ in each of the first two cases, and $1/2$ in the   other two). The orbit of $T_6=X_{L_{12}}$ has 7 elements, since there are 7 lines. Similarly, $S_\ast(I)\cdot T_7=\{X^{(i)}\mid i\in I\}$ has cardinal 7 too, just like $I$. Now, the stabilizer of $T_8=\{\{1,2\},\{1,3\},\{1,4\}\}$ has $6$ elements, since a collineation such that $\tilde\sigma(T_8)=T_8$
satisfies $\sigma(1)=1$ and is determined by $\sigma(2)$ and $\sigma(3)$ arbitrary distinct elements in $\{2,3,4\}$, since $4=1*2*3$. Also, there are two possibilities for  $\sigma $ leaving $T_9=\{\{1,2\},\{1,3\},\{1,5\}\}$ invariant, since necessarily $\sigma(1)=1$, $\sigma(3)=3$ and $\sigma(2)\in\{2,5\}$. We can proceed similarly for the remaining values $i\ge10$.   Thus the total number of nice sets is the sum of the cardinal of the orbits, $1\cdot 2+6\cdot 21+( 7+84)\cdot 5+28\cdot 4+42\cdot 2=779$. Thus we have 
  779 Lie algebras $\{\la^{\ep_{\eta^{\tilde\sigma(T_i)}}}\mid\sigma\in S_\ast(I),i\le24\}$ which are not graded-isomorphic, since they have different support. Of course, this   is not  relevant for   classifying graded contractions up to equivalence,  which is our main objective.
\end{remark}

\section{Classification of graded contractions of $\f{g}_2$} \label{secjunto}

Next, we explore how many non-isomorphic Lie algebras  can be obtained by graded contractions of $\Gamma_{\f{g}_2}$    with a fixed support, in Sections~\ref{sec4} and \ref{sec5}. 
For most of the nice sets, there is only one isomorphism class of Lie algebras attached. This assertion can be concluded only from the study of the equivalence classes via normalization, which will be the first aim in \S\ref{sec4}. We will need    more specific arguments in \S\ref{sec5} for dealing with several nice sets contained in some $X_{(i)}$.
With some extra work, this will give the classification of the graded contractions of $\Gamma_{\f{g}_2}$ up to equivalence in \S\ref{sec_revision}.

\subsection{Equivalent graded contractions via normalization} \label{sec4}

Recall that a first step towards the classification of all the equivalence classes $\tilde{\mathcal G}/\sim$ of graded contractions consists in classifying all the equivalence classes $\tilde{\mathcal G}/\sim_n$ of graded contractions 
via normalization; this turns out to be equivalent to describing the equivalence classes $\mathcal G/ \sim_n$ of 
admissible graded contractions via normalization, since the sets $\tilde{\mathcal G}/\sim$ and ${\mathcal G}/\sim$ are bijective (by Lemma~\ref{ref_existeadmisible}) and $\sim_n$ trivially restricts to $\mathcal G$ (that is, $\ep^\alpha$ is an admissible graded contraction provided $\ep$ is so). 

On the other hand, Proposition~\ref{bijection} allows us to work in the set $\mathcal A$ in   \eqref{def_A}, by defining $\eta \sim_n \eta'$ if $\ep_\eta \sim_n \ep_{\eta'}$ for any $\eta, \eta'\in \mathcal A$. Note that $\eta \sim_n \eta'$ if there exists $\alpha \colon  I \to \mathbb{C}^\times$ (written as $\alpha(i) = \alpha_i$) such that $\eta'=\eta^\alpha$, 
where $\eta_{ij} = \eta(\{i, j\})$ for all $\{i, j\} \in X$, and
\begin{equation}\label{eq_alphaij}
\eta^\alpha_{ij} := \eta_{ij} \alpha_{ij}, \qquad   
\alpha_{ij} := \frac{\alpha_i \alpha_j}{\alpha_{i \ast j}}.
\end{equation} 

Our goal here is to determine the equivalence classes in ${\mathcal A/}\sim_n$.
Given $\eta \in \mathcal A$ with support $T = \{\{i_1, j_1\}, \ldots, \{i_s, j_s\}\}$, lexicographically ordered, that is, $i_k < j_k$,
$i_1 \leq i_2 \leq \ldots \leq i_s$, and if $i_k=i_{k+1}$ then $j_k<j_{k+1}$;
to ease the notation, we write $\eta = (\eta_{i_1j_1}, \ldots, \eta_{i_sj_s})$. 
For instance, for $T = \{\{1, 2\}, \{1, 6\}, \{2, 7\}, \{6, 7\}\}$, we write $\eta = (\eta_{12}, \eta_{16}, \eta_{27}, \eta_{67})$. If $\eta_{i_kj_k} = 1$ for all $k$, then we write $\bm{1}^T = (1, 1, \ldots^{(s}, 1)$.

\begin{theorem} \label{prop_equiv}
 Any   $\eta \in\mathcal A$ with nontrivial support $T$ from Theorem~\ref{classiN} is equivalent via normalization to  $\bm{1}^T$ except in the following three cases: 
\begin{itemize}
 \setlength\itemsep{0.5em}
\item [\rm (i)] If $T =T_{14}= \{\{1, 2\}, \{1, 3\}, \{1, 5\}, \{1, 6\}\}$, then $\eta\sim_n  (1, 1, 1, \lambda) $, for $\lambda =  {\frac{\eta_{13}\eta_{16}}{\eta_{12}\eta_{15}}}$. \item[] Moreover, $(1, 1, 1, \lambda) \sim_n (1, 1, 1, \lambda')$ if and only if $\lambda= \lambda'$.

\item[\rm (ii)] If $T = T_{17}=\{\{1, 2\}, \{1, 3\}, \{1, 4\}, \{1, 5\}, \{1, 6\}\}$, then
$\eta\sim_n  (1, \lambda, 1, 1, \lambda)$,  for $\lambda^2 =  \frac{\eta_{13}\eta_{16}}{\eta_{12}\eta_{15}}$. 

\item[] Moreover, $(1, \lambda, 1, 1, \lambda)\sim_n(1, \lambda', 1, 1, \lambda')$ if and only if $\lambda = \pm\lambda'$.

\item[\rm (iii)] If $T =T_{20}= X_{(1)}$, then $\eta\sim_n  
(1, \lambda , \mu, 1, \lambda, \mu)$, for $\lambda^2 =  \frac{\eta_{13}\eta_{16}}{\eta_{12}\eta_{15}}$ and $\mu^2=\frac{\eta_{14}\eta_{17}}{\eta_{12}\eta_{15}}$. 

\item[] Moreover, $(1, \lambda , \mu, 1, \lambda, \mu)\sim_n(1, \lambda' , \mu', 1, \lambda', \mu')$ if and only if $\lambda = \pm\lambda'$, $\mu = \pm\mu'$.
\end{itemize}
\end{theorem}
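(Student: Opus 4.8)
The plan is to read the relation $\eta\sim_n\bm{1}^T$ purely multiplicatively. By \eqref{eq_alphaij}, $\eta\sim_n\bm{1}^T$ means there is $\alpha\colon I\to\bb{C}^\times$ with $\eta_{ij}\,\alpha_i\alpha_j\alpha_{i\ast j}^{-1}=1$ for every $\{i,j\}\in T$; equivalently, the tuple $(\eta_{ij})_{\{i,j\}\in T}$ lies in the image of the homomorphism $d_T\colon(\bb{C}^\times)^I\to(\bb{C}^\times)^T$, $d_T(\alpha)_{ij}=\alpha_i\alpha_j\alpha_{i\ast j}^{-1}$. Attaching to each edge the integer vector $u_{ij}:=u_i+u_j-u_{i\ast j}\in\bb{Z}^7$ (with $u_1,\dots,u_7$ the standard basis), a divisibility argument — $\bb{C}^\times$ is a divisible group — shows $(c_{ij})_T\in\operatorname{im}d_T$ if and only if $\prod_{\{i,j\}\in T}c_{ij}^{\,n_{ij}}=1$ for every integer relation $\sum n_{ij}u_{ij}=0$. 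So each such relation produces a monomial in the $\eta_{ij}$ that is $\sim_n$-invariant, and every $\eta\in\mathcal A$ with support $T$ normalizes to $\bm{1}^T$ precisely when all these monomial invariants are forced to equal $1$ by the defining conditions of $\mathcal A$.

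The crux is to separate relations coming from generative triples from those that do not. First I would note the vector identity $u_{i\,j\ast k}+u_{jk}=u_{j\,k\ast i}+u_{ki}=u_{k\,i\ast j}+u_{ij}\;(=u_i+u_j+u_k-u_{i\ast j\ast k})$, valid for any generative triple $\{i,j,k\}$ with $P_{\{i,j,k\}}\subseteq T$; the invariants attached to these two relations are exactly the ratios appearing in condition (b2) of Proposition~\ref{pr_condicionesgrcont}, hence equal $1$ for $\eta\in\mathcal A$. Thus every relation lying in the sublattice spanned by such ``$P$-relations'' contributes nothing. For instance, for $T_{21}=P_{\{1,2,3\}}$ the relation lattice of the six $u_{ij}$ is two-dimensional and is spanned precisely by the two (b2)-relations of $\{1,2,3\}$, so both a priori invariants are trivial. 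By contrast, a relation $u_{1m}+u_{1\,1\ast m}=u_{1n}+u_{1\,1\ast n}$ — present whenever $T$ contains two full conjugate pairs $\{1,m\},\{1,1\ast m\}$ and $\{1,n\},\{1,1\ast n\}$ through one vertex, both sides being $2u_1$ — comes from lines, not generative triples, and is a genuine invariant unless enough auxiliary edges are present to rederive it from (b2).

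With this dichotomy the case analysis is short. For the supports of Theorem~\ref{classiN} containing no $P_{\{i,j,k\}}$ there are no (b2)-relations available, so by Corollary~\ref{todosnice} each such $T$ is a subset of some $X_L$, $X_{L^C}$, $X_{(\ell)}$ or $X^{(\ell)}$; a direct check shows the $u_{ij}$ are $\bb{Q}$-independent, giving a single class at $\bm{1}^T$, in every case \emph{except} those containing two full conjugate pairs through a vertex — exactly $T_{14},T_{17},T_{20}$. For these I would extract the surviving invariants $\lambda^2=\tfrac{\eta_{13}\eta_{16}}{\eta_{12}\eta_{15}}$ (and, for $T_{20}$, also $\mu^2=\tfrac{\eta_{14}\eta_{17}}{\eta_{12}\eta_{15}}$) from the identity $\alpha_{1m}\alpha_{1\,1\ast m}=\alpha_1^2$, put $\eta$ in the stated normal form by killing the remaining edges (possible since the unpaired $u_{ij}$ stay independent and $\bb{C}^\times$ has square roots), and read off completeness: two normal forms are $\sim_n$ iff they share these invariants, i.e. iff $\lambda=\pm\lambda'$ (resp.\ and $\mu=\pm\mu'$). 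Here $T_{14}$ avoids the sign ambiguity because its single free edge carries $\lambda$ itself rather than $\lambda^2$.

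Finally, for the large supports containing a $P$, namely $T_{21},T_{22},T_{23},T_{24}$, I must show no non-(b2) invariant survives. For $T_{24}=X$ this is cleanest: a full-support $\eta\in\mathcal A$ completes (using (b2) for well-definedness) to a normalized symmetric $2$-cocycle on $\bb{Z}_2^3$ valued in $\bb{C}^\times$, and since $\bb{C}^\times$ is injective, $\operatorname{Ext}^1(\bb{Z}_2^3,\bb{C}^\times)=0$, so the cocycle is a coboundary $\beta(g)\beta(h)\beta(g+h)^{-1}$, giving the normalizing $\alpha_i=\beta(g_i)$; concretely, a would-be star invariant such as $\eta_{12}\eta_{15}(\eta_{13}\eta_{16})^{-1}$ is rederived from the (b2)-relations $\eta_{23}\eta_{15}=\eta_{56}\eta_{13}$ and $\eta_{23}\eta_{16}=\eta_{56}\eta_{12}$ of the triples $\{1,3,5\}$ and $\{1,2,6\}$. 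The cases $T_{22},T_{23}$ follow by the same relation-lattice bookkeeping, checking that every relation among their $u_{ij}$ lies in the sublattice generated by the (b2)-relations of the $P$'s they contain. I expect this verification — confirming that the large nice sets admit no surviving invariant — to be the main obstacle; the conceptual heart, by contrast, is simply that (b2) is the well-definedness of the coboundary construction, while the conjugate-pair identity is what escapes it.
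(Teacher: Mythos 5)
Your proposal is correct in method and takes a genuinely different route from the paper. The paper proves the theorem by brute construction: for each non-exceptional support it exhibits an explicit normalizing map $\alpha\colon I\to\bb{C}^\times$ (with careful square-root bookkeeping), for $T_{21}$, $T_{22}$, $T_{23}$ it first uses the (b2)-identities to force certain ratios to equal $\pm1$ and then removes the residual signs with further explicit maps $\delta,\delta'$, and it delegates $T_{24}=X$ to Weimar-Woods' theorem. You instead dualize: since $\bb{C}^\times$ is divisible, hence injective over $\bb{Z}$, a tuple lies in the image of $d_T$ exactly when it kills every integer relation among the vectors $u_{ij}$, so the $\sim_n$-classes with support $T$ are the characters of $K_T/K_{b2,T}$, where $K_T$ is the full relation lattice and $K_{b2,T}$ is the sublattice generated by the (b2)-relations of the triples with $P\subseteq T$ (niceness guaranteeing these are the only constraints cutting out $\mathcal{A}$ on a given support). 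This is more conceptual and more explanatory: it makes visible \emph{why} $T_{14},T_{17},T_{20}$ are the exceptions (they are precisely the nice sets where $K_T\supsetneq K_{b2,T}$, the relations being the conjugate-pair identities $u_{1m}+u_{1\,1\ast m}=2u_1$ with no (b2)-relations available to absorb them), it produces the invariants $\lambda$, $\lambda^2$, $\mu^2$ and the $\pm$-ambiguities of the ``moreover'' clauses without computation, and your $\operatorname{Ext}$/cocycle argument for $T_{24}$ is a clean stand-in for the paper's citation of Weimar-Woods. Your dichotomy for the no-$P$ supports is right, your derivation of the conjugate-pair identity from the (b2)-relations of $\{1,3,5\}$ and $\{1,2,6\}$ is exactly the computation needed, and your lattice equality for $T_{21}$ is correct (the two (b2)-vectors do generate the rank-two kernel over $\bb{Z}$).

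The one substantive incompleteness, which you flag yourself, is that for $T_{22}$ and $T_{23}$ the equality $K_T=K_{b2,T}$ is asserted rather than checked. Note that this must be an equality of lattices over $\bb{Z}$, not merely of $\bb{Q}$-spans: the number of classes with support $T$ is $|\Hom(K_T/K_{b2,T},\bb{C}^\times)|$, so an index-$2$ discrepancy would leave a surviving $\pm1$-valued invariant and hence a second class. That is exactly the phenomenon the paper confronts when, after normalizing, it is left with signs $\lambda_1,\lambda_2\in\{\pm1\}$ and must produce the maps $\delta,\delta'$ to kill them; in your language those maps certify that the index is $1$. The verification is a finite computation (for $T_{22}$: ten vectors $u_{ij}$ of rank $5$ against the eight (b2)-vectors of the triples $\{1,2,3\},\{1,2,6\},\{1,3,5\},\{1,5,6\}$; similarly for the fifteen edges of $T_{23}$), and it will succeed — for these supports the theorem is in fact equivalent to it — but as written your treatment of $T_{22}$ and $T_{23}$ is a program rather than a proof, and this $\bb{Z}$-lattice check is where the real content of those two cases lies.
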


 For the proof,  it is convenient to establish notation, since any nonzero complex number admits two \emph{square roots}. 
In order to choose one of them, for any $\alpha\in\bb C^\times$,
we may uniquely express $\alpha=\vert \alpha\vert e^{\mathfrak{i}\theta_{\alpha}},$ for some $\theta_{\alpha}\in [0,2\pi),$ 
and then we
denote by $\sqrt{\alpha}:= \sqrt{\vert \alpha\vert}e^{\mathfrak{i}\theta_{\alpha}/2}$.
  Here we use $\mathfrak{i}$ for the imaginary unit in the underlying field of complex numbers, to distinguish it from $\bf i\in\mathcal O$,
used through the manuscript for an octonion.
Note that we do not have the property that $\sqrt{\alpha\alpha'}=\sqrt{\alpha}\sqrt{\alpha'}$ for any $\alpha,\alpha'\in\bb C^\times$.

\begin{proof}
First, note that for any $T$ with $T_{14}\subseteq T\subseteq X_{(1)}$, any $|T|$-tuple in $(\mathbb C^\times)^{|T|}$ does provide 
a  map 
in $\mathcal A$ (an admissible graded contraction, with a minor  abuse of language)
with support $T$; because, as there is no generating triplet  $\{i, j, k\}$ such that $\{i, j\}, \{i\ast j,k\}\in T$,  then the condition $\eta_{ijk}=\eta_{jki}$ necessary to assure $\eta\in\mathcal A$   is satisfied trivially, since $\eta_{ijk}=0$ for any generating triplet.
 This implies that all the tuples used in (i), (ii) and (iii) really provide  equivalence classes up to normalization related to those supports. 
 
\smallskip

 (i) Suppose that $T = \{\{1, 2\}, \{1, 3\}, \{1, 5\}, \{1, 6\}\}$. Taking $\alpha_1 = \frac{1}{\sqrt{\eta_{12}}\sqrt{\eta_{15}}}$, $\alpha_2 = \sqrt{\eta_{15}}$, $\alpha_3 = \frac{\sqrt{\eta_{12}}\sqrt{\eta_{15}}}{\eta_{13}}$, $\alpha_5 = \sqrt{\eta_{12}}$, and $\alpha_4 = \alpha_6 = \alpha_7 = 1$, we obtain that $\eta^\alpha = (1, 1, 1, \lambda)$, where $\lambda =  {\frac{\eta_{13}\eta_{16}}{\eta_{12}\eta_{15}}}$; which shows that $\eta \sim_n (1, 1, 1, \lambda)$.

\smallskip

Now, if $(1, 1, 1, \lambda')$ is another admissible graded contraction with support $T$ satisfying that $(1,  1, 1, \lambda') \sim_n (1,  1, 1, \lambda)$, then there exists a map $\beta\colon I \to \mathbb{C}^\times$ such that   $\beta_{12} =\beta_{13} =\beta_{15} =1$ and $\lambda' \beta_{16}=\lambda$
(notation as in \eqref{eq_alphaij}).
From here we get $1 =  \beta_{12}\beta_{15}=(\beta_1)^2=\beta_{13}\beta_{16}=\beta_{16}$ so that $\lambda=\lambda'$, concluding the proof of (i).

\medskip

\noindent (ii)  Suppose that $T = \{\{1, 2\}, \{1, 3\}, \{1, 4\}, \{1, 5\}, \{1, 6\}\}$. Then taking $\alpha_1 = \alpha_7 = \frac{1}{\sqrt{\eta_{12}}\sqrt{\eta_{15}}}$, $\alpha_2 = \sqrt{\eta_{15}}$, $\alpha_3 = \frac{\sqrt{\eta_{16}}}{\sqrt{\eta_{13}}}$, $\alpha_4 = \frac{1}{\eta_{14}}$, $\alpha_5 = \sqrt{\eta_{12}}$ and $  \alpha_6 = 1$, we obtain that $\eta^\alpha = (1, \lambda, 1, 1, \lambda)$, for $\lambda = \frac{\sqrt{\eta_{13}}\sqrt{\eta_{16}}}{\sqrt{\eta_{12}}\sqrt{\eta_{15}}}$; which means that $\eta \sim_n (1, \lambda, 1, 1, \lambda)$.

\smallskip

Now, if $(1, \lambda', 1, 1, \lambda')$ is another admissible graded contraction with support $T$ 
such that   $(1, \lambda', 1, 1, \lambda') \sim_n (1, \lambda, 1, 1, \lambda)$, then there exists a map $\beta\colon I \to \mathbb{C}^\times$ satisfying that $(1, \lambda', 1, 1, \lambda')^\beta = (1, \lambda, 1, 1, \lambda)$. 
This means $\beta_{12} =\beta_{14} =\beta_{15} =1$, $\lambda' \beta_{13} = \lambda' \beta_{16} =\lambda$.
From here we get $(\beta_1)^2 =  \beta_{12}\beta_{15} = 1$, which implies $\beta_1 = \pm 1$, and so 
\begin{align*} 
\left(\frac{\beta_6}{\beta_3}\right)^2 =\frac{\beta_{16}}{\beta_{13}}= 1\  \Rightarrow\ 
 \frac{\lambda}{\lambda'} =\beta_{16}=\beta_1 \frac{\beta_6}{\beta_3} = \pm 1\ \Rightarrow\ 
 \lambda' = \pm \lambda.
\end{align*}
In order to finish
 the proof of (ii) we only need to find $\beta\colon I \to \mathbb{C}^\times$ 
such that
$\beta_{12} =\beta_{14} =\beta_{15} =1$, $ \beta_{13} =  \beta_{16}=-1$, so that $(1, \lambda, 1, 1, \lambda)^\beta=(1, -\lambda, 1, 1, -\lambda)$. 
For instance, $\beta=(1,1,-1,1,1,1,1)$ is such a map.

\medskip 

\noindent  (iii) Let $T = X_{(1)}$, $\lambda = \frac{\sqrt{\eta_{13}}\sqrt{\eta_{16}}}{\sqrt{\eta_{12}}\sqrt{\eta_{15}}}$ and $\mu = \frac{\sqrt{\eta_{14}}\sqrt{\eta_{17}}}{\sqrt{
\eta_{12}}\sqrt{\eta_{15}}}$. Notice that $\eta \sim_n (1, \lambda, \mu, 1, \lambda, \mu)$; in fact, take $\alpha_1 = \frac{1}{\sqrt{\eta_{12}}\sqrt{\eta_{15}}}$, $\alpha_2 = \sqrt{\eta_{15}}$, $\alpha_3 = \frac{\sqrt{\eta_{16}}}{\sqrt{\eta_{13}}}$, $\alpha_4 = \frac{\sqrt{\eta_{17}}}{\sqrt{\eta_{14}}}$, $\alpha_5 = \sqrt{\eta_{12}}$ and $ \alpha_6 = \alpha_7 = 1$.

\smallskip

Next, suppose that $(1, \lambda', \mu', 1, \lambda', \mu')$ is an admissible graded contraction with support $T$ such that $(1, \lambda', \mu', 1, \lambda', \mu') \sim_n (1, \lambda, \mu, 1, \lambda, \mu)$. Then there exists a map $\beta\colon I \to \mathbb{C}^\times$ satisfying that $(1, \lambda', \mu', 1, \lambda', \mu')^\beta = (1, \lambda, \mu, 1, \lambda, \mu)$. This means
$$
\beta_{12}=1=\beta_{15},\quad \beta_{13}=\frac{ \lambda}{ \lambda'}=\beta_{16},\quad \beta_{14}=\frac{\mu}{\mu'}=\beta_{17}.
$$
As above $(\beta_1)^2 =  \beta_{12}\beta_{15}=1$ and $\left(\frac{\beta_6}{\beta_3}\right)^2 = 1$, so that 
$ \frac{\lambda}{\lambda'} = \beta_1\frac{\beta_6}{\beta_3} = \pm 1 $.
Similarly, 
\begin{align*}
\left(\frac{\beta_7}{\beta_4}\right)^2 =\frac{\beta_{17}}{\beta_{14}}= 1 \Rightarrow \frac{ \mu }{ \mu '} =\beta_{17}=\beta_1 \frac{\beta_7}{\beta_4} = \pm 1 \Rightarrow \mu' = \pm \mu. 
\end{align*}
To finish the proof of (iii), note that $ (1, \lambda, \mu, 1, \lambda, \mu)^\delta= (1, -\lambda, \mu, 1, -\lambda, \mu)$
and $ (1, \lambda, \mu, 1, \lambda, \mu)^\gamma= (1, \lambda, -\mu, 1, \lambda, -\mu)$ for
$\delta=(1,1,-1,1,1,1,1)$ and $\gamma=(1,1,1,-1,1,1,1)$.

\smallskip

It remains to show that $\eta\sim_n  \bm{1}^T $, for the remaining $T$ in Theorem~\ref{classiN}; to do so it is enough to find a map $\alpha\colon  I \to \mathbb{C}^\times$ such that $\eta^\alpha = \bm{1}^T$, or equivalently, $\eta_{ij} = \frac{\alpha_{i \ast j}}{\alpha_i \alpha_j}$ for all $\{i, j\}\in T$.

\smallskip

\noindent - $T = \{\{1, 2\}\}$: let $\alpha_5 = \eta_{12}$, and $\alpha_i = 1$ for all $i \ne5$. 

\smallskip

\noindent -  $T = \{\{1, 2\}, \{1, 5\}\}$: let $\alpha_1 = \alpha_2 = \frac{1}{\sqrt{\eta_{12}}\sqrt{\eta_{15}}}$, $\alpha_5 = \frac{1}{\eta_{15}}$ and $\alpha_i = 1$ for all $i \ne1,2,5$.

\smallskip

\noindent - $T = \{\{1, 2\}, \{1, 3\}\}$: let $\alpha_5 = \eta_{12}$, $\alpha_6 = \eta_{13}$ and $\alpha_i = 1$ for all $i\ne 5, 6$.

\smallskip

\noindent - $T = \{\{1, 2\}, \{6, 7\}\}$: let $\alpha_1 = \frac{1}{\eta_{12}}$, $\alpha_6 = \frac{1}{\eta_{67}}$ and $\alpha_i = 1$ for all $i \ne1,6$. 

\smallskip

\noindent -  $T = X_{L_{12}}$: let $\alpha_1 = \frac{1}{\sqrt{\eta_{12}}\sqrt{\eta_{15}}}$, $\alpha_2 = \frac{1}{\sqrt{\eta_{12}}\sqrt{\eta_{25}}}$, $\alpha_5 = \frac{1}{\sqrt{\eta_{15}}\sqrt{\eta_{25}}}$ and $\alpha_i = 1$ for all $i \ne1,2,5$.

\smallskip

\noindent - $T = X^{(1)}$: let $\alpha_2 = \frac{1}{\eta_{25}}$, $\alpha_3 = \frac{1}{\eta_{36}}$, $\alpha_4 = \frac{1}{\eta_{47}}$ and $\alpha_1 = \alpha_5 = \alpha_6 = \alpha_7 = 1$.

\smallskip

\noindent - $T = \{\{1, 2\}, \{1, 3\}, \{1, 4\}\}$: let  
$\alpha_5 = \eta_{12}$, $\alpha_6 = \eta_{13}$, $\alpha_7 = \eta_{14}$ and $\alpha_1 = \alpha_2 = \alpha_3 = \alpha_4 = 1$.

\smallskip

\noindent -  $T = \{\{1, 2\}, \{1, 3\}, \{1, 5\}\}$: let $\alpha_1 = \frac{1}{\sqrt{\eta_{12}}\sqrt{\eta_{15}}}$, $\alpha_2 = \sqrt{\eta_{15}}$, $\alpha_3 = \sqrt{\eta_{12}}\sqrt{\eta_{15}}$, $\alpha_5 = \sqrt{\eta_{12}}$,

\quad \quad \quad \quad \quad \quad \quad \quad \quad \quad \, \, \quad $\alpha_6 =  {\eta_{13}}$ and 
$\alpha_4 = \alpha_7 = 1$. 

\smallskip

\noindent - $T = \{\{1, 2\}, \{1, 3\}, \{1, 7\}\}$: let $\alpha_5 = \eta_{12}$, $\alpha_6 = \eta_{13}$, $\alpha_4 = \eta_{17}$ and $\alpha_i = 1$ for all $i\ne 4,5,6$.

\smallskip

\noindent - $T = \{\{1, 2\}, \{1, 6\}, \{2, 6\}\}$: let $\alpha_3 = \eta_{16}$, $\alpha_4 = \eta_{26}$, $\alpha_5 = \eta_{12}$, and $\alpha_i = 1$ for all $i\ne 3,4,5$.

\smallskip

\noindent - $T = \{\{1, 2\}, \{1, 6\}, \{6, 7\}\}$: let $\alpha_2 = \frac{1}{\eta_{12}}$, $\alpha_3 = \eta_{16}$, 
$\alpha_7 = \frac{1}{\eta_{67}}$ and $\alpha_i = 1$ for all $i\ne 2,3,7$.

\smallskip

\noindent -  $T = \{\{1, 2\}, \{1, 3\}, \{1, 4\}, \{1, 5\}\}$: let
$\alpha_1 = \frac{1}{\sqrt{\eta_{12}}\sqrt{\eta_{15}}}$, $\alpha_2 = \sqrt{\eta_{15}}$, $\alpha_3 = \alpha_4 = \sqrt{\eta_{12}}\sqrt{\eta_{15}}$,

\quad \quad \quad \quad \quad \quad \quad \quad \quad \quad \quad \quad \quad \quad \, $\alpha_5 = \sqrt{\eta_{12}}$, $\alpha_6 = \eta_{13}$,  and $\alpha_7 = \eta_{14}$.

\smallskip

\noindent  - $T = \{\{1, 2\}, \{1, 6\}, \{1, 7\}, \{2, 6\}\}$: let $\alpha_3 = \eta_{16}$, $\alpha_4 = \eta_{26}$, $\alpha_5 = \eta_{12}$, $\alpha_7 = \frac{\eta_{26}}{\eta_{17}}$ and 

\quad \quad \quad \quad \quad \quad \quad \quad \quad \quad \quad \quad \quad \quad \, $\alpha_1 = \alpha_2= \alpha_6 = 1$.

\smallskip 
 
\noindent  -  $T = \{\{1, 2\}, \{1, 6\}, \{2, 7\}, \{6, 7\}\}$: let $\alpha_1 = \frac{\sqrt{\eta_{67}}}{\sqrt{\eta_{12}}\sqrt{\eta_{16}}\sqrt{\eta_{27}}}$, $\alpha_2 = \alpha_3 = \eta_{16}$, $\alpha_5 = \frac{\sqrt{\eta_{12}}\sqrt{\eta_{16}}\sqrt{\eta_{67}}}{\sqrt{\eta_{27}}}$,

\smallskip

\quad \quad \quad \quad \quad \quad \quad \quad \quad \quad \quad \quad \quad \quad \,  $\alpha_6 = \frac{\sqrt{\eta_{12}}\sqrt{\eta_{16}}\sqrt{\eta_{27}}}{\sqrt{\eta_{67}}}$, $\alpha_7 = \frac1{\eta_{27}}$ and $\alpha_4 = 1$.

\smallskip

\noindent - $T = \{\{1, 2\}, \{1, 6\}, \{1, 7\}, \{2, 6\}, \{2, 7\}\}$: let $\alpha_1 = \sqrt{\eta_{26}}\sqrt{\eta_{27}}$, 
$\alpha_2 = \sqrt{\eta_{16}}\sqrt{\eta_{17}}$, $\alpha_6 = 1$,

\quad \quad \quad \quad \quad \quad \quad \quad \quad \quad \quad   \quad \quad \quad \quad   $\alpha_3 = \eta_{16}\sqrt{\eta_{26}}\sqrt{\eta_{27}}$, $\alpha_4 = \eta_{26}\sqrt{\eta_{16}}\sqrt{\eta_{17}}$, $\alpha_7 = \frac{\sqrt{\eta_{16}}\sqrt{\eta_{26}}}{\sqrt{\eta_{17}}
\sqrt{\eta_{27}}}$,

\quad \quad \quad \quad \quad \quad \quad \quad \quad \quad \quad \quad \quad   \quad   \, \, $\alpha_5 = \eta_{12}\sqrt{\eta_{16}}\sqrt{\eta_{17}}
\sqrt{\eta_{26}}\sqrt{\eta_{27}}$.

\smallskip

\noindent -  $T = X_{L_{12}^C}$: let $\alpha_1 = \frac{1}{\sqrt{\eta_{34}}\sqrt{\eta_{37}}\sqrt{\eta_{46}}\sqrt{\eta_{67}}}$, $\alpha_2 = \frac{1}{\sqrt{\eta_{34}}\sqrt{\eta_{36}}
\sqrt{\eta_{47}}\sqrt{\eta_{67}}}$, $\alpha_3 = \frac{1}{\sqrt{\eta_{34}}\sqrt{\eta_{36}}\sqrt{\eta_{37}}}$, 

\quad \quad \quad   \, \,  $\alpha_4 = \frac{1}{\sqrt{\eta_{34}}\sqrt{\eta_{46}}\sqrt{\eta_{47}}}$, $\alpha_5 = \frac{1}{\sqrt{\eta_{36}}\sqrt{\eta_{37}}\sqrt{\eta_{46}}
\sqrt{\eta_{47}}}$, $\alpha_6 = \frac{1}{\sqrt{\eta_{36}}\sqrt{\eta_{46}}\sqrt{\eta_{67}}}$, $\alpha_7 = \frac{1}{\sqrt{\eta_{37}}\sqrt{\eta_{47}}\sqrt{\eta_{67}}}$.

\medskip

\noindent - $T = P_{\{1, 2, 3\}}$: let $\alpha_1 = \alpha_4 = \alpha_6 = \frac{1}{\eta_{12}}$, $\alpha_2 = \alpha_5 = \frac{1}{\eta_{26}}$, $\alpha_3 = \frac{1}{\eta_{13}}$, $\alpha_7 = \frac{1}{\eta_{17}}$. 
Note that  $\alpha_{23}\eta_{23}=1$ and $\alpha_{35}\eta_{35}=1$ follows from $\eta_{132}=\eta_{321}=\eta_{213}$, 
since we are assuming $\eta$  belongs to $\mathcal A$.

\medskip

\noindent -  $T = T_{\{1, 2, 3\}}$: Take $\beta_1 = \frac{1}{\sqrt{\eta_{12}}\sqrt{\eta_{15}}}$, $\beta_2 = \sqrt{\eta_{15}}$, $\beta_3 = \frac{1}{\sqrt{\eta_{13}}}$,
$\beta_4 = \frac{1}{\sqrt{\eta_{14}}}$,
$\beta_5 = \sqrt{\eta_{12}}$, $\beta_6 = \frac1{\sqrt{\eta_{16}}}$, $\beta_7 = \frac1{\sqrt{\eta_{17}}}$,
to get that $\eta^\beta=\eta'= (1, \, \lambda_1, \, \lambda_2, \, 1, \, \lambda_1, \, \lambda_2, \, \lambda_3, \, \lambda_4, \lambda_5, \, \lambda_6)$, where 
 $\lambda_1 = \frac{\sqrt{\eta_{13}}\sqrt{\eta_{16}}}{\sqrt{\eta_{12}}\sqrt{\eta_{15}}}$, \,
$\lambda_2 = \frac{\sqrt{\eta_{14}}\sqrt{\eta_{17}}}{\sqrt{\eta_{12}}\sqrt{\eta_{15}}}$, \, 
$\lambda_3 = \frac{\sqrt{\eta_{15}}\sqrt{\eta_{17}}}{\sqrt{\eta_{13}}}\eta_{23}$, \, 
$\lambda_4 = \frac{\sqrt{\eta_{15}}\sqrt{\eta_{14}}}{\sqrt{\eta_{16}}}\eta_{26}$, \, 
$\lambda_5 = \frac{\sqrt{\eta_{12}}\sqrt{\eta_{14}}}{\sqrt{\eta_{13}}}\eta_{35}$ and 
$\lambda_6 = \frac{\sqrt{\eta_{12}}\sqrt{\eta_{17}}}{\sqrt{\eta_{16}}}\eta_{56}$.
 Now,
using that $\eta' \in\mathcal A$  we derive
\begin{align*} 
\eta'_{123} & = \eta'_{312} = \eta'_{231} \Leftrightarrow 
\lambda_2 \lambda_3 = \lambda_5 = \lambda_4 \lambda_1 , 
\\
\eta'_{126} & = \eta'_{612} = \eta'_{261}
\Leftrightarrow 
\lambda_2\lambda_4  = \lambda_6= \lambda_3 \lambda_1,
\\
\eta'_{135} & = \eta'_{513} = \eta'_{351}
\Leftrightarrow 
\lambda_2\lambda_5 = \lambda_6 \lambda_1= \lambda_3 ,
\\
\eta'_{156} & = \eta'_{615} = \eta'_{561}
\Leftrightarrow
\lambda_2\lambda_6 =   \lambda_4 = \lambda_5 \lambda_1.
\end{align*}
From here we obtain that   
\begin{align*}
\lambda_5 & = \lambda_1\lambda_4 = (\lambda_1)^2 \lambda_5 \Rightarrow \lambda_1 = \pm 1,   
\\  
\lambda_6 & = \lambda_2\lambda_4 = (\lambda_2)^2 \lambda_6 \Rightarrow \lambda_2 = \pm 1.     
\end{align*}
Hence, $\eta'= (1, \, \lambda_1, \, \lambda_2, \, 1, \, \lambda_1, \, \lambda_2, \, \mu, \, \lambda_1\lambda_2\mu, \lambda_2\mu, \, \lambda_1\mu)$
for some $\mu\in\mathbb{C}^\times$,  $\lambda_1,\lambda_2 \in\{\pm 1\}$.
Defining $\gamma\colon  I \to \mathbb{C}^\times$ by    $\gamma_3 = \gamma_6 = \frac1{\mu}$ and $\gamma_i=1$ for all $i\ne3,6$, we get that 
$$(\eta')^\gamma =  
(1, \, \lambda_1, \, \lambda_2, \, 1, \, \lambda_1, \, \lambda_2, \, 1, \, \lambda_1\lambda_2 , \lambda_2, \, \lambda_1 ).
$$
But $(1, \, -1, \, 1, \, 1, \, -1, \, 1, \, 1, \, -1 , 1, \, -1 )^\delta= \bm{1}^T$
and $(1, \, 1, \, -1, \, 1, \, 1, \, -1, \, 1, \, -1 , -1, \, 1 )^{\delta'}= \bm{1}^T$,
for $\delta=(1,1,-1,-1,1,1,-1)$ and 
 $\delta'= (1,1,1,-1,1,1,1) $.

\smallskip

\smallskip  

- Let $T = X\setminus X_{L^C_{12}}$. Since $X_{(1)} \subseteq T$, we can take $\beta\colon I\to \bb{C}^\times$ the same map as in case   $T_{\{1,2,3\}}$ to get 
$\eta^\beta=\eta'=(1, \, \lambda_1, \, \lambda_2, \, 1, \, \lambda_1, \, \lambda_2, \, \lambda_3, \ldots, \lambda_{11})$, for some $\lambda_i \in \mathbb{C}^\times$.
On the other hand, using that $\eta'_{ijk} = \eta'_{jki}$ for all $i, j, k$ generative, we obtain that 
$\lambda_1, \lambda_2\in\{\pm1\}$ and
$
\eta'=(1,  \lambda_1, \lambda_2, 1, \lambda_1, \lambda_2, \, \lambda, \, \mu, \, \lambda\mu\lambda_1\lambda_2, \, \lambda\lambda_1\lambda_2, \, \mu\lambda_1\lambda_2, \, \lambda\lambda_2, \, \mu\lambda_1, \, \lambda\lambda_1, \,\mu\lambda_2)$, for some  $\lambda, \mu \in \mathbb{C}^\times$. To finish, take
 $\gamma_1 = \lambda_1  \lambda_2$, $\gamma_2 =  \frac{1}{\sqrt{\lambda_1}\sqrt{\lambda_2}\sqrt{\lambda}\sqrt{\mu}}$, 
$\gamma_3 = \frac{1}{\sqrt{\lambda_1}\sqrt{\lambda}}$, $\gamma_4   = \frac{1}{\sqrt{\lambda_2}\sqrt{\mu}}$,
$\gamma_5 = \lambda_1 \lambda_2 \gamma_2$,
$\gamma_6 = \lambda_2  \gamma_3$,
$\gamma_7 = \lambda_1  \gamma_4$ to get that $(\eta')^\gamma = \bm{1}^T$. \end{proof}

This finishes the equivalence classes via normalization attached to the possible non-trivial supports up to collineation. 
Lastly, in case $T = X$ it is also true that any $\eta \in\mathcal A$ with   support $X$ is equivalent via normalization to  $\bm{1}^X$.
The proof is similar to the one in case $X\setminus X_{L^C}$; it can also be seen as a consequence of 
\cite[Theorem~3.1]{ref33}, which deals with graded contractions \emph{without zeroes} and states (in our notation) the following:

\emph{``If $\ep$ is a complex $G$-graded contraction without zeroes, then there exist some nonzero complex numbers  $\{\alpha_g: g\in G\}$ such that $\ep(g, h) = \frac{\alpha_g\alpha_h}{\alpha_{g + h}}$''.}

\noindent Here in \cite{ref33}, $G$ denotes an arbitrary finite abelian group and there are no restrictions on the $G$-graded Lie algebra.

\begin{conclusion}\label{conclus}
The set $\mathcal G/\sim_n$ consists of  one isolated equivalence class  related to each of the $21$ nice sets in $\{T_i:i\ne 14,17,20\}$ (and to those ones collinear to them); 
together with three infinite families related to $T_{14}$, $T_{17}$, and $T_{20}$,   parametrized by $\bb{C}^\times$, $\bb{C}^\times/\bb{Z}_2$ and   $(\bb{C}^\times)^2/\bb{Z}^2_2$, respectively (and those ones collinear to them). 

  (More precisely, as in Remark~\ref{numeronicesets}, we have 70 parametrized families 
  jointly with 674 isolated equivalence classes by normalization,
  whose related Lie algebras can be obtained from $\la^{\ep_{\eta^{T_i}}}$ by applying the Weyl group of the grading as in Proposition~\ref{colli}.)
\end{conclusion}

The next aim is to prove if the corresponding Lie algebras are non-isomorphic. 
At the moment we know that there is no an isomorphism between two of these algebras which is a scalar multiple of the identity on each homogeneous component.


\subsection{Classification up to strong equivalence}\label{sec5}


Our goal here is to prove that any two strongly equivalent admissible graded contractions of $\Gamma_{\f{g}_2}$  
are also equivalent by normalization. 
That is, we have to prove that, 
  if $\varphi\colon  \la^{\ep_\eta} \to \la^{\ep_{\eta'}}$ denotes a graded isomorphism, then there exists a graded isomorphism 
  $\varphi'\colon  \la^{\ep_\eta} \to \la^{\ep_{\eta'}}$ such that $\varphi'\vert_{\la_i}$ is a scalar multiple of the identity for all $i\in I$. (We are using the notation $\la_i\equiv (\f{g}_2)_{g_i}$ 
  as in Lemma~\ref{unaporsoporte}.)
 This is not a trivial problem by any means, and it seems to rely heavily on the properties of the grading. 
 
  There is no precedent in   dealing with this problem,  
   so we will try to explain where our ideas for addressing it come from.
First, our results in the previous sections allow us to restrict our attention to admissible graded contractions with support contained in $X_{(1)}$, since we proved in Lemma~\ref{unaporsoporte} that two strongly equivalent admissible graded contractions have the same support and,  
for the remaining (non-collinear) supports, Theorem~\ref{prop_equiv} tells us that there is only one equivalence class up to normalization, so that in particular only one class up to strong equivalence. 
 The difficulty in dealing with a nice set $T \subseteq X_{(1)}$  lies in the fact that we have no much information 
 about the nonzero values of an admissible map $\eta\in\mathcal A$ with support $T$, since  any map $\eta\colon X\to \bb C^\times$ with support $T$ belongs to $\mathcal A$ (there is no $\{i, j\},\{i\ast j,k\}\in T$ with $\{i, j,k\}$ a generating triplet, so $\eta_{ijk}=0$). We will obtain valuable  information on the values of 
 $\eta$ in Corollary~\ref{co_secdificil}; the main tool being thinking of  $\varphi\vert_{\la_i}$ as an endomorphism of a  2-dimensional vector space 
  and to take   advantage of our knowledge of   the products among the subspaces $\la_i$'s as in Lemma~\ref{lem_base}. We begin by  adapting the notation used in
the basis \eqref{eq_para5} in order to   handle  several  basis of the same homogeneous component simultaneously. 
(The notation in \eqref{eq_para5}, less precise but much simpler, has been used through the remaining sections of this paper.)

\begin{remark} \label{re_base}
 We denote our basis of the space of zero trace octonions as 
\[
e_1 = {\bf i},  \quad 
e_2 = {\bf j},  \quad 
e_3 = {\bf l},  \quad
e_4 = {\bf kl}, \quad 
e_5 = {\bf k},  \quad 
e_6 = {\bf il}, \quad 
e_7 = {\bf -jl}.
\]
Then  $e_ie_j = e_{i\ast j}$ if either the ordered line $(i, j, i\ast j)$ or some of its cyclic permutations belong to the set
${\bf L} = \{(1, 2, 5), \, (5, 6, 7),\, (7, 4, 1), \, (1, 3, 6), \, (6, 4, 2),\ (2, 7, 3), \, (3, 4, 5)\}$, 
and  $e_ie_j = -e_{i\ast j}$, otherwise. (Note we have used brackets 
instead of braces because the order in the lines is relevant for describing the signs of the products.
Also, we use here $\ell,\ell'\dots$ for ordered lines instead of indices, due to the necessity of adopting a  very precise notation.)

Take $\ell \in {\bf L}$ and fix $i\in\ell$, $k \notin \ell$. If $ j\in \ell\setminus\{i\}$, then $i,j,k$ are generative, $\mathcal{Q}:= \langle 1,e_i,e_j,e_{i\ast j}\rangle$ is a quaternion subalgebra (isomorphic to $\mathcal{H}$) and  
$e_k$ is orthogonal to $\mathcal{Q}$ with respect to the norm $n$. So $\mathcal{O}=\mathcal{Q}\oplus \mathcal{Q}e_k $.
Consider the derivations of $\mathcal{O}$ given by, for any $q\in \mathcal{Q}$,
\begin{equation}\label{eq_para5'}
\begin{array}{lll}
E^{\ell,k}_{i }: &q\mapsto 0, & qe_{k}\mapsto \frac12 (e_{i}q)e_{k},   \\
F^\ell_{i }: &q\mapsto \frac12[ e_{i},q],&qe_{k}\mapsto-\frac12 (qe_{i})e_{k}. 
\end{array}
\end{equation}
The definition of $F^\ell_{i } $ does not depend on $k$ since $F^\ell_{i } = \frac14D_{e_{j}, e_ie_j}$ for both $j\in \ell\setminus\{i\}$.
For the other derivation, the choice of $k\notin\ell$ is not very relevant either, since
$$
E^{\ell,k}_{i}=E^{\ell,i\ast k}_{i}=-E^{\ell,j \ast k}_{i}=-E^{\ell,i\ast j\ast k}_{i}.
$$
The set $B^{\ell,k}_{i}:=\{E^{\ell,k}_{i}, F^\ell_{i}\}$ is a basis of $\la_{i}  $ and 
each homogeneous component $\la_{i}  $  has six of such bases since each index belongs exactly to three different lines in ${\bf L}$ and there are two possible signs for  \lq\lq$E$\rq\rq. 
Now, as in   \eqref{eq_para5}, for any $r,r'\in\ell$, the elements in the basis multiply as follows,
\[
[E^{\ell,k}_{i}, E^{\ell,k}_{j}] = E^{\ell,k}_{i\ast j}, \quad 
[F^\ell_{i}, F^\ell_{j}] = F^\ell_{i\ast j}, \quad 
[E^{\ell,k}_{r}, F^\ell_{r'}] = 0, 
\]
if $\ell$ is any cyclic permutation of $ (i, j, i\ast j )\in {\bf L}$.
As a consequence, for any $a,b\in\bb C$, and any $i\ne j\in\ell$,
\begin{equation}\label{eqSpec}
\Spec(\ad^2(aE^{\ell,k}_{i}+bF^{\ell}_{i})\vert_{\la_{ j}}) = \{-a^2, -b^2\},
\end{equation} 
 where   $E^{\ell,k}_{j}$ and $F^{\ell}_{j}$ are   eigenvectors related to $-a^2$ and $-b^2$, respectively.
 Here, $\Spec$ refers to the spectrum of an endomorphism, that is, the set of eigenvalues.
\end{remark}

\begin{prop}\label{le_matrices}
Let $T$ be a nice set and $\varphi\colon \la^{\ep } \to \la^{\ep_{\eta^T}}$ a graded isomorphism, with $\eta^T\in\mathcal A$  defined in Equation~\eqref{defietaT}. 
If $\big \{\{i, j\}, \{i, i\ast j\} \big \}\subseteq T$, then the following assertions hold: 
\smallskip

\noindent {\rm (i)} For any $z\in\la_i$,
$\Spec(\ad^2\varphi(z)\vert_{\la_{ j}}) = \{ \ep_{ij}\ep_{i\,i\ast j} \lambda\mid \lambda\in \Spec(\ad^2 z\vert_{\la_{ j}}) \}.$
\smallskip

\noindent {\rm (ii)} The matrix of $\varphi\vert_{\la_i}$ with respect to the basis $B^{\ell,k}_i$, 
for $i,j\in\ell $ and $k\notin\ell$, is one of the following
\begin{equation*}\label{eq_matrices}
 \pm\alpha\begin{pmatrix}
1  & 0  \\
0  & 1 
\end{pmatrix},\quad
\pm \alpha\begin{pmatrix}
1  & 0  \\
0  & -1 
\end{pmatrix},\quad
\pm\alpha\begin{pmatrix}
0  & 1  \\
1  & 0 
\end{pmatrix},\quad
\pm\alpha\begin{pmatrix}
0  & 1  \\
-1  & 0 
\end{pmatrix},
\end{equation*}
for $\alpha^2 =  {\ep_{ij}\ep_{i \, i\ast j}}$.

\smallskip 

\noindent {\rm (iii)} $\det(\varphi\vert_{\la_i})=\pm\ep_{ij}\ep_{i \, i\ast j}$. 
\end{prop}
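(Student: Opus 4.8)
The plan is to derive all three parts from two inputs: the fact that a graded isomorphism conjugates adjoint operators up to the contraction scalars, and the fact, recorded in \eqref{eqSpec}, that $\ad^2(\cdot)|_{\la_j}$ is diagonalized on the generic element of $\la_i$ in the bases of Remark~\ref{re_base}. Throughout, $\ad$ will denote the adjoint map for the \emph{original} bracket of $\f{g}_2$, and I abbreviate $c:=\ep_{ij}\ep_{i\,i\ast j}$. Since $\varphi$ is a graded isomorphism, Lemma~\ref{unaporsoporte} gives $S^\ep=T$; as $\{i,j\},\{i,i\ast j\}\in T$, this forces $\ep_{ij},\ep_{i\,i\ast j}\neq 0$, hence $c\neq 0$, a nonvanishing that is used repeatedly.

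For (i) I would unwind the relation $\varphi([x,y]^\ep)=[\varphi(x),\varphi(y)]^{\eta^T}$. For $z\in\la_i$ and $w\in\la_j$, using $\eta^T_{ij}=1$ this reads $[\varphi(z),\varphi(w)]=\ep_{ij}\,\varphi([z,w])$, with $[z,w]\in\la_{i\ast j}$; applying the same relation once more to $z$ and $[z,w]$, now with $\eta^T_{i\,i\ast j}=1$, yields $\ad^2\varphi(z)\big(\varphi(w)\big)=c\,\varphi\big(\ad^2 z\,(w)\big)$ (note $i\ast(i\ast j)=j$, so both operators preserve $\la_j$). As $w$ ranges over $\la_j$ and $\varphi|_{\la_j}$ is a bijection onto $\la_j$, this says $\ad^2\varphi(z)|_{\la_j}=c\,\big(\varphi|_{\la_j}\big)\circ\big(\ad^2 z|_{\la_j}\big)\circ\big(\varphi|_{\la_j}\big)^{-1}$, so $\ad^2\varphi(z)|_{\la_j}$ is $c$ times a conjugate of $\ad^2 z|_{\la_j}$, and (i) follows from the invariance of the spectrum under conjugation.

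For (ii), fix $\ell\in\mathbf{L}$ with $i,j\in\ell$ and $k\notin\ell$, and write the matrix of $\varphi|_{\la_i}$ in $B_i^{\ell,k}$ as $\left(\begin{smallmatrix}p&q\\r&s\end{smallmatrix}\right)$, so that $z=aE_i^{\ell,k}+bF_i^\ell$ maps to $\varphi(z)=(pa+qb)E_i^{\ell,k}+(ra+sb)F_i^\ell$. Applying \eqref{eqSpec} to both $z$ and $\varphi(z)$ and substituting into (i) gives the multiset identity $\{(pa+qb)^2,(ra+sb)^2\}=\{ca^2,cb^2\}$, valid for all $a,b\in\bb{C}$. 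Comparing the two elementary symmetric functions yields the polynomial identities $(pa+qb)^2+(ra+sb)^2=c(a^2+b^2)$ and $\big[(pa+qb)(ra+sb)\big]^2=c^2a^2b^2$; since $\bb{C}[a,b]$ is a domain, the latter forces $(pa+qb)(ra+sb)=\pm c\,ab$ with a single global sign. Unique factorization of this product of linear forms then makes the unordered pair $\{pa+qb,\,ra+sb\}$ equal to $\{\mu a,\nu b\}$ (up to the interchange $a\leftrightarrow b$), so the matrix is diagonal or antidiagonal; the sum identity gives $\mu^2=\nu^2=c$, and writing $\alpha$ for a square root of $c$ produces exactly the four listed matrices.

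Part (iii) is then immediate, since each of the four matrices in (ii) has determinant $\pm\alpha^2=\pm c=\pm\ep_{ij}\ep_{i\,i\ast j}$. The main obstacle is the middle step of (ii): passing from a single spectral/multiset identity to a rigid description of $\varphi|_{\la_i}$. The essential inputs there are that $c\neq 0$ and that a product of two linear forms in $\bb{C}[a,b]$ factors uniquely into irreducibles; the sign bookkeeping that collapses all sign-choices of $\mu,\nu$ into the stated $\pm\alpha$ normalization also lives in this step.
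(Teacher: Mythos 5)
Your proposal is correct, and its overall architecture is the paper's: part (i) is obtained exactly as in the paper, by applying the graded-isomorphism identity twice to get $\ad^2\varphi(z)\vert_{\la_j}=\ep_{ij}\ep_{i\,i\ast j}\,\varphi\circ\ad^2 z\circ\varphi^{-1}\vert_{\la_j}$ and invoking conjugation-invariance of the spectrum, and part (iii) is read off from (ii) in both treatments. Where you genuinely diverge is the middle of (ii). The paper applies (i) together with Equation~\eqref{eqSpec} only to the two basis vectors $E^{\ell,k}_i$ and $F^{\ell}_i$: each of the two columns of the matrix is then forced to be supported on a single basis vector with its entry squaring to $\ep_{ij}\ep_{i\,i\ast j}$, and the degenerate sign patterns (both columns proportional to the same basis vector) are discarded because $\varphi\vert_{\la_i}$ is bijective. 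You instead apply (i) to the generic element $aE^{\ell,k}_i+bF^{\ell}_i$, turn the resulting spectral identity into two polynomial identities via elementary symmetric functions, and use that $\bb{C}[a,b]$ is a UFD to force the diagonal/antidiagonal shape. Both routes are sound; note that set equality of pairs (even when a spectrum collapses to a single eigenvalue) does imply equality of sums and products, so your symmetric-function step is not affected by the set-versus-multiset issue. The paper's argument is shorter, needing only two evaluations; yours buys two small things in exchange for the heavier algebra: it makes explicit at the outset that $\ep_{ij}\ep_{i\,i\ast j}\neq 0$ (via Lemma~\ref{unaporsoporte}, a point the paper leaves implicit), and once the product of the two linear forms equals $\pm\ep_{ij}\ep_{i\,i\ast j}\,ab\neq 0$, the factorization argument excludes the rank-deficient configurations automatically, without a separate appeal to invertibility.
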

 
\begin{proof}
 Recall that   $\ep_{\eta^T}(i,j)=\ep_{\eta^T}(i,i\ast j)=1$. 

\noindent (i) For   $z \in \la_i$ and  $w\in \la_j$ we have $[\varphi(z),[\varphi(z),\varphi(w)]]=\varphi([z,[z,w]^{\ep } ]^{\ep } )=\ep_{ij}\ep_{i\,i\ast j}\varphi([z,[z,w]])$, which implies  
$\ad^2(\varphi(z))\vert_{\la_j}= \ep_{ij}\ep_{i\,i\ast j}   \varphi \circ\ad^2z\circ  \varphi^{-1}\vert_{\la_j}$.

\smallskip

\noindent (ii) and (iii). Let 
$P  = \tiny
{\begin{pmatrix}
a  & b  \\
c  & d 
\end{pmatrix}}$ be the matrix of $\varphi\vert_{\la_i}$ with respect to the basis $B^{\ell,k}_i$ given in Remark~\ref{re_base}.
 To ease the notation, write $E = E^{\ell,k}_i$, $F = F^\ell_i$; notice that  $\varphi(E) = aE+ cF$  and  $\varphi(F) = bE + dF$.
From (i) we get that 
$\Spec(\ad^2\varphi(E)\vert_{\la_{ j}}) = \{ -\ep_{ij}\ep_{i\,i\ast j} ,0 \}$, since $\Spec(\ad^2 E\vert_{\la_{ j}}) = \{ -1 ,0 \}$. 
As $\Spec(\ad^2\varphi(E)\vert_{\la_{ j}}) =\{-a^2, -c^2\}$
by Equation~\eqref{eqSpec},
 thus, either $a = 0$ and $c^2=\ep_{ij}\ep_{i\,i\ast j}$, or $c = 0$ and $a^2=\ep_{ij}\ep_{i\,i\ast j}$. 
 Arguing with $F$, we get in a similar way that either $b = 0$ and $d^2=\ep_{ij}\ep_{i\,i\ast j}$, or $d = 0$ and $b^2=\ep_{ij}\ep_{i\,i\ast j}$. 
  Therefore, 
  \begin{itemize}
  \item either $P  = 
\tiny\begin{pmatrix}
a  & 0  \\
0  & d 
\end{pmatrix}$ with $a=\pm d$ and $\det(P)=ad\in\{\pm\ep_{ij}\ep_{i\,i\ast j}\}$;
\item or $P  = 
\tiny\begin{pmatrix}
0  & b  \\
c  & 0 
\end{pmatrix}$ with $b=\pm c$  and $\det(P)=-bc\in\{\pm\ep_{ij}\ep_{i\,i\ast j}\}$.
\end{itemize}
 \end{proof}

\begin{cor}\label{co_secdificil}
Let $T$ be a nice set such that $\big \{\{i, j\}, \{i, i\ast j\}, \{i, k\}, \{i, i\ast k\} \big \} \subseteq T$. If
$\eta \in \mathcal{A}$ is strongly equivalent to $\eta^T$, then 
$\eta_{ij}\eta_{i, i\ast j} = \pm \eta_{ik}\eta_{i, i\ast k}$.   
\end{cor}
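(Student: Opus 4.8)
The plan is to read the corollary off Proposition~\ref{le_matrices}(iii), exploiting the fact that $\det(\varphi|_{\la_i})$ is an intrinsic invariant of the linear map $\varphi|_{\la_i}$ and is therefore independent of which line through $i$ we use to compute it. First I would unwind the hypothesis: since $\eta\approx\eta^T$, there is by definition a graded isomorphism $\varphi\colon\la^{\ep_\eta}\to\la^{\ep_{\eta^T}}$, that is, $\varphi(\la_i)=\la_i$ for every $i\in I$, so each $\varphi|_{\la_i}$ is a well-defined automorphism of the $2$-dimensional space $\la_i$. Taking $\ep=\ep_\eta$ places us exactly in the situation of Proposition~\ref{le_matrices}, and recall that under the correspondence of Proposition~\ref{bijection} one has $\ep_{ij}=\ep_\eta(g_i,g_j)=\eta_{ij}$.

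Next I would invoke Proposition~\ref{le_matrices}(iii) along each of the two lines through $i$ provided by the hypothesis. Because $\{i,j,i\ast j\}=L_{ij}$ is a line and $\{\{i,j\},\{i,i\ast j\}\}\subseteq T$, the proposition gives $\det(\varphi|_{\la_i})=\pm\,\eta_{ij}\eta_{i, i\ast j}$. Likewise, because $\{i,k,i\ast k\}=L_{ik}$ is a line and $\{\{i,k\},\{i,i\ast k\}\}\subseteq T$, it gives $\det(\varphi|_{\la_i})=\pm\,\eta_{ik}\eta_{i, i\ast k}$. Since both equalities compute the same scalar $\det(\varphi|_{\la_i})$, equating them yields $\eta_{ij}\eta_{i, i\ast j}=\pm\,\eta_{ik}\eta_{i, i\ast k}$, which is precisely the claim.

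There is no real obstacle remaining, as all the analytic work was already absorbed into Proposition~\ref{le_matrices}; the single point meriting a word is that applying that proposition along $L_{ij}$ requires orienting the line as a cyclic permutation of an element of $\mathbf{L}$ and choosing an auxiliary index $k'\notin L_{ij}$, but such choices always exist and leave $\det(\varphi|_{\la_i})$ unchanged, this determinant being basis-independent. The degenerate case $L_{ij}=L_{ik}$ (that is, $k\in\{j,i\ast j\}$) makes the conclusion immediate, so nothing is lost there either. The conceptual heart is simply that the basis-freeness of the determinant forces the two products of $\eta$-values along the two lines through $i$ to agree up to sign.
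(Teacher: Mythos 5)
Your proposal is correct and follows essentially the same route as the paper: the paper's proof also applies Proposition~\ref{le_matrices}(iii) to the two lines through $i$ (reordered as elements of $\mathbf{L}$) and equates the two resulting expressions for $\det(\varphi\vert_{\la_i})$. Your extra remarks on the basis-independence of the determinant and the degenerate case $L_{ij}=L_{ik}$ are harmless refinements of the same argument.
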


\begin{proof}
Let $\varphi\colon \la^{\ep_\eta } \to \la^{\ep_{\eta^T}}$ be the corresponding graded isomorphism. Applying the previous proposition to the two lines 
$\ell  $ and $\ell'  $ in ${\bf L}$ which are reorderings of  $\{i,j,i*j\}$ and $\{i,k,i*k\}$ respectively, we get $\det(\varphi\vert_{\la_i})=\pm\eta_{ij}\eta_{i \, i\ast j}=\pm\eta_{ik}\eta_{i \, i\ast k}$, and the result follows.
\end{proof}

At the moment it is not yet immediate whether there could be an admissible map    $\eta \in \mathcal{A}$ satisfying $\eta\approx\eta^T$ but $\eta\not \sim_n\eta^T$, but 
a lot of information on the possibilities for $T$ and $\eta$ can be extracted from Corollary~\ref{co_secdificil}.  
Of course we can assume that $\{\{1, 2\}, \{1, 3\},  \{1, 5\}, \{1, 6\}\}\subseteq T\subseteq X_{(1)}$, by an application of Theorem~\ref{prop_equiv}. Let us prove that:
\begin{itemize}
\item[-] If $ T= \{\{1, 2\}, \{1, 3\},  \{1, 5\}, \{1, 6\}\}$, then $\eta  \sim_n (1,1,1,-1)$.
\item[-] If $T = X_{(1)}\setminus  \{\{1, 7\}\}$, then $\eta  \sim_n(1,\mathfrak{i},1,1, \mathfrak{i})$, for   $\mathfrak{i}\in\bb C$.
\item[-] If $T = X_{(1)}$, then $\eta  \sim_n (1,\lambda,\mu,1, \lambda,\mu)$, for $\lambda, \mu\in\{\pm1,\pm  \mathfrak{i}\}$ no both in $\{\pm1\}$.
\end{itemize}
For the first case, we can assume that $\eta_{12} = \eta_{13} = \eta_{15} = 1$ by Theorem~\ref{prop_equiv}.
Now, $\eta_{16}=\eta_{16}\eta_{13}=\pm\eta_{12}\eta_{15}=\pm1$, due to Corollary~\ref{co_secdificil}. But $\eta_{16} \ne1$ since $\eta\not\sim_n \eta^T$.
 If $T = X_{(1)}\setminus  \{\{1, 7\}\}$, then $\eta\sim_n (1,\lambda,1,1, \lambda)$, for some $\lambda \in \bb C$ such that $\lambda^2=\eta_{16}\eta_{13}=\pm\eta_{12}\eta_{15}=\pm1$. Now  
$\lambda^2\ne1$, since $\eta\not\sim_n\eta^T$; the result clearly follows since $(1,\mathfrak{i},1,1, \mathfrak{i})\sim_n (1,-\mathfrak{i},1,1,- \mathfrak{i})$. The last case follows similarly, since $\lambda^2=\pm1$ and $\mu^2=\pm1$. Now the question   is whether these situations can really occur: is it $ (1,1,1,-1)$
strongly equivalent to $ \bm{1}^T$? It will be easy to give a negative answer once we know the relation among the 3 different basis of the same homogeneous component.

\begin{lemma}\label{le_cambiodebase}
Let $\ell = (1, 2, 5)$, $\ell' = (1, 3, 6)$ and $B_1^{\ell,3}$, $B_1^{\ell',2}$ be the bases of $\la_1$ defined in Equation~\eqref{eq_para5'}. 
Denote  $E=E_1^{\ell,3}$, $F=F_1^{\ell}$, $E'=E_1^{\ell',2}$, $F'=F_1^{\ell'}$. Then
 $$ E'=\frac12(E+F),\qquad  F'=\frac12(3E-F).$$
\end{lemma}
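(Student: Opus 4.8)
The plan is to use that $\la_1=(\f{g}_2)_{g_1}$ is two-dimensional with basis $\{E,F\}$ (Lemma~\ref{lem_base}(ii) together with Remark~\ref{re_base}). Since $E',F'$ also lie in $\la_1$, there exist unique scalars $a,b,c,d$ with $E'=aE+bF$ and $F'=cE+dF$, and the whole task reduces to pinning these down. Each equality is an identity of derivations of $\mathcal O$; as every derivation annihilates $1$ and $\mathcal O$ is spanned by $1,e_1,\dots,e_7$, it suffices to evaluate the two identities on a well-chosen pair of the $e_i$ and read off the coefficients. I would take the test vectors $e_2=\mathbf j$ and $e_3=\mathbf l$, precisely because they sit conveniently inside both of the quaternionic splittings involved.

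First I would compute $E=E_1^{\ell,3}$ and $F=F_1^{\ell}$ on $e_2,e_3$ straight from~\eqref{eq_para5'}, using the sign rule governed by $\mathbf L$ and the relations $e_1e_2=e_5$, $e_1e_3=e_6$ (which hold since $(1,2,5),(1,3,6)\in\mathbf L$). Here $E,F$ are adapted to $\mathcal O=\mathcal Q\oplus\mathcal Q e_3$ with $\mathcal Q=\langle 1,e_1,e_2,e_5\rangle$, so $e_2\in\mathcal Q$ and $e_3=1\cdot e_3\in\mathcal Q e_3$; this gives $E(e_2)=0$, $F(e_2)=\tfrac12[e_1,e_2]=e_5$, $E(e_3)=\tfrac12 e_1e_3=\tfrac12 e_6$ and $F(e_3)=-\tfrac12 e_1e_3=-\tfrac12 e_6$.

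The delicate point — and essentially the only place where care is required — is that $E'=E_1^{\ell',2}$ and $F'=F_1^{\ell'}$ are defined relative to the \emph{other} quaternionic splitting $\mathcal O=\mathcal Q'\oplus\mathcal Q'e_2$ with $\mathcal Q'=\langle 1,e_1,e_3,e_6\rangle$, so one must present each test vector in the form dictated by this new decomposition and keep the octonionic signs straight. The choice of $e_2,e_3$ is exactly what tames this: $e_3\in\mathcal Q'$ gives $E'(e_3)=0$ and $F'(e_3)=\tfrac12[e_1,e_3]=e_6$, while $e_2=1\cdot e_2\in\mathcal Q'e_2$ gives $E'(e_2)=\tfrac12 e_1e_2=\tfrac12 e_5$ and $F'(e_2)=-\tfrac12 e_1e_2=-\tfrac12 e_5$. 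No appeal to the alternative or flexible laws is needed for these two vectors, whereas testing on $e_4$ or $e_7$ would force one to rewrite them as $q'e_2$ with $q'\in\mathcal Q'$ and to simplify expressions such as $e_1(e_1e_3)=(e_1e_1)e_3=-e_3$; avoiding that bookkeeping is the whole benefit of the dimension argument.

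Finally I would substitute into $E'=aE+bF$: the value at $e_2$ lies in $\langle e_5\rangle$ and reads $\tfrac12 e_5=b\,e_5$, so $b=\tfrac12$, while the value at $e_3$ lies in $\langle e_6\rangle$ and reads $0=(\tfrac a2-\tfrac b2)e_6$, forcing $a=b=\tfrac12$ and hence $E'=\tfrac12(E+F)$. Likewise, for $F'=cE+dF$ the equation at $e_2$ gives $-\tfrac12 e_5=d\,e_5$, i.e. $d=-\tfrac12$, and at $e_3$ gives $e_6=(\tfrac c2-\tfrac d2)e_6$, i.e. $c=\tfrac32$, so $F'=\tfrac12(3E-F)$, as claimed. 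Should one prefer to dispense with the dimension argument altogether, the same four maps computed on all of $e_1,\dots,e_7$ verify both identities coordinatewise; that route is longer but conceptually identical.
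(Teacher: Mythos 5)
Your proposal is correct: all eight evaluations agree with Equation~\eqref{eq_para5'} and the sign rule of Remark~\ref{re_base} (namely $E(e_2)=0$, $F(e_2)=e_5$, $E(e_3)=\tfrac12 e_6$, $F(e_3)=-\tfrac12 e_6$, $E'(e_2)=\tfrac12 e_5$, $F'(e_2)=-\tfrac12 e_5$, $E'(e_3)=0$, $F'(e_3)=e_6$), and the two resulting linear systems do force $E'=\tfrac12(E+F)$ and $F'=\tfrac12(3E-F)$. Your route is a genuine economization of what the paper does by brute force: the paper's proof evaluates all four derivations $E,F,E',F'$ on all seven basis octonions and records the full table of coefficients with respect to the permutation $\sigma=(2\ 5)(3\ 6)(4\ 7)$, reading both identities off coordinatewise, whereas you invoke the fact that $\{E,F\}$ is a basis of the $2$-dimensional $\la_1$ and that $E',F'\in\la_1$ (Lemma~\ref{lem_base}(ii) and Remark~\ref{re_base}), so that two well-chosen test vectors determine the coordinates uniquely; this cuts $28$ evaluations down to $8$, and the choice of $e_2,e_3$ (each lying in the quaternionic half of one splitting and equal to $1\cdot e_k$ in the other) keeps the octonionic bookkeeping trivial. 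What the paper's table buys in exchange is the complete matrices of all four derivations, which verifies the identities on all of $\mathcal{O}$ without appealing to the basis fact. One caution if you check your numbers against the paper's table: its entries are uniformly twice yours (it records, e.g., $F(e_2)=2e_5$), i.e.\ the table is normalized without the factors $\tfrac12$ of Equation~\eqref{eq_para5'}; since both identities are linear and homogeneous in $(E,F,E',F')$, this common rescaling is immaterial, and your values are the ones that literally follow from \eqref{eq_para5'}.
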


\begin{proof}
Straightforward computations permit to check that for any derivation $d\in\{E,F,E',F'\}$, then $d(e_i)=\alpha_{d,i}e_{\sigma(i)}$, for the permutation
$\sigma=(2\quad 5)(3\quad 6)(4\quad 7)$ and the scalar $\alpha_{d,i}$ given by the next table:
$$
\begin{array}{c|ccccccc|}
d/i& 1& 2& 3& 4& 5& 6& 7  \\\hline
E&0&0&1&1&0&-1&-1\\
F&0&2&-1&1&-2&1&-1\\
E'&0&1&0&1&-1&0&-1\\
F'&0&-1&2&1&1&-2&-1 \\\hline
\end{array}
$$
\end{proof}

\begin{theorem}\label{teo_normalizacionbastaba}
Strong equivalence  and equivalence via normalization coincide for $\Gamma_{\f{g}_2}$. 
\end{theorem}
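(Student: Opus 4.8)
The plan is to prove the only nontrivial implication, $\eta\approx\eta'\Rightarrow\eta\sim_n\eta'$, since the converse was already observed right after Definition~\ref{defequivnorm3}. First I would apply Lemma~\ref{unaporsoporte} to get $S^\eta=S^{\eta'}=:T$, and note that $\approx$ is preserved under the simultaneous action of a collineation $\sigma$: if $\varphi\colon\la^{\ep_\eta}\to\la^{\ep_{\eta'}}$ is a graded isomorphism, then $\tilde f_\sigma^{-1}\circ\varphi\circ\tilde f_\sigma$ (with $\tilde f_\sigma$ the automorphism of Proposition~\ref{colli}, which sends $\la_i$ to $\la_{\sigma(i)}$) is a graded isomorphism $\la^{\ep_{\eta^\sigma}}\to\la^{\ep_{\eta'^\sigma}}$. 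This lets me assume $T$ is one of the representatives of Theorem~\ref{classiN}. For every $T\notin\{T_{14},T_{17},T_{20}\}$ Theorem~\ref{prop_equiv} gives a single normalization class with support $T$, so $\eta\sim_n\bm{1}^T\sim_n\eta'$ and there is nothing to prove; the whole difficulty concentrates on the three supports contained in $X_{(1)}$.

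For these, by Theorem~\ref{prop_equiv} I may replace $\eta,\eta'$ by their canonical forms, say $(1,1,1,\lambda)$ and $(1,1,1,\lambda')$ when $T=T_{14}$, and it suffices to show that $\approx$ forces exactly the relation between parameters that $\sim_n$ does. The main tool is the spectral analysis of Proposition~\ref{le_matrices}, which I would restate for an arbitrary target: for a line $\ell=(1,j,1\ast j)\in\mathbf{L}$ and the basis $B_1^{\ell,k}$, the identity $[\varphi z,[\varphi z,\varphi w]]=\frac{\eta_{1j}\eta_{1,1\ast j}}{\eta'_{1j}\eta'_{1,1\ast j}}\varphi([z,[z,w]])$ shows, exactly as in the stated proof via $\Spec(\ad^2)$, that the matrix of $\varphi\vert_{\la_1}$ in $B_1^{\ell,k}$ is $\pm\alpha$ times one of the four sign matrices, with $\alpha^2=\frac{\eta_{1j}\eta_{1,1\ast j}}{\eta'_{1j}\eta'_{1,1\ast j}}$; in particular $\det(\varphi\vert_{\la_1})=\pm\alpha^2$.

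Now I would read the basis-independent quantity $\det(\varphi\vert_{\la_1})$ off two lines through $1$: along $(1,2,5)$ it is $\pm1$ (both maps take the value $1$ on those edges), while along $(1,3,6)$ it is $\pm\lambda/\lambda'$, so $\lambda/\lambda'=\pm1$ (for $T_{17}$ and $T_{20}$ the same computation on the usable lines gives $\lambda^2/\lambda'^2=\pm1$ and $\mu^2/\mu'^2=\pm1$). To remove the stubborn sign I would use Lemma~\ref{le_cambiodebase}: the matrix of $\varphi\vert_{\la_1}$ in $B_1^{(1,2,5),3}$ has entries in $\{0,\pm1\}$ since $\alpha^2=1$ there, and the change of basis to $B_1^{(1,3,6),2}$ is the explicit real invertible matrix of Lemma~\ref{le_cambiodebase}; hence the matrix in $B_1^{(1,3,6),2}$ is also real, forcing $\alpha$ real and thus $\alpha^2=\lambda/\lambda'>0$. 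Combined with $\lambda/\lambda'=\pm1$ this yields $\lambda=\lambda'$ (resp. $\lambda^2=\lambda'^2$ and $\mu^2=\mu'^2$), which is precisely $\eta\sim_n\eta'$ by Theorem~\ref{prop_equiv}. The analogue of Lemma~\ref{le_cambiodebase} for the line $(1,4,7)$, needed for $T_{20}$, follows from the collineation fixing $1$ and cycling the three lines through it.

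The hard part is exactly this last sign resolution. The determinant argument is symmetric in the two square roots and cannot by itself separate $\lambda'$ from $-\lambda'$, which is why the candidates $(1,1,1,-1)$, $(1,\mathfrak{i},1,1,\mathfrak{i})$ and their relatives recorded just before Lemma~\ref{le_cambiodebase} remain open after it. What closes the gap is that a different line through $1$ rescales $\alpha^2$ by a ratio of support values on a \emph{different} pair of edges, while the two homogeneous bases are related over $\bb{Q}$ (the coefficients $\tfrac12,\tfrac32$ of Lemma~\ref{le_cambiodebase}); a real similarity cannot turn a real matrix into a purely imaginary one, and this is precisely the obstruction forbidding $\alpha^2=-1$. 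Thus the decisive ingredient is the Cartan/non-toral structure of the grading, through the explicit derivations of Remark~\ref{re_base} and Corollary~\ref{co_secdificil}, rather than any dimension count.
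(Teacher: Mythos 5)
Your proposal is correct, and its skeleton is the same as the paper's: reduce via Lemma~\ref{unaporsoporte}, the collineation action and Theorem~\ref{prop_equiv} to the three supports inside $X_{(1)}$, then combine the $\Spec(\ad^2)$ analysis of Proposition~\ref{le_matrices} on two lines through $1$ with the change-of-basis matrix $Q$ of Lemma~\ref{le_cambiodebase} to kill the residual sign ambiguity. There are, however, two genuine differences in execution. First, you generalize Proposition~\ref{le_matrices} to an arbitrary target (with $\alpha^2$ equal to the ratio of the two support values along a line), so you can compare two arbitrary canonical forms $(1,1,1,\lambda)$ and $(1,1,1,\lambda')$ directly. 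The paper instead keeps the target fixed at $\bm{1}^T$, uses Corollary~\ref{co_secdificil} to cut the possible strong-equivalence partners of $\bm{1}^T$ down to a finite list of candidates such as $(1,1,1,-1)$, and then shows those candidates are not strongly equivalent to $\bm{1}^T$; as written, that settles only the class of $\bm{1}^T$, and passing to arbitrary pairs requires either your generalization or the (unstated) remark that a graded isomorphism $\la^{\ep_\eta}\to\la^{\ep_{\eta'}}$ is also one between $\la^{\ep_{\eta\nu}}$ and $\la^{\ep_{\eta'\nu}}$ for any pointwise multiplier $\nu$ supported on $T$. Your route is therefore more self-contained on precisely the point that Theorem~\ref{th_lasclasesdeverdad} later needs. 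Second, the final step differs: the paper checks that no scalar multiple of $QP_iQ$ is again one of the four sign matrices unless $i=0$, forcing both matrices to be scalar and the basis-independent determinant to satisfy $\alpha^2=(\alpha')^2$; you instead observe that conjugation by the real matrix $Q$ preserves realness, forcing the second scalar to be real, hence its square positive, which together with the determinant relation $(\alpha')^2=\pm 1$ pins the value to $+1$. Both arguments are valid and of comparable length.

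One loose end: for $T_{20}$ you need the change of basis between $B_1^{(1,2,5),3}$ and a basis adapted to the line through $1,4,7$ to be real, and you justify this "by the collineation cycling the three lines through $1$". A collineation does not by itself carry the distinguished bases onto one another, so this step should instead be justified by noting that all the derivations $E^{\ell,k}_1$, $F^{\ell}_1$ of Remark~\ref{re_base} have rational coefficients on the octonion basis $\{e_1,\dots,e_7\}$, whence any change of basis among them is rational, hence real. This is a one-line repair, not a gap in the argument.
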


\begin{proof}

The result will follow by proving that $ \bm{1}^T$ is not strongly equivalent to  $(1, 1, 1, -1)$, $(1, \mathfrak{i}, 1, 1, \mathfrak{i})$ and $(1, \lambda, \mu, 1, \lambda,\mu)$; notice that there is no ambiguity in the support ($T=T_{14},T_{17},T_{20}$ respectively).  We prove here for 
  $T = \{\{1, 2\}, \{1, 3\},  \{1, 5\}, \{1, 6\}\}$. Suppose, on the contrary, that $\varphi\colon \la^{ \ep } \to \la^{\ep'}$ is a graded isomorphism for $\eta^\ep = (1, 1, 1, -1)$ and $\eta^{\ep'} = \bm{1}^T$.
   Applying Proposition~\ref{le_matrices}~(ii) twice with $\ell =(1, 2, 5)$ and $\ell' = (1, 3, 6)$, we obtain that the 
matrices $P$, $P'$ of the endomorphism $\varphi\vert_{\la_1}$ relative to the bases $ B_1^{\ell,3}$ and $ B_1^{\ell',2}$, respectively, must be 
$\pm\alpha  P_s$ and $\pm\alpha'  P_r$ for some $s,r\in\{0,1,2,3\}$, where
  $$
  P_0=\begin{pmatrix}
1  & 0  \\
0  & 1 
\end{pmatrix},\quad
P_1 =\begin{pmatrix}
1  & 0  \\
0  & -1 
\end{pmatrix},\quad
P_2=\begin{pmatrix}
0  & 1  \\
1  & 0 
\end{pmatrix},\quad
P_3=\begin{pmatrix}
0  & 1  \\
-1  & 0 
\end{pmatrix},
 $$
with
 $\alpha^2= \ep_{12}\ep_{15}=1$ and $(\alpha')^2= \ep_{13}\ep_{16}=-1$.
 Notice that $
 Q=\frac12\begin{pmatrix}
1  & 3  \\
1  & -1 
\end{pmatrix}$ is the order 2 matrix of the change of bases, by Lemma~\ref{le_cambiodebase}.
Now observe that no scalar multiple of $QP_iQ$ belongs to $\{P_0,P_1,P_2,P_3\}$ if $i\ne0$.
This forces $r=s=0$ and then $\alpha^2 =\det (\varphi\vert_{\la_1})= (\alpha')^2$, a contradiction. 
\end{proof}



\subsection{Classification up to equivalence}\label{sec_revision}
  We would like to take advantage of all the above information to solve the problem of how many classes of Lie algebras can be obtained up to equivalence (in the sense of Definition~\ref{defequivnorm1}).
For now, we can be sure that there are at the most 21 classes along with 3 infinite families. 
In fact, if $\eta\in\mathcal{A}$, then there are $\sigma\in S_\ast(I)$ and $i\le 24$ such that $\tilde\sigma(S^\eta)=T_i$
(Proposition~\ref{suppisnice}  and Theorem~\ref{classiN}). Now Lemma~\ref{le_relacionsoportes}
says that the support of $\eta'=\eta^{\sigma^{-1}}$ is just $T_i$, and $\eta'\sim \eta$ by Proposition~\ref{colli}.
If $i\ne 14,17,20$, we know that $\eta'\sim_n\eta^{T_i}$ by  Theorem~\ref{prop_equiv}, and then $\eta\sim\eta^{T_i}$.
Similarly for $i=14,17,20$, $\eta$ is equivalent to one of the graded contractions exhibited in Theorem~\ref{prop_equiv}. 
But all this gives an upper bound for the number of equivalence classes.
A priori it could happen that $\eta^{T_i}\sim\eta^{T_j}$ for $i\ne j$.
Even that, for a fixed $i=14,17,20$, two admissible maps with support $T_i$ could be equivalent although not strongly equivalent. We have to 
discuss
carefully all these possibilities for getting the classification up to equivalence.  
 
Our next goal will be to prove that equivalent graded contractions will have collinear supports with only one   exception. This is not an easy task and we need  some preparation: 
 first we need to observe the relation between support of an admissible graded contraction and center of the related Lie algebra;
 and second, we will find some very convenient
   collection of isomorphisms of some of the algebras obtained by graded contractions of $\Gamma_{\f{g}_2}$. 
\smallskip


The support gives immediate information about the center of the algebra. Here   $\f{z}(\f{g}) = \{x \in \f{g} \colon [x,\f{g}] = 0\}$ denotes the center of a Lie algebra $\f{g}$. 

\begin{prop} \label{centre}
Let $\ep\colon G\times G\to \mathbb C$ be an admissible graded contraction.
The center $\f{z}(\la^{{\ep}})$ is the direct sum of the homogeneous components $\la_i = (\f{g}_2)_{g_i}$ such that  
$i$ does not appear in any of the elements of $T=S^{\eta^\ep}$. In other words, $\f{z}(\la^{\ep}) = \bigoplus\limits_{i\in I_T}\la_i$, where 
$I_T = \setbar{i\in I}{i \notin t, \, \, \forall \, t \in T}$. In particular, $\dim \f{z}(\la^{\ep }) = 2|I_T|$.  
\end{prop}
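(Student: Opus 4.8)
The plan is to prove the two inclusions directly from the definition of the bracket $[\cdot,\cdot]^{\ep}$ and the structural facts about $\Gamma_{\f{g}_2}$ established in Lemma~\ref{lem_base}. First I would fix $T=S^{\eta^\ep}$ and recall that for homogeneous $x\in\la_i$, $y\in\la_j$ (with $i,j\in I$) the contracted bracket is $[x,y]^{\ep}=\ep(g_i,g_j)[x,y]=\eta^\ep_{ij}[x,y]$, and that $[x,y]^{\ep}=0$ automatically whenever $i=j$ or $g_i+g_j=e$ (since $\la_e=0$) or $\{i,j\}\notin S^{\eta^\ep}$. Because every homogeneous component is two-dimensional and $[\la_i,\la_j]=\la_{i\ast j}\neq 0$ for $i\neq j$ by Lemma~\ref{lem_base}(iii), the only way an individual product $[x,y]^{\ep}$ can fail to vanish is when $i\neq j$ and $\{i,j\}\in T$.

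For the inclusion $\bigoplus_{i\in I_T}\la_i\subseteq\f{z}(\la^{\ep})$, I would take $i\in I_T$, an arbitrary $x\in\la_i$, and any homogeneous $y\in\la_j$; if $j=i$ the bracket is zero, and if $j\neq i$ then $\{i,j\}\notin T$ because $i$ appears in no element of $T$, so $\eta^\ep_{ij}=0$ and $[x,y]^{\ep}=0$. Since $\la^{\ep}$ is spanned by homogeneous elements, $x$ is central. For the reverse inclusion I would argue by contradiction on the grading: a general central element $z$ decomposes as $z=\sum_{i\in I}z_i$ with $z_i\in\la_i$, and since $\f{z}(\la^{\ep})$ is a graded subspace (the bracket is $G$-graded, so the center is homogeneous), it suffices to show that if $i\notin I_T$ then no nonzero $z_i\in\la_i$ is central. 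Indeed, $i\notin I_T$ means there is some $t=\{i,j\}\in T$ with $j\neq i$, so $\eta^\ep_{ij}\neq 0$; here the key step is that the nondegeneracy of the bracket on the pair must be promoted to the statement that $\mathrm{ad}^{\ep}(z_i)$ is nonzero on $\la_j$ for \emph{every} nonzero $z_i$, not merely for some $z_i$.

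This last point is exactly where I expect the main obstacle to lie, since $\la_i$ is two-dimensional and a priori a given nonzero $z_i$ might be annihilated under the bracket with all of $\la_j$. I would resolve it using the explicit $\f{sl}(2,\bb C)\oplus\f{sl}(2,\bb C)$ structure of $\la_i\oplus\la_j\oplus\la_{i\ast j}$ from Lemma~\ref{lem_base}(iii): in the basis $\{x_i,y_i\}$, $\{x_j,y_j\}$ of $\la_i,\la_j$ satisfying \eqref{eq_para5}, one has $[x_i,x_j]=x_{i\ast j}$ and $[y_i,y_j]=y_{i\ast j}$ while the cross-brackets $[x_i,y_j]$ and $[y_i,x_j]$ vanish. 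Writing $z_i=ax_i+by_i$, the contracted brackets give $[z_i,x_j]^{\ep}=\eta^\ep_{ij}\,a\,x_{i\ast j}$ and $[z_i,y_j]^{\ep}=\eta^\ep_{ij}\,b\,y_{i\ast j}$; since $\eta^\ep_{ij}\neq 0$ and $x_{i\ast j},y_{i\ast j}$ are linearly independent, these both vanish only when $a=b=0$, i.e. $z_i=0$. This shows $\la_i\cap\f{z}(\la^{\ep})=0$ for $i\notin I_T$, completing the reverse inclusion. The dimension count $\dim\f{z}(\la^{\ep})=2|I_T|$ then follows immediately, since each $\la_i$ is two-dimensional and the sum over $i\in I_T$ is direct.
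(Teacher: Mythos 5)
Your proof is correct and follows essentially the same route as the paper: the forward inclusion is argued identically, and your reverse inclusion uses the same two ingredients, namely separation of homogeneous components (your appeal to the center of a $G$-graded algebra being graded is exactly the inline component-separation argument the paper performs, via injectivity of $s\mapsto s\ast j$) together with the nondegeneracy of the bracket on pairs coming from Lemma~\ref{lem_base}(iii). If anything, your explicit computation of $[z_i,x_j]^{\ep}$ and $[z_i,y_j]^{\ep}$ in the basis \eqref{eq_para5} is slightly more careful than the paper's claim that $\mathrm{ad}\,z_i\vert_{\la_j}$ is \emph{surjective} for every nonzero $z_i$ (which fails when $z_i$ is a multiple of $x_i$ or $y_i$), although only the nonvanishing of that map is actually needed there.
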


\begin{proof}
Notice that if $i \in I_T$, then $\{i, j \}\notin T$ for all $j \ne i$. Thus $\eta^\ep_{ij} = 0$ and so $[\la_i,\la_j]^\ep = 0$ for all $j \neq i$, which implies that $\left[\la_i,\sum_{j\ne i}\la_j\right]^\ep = 0$. On the other hand, $[\la_i, \la_i]^\ep = 0$, since $\la_i$ is abelian. This shows that $\la_i \subseteq \f{z}(\la^\ep)$.

Conversely, take $z \in \f{z}(\la^\ep)$ and write $z = \sum_{i\in I} z_i$, where $z_i \in \la_i$ for   $i \in I$. We claim that $\tilde z = z - \sum_{i\in I_T} z_i = \sum_{s \notin I_T} z_s \in \f{z}(\la^\ep)$ is zero. In fact, if $\tilde z \neq 0$, then $z_i \ne 0$ for some $i \notin I_T$; then there exists $j \in I$ such that $\{i, j\} \in T$, so $\eta^\ep_{ij}\ne 0$. On the other hand, $\mathrm{ad} \, z_i\vert_{\la_{j} } \colon \la_{j} \to \la_{i\ast j}$ is surjective by Lemma~\ref{lem_base} (iii), so there exists $w\in \la_{j}$ such that $0 \ne [z_i, w]\in\la_{i\ast j}$. Using that $\tilde z \in \f{z}(\la^\ep)$, we obtain that 
\[
0 = [\tilde z,w]^{\ep} = \sum_{s\notin I_T} [z_s, w]^{\ep} = \sum_{s\notin I_T}\eta^\ep_{sj} [z_s, w],
\] 
where $[z_s, w] \in \la_{s\ast j}$. This implies that $\eta^\ep_{sj} [z_s, w] = 0$ for all $s \notin I_T$, since the map $I\to I$, $s\mapsto s\ast j$, is injective. 
In particular, $\eta^\ep_{ij} [z_i, w] = 0$; but this is impossible since $\eta^\ep_{ij}\ne 0$ and
$[z_i, w] \ne 0$. Thus $\tilde z = 0$ and so $z \in \bigoplus\limits_{i\in I_T}\la_i$. 
\end{proof}

Now we will look for suitable auxiliar linear maps.
We begin by observing, for   the Lie algebra $  \mathfrak{so}(3,\bb C)$ of skew-symmetric matrices with basis $\{x_1, x_2, x_3\}$ as in 
Example~\ref{ex_noeslomismo}, that the map   given by $x_1\mapsto x_1$, $x_i\mapsto  -x_j$ and $x_j\mapsto x_i$ is an automorphism, for $(i,j)=(2,3)$ or $(3,2)$.
This gives, for any $i,j\in I$ distinct, an automorphism of $\varphi_{ij}$ of the semisimple Lie algebra $\la_i\oplus\la_{j}\oplus\la_{i*j}\le\la=\f{g}_2$ (two copies of $  \mathfrak{so}(3,\bb C)$):
\begin{equation*}\label{fi_ij}
\varphi_{ij}\vert_{\la_{i*j}}=\id,\qquad  
\varphi_{ij}(x_i)=-x_j,\quad  
\varphi_{ij}(y_i)=-y_j,\quad  
\varphi_{ij}(x_j)=x_i, \quad  
\varphi_{ij}(y_j)=y_i, 
\end{equation*}
  where now $x_i, y_i, x_j, y_j$ are as in \eqref{eq_para5}. (Both are compatible notations.)
This can be extended to the bijective linear map 
\begin{equation}\label{titaij}
 \theta_{ij}:=
\begin{cases} 
	 \theta_{ij}\vert_{\la_t}= \varphi_{ij}, & \text{if  $t\in \{i,j,i*j\},$} \\
	 \theta_{ij}\vert_{\la_t}= \id, & \text{otherwise.}
\end{cases}
\end{equation}
 As in Remark~\ref{re_nodeltotal}, $ \theta_{ij}$ is not an automorphism of $\la$, but it is an automorphism of some of the Lie algebras obtained by graded contractions from $\la$. 
 For instance, take $T=X^{(i*j)}$, that is, $T=\{\{ r,s \},\{ i,j \},\{ k,l \}  \}\subseteq X$ with different indices such that $i*j=k*l=r*s$ and note
 that $\theta_{ij}\in\Aut(\la^{\ep })$, for $ \ep=\Phi^{-1}(\eta^T)$. 
 Simply note that $\theta_{ij}$ coincides with $\varphi_{ij}$ in $\la_i\oplus\la_{j}\oplus\la_{i*j}$, and with the identity map
 in $\la_r\oplus\la_{s}\oplus\la_{r*s}$ and in $\la_k\oplus\la_{l}\oplus\la_{k*l}$. The same arguments can be used to check 
 how $\ep$ has to be  for $\theta_{ij}$ to be an automorphism:

\begin{lemma}\label{le_automo}
If $\ep$ is an admissible graded contraction of $\Gamma_{\f{g}_2}$, and $T=S^{\eta^\ep}$ is the support, then, $\theta_{ij}$ is an automorphism of $\la^\ep$ if and only if:
\begin{itemize}
\item[\rm(i)] For any $\{t_1,t_2\}\in T$, either $t_1,t_2,t_1*t_2\in I\setminus\{i,j\}$ or $t_1,t_2,t_1*t_2\in\{i,j,i*j\}$;
\item[\rm(ii)] $\ep_{it}=\ep_{jt}$ for all $t\ne i,j$.
\end{itemize}
\end{lemma}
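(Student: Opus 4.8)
The plan is to verify the automorphism identity $\theta_{ij}([u,v]^\ep)=[\theta_{ij}(u),\theta_{ij}(v)]^\ep$ on homogeneous pairs $u\in\la_a$, $v\in\la_b$, and to read off exactly which constraints on the support $T=S^{\eta^\ep}$ and on the values $\ep_{it}$ are forced. The starting observation is that $\theta_{ij}$ acts as the identity on every $\la_t$ with $t\neq i,j$ (indeed $\varphi_{ij}\vert_{\la_{i\ast j}}=\id$ and $\theta_{ij}\vert_{\la_t}=\id$ for $t\notin\{i,j,i\ast j\}$), while it interchanges $\la_i$ and $\la_j$ through the signed map $\varphi_{ij}$; in particular $\theta_{ij}$ realizes the transposition $\pi=(i\ j)$ on indices. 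I would repeatedly use Lemma~\ref{lem_base}(iii), which gives $[\la_a,\la_b]=\la_{a\ast b}\neq0$ and lets me treat distinct homogeneous components as linearly independent, together with the fact --- verified directly from \eqref{eq_para5} --- that $\varphi_{ij}$ is a genuine Lie automorphism of the line subalgebra $\la_i\oplus\la_j\oplus\la_{i\ast j}$ for the original bracket, and with the symmetry of $\eta$.

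The computation splits according to how $\{a,b\}$ meets $\{i,j\}$. When $a,b\notin\{i,j\}$, both sides equal $\eta_{ab}[u,v]$ unless $a\ast b\in\{i,j\}$, in which case the two sides land in the distinct components $\la_i$ and $\la_j$; their equality then forces $\eta_{ab}=0$, i.e.\ $\{a,b\}\notin T$. When exactly one index, say $a=i$, lies in $\{i,j\}$ and $b\neq i\ast j$, the left side sits in $\la_{i\ast b}$ and the right side in $\la_{j\ast b}$, two distinct components, so equality forces $\eta_{ib}=\eta_{jb}=0$. In both situations the offending edges are precisely those $\{t_1,t_2\}$ whose line $\{t_1,t_2,t_1\ast t_2\}$ contains exactly one of $i,j$; demanding that none of them lie in $T$ is exactly condition (i). The case $\{a,b\}=\{i,j\}$ produces no constraint, since $\varphi_{ij}$ is an automorphism and $\eta$ is symmetric, so both sides reduce to $\eta_{ij}[u,v]$.

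The only remaining, and genuinely delicate, case is $a=i$, $b=i\ast j$ (and symmetrically $a=j$, $b=i\ast j$), where both sides lie in the \emph{same} component $\la_i$ and a numerical identity survives. Here I would use that $\varphi_{ij}$ is an automorphism of the original line subalgebra and fixes $\la_{i\ast j}$ pointwise, whence $\varphi_{ij}([u,v])=[\varphi_{ij}(u),v]$; the identity then collapses to $\eta_{i,\,i\ast j}[\varphi_{ij}(u),v]=\eta_{j,\,i\ast j}[\varphi_{ij}(u),v]$, that is, to $\ep_{i,\,i\ast j}=\ep_{j,\,i\ast j}$. Finally I would observe that, once (i) holds, one has $\ep_{it}=\ep_{jt}=0$ for every $t\notin\{i,j,i\ast j\}$, so that condition (ii) carries no content beyond the single equality $\ep_{i,\,i\ast j}=\ep_{j,\,i\ast j}$ just obtained. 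This shows that the conjunction of all the case-by-case constraints is equivalent to (i) together with (ii), giving both implications of the lemma simultaneously.

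I expect the main obstacle to be the bookkeeping of which homogeneous component each side of the identity lands in: in most cases the two sides sit in different components and equality degenerates to a vanishing (support) condition feeding into (i), whereas only in the case $b=i\ast j$ do they coincide and yield the scalar condition (ii). Keeping the sign conventions of $\varphi_{ij}$ straight while invoking its automorphism property with respect to the contracted, rather than the original, bracket is the point that will require the most care.
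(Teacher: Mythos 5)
Your proof is correct and is essentially the argument the paper intends: the paper states this lemma without an explicit proof, appealing to ``the same arguments'' used just before for the special case $T=X^{(i\ast j)}$, and your homogeneous-component case analysis --- splitting according to how $\{a,b\}$ meets $\{i,j\}$, using Lemma~\ref{lem_base}(iii) for the nonvanishing of brackets and the fact that $\varphi_{ij}$ is an automorphism of the line subalgebra for the original bracket --- is precisely the verification left to the reader. Your observation that, once (i) holds, condition (ii) reduces to the single equality $\ep_{i,\,i\ast j}=\ep_{j,\,i\ast j}$ coming from the case $u\in\la_i$, $v\in\la_{i\ast j}$ is also correct.
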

This is consistent with the above: $\theta_{ij}\notin\Aut(\la)$  since $T=X$ does not satisfies (i). However, $\theta_{ij}\in\Aut(\la^\ep)$ 
if $S^{\eta^\ep}=X^{(i*j)}  $.  Another relevant example is $T\subseteq X_{(i*j)}$, where (i) is always satisfied, although (ii) could be false:  
for instance,  
for $\theta_{ij}\in\Aut(\la^\ep)$ it is necessary the condition  that $\{i,i*j\}\in T$ if and only if $\{j,i*j\}\in T$.

\begin{example}\label{horror}
Let $\{i, j, k\}$ a generating triplet.
Take $T=\{\{i,j\},\{i,k\},\{i,j*k\}\}$ and $T'=\{\{i,j\},\{i,k\},\{i,i*j*k\}\}$. Note that $T$ and $T'$ are not collinear: $T\sim_c T_{10}$ and $T'\sim_c T_8$.
However, we claim that  $ \eta^{T}$ and $ \eta^{T'}$ defined by Equation~\eqref{defietaT} are equivalent.  
In fact,  it is easy to check that the map $\theta=\theta_{j*k,i*j*k}$   in Equation~\eqref{titaij}
is an isomorphism of graded algebras 
$\theta\colon\la^{\Phi^{-1}({T})}\to\la^{\Phi^{-1}({T'})}$.  
\end{example}

This is a surprising example, because one could think that equivalent admissible graded contractions have collinear supports, but, at least in this example is not true. 
The following technical result proves in particular that this is the only case in which $ \eta^{T}\sim \eta^{T'}$ but $T\not\sim_c T'$:



\begin{prop} \label{nueva}   
Let $\eta,\eta' \in \mathcal{A}$, $\eta\sim\eta'$.
Denote by $T = S^{\eta}$ and $T' = S^{\eta'}$.
Then the following assertions are true.
\begin{enumerate} 
 \item [\rm (i)] 
Either there are   an isomorphism $\psi\colon  \la^{\ep_{\eta} } \to \la^{\ep_{\eta'}}$ 
and a collineation $\sigma\colon I\to I $
such that $\psi\big(\la_i\big) = \la_{\sigma(i)}$ for all $i \in I$; or $T\subseteq X_{(i)}$ has between 3 and 5 elements.
\item[\rm (ii)] If $T\not\sim_c\{T_8,T_{10}\}$, then there exists $\sigma\in S_\ast(I)$  such that $\eta^\sigma\approx\eta'$.

\item[\rm (iii)] If $T \not\sim_c T'$, then either
  $ T\sim_c T_8$, $T'\sim_c T_{10}$, or viceversa, and there are 
  an isomorphism $\psi\colon  \la^{\ep_{\eta} } \to \la^{\ep_{\eta'}}$, a collineation $\sigma\colon I\to I $
and two distinct indices $r,s\in I$ such that $\psi\theta_{rs}\big(\la_i\big) = \la_{\sigma(i)}$ for all $i \in I$.
\item[\rm (iv)] If $T \sim_c T'$, then there exists $\sigma\in S_\ast(I)$  such that $\eta^\sigma\approx\eta'$.
\end{enumerate}
\end{prop}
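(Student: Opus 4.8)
The plan is to begin from the definition of equivalence: choose any isomorphism $\varphi\colon \la^{\ep_\eta}\to\la^{\ep_{\eta'}}$ realizing $\eta\sim\eta'$. Since $\la_0=(\f{g}_2)_{g_0}=0$, this $\varphi$ carries each nonzero component $\la_i$ onto some $\la_{\pi(i)}$, defining a permutation $\pi$ of $I$. The first step is to read off the constraints on $\pi$. Whenever $\{i,j\}\in T=S^{\eta}$, Lemma~\ref{lem_base}(iii) gives $[\la_i,\la_j]^{\ep_\eta}=\la_{i\ast j}\ne0$, so applying $\varphi$ forces $[\la_{\pi(i)},\la_{\pi(j)}]^{\ep_{\eta'}}\ne0$ with image $\la_{\pi(i\ast j)}$; comparing this with $\la_{\pi(i)\ast\pi(j)}$ yields the \emph{partial collineation property}: for all $\{i,j\}\in T$ one has $\{\pi(i),\pi(j)\}\in T'=S^{\eta'}$ and $\pi(i\ast j)=\pi(i)\ast\pi(j)$. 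In particular $\tilde\pi(T)=T'$, and $\pi$ maps every \emph{touched} line $L_{ij}$ (those carrying an edge of $T$) to a line.

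For part (i) the dichotomy is governed by the touched lines. If they are not concurrent, their configuration (a triangle, or nearly all seven lines) is rigid enough to force $\pi$ to preserve every line, i.e. to be a collineation, and I take $\psi=\varphi$, $\sigma=\pi$. If the touched lines all pass through a point $p$, then by Theorem~\ref{classiN} the set $T$ is contained in a star $X_{(p)}\cup X^{(p)}$ (or has at most two edges), and $\pi$ is only pinned down inside the stabilizer of the pencil through $p$; its failure to be a collineation is then measured by a single $\bb{Z}_2$ parity, namely the number of transpositions it performs among the three pairs of points cut by the lines through $p$. I would kill this parity by replacing $\varphi$ with $\varphi\circ\theta_{rs}$ for a suitable map $\theta_{rs}$ from \eqref{titaij}: by Lemma~\ref{le_automo} such a $\theta_{rs}$ is an \emph{automorphism} of $\la^{\ep_\eta}$ — hence support-preserving — precisely when the relevant pair carries $0$ or $2$ edges of $T$, and (using the central freedom granted by Proposition~\ref{centre} for the small and product-star cases) a correcting pair of this type always exists except when $T$ is an edge-star $X_{(i)}$ in which each of the three lines through $i$ meets $T$ in exactly one edge. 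The latter falls under $T\subseteq X_{(i)}$ with $3\le|T|\le5$, the second alternative.

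Parts (ii) and (iv) then follow by turning a collineation-inducing isomorphism into a strong equivalence. If $\psi(\la_i)=\la_{\sigma(i)}$ with $\sigma\in S_\ast(I)$, then composing $\psi$ with the isomorphism $\tilde f_{\sigma^{-1}}\colon\la^{\ep_{\eta^{\sigma^{-1}}}}\to\la^{\ep_\eta}$ built in the proof of Proposition~\ref{colli} produces a map fixing every homogeneous component, that is, a graded isomorphism $\la^{\ep_{\eta^{\sigma^{-1}}}}\to\la^{\ep_{\eta'}}$; hence $\eta^{\sigma^{-1}}\approx\eta'$. Under the hypothesis of (ii), where $T$ is not collinear to $T_8$ or $T_{10}$, all remaining stars ($T_9,T_{13},T_{14},T_{17}$ and the full star $T_{20}$, as well as the product-star $T_7$) admit a support-preserving parity correction, so a collineation-inducing $\psi$ exists and (ii) holds. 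For (iv), where $T\sim_c T'$, the same construction applies; and if $T$ lies in a $T_8/T_{10}$-type star, an odd parity of $\pi$ would move $T$ into the opposite collinearity class, contradicting $T\sim_c T'$, so $\pi$ is forced to be even, hence a collineation.

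Finally, part (iii) isolates the genuinely non-collinear situation. If $T\not\sim_c T'$ then $\pi$ cannot be a collineation, for otherwise $T'=\tilde\pi(T)\sim_c T$; thus we are in the stuck range of (i), and no parity-correcting swap can be support-preserving. Every single $\theta_{rs}$ then moves $T$ to a set of the opposite class, and a direct check shows this occurs only for the pattern where each line through the common vertex carries exactly one edge — that is, $T$ of class $T_8$ or $T_{10}$, which are separated by whether the three far endpoints form a line or a generating triplet. Composing $\varphi$ with such a $\theta_{rs}$ makes $\varphi\theta_{rs}$ collineation-inducing, which is exactly the conclusion of (iii) and matches the phenomenon already displayed in Example~\ref{horror}. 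The main obstacle throughout is precisely this star analysis: extracting the $\bb{Z}_2$ parity obstruction at the common vertex and separating the swaps that are automorphisms (harmless, support-preserving) from those that change the collinearity class, thereby pinning the single exception to the pair $T_8,T_{10}$.
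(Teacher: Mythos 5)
Your strategy is essentially the paper's, reorganized: you extract from the graded isomorphism a permutation $\pi$ of $I$ with the partial collineation property \eqref{P}, try to repair the isomorphism using the freedom on central components and component-swapping maps, convert any collineation-inducing isomorphism into a strong equivalence by composing with $\tilde f_\sigma$ from Proposition~\ref{colli}, and isolate the one-edge-per-line stars $T_8$, $T_{10}$ as the genuine exception. Your concurrency dichotomy and the $\bb{Z}_2$-parity picture at the common vertex are a reasonable repackaging of what the paper does with the agreement set $K$ and the propagation rule \eqref{U}. However, two steps fail or are missing as written.

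The concrete error is your use of Lemma~\ref{le_automo}: you claim that $\theta_{rs}$ is an automorphism of $\la^{\ep_\eta}$ ``precisely when the relevant pair carries $0$ or $2$ edges of $T$''. The lemma requires, besides the combinatorial condition (i), the equality of values $\ep_{rt}=\ep_{st}$ for all $t$; for a two-edge pair in a star at $p$ this means $\eta_{pr}=\eta_{ps}$, which is generally false, since the values of an admissible map supported inside $X_{(p)}$ are arbitrary nonzero scalars. Consequently, for supports collinear to $T_{13}$, $T_{17}$ or $T_{20}$ --- exactly the stars in which every support-preserving swap sits on a two-edge pair --- your correcting map $\varphi\circ\theta_{rs}$ is not an isomorphism, and parts (ii) and (iv) remain unproved there. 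The paper overcomes this by introducing the rescaled swaps $\theta'$ and $\hat\theta$ (with factors $-\ep_{ir}/\ep_{is}$), which are automorphisms irrespective of the values; some such device is indispensable. Separately, the other half of your dichotomy --- that non-concurrent touched lines force $\pi$ itself to be a collineation --- is asserted (``rigid enough'') but never argued; this is precisely where the bulk of the paper's proof lives (the set $K=\{t\mid\sigma(t)=\mu(t)\}$, the propagation property \eqref{U}, and the case analysis concluding that otherwise $T=X^{(i\ast j)}$ or $T\subseteq X_{(i)}$), and the same applies to the ``direct check'' invoked in your part (iii). Those claims are in fact true, but establishing them is the substance of the proposition, not a detail one may wave through.
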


\begin{proof}
To ease the notation, write  $\ep=\ep_{\eta}$ and $\ep'=\ep_{\eta'}$.
Take an isomorphism of graded algebras $ \varphi\colon \la^{\ep }\to\la^{\ep'}$ and   a bijection 
$\mu  \colon I\to I$ defined by $\varphi\big(\la_{i}\big) = \la_{{\mu (i)}}$. Note that the determined map $\mu$ might not be a collineation. 
For $i, j \in I$ distinct and $x \in \la_i$ and $y \in \la_j$, we have that
${\ep}_{ij}\varphi([x,y])={\ep'}_{\mu(i),\mu(j)}[\varphi(x),\varphi(y)]$.
Thus $\mu$ satisfies   the following property:
\begin{equation} \label{P}
\{i, j\} \in T\Rightarrow \{\mu(i), \mu(j)\} \in T', \quad \mu(i) \ast \mu(j)=\mu(i \ast j). \tag{P}
\end{equation} 
In particular, $\mu({T}) = {T'}$.
Also, $\mu(I_{{T}}) = I_{T'}$ with the notations  in Proposition \ref{centre}, because 
  $\varphi$ maps the center onto the center and  $\f{z}(\la^{\ep}) = \bigoplus\limits_{i\in I_T}\la_i$. 
  
Clearly, $\mu$ is a collineation provided that $T= X$.
Our first goal will be to prove that, 
except for the case $T\subseteq X_{(i)}$, $3\le|T|\le5$,
 we can   replace $\mu$ with a collineation $\sigma$ and the isomorphism $\varphi$ with another isomorphism $\psi\colon \la^{\ep}\to\la^{\ep'}$   that  satisfies the property
\begin{equation} \label{Q}
\psi\big(\la_{i}\big) = \la_{{\sigma (i)}}, \quad \forall \, \, i \in I.\tag{Q}
\end{equation} 
Ir order to prove it, we distinguish some cases according to the indices involved in the support. 

$\bullet$ 
If ${T} = \emptyset$, we have that $\ep =0$ and $\la^{\ep}$ is abelian. Thus, $\la^{\ep'}$ is also abelian (and $\mu$ can be any bijection). Take $\sigma$ any collineation, and for each $i \in I$ choose any bijective linear map $f_{i, \sigma}\colon \la_i\to\la_{\sigma(i)}$. Then we can define $\psi\colon \la^{\ep}\to\la^{\ep'}$ by $\psi\vert_{\la_i}=f_{i,\sigma}$ for all $i\in I$, which so satisfies (Q).
As both algebras are abelian, $\psi$ is an isomorphism.
 
\smallskip

$\bullet$
If $\{i, j\}\in {T}\subseteq  \big\{\{i, j\}, \{i, i\ast j\}, \{j, i\ast j\}\big\}$, then choose $k\in I\setminus\{i,j,i*j\}$ and $\ell\in I\setminus\{\mu(i), \mu(j),\mu(i)*\mu(j)\}$.
Notice that $\{i, j, k\}$ and $\{\mu(i), \mu(j), \ell\}$ are both generating triplets; hence, there exists a collineation $\sigma: I \to I$ such that $\sigma(i) = \mu(i)$, $\sigma(j) = \mu(j)$ and $\sigma(k) = \ell$. Moreover, $\sigma(i \ast j) = \mu(i \ast j)$, since
$ 
 \sigma(i*j)=\sigma(i)*\sigma(j)=\mu(i)*\mu(j)\stackrel{\eqref{P}}{=}\mu(i*j).
$ 
Now define $\psi\colon \la^{\ep}\to\la^{\ep'}$  by 
$$
\psi\vert_{\la_i\oplus\la_j\oplus\la_{i*j}}=\varphi,\qquad
\psi\vert_{\la_t}=f_{t,\sigma} \textrm{ for any } t\in\{k,k*i,k*j,k*i*j\},
$$ 
(the bijective linear maps $f_{t,\sigma}$ chosen as in the previous item). 
Notice that $\psi$ is an isomorphism, since $\la_t \subseteq \f{z}(\la^{\ep})$, for any $t \in \{k, \, k \ast i, \, k \ast j,\, k \ast i \ast j\}$
and similarly, $\la_r \subseteq \f{z}(\la^{\ep'})$, for any $r \in \sigma \big(\{k, \, k \ast i, \, k \ast j,\, k \ast i \ast j\}\big) = I \setminus
\{\mu(i), \mu(j) ,\mu(i\ast j)\} \subseteq I_{T'}$. 
\smallskip

 $\bullet$
 If ${T}$ does not satisfy any of the conditions covered in the two previous cases, there exists a generating triplet $\{i, j, k\}$ such that $\{i, j\}, \{k, \ell \}\in {T}$ for some $\ell \in I$ with $\ell \ne k$. 
 Consider the collineation $\sigma\in S_\ast(I)$ determined by 
$\sigma(i) = \mu(i)$, $\sigma(j) = \mu(j)$ and $\sigma(k) = \mu(k)$. Notice that $\sigma$ is well defined since $\{\mu(i), \mu(j), \mu(k)\}$ is also a generating triplet (we can apply \eqref{P} to $\mu^{-1}$ since this is related to the isomorphism $\varphi^{-1}$). 
%
For $K:=\{t\in I\mid \sigma(t)=\mu(t)\}$, note that 
\begin{equation} \label{U}
t_1,t_2\in K,\quad
\{\{t_1,t_2\},\{t_1,t_1*t_2\},\{t_2,t_1*t_2\}\}\cap T\ne\emptyset\quad
\Rightarrow\quad t_1*t_2\in K.\tag{U}
\end{equation} 
In fact, if $\{t_1,t_2\}\in T$, then 
 $
 \sigma(t_1*t_2)=\sigma(t_1)*\sigma(t_2)=\mu(t_1)*\mu(t_2)\stackrel{\eqref{P}}{=}\mu(t_1*t_2).
 $
 Also, if $\{t_1,t_1*t_2\}\in T$, then $\sigma(t_2 \ast t_1)=\mu(t_2 \ast t_1)$ since:
$$
\sigma(t_2 \ast t_1)\ast \mu(t_1)=\sigma(t_2 \ast t_1)\ast \sigma(t_1)=\sigma(t_2)=\mu( t_2)\stackrel{\eqref{P}}{=} \mu(t_2 \ast t_1) \ast \mu(t_1).
$$ 
Now, as $i,j,k\in K$, then \eqref{U} implies $i*j\in K$. Also, either $\ell$ or $\ell*k$ belongs to $\{i,j,i*j\}\subseteq K$
 since any two lines in the Fano plane always intersect, so that both $\ell,\ell*k\in K $.
 As both $\sigma$ and $\mu$ are bijections, either $K=I$   or $\{i, j, i\ast j, k, \ell, \ell \ast k \}=K$ (just 5 elements).
In the first case, $\sigma=\mu$ and there is nothing to prove, since $\mu$ would be a collineation and $\varphi$ the required isomorphism. 
Then, assume    $\sigma\ne\mu$.
Labelling the remaining two elements of $I$ as $r$ 
and $s$, we know that $\sigma(r)= \mu(s)$, and $ \sigma(s)= \mu(r)$.  
%

 If $r,s\in I_T$, then  the next map 
 \[
\psi:=
\begin{cases} 
	\psi |_{\la_t}= \varphi, & \text{if  $t\in  K$}, \\ 
	\psi |_{\la_t}= f_{t,\sigma}, &  t=r,s,
\end{cases}
\]
  is an isomorphism satisfying
	\eqref{Q}:
 We only have to note that, if $\{t_1,t_2\}\in {T}$, then $t_1,t_2\in K$ 
 and hence $t_1*t_2\in K$ by \eqref{U}; so that $\psi\vert_{\la_{t_1}\oplus\la_{t_2}\oplus\la_{t_1*t_2}}$ coincides with the restriction of $\varphi$.
 

Now assume $r\notin I_T$ and let us prove that either $T=X^{(i*j)}$ or $T\subseteq X_{(i)}$ (interchanging $i$ and $j$ if necessary).
%
To argue easily, note the following facts.
\begin{enumerate}[label={(\alph*)}]
\item If $\{r,t\}\in {T}$, then either $t= s$ or $t=r*s$. (Similarly, if $\{s,t\}\in {T}$, then either $t= r$ or $t=r*s$.)

\item If $\{t_1,t_2\}\in {T}$, $t_1,t_2\ne r,s$, then $ t_1*t_2\ne r,s$. 

\end{enumerate} 
Indeed, if (a) is not true, $t,r*t\ne r,s$ implies that $t,r*t\in K$ and by \eqref{U}, also $r\in K$, a contradiction. 
And (b) says that $t_1,t_2\in K$ implies $ t_1*t_2\in K$, which is a  trivial fact from \eqref{U}.
%
Now we can discuss the possible supports according to the values of the index $\ell\ne k$. 
\begin{enumerate}  
\item If $\ell=i*j$, then $K=\{i,j,k,i*j,i*j*k\}$ and $\{r,s\}=\{j*k,i*k\}$.
As $T$ is nice, $P_{ijk}\subseteq T$. 
As $\{i,j*k\}\in P_{ijk}\subseteq T$ and $i,i*j*k\in K$, \eqref{U} implies $j*k\in K$, a contradiction.

\item If $\ell=i*k$, then $K=\{i,j,k,i*k,i*j\}$ and $\{r,s\}=\{j*k,i*j*k\}$.
As $T$ is nice, $P_{k,i*k,j}\subseteq T$. 
In particular $\{k,i*j*k\}\in T$, and, as $k,i*j\in K$, \eqref{U} implies $i*j*k\in K$, a contradiction.
(This argument works similarly for $\ell=j*k$.)

\item If $\ell=i*j*k$, then $K=\{i,j,k,i*j*k,i*j\}$ and $\{r,s\}=\{j*k,i*k\}$.
In this case $i*j=k*\ell=r*s$ and we claim that $X^{(i*j)}= T$.

In fact, if $\{r,r*s\}$ or $\{s,r*s\}$ are in $T$, then either $P_{i,j,j*k}\subseteq T$ or $P_{i,j,i*k}\subseteq T$, since $T$ is nice.
This gives $ \{i*j,j*k\}$ or $ \{i*j,i*k\}$ belongs to $T$, respectively, which contradicts (a). As $r\notin I_T$, the only option is 
$\{r,s\}\in T$ so that  $X^{(i*j)}=\{\{ i,j\},\{k.l \},\{ r,s\}\}\subseteq T$. If the containment is strict, then there is 
   $\{t_1,t_2 \}\in T$ (both $t_1,t_2\in K$) which is not in $X^{(i*j)}$ (so $t_1*t_2\ne r*s$).
By (b), $t_1*t_2\ne r,s$. This  we can assume that  $t_1*t_2=i$ ($i$, $j$, $k$ and $l$ play the same role here).
 As $j\notin\{t_1,t_2,t_1*t_2=i\}$ (since $\{i*j,t \}\notin T$ for all $t\in K$), then $t_1$, $t_2$, $j$ are   generative
 and $P_{t_1,t_2,j}\subseteq T$. But $\{i*k,k \}$ belongs to $ P_{t_1,t_2,j}$ and hence to $T$, which again contradicts (a).


\item If $\ell=i$, then $K=\{i,j,k,i*k,i*j\}$ and $\{r,s\}=\{j*k,i*j*k\}$. In this case $r*s=i$ and let us check that  $T\subseteq X_{(i)}$. (Of course
 $T\subseteq X_{(j)}$ if $\ell=j$.)
 
 On one hand, if $\{r,s\}\in T$, then $\{j,s\}\in P_{r,s,j}\subseteq T$, 
 a contradiction with (a). So, if $\{t_1,t_2 \}\in T$ with $t_1=r,s$, then $t_2=i$, and, in any case, $\{r,i \}\in T$. 
 On the other hand assume we have $\{t_1,t_2 \}\in T$ with $t_1,t_2\ne i,r,s$.
 There are only 6 possibilities for $\{t_1,t_2 \}$: if this element is one of 
  $\{j,k \}$, $\{ j,i*k\}$, $\{ k,i*j\}$, or $\{i*k,i*j \}$, then $t_1*t_2$ is either $r$ or $s$, a contradiction with (b).
  If $\{t_1,t_2 \}=\{j,i*j \}$, then $\{i*j,r\}\in P_{j,i*j,r}\subseteq T$, a contradiction. The same argument says that 
  $\{t_1,t_2 \}\ne \{k,i*k \}$. So we have proved $\{ \{i,j\},\{i,k\},\{i,r\} \}\subseteq T\subseteq X_{(i)}$.

\end{enumerate} 
Let us return to our search  for a convenient isomorphism according to the possible supports.

$\star$  $T=X^{(i*j)}=\{\{ r,s \},\{ i,j \},\{ k,l \}  \} $ with   $i*j=k*l=r*s$. 
Now $\theta_{rs}\in\Aut(\la^\ep)$ by
Lemma~\ref{le_automo}. Hence $\varphi\theta_{rs}\colon \la^{\ep }\to\la^{\ep'}$ is isomorphism (composition of isomorphisms). 
Moreover, $\varphi\theta_{rs}$ satisfies \eqref{Q}. In fact, if $t\in K$, $\varphi\theta_{rs}(\la_t)=\varphi(\la_t)=\la_{\mu(t)}=\la_{\sigma(t)}$.
And   $\varphi\theta_{rs}(\la_r)=\varphi(\la_s)=\la_{\mu(s)}=\la_{\sigma(r)}$ (and similarly for $s$).\smallskip

$\star$ 
 $\{ \{i,r\},\{i,s\} \}\subseteq T\subseteq X_{(i)}$. According to
Lemma~\ref{le_automo}, the map  $\theta_{rs} $ is   an automorphism of $ \la^\ep$ if and only if $\ep_{ir}=\ep_{is}$. 
In any case, as both $\ep_{ir},\ep_{is}\ne0$, we can slightly modify the map in Eq.~\eqref{titaij} to define
$\theta'$  by, if $t\in K$,
\begin{equation*} 
\theta'\vert_{\la_{t}}=\id,\quad  
\theta'(x_r)=-\frac{\ep_{ir}}{\ep_{is}}x_s,\quad  
\theta'(y_r)=-\frac{\ep_{ir}}{\ep_{is}}y_s,\quad  
\theta'(x_s)=x_r, \quad  
\theta'(y_s)=y_r. 
\end{equation*}
It is easy to prove that $\theta'\in\Aut(\la^\ep)$ and that $\varphi\theta'\colon \la^{\ep }\to\la^{\ep'}$ is the required isomorphism satisfying \eqref{Q}.
\smallskip

This finishes the proof of (i). Besides, the existence of the map $\psi\colon \la^{\ep}\to\la^{\ep'}$ satisfying \eqref{Q} also says that
 $\eta\approx(\eta')^\sigma$, by composing $\psi$ with the map $\tilde f_{\sigma}$ in Proposition~\ref{colli}:
 $\tilde f_{\sigma}\psi\colon \la^{\ep}\to\la^{\sigma\cdot\ep'}$ is a graded isomorphism.\smallskip

$\star$ 
$\{ \{i,j\},\{i,k\},\{i,r\} \} \subsetneq T\subseteq X_{(i)}$, $\{i,s\} \notin T$.
We can assume without loss of generality that $\{i,i*j\}\in T$. 
Again we modify   the map  $\theta_{j,i*j}$  in  Lemma~\ref{le_automo}, by considering\begin{equation*} 
\hat\theta\vert_{\la_{t}}=\id,\quad  
\hat\theta(x_j)=\frac{-\ep_{ij}}{\ep_{i{i*j}}}x_{i*j},\quad  
\hat\theta(y_j)=\frac{-\ep_{ij}}{\ep_{i{i*j}}}y_{i*j},\quad  
\hat\theta(x_{i*j})=x_j, \quad  
\hat\theta(y_{i*j})=y_j, 
\end{equation*}
if $t\ne i,j$. The same argument as above says that $\hat\theta\in\Aut(\la^\ep)$. 
Now take the collineation  $\nu\in S_\ast(I)$ determined by $\nu(i)=i$, 
$\nu(j)=i*j$ and $\nu(k)=k$. 
%
%
This induces, as in the proof of Proposition~\ref{colli}, the isomorphism $\tilde f_{\nu\sigma^{-1}}\colon \la^{\ep'}\to\la^{\sigma\nu^{-1}\cdot\ep'}$.
The composition of these maps gives the  isomorphism  $\Psi=\tilde f_{\nu\sigma^{-1}}\varphi \hat\theta \colon \la^{\ep}\to\la^{\sigma\nu^{-1}\cdot\ep'}
$. Note that $\Psi$ is a graded isomorphism: writing $t_1\mapsto t_2\mapsto t_3\mapsto t_4$ for shorten $\hat\theta(\la_{t_1}) =\la_{t_2}$, $\varphi(\la_{t_2}) =\la_{t_3}$, $\tilde f_{\nu\sigma^{-1}}(\la_{t_3}) =\la_{t_4}$, we have to check that $t_1=t_4$ for any choice of $t_1\in I$. Indeed,
$t\mapsto t\mapsto\sigma(t)\mapsto\nu(t)=t$ for any $t=i,k,i*k$, and
$$
\begin{array}{ll}
r\mapsto r\mapsto\sigma(s)\mapsto\nu(s)=r,\qquad\qquad 
&i*j\mapsto j\mapsto\sigma(j)\mapsto\nu(j)=i*j, 
 \\
s\mapsto s\mapsto\sigma(r)\mapsto\nu(r)=s,\qquad 
&j\mapsto i*j\mapsto\sigma(i*j)\mapsto\nu(i*j)=j. 
\end{array}
$$
%
That is, $\ep\approx \sigma\nu^{-1}\cdot\ep'$ and, by  Lemma~\ref{unaporsoporte}, $T$ and $T'$ are collinear. This finishes the proof of (ii).
\smallskip

$\star$ 
Finally, consider $T=\{ \{i,j\},\{i,k\},\{i,r\} \}$. 
If  $r=j*k$, then $T\sim_c T_{10}$ and $T'\sim_c T_{8}$,  
otherwise $r=i*j*k$, $T\sim_c T_{8}$ and $T'\sim_c T_{10}$. In both cases $T\not\sim_c T'=\mu(T)$.
Trivially, for the isomorphism $\varphi\colon \la^{\ep}\to\la^{\ep'}$, 
$\varphi\theta_{rs}\big(\la_i\big) = \la_{\sigma(i)}$ holds  for any $i \in I$. (iii) and (iv) follow.  
\end{proof}

At the moment, we know there are 3 infinite families and 21 strong equivalence classes of graded contractions, and that, among those 21,   
there are  only 20 equivalence classes of graded contractions. Let's find out  what happens to the parametrized families.
Again there will be less equivalence classes  of graded contractions than strongly equivalence classes. 

\begin{remark}\label{re_412}
Observe that, for each conflictive nice set $T\subseteq X_{(1)}$,
 there are also admissible maps with support $T$  which are equivalent but not strongly equivalent:
 
 $\bullet$ 
If $\eta$ has support $T_{14}$ and we take $\sigma\in S_\ast(I)$ with $\tilde\sigma(T_{14})=T_{14}$, then, with the notation as in 
Theorem~\ref{prop_equiv}, $\eta^\sigma=(\eta^\sigma_{12},\eta^\sigma_{13},\eta^\sigma_{15},\eta^\sigma_{16})$. It is clear that $\sigma(1)=1$ and so $\eta^\sigma=(\eta_{1\sigma(2)},\eta_{1\sigma(3)},\eta_{1\sigma(5)},\eta_{1\sigma(6)})$. For instance take $\sigma_0$  the collineation determined by 
$\sigma_0(1)=1$, $\sigma_0(2)=3$ and $\sigma_0(3)=2$.  
Then we have 
$(1, 1, 1, \lambda)^{\sigma_0}=(1, 1,  \lambda,1)\sim_n (1, 1, 1, \frac1\lambda)$.
In this last step we have applied   Theorem~\ref{prop_equiv} (i). So, for $\lambda\ne1$ , $(1, 1, 1, \lambda)\sim (1, 1, 1, \frac1\lambda)$, though we know that they are not strongly equivalent.\smallskip

$\bullet$ 
If $\eta$ has support $T_{17}$, again $\eta^\sigma=(\eta_{1\sigma(2)},\eta_{1\sigma(3)},\eta_{1\sigma(4)},\eta_{1\sigma(5)},\eta_{1\sigma(6)})$ for each collineation $\sigma$ such that $\tilde\sigma(T_{17})=T_{17}$. Note that we have used that $\sigma(1)=1$ for any such collineation. For 
$\sigma_0$ as in the above item, $(1, \lambda,1, 1, \lambda)^{\sigma_0}=(\lambda,1, 1, \lambda,1)\sim_n  (1, \frac{1}{\sqrt{\lambda^2}},1, 1, \frac{1}{\sqrt{\lambda^2}})\sim_n(1, \frac1\lambda,1, 1, \frac1\lambda)$. Here we have used  the proof of Theorem~\ref{prop_equiv} (ii). If $\lambda\ne\pm1$, then $(1, \frac1\lambda,1, 1, \frac1\lambda)$ and  $(1, \lambda,1, 1, \lambda)$ are then equivalent but not strongly equivalent.\smallskip

$\bullet$ 
If $\eta$ has support $T_{20}$, and $\sigma_1$ is  the collineation determined by 
$\sigma_1(1)=1$, $\sigma_1(2)=2$ and $\sigma_1(3)=4$, then $(1, \lambda , \mu, 1, \lambda, \mu)^{\sigma_1}=(1,\mu,  \lambda , 1, \mu, \lambda)$.
Also, $(1, \lambda , \mu, 1, \lambda, \mu)^{\sigma_0}=( \lambda ,1, \mu,  \lambda, 1,\mu)\sim_n (1, \frac1\lambda , \frac\mu\lambda, 1, \frac1\lambda , \frac\mu\lambda)$, by   Theorem~\ref{prop_equiv} (iii) and taking into account possible changes of signs. By composing these collineations,
$(1, \lambda , \mu, 1, \lambda, \mu)^{\sigma_0\sigma_1}=(\mu, 1, \lambda, \mu,1, \lambda )\sim_n(1,\frac1\mu,\frac\lambda\mu,1,\frac1\mu,\frac\lambda\mu)$. But recall that $(1, \lambda , \mu, 1, \lambda, \mu)\sim_n(1, \lambda' , \mu', 1, \lambda', \mu')$ if and only if $\lambda = \pm\lambda'$, $\mu = \pm\mu'$. Thus, there are less equivalence classes than strong equivalence classes.
\end{remark}

Next we will prove that we have essentially found all the possible $\eta,\eta'$ which are equivalent but not strongly equivalent.

\begin{theorem}\label{th_lasclasesdeverdad}
Representatives of all the classes up to equivalence of the graded contractions of $\Gamma_{\f{g}_2}$ are:
\begin{enumerate} 
\item [\rm (i)]  $\{\eta^{T_i}\mid i\ne 8,14,17,20\}$;
\item [\rm (ii)] $\{(1, 1, 1, \lambda)\mid  \lambda\in\bb C^\times\}$ related to $T_{14}$,  where $(1, 1, 1, \lambda)\sim (1, 1, 1, \lambda')$ if and only if $ \lambda'\in\{\lambda,\frac1{\lambda}\}$;
\item [\rm (iii)]  $ \{(1, \lambda, 1, 1, \lambda)\mid  \lambda\in\bb C^\times\}$ related to $T_{17}$,
where $(1, \lambda,1, 1, \lambda)\sim (1, \lambda',1, 1, \lambda')$ if and only if $\lambda'\in\{\pm\lambda,\pm\frac1{\lambda}\}$;

\item [\rm (iv)] $\{(1, \lambda , \mu, 1, \lambda, \mu)\mid \lambda,\mu\in\bb C^\times\}$ related to $T_{20}$,
where two   maps $(1, \lambda , \mu, 1, \lambda, \mu)\sim (1, \lambda' , \mu', 1, \lambda', \mu')$ if and only if the set
$\{\pm\lambda',\pm\mu'\}$ coincides with either $\{\pm\lambda,\pm\mu\}$ or $\{\pm\frac1\lambda,\pm\frac\mu\lambda\}$ or $\{\pm\frac\lambda\mu,\pm\frac1\mu\}$.
\end{enumerate} 
\end{theorem}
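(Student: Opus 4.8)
The plan is to combine the three pillars already in place: the upper bound observed at the start of \S\ref{sec_revision} (every $\eta\in\mathcal A$ is equivalent to some $\eta^{T_i}$, or, for $i\in\{14,17,20\}$, to one of the three families of Theorem~\ref{prop_equiv}), the reduction of $\sim$ to $\approx$ up to a collineation provided by Proposition~\ref{nueva}, and the identification $\approx\,=\,\sim_n$ of Theorem~\ref{teo_normalizacionbastaba}, whose invariants are listed explicitly in Theorem~\ref{prop_equiv}. Thus only two things remain: decide which of the representatives coincide under $\sim$, and, inside each family, read off the equivalence relation on the parameters.

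For part (i), I would argue that the support (up to collineation) is a complete invariant of $\sim$ among the isolated classes, with a single exception. Indeed, if $\eta^{T_i}\sim\eta^{T_j}$ with $i\neq j$ and $T_i\not\sim_c T_j$, then Proposition~\ref{nueva}~(iii) forces $\{T_i,T_j\}$ to be collinear to $\{T_8,T_{10}\}$; since the $T_i$ of Theorem~\ref{classiN} are pairwise non-collinear, this can only happen for $\{i,j\}=\{8,10\}$, and Example~\ref{horror} shows that this equivalence does occur. Comparing cardinalities ($|T_{14}|,|T_{17}|,|T_{20}|=4,5,6\ne 3=|T_8|=|T_{10}|$) rules out any equivalence between an isolated class and a family member. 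Hence the isolated classes reduce to exactly those with $i\neq 8,14,17,20$, keeping $T_{10}$ and discarding the redundant $T_8$.

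For parts (ii)--(iv) I would first establish the key reduction: for $\eta,\eta'$ both supported on $T_i$ with $i\in\{14,17,20\}$, one has $\eta\sim\eta'$ if and only if there is a collineation $\sigma$ stabilising $T_i$ with $\eta^\sigma\sim_n\eta'$. The forward direction uses Proposition~\ref{nueva}~(iv) to produce $\sigma$ with $\eta^\sigma\approx\eta'$, and then Lemma~\ref{unaporsoporte} forces $\tilde\sigma(T_i)=T_i$; the converse follows from Proposition~\ref{colli} together with $\sim_n\Rightarrow\approx\Rightarrow\sim$. It then remains to compute how the normalisation invariant of Theorem~\ref{prop_equiv} transforms under $S_\ast(I)_{T_i}$. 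Since all three supports lie in $X_{(1)}$, every stabilising collineation fixes the vertex $1$ and merely permutes the three lines $L_{12},L_{13},L_{14}$ through it; moreover, for the normal forms $(1,1,1,\lambda)$, $(1,\lambda,1,1,\lambda)$ and $(1,\lambda,\mu,1,\lambda,\mu)$ the value $\eta_{1t}$ depends only on the line containing $t$. Feeding this into $\lambda=\frac{\eta_{13}\eta_{16}}{\eta_{12}\eta_{15}}$ (respectively $\lambda^2=\frac{\eta_{13}\eta_{16}}{\eta_{12}\eta_{15}}$ and $\mu^2=\frac{\eta_{14}\eta_{17}}{\eta_{12}\eta_{15}}$) yields the stated orbits: $\{\lambda,1/\lambda\}$ for $T_{14}$, $\{\pm\lambda,\pm1/\lambda\}$ for $T_{17}$, and the three alternative sets of (iv) for $T_{20}$.

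The main obstacle is the last computation, especially for $T_{20}$: the stabiliser acts through the full symmetric group $S_3$ on the three lines through $1$, and one must check that the six permutations collapse to precisely three distinct parameter-sets $\{\pm\lambda,\pm\mu\}$, $\{\pm 1/\lambda,\pm\mu/\lambda\}$, $\{\pm\lambda/\mu,\pm 1/\mu\}$, while keeping careful track of the sign ambiguity built into the square-root conventions for $\lambda,\mu$. Remark~\ref{re_412} already exhibits explicit collineations realising each of these relations, so the ``if'' directions are essentially done; the remaining effort is the ``only if'' direction, i.e. verifying that no stabilising collineation produces any parameter-set beyond the three listed.
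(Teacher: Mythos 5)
Your proposal reproduces the paper's own proof of Theorem~\ref{th_lasclasesdeverdad}: the same three pillars (Proposition~\ref{nueva} to obtain a support-stabilising collineation $\sigma$ with $\eta^\sigma\approx\eta'$, using Lemmas~\ref{unaporsoporte} and \ref{le_relacionsoportes}; Theorem~\ref{teo_normalizacionbastaba} to pass from $\approx$ to $\sim_n$; the invariants of Theorem~\ref{prop_equiv} together with Remark~\ref{re_412} and Example~\ref{horror}), and the orbit enumeration you defer to the end is exactly what the paper carries out by listing the possible tuples $\eta^\sigma$. One small correction: for $T_{14}$ the normal form $(1,1,1,\lambda)$ does \emph{not} have $\eta_{1t}$ constant on the lines through $1$ (indeed $\eta_{13}=1\neq\lambda=\eta_{16}$ although $3,6\in L_{13}$), but this does not affect your conclusion, because the normalisation invariant $\eta_{13}\eta_{16}/(\eta_{12}\eta_{15})$ is a ratio of \emph{products} over $L_{13}$ and $L_{12}$, hence is insensitive to permutations within each line and depends only on the induced permutation of lines, which is all your computation of the orbits $\{\lambda,1/\lambda\}$, $\{\pm\lambda,\pm 1/\lambda\}$ and the three sets in (iv) actually uses.
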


\begin{proof}
We know that $\eta^{T_8}\sim \eta^{T_{10}}$ as in  Example~\ref{horror}, but  the maps in $\{\eta^{T_i}\mid 1\le i\le 24,i\ne10\}$ are all of them not equivalent
 by Proposition~\ref{nueva}  (ii). (If $\eta,\eta'\in\mathcal{A}$ were equivalent and $S^\eta\not\sim_c T_{8},T_{10}$, then their supports would be collinear.)
Of course the number of equivalence classes with support $T_i$ is at most the number of strong equivalence classes with support $T_i$, and we know that this number is $1$ if $i\ne14,17,20$, so that we have to think only of the supports collinear to 
$T_{14}$, $T_{17}$ and $T_{20}$. Besides,
by Proposition~\ref{colli}, we only have to find the classes of maps in $\mathcal{A}$ with supports equal to $T_{14}$, $T_{17}$ and $T_{20}$.

If $\eta=(1, 1, 1, \lambda)\sim \eta'=(1, 1, 1, \lambda')$ for $S^{\eta}=S^{\eta'}= T_{14}$, let us prove that $\lambda'=\lambda^{\pm1} $. Proposition~\ref{nueva}  (ii) implies that there is a collineation $\sigma$ such that $ \eta^\sigma\approx\eta' $. 
By Theorem~\ref{teo_normalizacionbastaba},  $(1, 1, 1, \lambda)^\sigma=\eta^\sigma\sim_n\eta'=(1, 1, 1, \lambda')$. 
As $\eta^\sigma_{ij}=\eta_{\sigma(i)\sigma(j)}$, observe that $(1, 1, 1, \lambda)^\sigma$ has to be 
one of the next four possibilities according to the value of $\sigma^{-1}(6)$ (respectively $6,5,3,2$):
$(1, 1, 1, \lambda)$, $(1, 1, \lambda,1)$, $( 1, \lambda,1,1)$ or $(\lambda,1, 1, 1)$. The first and third admissible maps are $\sim_n (1, 1, 1, \lambda)$, so that $\lambda=\lambda'$ by 
Theorem~\ref{prop_equiv}.
 In a similar way, the second and fourth cases are $\sim_n (1, 1, 1, \frac1\lambda)$, so that $\frac1\lambda=\lambda'$.
 Conversely, if  $ \lambda'\in\{\lambda,\frac1{\lambda}\}$, then $(1, 1, 1, \lambda)\sim (1, 1, 1, \lambda')$ 
by Remark~\ref{re_412}.

Second,  assume that both $\eta=(1, \lambda,1, 1, \lambda)\sim \eta'=(1, \lambda',1, 1, \lambda')$ have support equal to $T_{17}$. Again
 there is a collineation $\sigma$ such that $\tilde\sigma(T_{17})=T_{17}$ and $(1, \lambda,1, 1, \lambda)^\sigma\approx (1, \lambda',1, 1, \lambda')$ by Proposition~\ref{nueva}  (ii). Observe that $\sigma(1)=1$, $\sigma(4)=4$, and $\sigma(2)$ can be any value in $\{2,3,5,6\}$.
 This   leads to two possibilities: $\eta^\sigma\in\{ (1, \lambda,1, 1, \lambda),(\lambda,1, 1, \lambda,1) \}$. In the first case $\lambda=\pm\lambda'$, and, in the second one,
   $\eta^\sigma=(\lambda,1, 1, \lambda,1) \sim_n  (1, \frac1\lambda,1, 1, \frac1\lambda)  $. As this  is equivalent by normalization to $ (1, \lambda',1, 1, \lambda')$, 
  hence $\frac1\lambda\in\{\pm\lambda'\}$. The converse is an immediate consequence of Remark~\ref{re_412}.
 
Third, if  $(1, \lambda , \mu, 1, \lambda, \mu)$ and $(1, \lambda' , \mu', 1, \lambda', \mu')$ are two equivalent admissible maps with supports equal to $T_{20}$, then there exists a collineation $\sigma$ such that $\tilde\sigma(T_{20})=T_{20}$ and 
$(1, \lambda , \mu, 1, \lambda, \mu)^\sigma\approx  (1, \lambda' , \mu', 1, \lambda', \mu')$, applying by Proposition~\ref{nueva}  as in the above cases.
Note that $\sigma(1)=1$, and $\sigma$ sends lines to lines, so  $(1, \lambda , \mu, 1, \lambda, \mu)^\sigma$ is necessarily one of the next maps
with support $T_{20}$:
$$
\begin{array}{lll}
(1, \lambda , \mu, 1, \lambda, \mu),
&
( \lambda , 1,\mu,  \lambda,1, \mu)\sim_n (1, \frac1\lambda , \frac\mu\lambda, 1, \frac1\lambda , \frac\mu\lambda),
&
( \lambda , \mu, 1, \lambda, \mu,1)\sim_n (1, \frac\mu\lambda, \frac1\lambda , 1,   \frac\mu\lambda,\frac1\lambda ),
\\
(1, \mu,  \lambda, 1,\mu, \lambda ), 
&
( \mu,1,  \lambda, \mu, 1,\lambda )\sim_n(1,\frac1\mu,\frac\lambda\mu,1,\frac1\mu,\frac\lambda\mu),
&
( \mu,  \lambda, 1,\mu, \lambda,1 )\sim_n(1,\frac\lambda\mu,\frac1\mu,1, \frac\lambda\mu,\frac1\mu).
\end{array}
$$
By looking at the second element of the 6-tuples, Theorem~\ref{prop_equiv} says   $ \pm\lambda'\in\{\lambda,\mu, \frac1\lambda,\frac1\mu,\frac\mu\lambda,\frac\lambda\mu\} $, 
and looking at the the third one, we know that, respectively, $ \pm\mu'\in\{\mu,\lambda,\frac\mu\lambda,\frac\lambda\mu, \frac1\lambda,\frac1\mu\} $. 
This leads to the three possibilities for the set $\{\pm\lambda',\pm\mu'\}$ described in (iv). The converse is consequence of a suitable choice of collineations, as in Remark~\ref{re_412}.
\end{proof}

 \begin{remark}
 Observe that Theorem~\ref{th_lasclasesdeverdad} tells that, if $\eta,\eta'\in\mathcal A$ are such that $\eta\sim\eta'$ and $S^\eta=S^{\eta'}$, then  
 $\eta\approx\eta'$ except at most if $S^\eta\sim_c T_{14},T_{17},T_{20}$.
 \end{remark}

	
\section{Properties of the Lie algebras obtained as graded contractions of $\f{g}_2$} \label{sec6}
	
Finally, we study  the properties of the Lie algebras obtained in Theorem~\ref{th_lasclasesdeverdad}.
We begin by revisiting some notions from Lie theory. In what follows, $\f{g}$ denotes a finite-dimensional complex Lie algebra, and $\f{g}'$ its derived algebra $[\f{g}, \f{g}]$.  

The \emph{lower central series} of $\f{g}$ is the sequence of subalgebras $\f{g}^{0} = \f{g}$, $\f{g}^{1}= \f{g}'$ and $\f{g}^{n} = [\f{g}, \f{g}^{n-1}]$, for all $n \geq 2$; 
while the \emph{derived series} of $\f{g}$ is the sequence of subalgebras $\f{g}^{(1)} = \f{g}'$ and $\f{g}^{(n)} = [\f{g}^{(n-1)}, \f{g}^{(n-1)}]$, for all $n \geq 2$. We say that ${\mathfrak {g}}$ is \emph{nilpotent} (respectively, \emph{solvable}) if its lower central series (respectively, its derived series) terminates in the zero subalgebra; in other words, there exists $n$ such that $\f{g}^{n} = 0$ (respectively, $\f{g}^{(n)} = 0$). If $m$ is the least natural number such that $\f{g}^{m} = 0$ (respectively, $\f{g}^{(m)} = 0$), then $\f{g}$ is called $m$-step nilpotent (respectively, $m$-step solvable) and $m$ is called the nilindex of $\f{g}$. As usual, $\f{r}(\f{g})$ denotes the \emph{radical} of $\f{g}$ (its largest solvable ideal), and $\f{z}(\f{g}) = \{x \in \f{g} \colon [x,\f{g}] = 0\}$ refers to its \emph{center}; clearly, $\f{z}(\f{g}) \subseteq \f{r}(\f{g})$. We say that $\f{g}$ is \emph{semisimple} if $\f{r}(\f{g}) = 0$, or equivalently, if $\f{g}$ is the  direct sum of simple ideals; $\f{g}$ is \emph{reductive} if $\f{r}(\f{g}) = \f{z}(\f{g})$, or equivalently, if $\f{g}$ can be decomposed as the direct sum of a semisimple Lie algebra and an abelian Lie algebra. 
 Furthermore,  any $\f{g}$ can be   written as a direct sum of  $\f{r}(\f{g})$  and a semisimple subalgebra  called a \emph{Levi subalgebra}. This decomposition is called the \emph{Levi decomposition} of $\f{g}$.\smallskip

In what follows, $\ep\colon G\times G\to \mathbb C$ denotes  an admissible graded contraction and $T$ denotes the support $S^{\eta^\ep}$ of the associated admissible map $\eta^\ep\in\mathcal{A}$. Our goal here is to investigate which properties the 14-dimensional Lie algebra $\la^\ep = (\f{g}_2)^\ep$ satisfies. 
  Many properties are determined by the support. For instance, we saw in Proposition~\ref{centre}  that this is the situation of the center:
$\f{z}(\la^\ep) = \bigoplus\limits_{i\in I_T}\la_i$, for $I_T = \setbar{i\in I}{i \notin t, \, \, \forall \, t \in T}$.

Thus,
for each of the supports of admissible graded contractions (given up to collineation in Theorem~\ref{classiN}), we describe the properties of the related Lie algebra or family of Lie algebras (when appropriated). The proof becomes a straightforward calculation (and we leave it to the reader) by using Proposition~\ref{centre} and Lemma~\ref{lem_base}, keeping in mind how the Lie bracket $[\cdot, \cdot]^{\ep^T}$ works (see Definition~\ref{defepbracket}).

\begin{theorem}\label{teo_lasalgebras}   
Let $\ep\colon G\times G\to \mathbb C$ be an admissible graded contraction such that $T=S^{\eta^\ep}=T_i$ for some $1\le i\le24$. 

\smallskip

{\rm (1)} If $T =T_1= \emptyset$, then $\lae$ is abelian. 

\smallskip

{\rm (2)} If $T =T_{2}=\{\{1, 2\}\}$, then  
\begin{itemize}
\item[-] $\f{z}(\f{\lae}) = \la_3 \oplus \la_4\oplus\la_5\oplus\la_6\oplus\la_7$ and $\dim\f{z}(\la^\ep) = 10$,
\item[-] $(\f{\lae})'= \la_5$ and $\dim(\lae)' = 2$,
\item[-] $(\lae)^{(2)} = (\lae)^{2} = 0$, that is, $\lae$ is 2-step nilpotent.
\end{itemize}

\smallskip

{\rm (3)} If $T =T_{3}= \{\{1, 2\}, \{1, 3\}\}$, then 
\begin{itemize}
\item[-] $\f{z}(\f{\lae}) = \la_4 \oplus \la_5\oplus \la_6 \oplus \la_7$ and $\dim\f{z}(\la^\ep) = 8$,
\item[-] $(\f{\lae})'= \la_5 \oplus \la_6$ and $\dim(\lae)' = 4$,
\item[-] $(\lae)^{(2)} = (\lae)^{2} = 0$, that is, $\lae$ is 2-step nilpotent.
\end{itemize}

\smallskip

{\rm (4)} If $T = T_{4}=\{\{1, 2\}, \{1, 5\}\}$, then  
\begin{itemize} 
\item[-] $\f{z}(\la^\ep) = \la_3 \oplus \la_4 \oplus \la_6 \oplus \la_7$ and  $\dim\f{z}(\la^\ep) = 8$,
\item[-] $(\lae)'= (\lae)^{n} = \la_2 \oplus \la_5$ for all $n\ge 1$, $\dim(\lae)' = 4$ and $\lae$ is not nilpotent,
\item[-] $(\lae)^{(2)} =0$, that is, $\lae$ is 2-step solvable.
\end{itemize}

\smallskip

{\rm (5)} If $T = T_{5}=\{\{1, 2\}, \{6, 7\}\}$, then 
\begin{itemize}
\item[-] $\f{z}(\la^\ep) = \la_3 \oplus \la_4 \oplus \la_5$ and $\dim\f{z}(\la^\ep) = 6$,
\item[-] $(\lae)' = \la_5$ and $\dim(\lae)'= 2$,
\item[-] $(\lae)^{(2)} = (\lae)^{2} = 0$, that is, $\lae$ is 2-step nilpotent.
\end{itemize}

\smallskip

{\rm (6)} If $T =T_{6}= X_{L_{12}}=  \{\{1, 2\}, \{1, 5\}, \{2, 5\}\}$, then 
\begin{itemize}
\item[-] $\f{z}(\la^\ep) = \f{r}(\la^\ep) = \la_3\oplus\la_4\oplus \la_6\oplus\la_7$, $\dim\f{z}(\la^\ep) = 8$ and $\lae$ is reductive,
\item[-] $(\lae)' = (\lae)^{(n)} = (\lae)^{n} = \la_1 \oplus \la_2 \oplus \la_5$  for all $n\ge1$, and $\lae$ is neither nilpotent nor solvable,
\item[-] $(\lae)' \cong \f{sl}(2,\bb C) \oplus \f{sl}(2,\bb C)$ is the Levi subalgebra of $\lae$,  
\item[-]  $\lae = \f{z}(\lae) \oplus (\lae)'$ is the Levi decomposition of $\lae$.  
\end{itemize}

\smallskip

{\rm (7)} If $T = T_{7}=X^{(1)}= \{\{2, 5\}, \{3, 6\}, \{4, 7\}\}$, then  
\begin{itemize}
\item[-] $\f{z}(\la^\ep)= \la_1 = (\lae)'$ and $\dim\f{z}(\la^\ep) = 2$, 
\item[-] $(\lae)^{(2)}=(\lae)^{2} = 0$, that is, $\lae$ is 2-step nilpotent.
\end{itemize}

\smallskip

{\rm (8)} If $T =T_{8}= \{\{1, 2\}, \{1, 3\}, \{1, 4\}\}$, then 
\begin{itemize}
\item[-] $\f{z}(\la^\ep) = (\lae)' = \la_5\oplus \la_6\oplus\la_7$ and $\dim\f{z}(\la^\ep) = 6$, 
\item[-] $(\lae)^{(2)}=(\lae)^{2} = 0$, that is, $\lae$ is 2-step nilpotent.
\end{itemize}

\smallskip

{\rm (9)} If $T =T_{9}= \{\{1, 2\}, \{1, 3\}, \{1, 5\}\}$, then   
\begin{itemize}
\item[-] $\f{z}(\la^\ep) = \la_4\oplus \la_6\oplus\la_7$ and $\dim\f{z}(\la^\ep) = 6$,
\item[-] $(\lae)' = \la_2 \oplus \la_5 \oplus \la_6$ and $\dim(\lae)' = 6$,
\item[-] $(\f{\lae})^{(2)} = 0$, that is, $\lae$ is 2-step solvable,
\item[-] $(\lae)^{n} = \la_2 \oplus \la_5$ for all $n\ge2$, and $\lae$ is not nilpotent. 
\end{itemize}

%

\smallskip

{\rm (11)} If $T =T_{11}= \{\{1, 2\}, \{1, 6\}, \{2, 6\}\}$, then   
\begin{itemize}
\item[-] $\f{z}(\la^\ep) = \la_3 \oplus \la_4 \oplus \la_5 \oplus \la_7$ and $\dim\f{z}(\la^\ep) = 8$,
\item[-] $(\lae)' = \la_3 \oplus \la_4 \oplus \la_5$ and $\dim(\lae)' = 6$,
\item[-] $(\lae)^{(2)} = (\lae)^{2} = 0$, that is, $\lae$ is 2-step nilpotent.
\end{itemize}

\smallskip

{\rm (12)} If $T = T_{12}=\{\{1, 2\}, \{1, 6\}, \{6, 7\}\}$, then    
\begin{itemize}
\item[-] $\f{z}(\f{\lae}) = \la_3\oplus\la_4\oplus \la_5$ and $\dim\f{z}(\la^\ep) = 6$,
\item[-] $(\lae)' = \la_3 \oplus \la_5$ and $\dim(\lae)' = 4$, 
\item[-] $(\lae)^{(2)}=(\lae)^{2}=0$, that is, $\lae$ is 2-step nilpotent.
\end{itemize}

\smallskip

{\rm (13)} If $T =T_{13}= \{\{1, 2\}, \{1, 3\}, \{1, 4\}, \{1, 5\}\}$, then   
\begin{itemize}
\item[-] $\f{z}(\la^\ep) = \la_6 \oplus \la_7$ and $\dim\f{z}(\la^\ep) = 4$,
\item[-] $(\lae)' = \la_2 \oplus \la_5 \oplus \la_6 \oplus \la_7$ and $\dim(\lae)'=8$,
\item[-] $(\lae)^{(2)} =0$, that is, $\lae$ is 2-step solvable,
\item[-] $(\lae)^{n} = \la_2\oplus\la_5$ for all $n\ge2$, and $\lae$ is not nilpotent.
\end{itemize}

\smallskip

{\rm (14)} If $T =T_{14}= \{\{1, 2\}, \{1, 3\}, \{1, 5\}, \{1, 6\}\}$, then   
\begin{itemize}
\item[-] $\f{z}(\la^\ep) = \la_4 \oplus \la_7$ and $\dim\f{z}(\la^\ep) = 4$.
\item[-] $(\lae)' = \la_2\oplus\la_3\oplus \la_5\oplus\la_6 = (\lae)^{n}$, for all $n \geq 1$, $\dim(\lae)' = 8$ and $\lae$ is not nilpotent,
\item[-] $(\lae)^{(2)} = 0$, that is, $\lae$ is 2-step solvable, 
\end{itemize}

\smallskip

{\rm (15)} If $T =T_{15}= \{\{1, 2\}, \{1, 6\}, \{1, 7\}, \{2, 6\} \}$, then    
\begin{itemize}
\item[-] $ \f{z}(\la^\ep) = (\lae)' = \la_3 \oplus \la_4\oplus \la_5 $ and $\dim (\lae)' = \dim\f{z}(\la^\ep) = 6$,
\item[-] $(\lae)^{(2)} = (\lae)^{2} = 0$, that is, $\lae$ is 2-step nilpotent.
\end{itemize}

\smallskip
 
{\rm (16)} If $T =T_{16}= \{\{1, 2\}, \{1, 6\}, \{2, 7\}, \{6, 7\}\}$, then    
\begin{itemize}
\item[-] $\f{z}(\la^\ep) = \la_3 \oplus \la_4\oplus \la_5$ and $\dim\f{z}(\la^\ep) = 6$,
\item[-] $(\lae)' = \la_3 \oplus \la_5$ and $\dim(\lae)' = 4$,
\item[-] $(\lae)^{(2)} = (\lae)^{2} = 0$, that is, $\lae$ is 2-step nilpotent.
\end{itemize}

\smallskip
 
{\rm (17)} If $T =T_{17}= \{\{1, 2\}, \{1, 3\}, \{1, 4\}, \{1, 5\}, \{1, 6\}\}$, then  
\begin{itemize}
\item[-] $\f{z}(\la^\ep) = \la_7$ and $\dim\f{z}(\la^\ep)=2$, 
\item[-] $(\lae)'= \la_2 \oplus \la_3 \oplus \la_5 \oplus \la_6 \oplus \la_7$ and $\dim(\lae)' = 10$,
\item[-] $(\lae)^{(2)} =0$, that is, 2-step solvable,
\item[-] $(\lae)^{n} = \la_2\oplus\la_3\oplus \la_5\oplus\la_6$ for all $n\ge2$, and $\lae$ is not nilpotent.
\end{itemize}

\smallskip

{\rm (18)} If $T =T_{18}= \{\{1, 2\}, \{1, 6\}, \{1, 7\}, \{2, 6\}, \{2, 7\}\}$, then     
\begin{itemize}
\item[-] $\f{z}(\la^\ep) = (\la^\ep)' = \la_3 \oplus \la_4 \oplus \la_5$ and $\dim (\la^\ep)' = \dim\f{z}(\la^\ep) = 6$, 
\item[-] $(\lae)^{(2)}=(\lae)^{2}=0$, that is, $\lae$ is 2-step nilpotent. 
\end{itemize}

\smallskip

{\rm (19)} If $T =T_{19}= X_{L^C_{12}}=\{\{3, 4\}, \{3, 6\}, \{3, 7\}, \{4, 6\}, \{4, 7\}, \{6, 7\}\}$, then   
\begin{itemize}
\item[-] $\f{z}(\la^\ep) = (\la^\ep)' = \la_1 \oplus \la_2 \oplus \la_5$ and $\dim (\la^\ep)' = \dim\f{z}(\la^\ep) = 6$, 
\item[-] $(\lae)^{(2)} = (\lae)^{2} = 0$, that is, $\lae$ is 2-step nilpotent. 
\end{itemize}

\smallskip

{\rm (20)} If $T =T_{20}= X_{(1)}=\{\{1, 2\}, \{1, 3\}, \{1, 4\}, \{1, 5\}, \{1, 6\},\{1, 7\}\}$, then    
\begin{itemize}
\item[-] $\f{z}(\la^\ep) = 0$,
\item[-] $(\lae)' = \la_2\oplus\la_3\oplus\la_4\oplus  \la_5\oplus\la_6\oplus \la_7$ and $\dim(\lae)' = 12$,
\item[-] $(\lae)^{(2)} = 0$, that is, $\lae$ is 2-step solvable,
\item[-] $(\lae)^{n}=\la'$ for all $n \ge 2$ and $\lae$ is not nilpotent.
\end{itemize}

\smallskip

{\rm (21)} If $T =T_{21}= P_{\{1, 2, 3\}}=\{\{1, 2\}, \{1, 3\}, \{1, 7\}, \{2,3\}, \{2, 6\},\{3,5\}\}$, then   
\begin{itemize}
\item[-] $\f{z}(\la^\ep) = \la_4$ and $\dim\f{z}(\la^\ep) = 2$,
\item[-] $(\lae)' = \la_4 \oplus \la_5 \oplus \la_6 \oplus \la_7$ and $\dim(\lae)' = 8$, 
\item[-] $(\lae)^{(2)} = 0$, that is,  $\lae$ is 2-step solvable, 
\item[-] $(\lae)^2 = \f{z}(\la^\ep)$ and $\lae$ is 3-step nilpotent.
\end{itemize}

\smallskip

{\rm (22)} If $T =T_{22}= T_{\{1,2,3\}}$, then   
\begin{itemize}  
\item[-] $\f{z}(\la^\ep) = 0$,
\item[-] $(\lae)^{n} = \la_2 \oplus \la_3 \oplus \la_4 \oplus \la_5 \oplus \la_6 \oplus \la_7$ 
for all $n \geq 1$, $\dim(\lae)' = 12$ and  $\lae$ is not nilpotent,
\item[-] $(\lae)^{(2)} = \la_4 \oplus \la_7$, $(\lae)^{(3)} = 0$ and $\lae$ is 3-step solvable.
\end{itemize}

\smallskip

{\rm (23)} If $T =T_{23}= X\setminus X_{L^C_{12}}$, then 
\begin{itemize}
\item[-]  $\f{z}(\la^\ep) = 0$,
\item[-]  $(\lae)^{n} = (\lae)^{(n)} = \lae$ for all $n\ge1$, and $\lae$ is neither nilpotent nor solvable,
\item[-]  $\f{r}(\lae) = \la_3 \oplus \la_4 \oplus \la_6 \oplus \la_7$ is an abelian ideal, $\dim\f{r}(\lae) = 8$ and $\lae$ is not reductive,
\item[-]  $\f{h} = \la_1 \oplus \la_2 \oplus \la_5 \cong \f{sl}(2,\bb C) \oplus \f{sl}(2,\bb C)$ is the Levi subalgebra of $\lae$,
\item[-]  $\lae = \f{r}(\lae) \oplus \f{h}$ is the Levi decomposition of $\lae$.  
\end{itemize}

\smallskip

{\rm (24)} If $T =T_{24}= X$, then $\lae$ is simple.
\end{theorem}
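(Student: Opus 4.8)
The plan is to reduce the entire case analysis to a single combinatorial operation on subsets of $I$ and then read off every assertion mechanically. The starting observation is that, by Lemma~\ref{lem_base}(ii) each $\la_i$ is abelian and by Lemma~\ref{lem_base}(iii) one has $[\la_i,\la_j]=\la_{i\ast j}$ for all distinct $i,j\in I$; hence for the contracted bracket
\[
[\la_i,\la_j]^{\ep}=\ep_{ij}[\la_i,\la_j]=
\begin{cases}
\la_{i\ast j}, & \{i,j\}\in T,\\
0, & \text{otherwise,}
\end{cases}
\qquad [\la_i,\la_i]^{\ep}=0 .
\]
Crucially, the right-hand side is the \emph{full} $2$-dimensional space $\la_{i\ast j}$ whenever $\ep_{ij}\neq0$, independently of the actual nonzero value of $\ep_{ij}$; consequently every subspace produced by iterated brackets depends only on the support $T$. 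Writing, for $A,B\subseteq I$, $A\bullet B:=\{\,i\ast j : i\in A,\ j\in B,\ i\neq j,\ \{i,j\}\in T\,\}$, one obtains for graded subspaces the formula $\bigl[\bigoplus_{i\in A}\la_i,\bigoplus_{j\in B}\la_j\bigr]^{\ep}=\bigoplus_{k\in A\bullet B}\la_k$. Thus all the structural invariants are governed by $\bullet$, and it suffices to treat one representative per support.

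With this dictionary the three families of assertions are handled uniformly. First I would dispatch the \textbf{centers} by quoting Proposition~\ref{centre} verbatim: $\f{z}(\lae)=\bigoplus_{i\in I_T}\la_i$, and for each $T_i$ the set $I_T$ of indices missing from every pair is read off by inspection. Second, the \textbf{derived algebra} is $(\lae)'=\bigoplus_{k\in I\bullet I}\la_k$ with $I\bullet I=\{\,i\ast j:\{i,j\}\in T\,\}$, and the \textbf{lower central and derived series} are computed by the recursions $I^{0}=I$, $I^{n}=I\bullet I^{n-1}$ and $J^{1}=I\bullet I$, $J^{n}=J^{n-1}\bullet J^{n-1}$, giving $(\lae)^n=\bigoplus_{k\in I^n}\la_k$ and $(\lae)^{(n)}=\bigoplus_{k\in J^n}\la_k$. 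Nilpotency (resp. solvability) and the exact indices then amount to finding the first $m$ with $I^m=\varnothing$ (resp. $J^m=\varnothing$); a support like $T_4$, where $I^n=\{2,5\}$ stabilises, exhibits a non-nilpotent algebra, while $T_{21}$ gives $I^2=\{4\}\neq\varnothing$ and $I^3=\varnothing$, the unique $3$-step nilpotent case. These are short finite iterations, carried out once per listed support.

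For the \textbf{reductive and Levi cases} $T_6$ and $T_{23}$ I would argue slightly more. When $T$ contains all three pairs of a line $\{i,j,i\ast j\}$, the subspace $\la_i\oplus\la_j\oplus\la_{i\ast j}$ is a $\bullet$-stable subalgebra; restricting the contraction to it yields a full-support graded contraction of $\f{sl}(2,\bb C)\oplus\f{sl}(2,\bb C)$ (Lemma~\ref{lem_base}(iii)), which by the full-support normalization result following Theorem~\ref{prop_equiv} is isomorphic to $\f{sl}(2,\bb C)\oplus\f{sl}(2,\bb C)$. In case $T_6$ the remaining pieces $\la_3\oplus\la_4\oplus\la_6\oplus\la_7$ are central, so $\lae=\f{z}(\lae)\oplus(\lae)'$ is reductive with the stated Levi decomposition. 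In case $T_{23}$ the pairs inside $\{3,4,6,7\}=L^C_{12}$ are precisely those excluded from $T_{23}$, so $\la_3\oplus\la_4\oplus\la_6\oplus\la_7$ is an abelian ideal whose complementary semisimple subalgebra $\la_1\oplus\la_2\oplus\la_5$ realises the Levi decomposition, whence this ideal equals $\f{r}(\lae)$; it is non-central because $[\la_1,\la_3]^{\ep}=\la_6\neq0$, so $\lae$ is not reductive. Finally, for $T_{24}=X$ the full-support normalization gives $\lae\cong\la=\f{g}_2$, which is simple.

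The work is almost entirely bookkeeping, so the main obstacle is organisational rather than conceptual: one must run the $\bullet$-recursions correctly for all the listed supports and, above all, pin down the exact step indices, since the subtle distinctions — $2$-step versus $3$-step nilpotent ($T_{21}$), nilpotent versus merely solvable ($T_4,T_9,T_{13},T_{14},T_{17},T_{20}$), and $2$-step versus $3$-step solvable ($T_{22}$) — all hinge on whether the iterated sets $I^n,J^n$ stabilise at a nonzero value or reach $\varnothing$, and a single miscomputed $i\ast j$ changes the answer. The only genuinely structural point is the identification of the Levi factors, which is settled once one observes that a full-support graded contraction of $\f{sl}(2,\bb C)\oplus\f{sl}(2,\bb C)$ is again $\f{sl}(2,\bb C)\oplus\f{sl}(2,\bb C)$.
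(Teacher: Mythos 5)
Your proof is correct and is essentially the paper's own argument: the paper deliberately leaves this theorem as a routine verification from Proposition~\ref{centre}, Lemma~\ref{lem_base}(iii) and the definition of $[\cdot,\cdot]^\ep$, and your observation that $[\la_i,\la_j]^\ep$ is the full component $\la_{i\ast j}$ whenever $\{i,j\}\in T$ (so that the center, derived series and lower central series depend only on the support) is exactly that computation, merely organized through your $\bullet$-recursion, whose outputs I have checked against all the listed cases. One cosmetic repair: in the cases $T_6$, $T_{23}$ and $T_{24}$ the Weimar-Woods ``without zeroes'' theorem does not literally apply to an admissible map (which vanishes on the diagonal and at $e$), so either reset those irrelevant values to $1$ first, or more simply quote Theorem~\ref{prop_equiv} itself (cases $X_{L_{12}}$, $X\setminus X_{L^C_{12}}$ and $X$), or rescale the basis of \eqref{eq_para5} directly using $\eta_{12},\eta_{15},\eta_{25}\neq 0$ to identify the Levi factor with $\f{sl}(2,\bb C)\oplus\f{sl}(2,\bb C)$.
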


Of course the algebras related to $T_8$ and $T_{10}$ share the main properties, since they are isomorphic.

We close the section by summarizing the information obtained in the previous theorem. 
We order the algebras so that they appear progressively  ``less and less abelian''.
The pair $d_{(i)}: = \big(\dim \f{z}(\la^\ep), \dim(\lae)'\big)$, for $\ep$ an admissible graded contraction with support $T_i$, gives a key invariant. 

\begin{cor}\label{cor_recopilacion}
The Lie algebras obtained by graded contractions of $\Gamma_{\f{g}_2}$ are, 
\begin{itemize}
\item[-] 1 abelian: $d_{(1)}=(14,0)$;
\item[-] 12 nilpotent not abelian:  
\begin{itemize}
\item   1 with nilindex 3: $d_{(21)}=(2,8)$,
\item  11 with nilindex 2:  $d_{(2)}=(10,2)$, $d_{(3)}=(8,4)$, $d_{(5)}=(6,2)$,  $d_{(7)}=(2,2)$, 
$d_{{(8)}}=d_{(15)}=d_{(18)}=d_{(19)}=(6,6)$, $d_{(11)}=(8,6)$ and $d_{(12)}=d_{(16)}=(6,4)$;  
\end{itemize}
\item[-]   solvable not nilpotent:    
\begin{itemize}
 \item 1 with solvability index 3: $d_{(22)}=(0,12)$,
  \item 3 isolated cases with solvability index 2: $d_{(4)}=(8,4)$, $d_{(9)}=(6,6)$  and $d_{(13)} =(4,8)$,   
  \item 3 infinite families depending on parameters with solvability index 2: $ d_{(14)}=(4,8)$, $d_{(17)}=(2,10)$ and $d_{(20)}=(0,12)$;
\end{itemize}
\item[-] 1  which is sum of a semisimple Lie algebra and a non-trivial center:  $d_{(6)}=(8,6)$;
 \item[-] 1  not reductive: $d_{(23)} =(0,14)$;  
\item[-] 1 simple: $ d_{(24)}=(0,14)$.
\end{itemize}
\end{cor}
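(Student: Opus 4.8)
The plan is to verify Theorem~\ref{teo_lasalgebras} case by case, since it is a catalogue of structural data attached to each of the 24 representative supports $T_i$ from Theorem~\ref{classiN}. The unifying principle is that, once the support $T=S^{\eta^\ep}$ is fixed, the entire bracket $[\cdot,\cdot]^{\ep^T}$ is determined on homogeneous pairs up to nonzero scalars: for $\{i,j\}\in T$ the product $[\la_i,\la_j]^\ep=\la_{i\ast j}$ is the full 2-dimensional component by Lemma~\ref{lem_base}~(iii), while for $\{i,j\}\notin T$ one has $[\la_i,\la_j]^\ep=0$. Thus every subalgebra in the statement (center, derived algebra, terms of the lower central and derived series, radical) is a direct sum of homogeneous components, and the whole computation reduces to bookkeeping on the index set $I$ with the operation $\ast$ encoded by the Fano plane. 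For the \emph{parametric} cases $T_{14},T_{17},T_{20}$, I would note at the outset that the scalar values of $\eta$ do not affect any of the listed invariants, since whether $[\la_i,\la_j]^\ep$ is zero or all of $\la_{i\ast j}$ depends only on membership in $T$; this is why a single answer covers each family.

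For the concrete steps, first I would read off $\f{z}(\lae)=\bigoplus_{i\in I_T}\la_i$ directly from Proposition~\ref{centre}, which already supplies every center in the list; the dimension is $2|I_T|$. Second, I would compute the derived algebra as $(\lae)'=\sum_{\{i,j\}\in T}\la_{i\ast j}$, collecting the distinct values $i\ast j$ as $\{i,j\}$ ranges over $T$. Third, for the lower central series I would iterate: $(\lae)^{n}=\sum\{\,\la_{i\ast j}: \{i,j\}\in T,\ \la_i\text{ or }\la_j\subseteq(\lae)^{n-1}\,\}$, continuing until the sequence stabilizes; nilpotency holds precisely when it reaches $0$. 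Fourth, for the derived series I would use $(\lae)^{(2)}=[(\lae)',(\lae)']^\ep$, which is nonzero only if two components appearing in $(\lae)'$ are joined by an edge of $T$. The frequent outcome $(\lae)^{(2)}=0$ happens exactly when $(\lae)'$ is abelian, i.e.\ no two of its index values form an element of $T$—typically because $(\lae)'$ sits inside the center. For the three semisimple-flavoured cases $T_6,T_{23},T_{24}$ I would invoke Lemma~\ref{lem_base}~(iii) to recognize $\la_1\oplus\la_2\oplus\la_5\cong\f{sl}(2,\bb C)\oplus\f{sl}(2,\bb C)$ as a Levi factor and identify the complementary homogeneous components as the (abelian) radical, checking reductivity via $\f{r}=\f{z}$; the case $T_{24}=X$ is the original simple $\f{g}_2$.

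The genuinely delicate cases are those where the series do \emph{not} collapse at the first step, namely $T_{21}$ (nilindex $3$) and $T_{22}$ (solvability index $3$). For $T_{21}=P_{\{1,2,3\}}$ I would track $(\lae)^2$ carefully: starting from $(\lae)'=\la_4\oplus\la_5\oplus\la_6\oplus\la_7$, one must bracket back against $\la_1,\la_2,\la_3$ using the six edges of $P_{\{1,2,3\}}$ and check that the only surviving product lands in $\la_4=\f{z}(\lae)$, giving $(\lae)^2=\la_4$ and $(\lae)^3=0$. For $T_{22}=T_{\{1,2,3\}}$, which has ten edges, the derived series must be iterated twice: I expect $(\lae)^{(2)}=\la_4\oplus\la_7$ and $(\lae)^{(3)}=0$, and verifying that $\la_4\oplus\la_7$ is itself abelian with respect to $T_{22}$ (so that the series terminates exactly at step $3$ and not earlier or later) is where the edge-incidence combinatorics must be done with real care. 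These two cases are the main obstacle; everywhere else the pattern $\text{component appears in }(\lae)'\iff\text{it is }i\ast j\text{ for some edge}$, together with the immediate vanishing of second-order brackets, makes the verification routine, so I would present the bulk as the stated straightforward calculation and devote the written detail to $T_{21}$ and $T_{22}$.
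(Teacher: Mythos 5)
Your proposal is correct and follows essentially the same route as the paper: the paper proves Theorem~\ref{teo_lasalgebras} by exactly the bookkeeping you describe (Proposition~\ref{centre} for centers, Lemma~\ref{lem_base}~(iii) to see that $[\la_i,\la_j]^\ep$ is all of $\la_{i\ast j}$ precisely when $\{i,j\}$ lies in the support, hence every term of the lower central and derived series is a sum of homogeneous components computable on the Fano plane), and the corollary is then just the tally of those 24 cases. Your checks of the two non-collapsing cases $T_{21}$ and $T_{22}$, and your observation that the parameters in $T_{14},T_{17},T_{20}$ cannot affect any listed invariant, match the paper's (reader-delegated) computation.
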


Thus the  invariant  $d_{(i)}$ jointly with the nilpotency and solvability indices allow to distinguish 
the equivalence class of two graded contractions of $\Gamma_{\f{g}_2}$,
except in the case both related Lie algebras are 2-step nilpotent (with $\dim\f{z}(\la^\ep)  =6$), and except in the case where both algebras are 2-step solvable not nilpotent   (with $\dim\f{z}(\la^\ep)  \le4$). 

\section{Conclusions and further work}

In this paper we have tackled  the problem of the classification of the graded contractions of $\Gamma_{\f{g}_2}$ up to equivalence $\sim$, 
that is, where the isomorphism between the related Lie algebras   permutes the homogeneous components of the grading. 
Some other equivalence relations have been considered to help to our study, namely, $\sim_n$ and $\approx$.
  If $\ep$ and $\ep'$ are  graded contractions of a grading $\Gamma$ on a Lie algebra $\la$, then
$\ep\sim_n\ep'\Rightarrow \ep\approx\ep' \Rightarrow \ep\sim\ep'\Rightarrow\la^{\ep}$ and $\la^{\ep'}$ are isomorphic, although
 none of the converses is true in general. 

Our approach  to the case of $\f{g}_2$ and its $\bb{Z}_2^3$-grading can be summarized as follows.
First,   each equivalence class contains an admissible representative.  
The supports of the admissible graded contractions  are nice sets, 
and each nice set is related to at least one graded contraction.
The Weyl group  of $\Gamma_{\f{g}_2}$ allows us to obtain    equivalent   graded contractions from   collinear nice sets (see Proposition~\ref{colli}).
There are 24  nice sets up to collineation according to the purely combinatorial classification   achieved in Theorem~\ref{classiN}. 
For 21 of these nice sets, all the admissible graded contractions with just that support are equivalent by normalization
(Theorem~\ref{prop_equiv}), in particular equivalent.
This is not the case for the remaining 3 nice sets, which give families of  graded contractions parametrized by $\bb{C}^\times$, $\bb{C}^\times/\bb{Z}_2$ and   $(\bb{C}^\times)^2/\bb{Z}^2_2$, not equivalent by normalization. 
The problem of whether these graded contractions could be strong equivalent, even though they are not equivalent by normalization, is a difficult task.  Theorem~\ref{teo_normalizacionbastaba}   gives a negative answer in our case: strong equivalence and equivalence by normalization coincides for $\Gamma_{\f{g}_2}$. 
Putting together the above results,   we have found a   representative of each equivalence class of graded contractions of $\Gamma_{\f{g}_2}$. 
More precisely:
if $\ep$ is such graded contraction, there is $\eta\in\mathcal A$ with $\ep\sim\ep^\eta$, 
there is  a collineation $\sigma$ such that $\tilde\sigma^{-1}(S^{\eta})=T$
 is one of the 24 nice sets shown in Theorem~\ref{classiN}, and hence
$\ep\sim\ep^\eta\sim\ep^{\eta^\sigma}\approx\ep^{\eta^T}$ except for $T$   one the three nice sets, subsets of $X_{(i)}$, mentioned in Theorem~\ref{prop_equiv}. In this situation, $\ep\sim
\ep^{\eta'}$ for $\eta'$ of the form $(1,1,1,\lambda)$, $(1,\lambda,1,1,\lambda)$ or $(1,\lambda,\mu,1,\lambda,\mu)$, with the notations used there.
But not all these classes are not equivalent, Theorem~\ref{th_lasclasesdeverdad} deals with this issue  in order to distinguish when two   equivalent graded contractions with the same support are strongly equivalent. A point to be careful with    is that $\eta \sim\eta'$ does not imply that there exists a collineation $\sigma $  such that $\eta^\sigma\approx\eta'$, according to Proposition~\ref{nueva}.
Thus Theorem~\ref{th_lasclasesdeverdad}  gives  the collection of representatives of the possible graded  Lie algebras, up to isomorphism, obtained from $\Gamma_{\f{g}_2}$.
Finally, we study some properties of these Lie algebras  in Theorem~\ref{teo_lasalgebras}.
It remains to be studied whether two of these algebras   are non-isomorphic in the standard sense. Some invariants have been considered, but this   issue needs further research in order to conclusively demonstrate when two graded contractions with nilindex 2  are indeed
non-isomorphic as a Lie algebras, and similarly for  two algebras within
the parametric continua.
We leave this study pending for the near future. 
One idea for tackling this task is to first  give     precise models of all the algebras studied in Theorem~\ref{teo_lasalgebras}, taking into account the  recent work \cite{twisted}. 
There, a $\bb Z_2^3$-graded Lie algebra over the reals isomorphic to the compact Lie algebra $\f{g}_2^c$ is explicitly
constructed without   using octonions or derivations, which makes such algebra extremely easy to use. 

Additionally, we would also like to study the real case. Many of our results are still valid for the real field and the compact Lie algebra of derivations of the octonion division algebra $\f{g}_2^c=\Der(\bb O)$, 
but Theorem~\ref{prop_equiv} is no longer true (using  the field $\mathbb C$ was  relevant there). The real Lie algebras obtained  
by using the admissible maps in Theorem~\ref{th_lasclasesdeverdad} are not isomorphic and
satisfy the   properties described in  Theorem~\ref{teo_lasalgebras}, although these real algebras do not cover all the Lie algebras that could be obtained by graded contractions of $\f{g}_2^c$. 

Another question to study is how a $\bb Z_2^3$-grading on an algebra has to be  in order to apply the results obtained in this paper to it. 
At first glance it might seem that none, because throughout this work we have used many properties that are specific to $\f{g}_2$.
We have begun to investigate this line of work  and  we can announce that  there are some suitable graded Lie algebras. 
This will allow to take advantage of the (very technical)  classification of the nice sets to obtain more new Lie algebras. 
\smallskip


 \textbf{Acknowledgment:}  
The authors are indebted to the anonymous referee  for his/her careful reading and valuable suggestions which have improved significantly the manuscript.

\end{document}